\documentclass[a4paper,UKenglish,cleveref, autoref,thm-restate]{lipics-v2021} 

\nolinenumbers 
\hideLIPIcs

\usepackage{amsmath, amsthm,amssymb}
\usepackage{tikz}
\usepackage{xcolor}
\usepackage{graphicx}
\usepackage{booktabs}
\usepackage{pxfonts}
\usepackage{bm}
\usepackage{scalerel}
\usepackage{trimclip}
\usepackage{ifthen}
\usepackage{xparse}
\usepackage{float}
\usepackage[all]{xy}
\usepackage{mathtools}
\usepackage{stackengine}
\usepackage{mathpartir}
\usepackage{bbm}
\usepackage{multicol}
\usepackage{subcaption}
\usepackage{prooftree}
\usepackage{realboxes}
\usepackage{mathtools}
\usepackage[inline]{enumitem}
\usepackage{makecell}

\usepackage{quiver}

\usepackage{todonotes}

\theoremstyle{definition} %

\input{macros}

\newcommand{\angbr}[2]{\langle #1,#2 \rangle}

\newcommand{\freccia}[3]{#2\colon#1 \to #3}

\newcommand{\frecciainj}[3]{\xymatrix{#2 \colon #1  \ar@{^{(}->}[r] &  #3}}

\newcommand{\duefreccia}[3]{\xymatrix@C=0.5cm{#2 \colon #1  \ar@{=>}[r] &  #3}}

\newcommand{\duemorfismo}[6]{\xymatrix@+1pc{
#1^{\op} \ar[rrd]^#2_{}="a" \ar[dd]_{#3^{\op}}\\
&& \infsl\\
#5^{\op}  \ar[rru]_#6^{}="b"
\ar_{#4}  "a";"b"}}
\newcommand{\comsquare}[8]{ \xymatrix@+1pc{ 
#1 \ar[r]^{#5} \ar[d]_{#6} & #2 \ar[d]^{#7} \\
#3 \ar[r]_{#8} & #4 
}}
\newcommand{\pullback}[8]{ \xymatrix@+1pc{ 
#1 \pullbackcorner \ar[r]^{#5} \ar[d]_{#6} & #2 \ar[d]^{#7} \\
#3 \ar[r]_{#8} & #4 
}}
\newcommand{\quadratocomm}[8]{ \xymatrix@+1pc{ 
#1 \ar[r]^{#5} \ar[d]_{#6} & #2 \ar[d]^{#7} \\
#3 \ar[r]_{#8} & #4 
}}
\newcommand{\comsquarelargo}[8]{ \xymatrix@+1pc{ 
#1 \ar[rr]^{#5} \ar[d]_{#6} && #2 \ar[d]^{#7} \\
#3 \ar[rr]_{#8} && #4 
}}
\newcommand{\parallelmorphisms}[4]{\xymatrix@+1pc{
#1 \ar @<+4pt>[r]^{#2} \ar @<-4pt>[r]_{#3} & #4
}}
\newcommand{\relation}[4]{\xymatrix@+1pc{
\angbr{#2}{#3}\colon #1 \ar @<+4pt>[r] \ar @<-4pt>[r] & #4
}}
\newcommand{\frecceparalleleopposte}[4]{\xymatrix@+1pc{
#1 \ar@<+4pt>[r]^{#2} \ar@<-4pt>@{<-}[r]_{#3} & #4
}}
\newcommand{\equalizer}[6]{\xymatrix@+1pc{
#1 \ar[r]^{#2} & #3 \ar @<+4pt>[r]^{#4} \ar @<-4pt>[r]_{#5} & #6
}}
\newcommand{\coequalizer}[6]{\xymatrix@+1pc{
 #1 \ar @<+4pt>[r]^{#2} \ar @<-4pt>[r]_{#3} & #4 \ar[r]^{#5} & #6
}}

\newcommand{\sottoggetto}[2]{\xymatrix{
#1 \ar@{>->}[r] & #2
}}

\newcommand{\pullbackcorner}[1][ul]{\save*!/#1+1.2pc/#1:(1,-1)@^{|-}\restore}

\def\pr{\pi}

\def\mV{\mathcal{V}}

\def\Sub{\mathsf{Sub}}

\def\infsl{\mathsf{InfSl}}

\def\set{\mathsf{Set}}

\newcommand{\doctrine}[2]{#2\colon #1^{\op}\longrightarrow\infsl}
\newcommand{\hyperdoctrine}[2]{#2\colon #1^{\op}\longrightarrow\bool}
\def\bool{\mathsf{Bool}}

\newcommand{\comp}[1]{\{ #1 \}}

\newcommand{\EED}{\mathbb{EED}}

\newcommand{\CartBic}{\mathbb{CB}}
\newcommand{\FOBic}{\mathbb{FOB}}
\newcommand{\PeirceBic}{\mathbb{PB}}

\bibliographystyle{plainurl}%

\title{When Lawvere meets Peirce: an equational presentation of boolean hyperdoctrines}

\titlerunning{An equational presentation of boolean hyperdoctrines} %

\author{Filippo Bonchi}{University of Pisa, Italy}{}{}{}
\author{Alessandro Di Giorgio}{University College London, United Kingdom}{a.giorgio@ucl.ac.uk}{0000-0002-6428-6461}{}
\author{Davide Trotta}{University of Pisa, Italy}{}{}{}

\authorrunning{F. Bonchi, A. Di Giorgio and D. Trotta} %

\Copyright{Filippo Bonchi, Alessandro Di Giorgio and Davide Trotta} %

\begin{CCSXML}
    <ccs2012>
    <concept>
    <concept_id>10003752.10003790</concept_id>
    <concept_desc>Theory of computation~Logic</concept_desc>
    <concept_significance>500</concept_significance>
    </concept>
    <concept>
    <concept_id>10003752.10010124.10010131.10010137</concept_id>
    <concept_desc>Theory of computation~Categorical semantics</concept_desc>
    <concept_significance>500</concept_significance>
    </concept>
    </ccs2012>
\end{CCSXML}
    
\ccsdesc[500]{Theory of computation~Logic}
\ccsdesc[500]{Theory of computation~Categorical semantics}

\keywords{relational algebra, hyperdoctrines, cartesian bicategories, string diagrams} %

\category{} %

\relatedversion{} %

\EventEditors{John Q. Open and Joan R. Access}
\EventNoEds{2}
\EventLongTitle{42nd Conference on Very Important Topics (CVIT 2016)}
\EventShortTitle{CVIT 2016}
\EventAcronym{CVIT}
\EventYear{2016}
\EventDate{December 24--27, 2016}
\EventLocation{Little Whinging, United Kingdom}
\EventLogo{}
\SeriesVolume{42}
\ArticleNo{23}

\begin{document}

\maketitle

\begin{abstract}
Fo-bicategories are a categorification of  Peirce's calculus of relations. Notably, their laws provide a proof system for first-order logic that is both purely equational and complete. This paper illustrates a correspondence between fo-bicategories and Lawvere's hyperdoctrines. To streamline our proof, we introduce peircean bicategories, which offer a more succinct characterization of fo-bicategories.
\end{abstract}

\section{Introduction}
The first appearences of the characteristic features of first-order logic  can be traced back to the works of Peirce~\cite{peirce1897_the-logic-of-relatives} and Frege~\cite{frege1977begriffsschrift}. Frege was mainly motivated by the pursuit of a rigorous foundation for mathematics: his work was inspired by real analysis, bringing the concept of functions and variables into the logical realm~\cite{ewald2018emergence}. On the other hand Peirce, inspired by the work of De Morgan~\cite{de1860syllogism} on relational reasoning, 
introduced a calculus in which operations allow the combination of relations and adhere to a set of algebraic laws. Like Boole's algebra of classes~\cite{boole1847mathematical}, Peirce's calculus of relations does not feature variables nor quantifiers and its sole deduction rule is substituting equals by equals.

Despite several negative results regarding axiomatizations for the entire calculus~\cite{monk} and various fragments thereof~\cite{hodkinson2000axiomatizability,redko1964defining,freyd1990categories,doumane2020non,DBLP:conf/stacs/Pous18}, its lack of binder-related complexities, coupled with purely equational proofs, has rendered the calculus of relations highly influential in computer science, e.g., in the context of database theory~\cite{codd1983relational}, programming languages~\cite{pratt1976semantical,hoare1986weakest,lassen1998relational,bird1996algebra,DBLP:journals/pacmpl/LagoG22a} and proof assistants~\cite{pous2013kleene, pous2016automata, krauss2012regular}.

In logic, the calculus played a secondary role for many years, likely because it is strictly less expressive than first-order logic~\cite{lowenheim1915moglichkeiten}. This was until Tarski in~\cite{tarski1941calculus} recognized its algebraic flavour and initiated a program of algebraizing first-order logic, including works such as~\cite{everett1946projective,henkin1971cylindric,QUINE1971309}. Quoting Quine \cite{QUINE1971309}:
\begin{center}``Logic in his adolescent phase was algebraic. There was Boole's algebra of classes and Peirce's algebra of relations. But in 1879 logic come of age, with Frege's quantification theory. Here the bound variables, so characteristic of analysis rather than of algebra, became central to logic.''\end{center} 
Such a perspective, which regarded algebraic aspects and those concerning quantifiers as separate entities, changed with the work of Lawvere.

Thanks to the recent development of a new branch of mathematics, namely category theory,  Lawvere introduced  in~\cite{AF,DACCC,EHCSAF} \emph{hyperdoctrines} which enabled the study of
logic from a pure algebraic perspective. The crucial insights of Lawvere was to show that quantifiers, as well as many logical constructs, can be algebraically captured through the crucial notion of adjointness. Hyperdoctrines, along with many categorical structures related to logics, such as regular, Heyting, and boolean categories~\cite{johnstone2014topos,SAE}, align with Frege's functional perspective: arrows represent functions (terms), and relations are derived through specific constructions.

\smallskip

In the last decade, the paradigm shift towards treating data as a physical resource %
has motivated many computer scientists to move from traditional term-based (cartesian) syntax toward a string diagrammatic (monoidal) syntax~\cite{joyal1991geometry,Selinger2009} (see e.g.,~\cite{stein2023probabilistic,DBLP:journals/jacm/BonchiGKSZ22,DBLP:journals/pacmpl/BonchiHPSZ19,Bonchi2015,CoeckeDuncanZX2011,Fong2015,DBLP:journals/corr/abs-2009-06836,Ghica2016,DBLP:conf/lics/MuroyaCG18,Piedeleu2021}). This shift in syntax enables an extension of Peirce's calculus of relations that is as expressive as first-order logic, accompanied by an axiomatization that is purely equational and complete. The axioms are those of \emph{first-order bicategories} \cite{bonchi2024diagrammatic}: see Figures \ref{fig:cb axioms}, \ref{fig:closed lin axioms} and \ref{fig:fo bicat axioms}. In essence, a first-order bicategory, or fo-bicategory, encompasses a cartesian and a cocartesian bicategory \cite{carboni1987cartesian}, interacting as a linear bicategory \cite{cockett2000introduction}, while additionally satisfying linear versions of Frobenius equations and adjointness conditions.

\medskip
In this paper, we reconcile Lawvere's understanding of logic with Peirce's calculus of relations by illustrating a formal correspondence between  boolean hyperdoctrines and first-order bicategories.

\medskip

To reach such a correspondence, we found convenient to introduce the novel notion of \emph{peircean bicategories}: these are cartesian bicategories with each homset carrying a boolean algebra where the negation behaves appropriately with \emph{maps} -- special arrows that intuitively generalize functions. Our first result (\cref{thm_equiv_FOBic_PeirceBic}) establishes that peircean bicategories are equivalent to first-order bicategories.

While the definition of peircean bicategories is not purely equational, as in the case of fo-bicategories, it is notably more concise. Moreover, it allows us  to reuse from \cite{bonchi2021doctrines} an adjunction between cartesian bicategories and \emph{elementary and existential doctrines} \cite{QCFF,EQC,UEC}, which are a generalisation of hyperdoctrines, corresponding to the regular (i.e., $\exists,=,\top, \wedge$) fragment of first-order logic. %

Our main result (\cref{thm:main}) reveals an adjunction between the category of first-order bicategories and the category of boolean hyperdoctrines. One can perceive this as a logical analogue of Fox's theorem \cite{fox1976coalgebras}, which establishes an equivalence between categories with finite products and monoidal categories equipped with natural comonoids. The latter notion, like the one of fo-bicategories, is purely equational.

It is essential to note that our theorem establishes an adjunction rather than an equivalence. The discrepancy  can be intuitively explained by observing that, akin to first-order logic, terms and formulas are distinct entities in hyperdoctrines. This differentiation does not exist in the calculus of relations or first-order bicategories. For instance, given two terms $t_1$ and $t_2$, the hyperdoctrine where the formula $t_1=t_2$ is true differs from the hyperdoctrine where $t_1$ and $t_2$ are equated as terms, a distinction not present in fo-bicategories. These issues, related to the extensionality of equality, are thoroughly analyzed in the context of doctrines in \cite{EQC} and, more generally, in that of fibrations in \cite{CLTT}.

Leveraging another result from~\cite{bonchi2021doctrines}, we demonstrate (\cref{thm:theequivalence}) that the adjunction in~\cref{thm:main} becomes an equivalence when restricted to well-behaved hyperdoctrines (i.e., those whose equality is extentional and satisfying the rule of unique choice \cite{TECH}). Finally, combining this finding with a result in~\cite{TECH}, we illustrate (Corollary \ref{cor:final}) that \emph{functionally complete}~\cite{carboni1987cartesian} first-order bicategories are equivalent to boolean categories~\cite{SAE}.

\emph{Synopsis:}  In \S~\ref{sec:background}, we provide a review of (co)cartesian bicategories, linear bicategories, and fo-bicategories. \S~\ref{sec:hyp} covers a recap of elementary and existential doctrines and boolean hyperdoctrines. The adjunction between cartesian bicategories and doctrines, as detailed in~\cite{bonchi2021doctrines}, is presented in \S \ref{sec:adjunction}.
Our original contributions commence in \S~\ref{sec:pb}, where we introduce peircean bicategories and establish their equivalence with fo-bicategories. This result is further used in \S~\ref{sec:restrictingadjunction} to demonstrate the adjunction and in \S~\ref{sec:equivalence} to establish the equivalence. \S~\ref{sec:tabulation} elucidates the correspondence with boolean categories. All proofs are in appendix.

\emph{Terminology and Notation:} %
All bicategories considered in this paper are just poset-enriched symmetric monoidal categories. For a bicategory $\Cat{C}$, we will write $\opposite{\Cat{C}}$ for the bicategory having the same objects as $\Cat{C}$ but homsets $\opposite{\Cat{C}}[X,Y] \defeq \Cat{C}[Y,X]$. Similarly, we will write  $\co{\Cat{C}}$ to denote the bicategory having the same objects and arrows of $\Cat{C}$ but equipped with the reversed ordering $\geq$. The cartesian bicategories in this paper are called in~\cite{carboni1987cartesian} cartesian bicategories of relations. We refer the reader to~\cite[Rem. 2]{bonchi2024diagrammatic} for a comparison with the presentation of linear bicategories in~\cite{cockett2000introduction}.

\section{From (Co)Cartesian to  First-Order Bicategories}\label{sec:background}

In this section we recall the notion of \emph{first-order bicategory} from \cite{bonchi2024diagrammatic}. To provide a preliminary intuition, it is convenient to consider $\Rel$, the first-order bicategory of sets and relations. 

It is well known that sets and relations form a symmetric monoidal category, hereafter denoted as $\Relp$, with composition, identities, monoidal product and symmetries defined as
\begin{equation}\label{eq:whiteRel}
\begin{array}{c}
\begin{array}{cc}
 \seq[+][a][b] \;\defeq \; \{(x,z) \mid \exists y\!\in\! Y \,.\, (x,y)\in a  \wedge (y,z)\in b\}\subseteq X\times Z  & \id[+][X] \;\defeq \; \{(x,y) \!\mid\! x=y\}\!\subseteq\! X \times X 
\end{array}
\\
\begin{array}{rcl}
 a \tensor[+]c & \defeq & \{ ( \,(x,z), (y,v) \,) \mid (x,y)\in a \wedge (z,v) \in c \} \subseteq (X \times Z) \times (Y \times V) \\
 \symm[+][X][Y] &\defeq & \{(\;(x,y), (y',x')\;) \mid x=x' \wedge \,y=y' \}\subseteq (X\times Y) \times (Y\times X)
\end{array}
\end{array}
\end{equation}
for all sets $X,Y,Z,V$ and relations $a\subseteq X\times Y$, $b\subseteq Y\times Z$ and $c\subseteq Z\times V$. As originally observed by Peirce in~\cite{peirce1883_studies-in-logic.-by-members-of-the-johns-hopkins-university}, beyond $\seq[+]$ there exists another form of relational composition that enjoys noteworthy algebraic properties. This different composition gives rise to another symmetric monoidal category of sets and relations, hereafter denoted by $\Relm$ and defined as follows.
\begin{equation}\label{eq:blackRel}
\begin{array}{c}
\begin{array}{cc}
 \seq[-][a][b] \;\defeq \; \{(x,z) \mid \forall y\!\in\! Y \,.\, (x,y)\in a  \vee (y,z)\in b\}\subseteq X\times Z  & \id[-][X] \;\defeq \; \{(x,y) \!\mid\! x\neq y\}\!\subseteq\! X \times X 
\end{array}
\\
\begin{array}{rcl}
 a \tensor[-]c & \defeq & \{ ( \,(x,z), (y,v) \,) \mid (x,y)\in a \vee (z,v) \in c \} \subseteq (X \times Z) \times (Y \times V) \\
 \symm[-][X][Y] &\defeq & \{(\;(x,y), (y',x')\;) \mid x\neq x' \vee \,y\neq y' \}\subseteq (X\times Y) \times (Y\times X)
\end{array}
\end{array}
\end{equation}
Note that $\tensor[+]$ and $\tensor[-]$ are both defined on objects as the cartesian product of sets and have as unit the singleton set $\unittensor \defeq \{ \star \}$.
Both $\Relp$ and $\Relm$ are poset-enriched symmetric monoidal categories when taking as ordering the inclusion $\subseteq$ and the complement $\nega{} \colon \co{(\Relp)} \to \Relm$ is an isomorphism. As we will explain in \S~\ref{sec:cartesianbi}, the relations defined for all sets $X$ as %
\begin{equation}\label{eq:comonoidsREL}
	\hspace*{-1em}
		\begin{tabular}{rcl rcl}
			$\copier[+][X]$   & $\!\!\!\!\!\defeq\!\!\!\!\!$ & $\{(x, \; (y,z)) \mid x=y \wedge x=z\} \subseteq X \times (X \times X)$  	    & $\copier[-][X] $   & $\!\!\!\!\!\defeq\!\!\!\!\!$ & $\{(x, \; (y,z)) \mid x\neq y \vee x \neq z\} \subseteq X \times (X \times X)$     \\
			$\cocopier[+][X]$ & $\!\!\!\!\!\defeq\!\!\!\!\!$ & $\{((y,z),\; x) \mid x=y \wedge x=z\} \subseteq  (X \times X) \times X$         &  $\cocopier[-][X]$ & $\!\!\!\!\!\defeq\!\!\!\!\!$ & $\{((y,z), \; x  ) \mid x\neq y \vee x \neq z\} \subseteq  (X \times X) \times X$\\
			$\discard[+][X]$   & $\!\!\!\!\!\defeq\!\!\!\!\!$ & $\{(x, \star) \mid x\in X\} \subseteq X \times I$&  $\discard[-][X]$   & $\!\!\!\!\!\defeq\!\!\!\!\!$ & $\varnothing \subseteq X \times I$ \\
			$\codiscard[+][X]$ & $\!\!\!\!\!\defeq\!\!\!\!\!$ &  $\{(\star,x) \mid x\in X\}\subseteq \unittensor \times X$  &  $\codiscard[-][X]$ & $\!\!\!\!\!\defeq\!\!\!\!\!$ & $\varnothing \subseteq \unittensor \times X$
		\end{tabular}
\end{equation}
make $\Relp$ a cartesian bicategory, while $\Relm$ a cocartesian one. 

Intuitively, a first-order bicategory $\Cat{C}$ consists of a cartesian bicategory $\Cat{C}^\circ$, called the ``white structure'', and a cocartesian bicategory $\Cat{C}^\bullet$, called the ``black structure'', that interact by obeying the same laws of $\Relp$ and $\Relm$. The name ``first-order'' is due to the fact that such laws provide a complete system of axioms for first-order logic.

\smallskip

The axioms can be conveniently given by means of a graphical representation inspired by string diagrams \cite{joyal1991geometry,Selinger2009}: composition is depicted as horizontal composition while the monoidal product by vertically ``stacking'' diagrams. However, since there are two compositions  $\seq[+]$ and $\seq[-]$ and two monoidal products $\tensor[+]$ and $\tensor[-]$,  to distinguish them we use different colors. %
All white constants have white background, mutatis mutandis for the black ones: for instance $\copier[+][X]$ and $\cocopier[-][X]$ are drawn
$\copierCirc[+][X]$ and $\cocopierCirc[-][X]$, while for some arrows $a,b,c,d$ of the appropriate type, $(a\tensor[+]c) \seq[-] (b \tensor[-]d)$ is drawn as on the right of \eqref{ax:linStrn1} in \cref{fig:closed lin axioms}. %

\subsection{(Co)Cartesian Bicategories}\label{sec:cartesianbi}
We commence with the notion of cartesian bicategories by Carboni and Walters \cite{carboni1987cartesian}.

\begin{definition}\label{def:cartesian bicategory}
    A \emph{cartesian bicategory} $(\Cat{C}, \tensor[+], \unittensor, \copier[+], \discard[+], \cocopier[+], \codiscard[+])$, shorthand $(\Cat{C}, \copier[+], \cocopier[+])$,
    is a poset-enriched symmetric monoidal category $(\Cat{C}, \tensor[+], \unittensor)$ and, for every object $X$ in $\Cat{C}$,
    arrows $\copier[+][X]\colon X \to X\tensor[+]X$, $\discard[+][X]\colon X \to \unittensor$, $\cocopier[+][X] \colon X \tensor[+]X \to X$,  $\codiscard[+][X]\colon \unittensor \to X$ such that
     \begin{enumerate}
     \item $(\copier[+][X], \discard[+][X])$ is a comonoid and $(\cocopier[+][X], \codiscard[+][X])$ a monoid, i.e., the equalities \eqref{ax:comPlusAssoc}, \eqref{ax:comPlusUnit},  \eqref{ax:comPlusComm} and \eqref{ax:monPlusAssoc}, \eqref{ax:monPlusUnit}, \eqref{ax:monPlusComm} in Figure~\ref{fig:cb axioms} hold;
     \item every arrow $c \colon X \to Y$ is a lax comonoid homomorphism, i.e., \eqref{ax:comPlusLaxNat} and \eqref{ax:discPlusLaxNat} hold;
    
    \item comonoids are left adjoints to the monoids, i.e., %
    \eqref{ax:plusCopyCocopy}, \eqref{ax:plusCocopyCopy}, \eqref{ax:plusDiscCodisc} and \eqref{ax:plusCodiscDisc} hold;
    \item monoids and comonoids form special Frobenius bimonoids, i.e., \eqref{ax:plusFrob} and \eqref{ax:plusSpecFrob} hold;
    \item monoids and comonoids satisfy the expected coherence conditions (see e.g. \cite{bonchi2021doctrines}).
    \end{enumerate}
    $\Cat{C}$ is a \emph{cocartesian bicategory} if  $\co{\Cat{C}}$ is a cartesian bicategory. %
    A \emph{morphism of (co)cartesian bicategories} is a poset-enriched strong symmetric monoidal functor preserving monoids and comonoids.  We denote by $\CartBic$ the category of cartesian bicategories and their morphisms.
\end{definition}
As already mentioned, $\Relp$ with $\copier[+][X]$, $\discard[+][X]$, $\cocopier[+][X]$ and $\codiscard[+][X]$ defined in \eqref{eq:comonoidsREL} form a cartesian bicategory: the reader can easily check, using the definitions in \eqref{eq:whiteRel} and \eqref{eq:comonoidsREL}, that all the laws in \cref{fig:cb axioms} are satisfied. Similarly, one can observe that the opposite inequality of \eqref{ax:comPlusLaxNat} holds iff the relation $c\subseteq X\times Y$ is single-valued (i.e., deterministic), while the opposite of \eqref{ax:discPlusLaxNat} iff $c$ is total. In other words, $c$ is a function iff both \eqref{ax:comPlusLaxNat} and \eqref{ax:discPlusLaxNat} holds as equalities.

\begin{figure}[t]
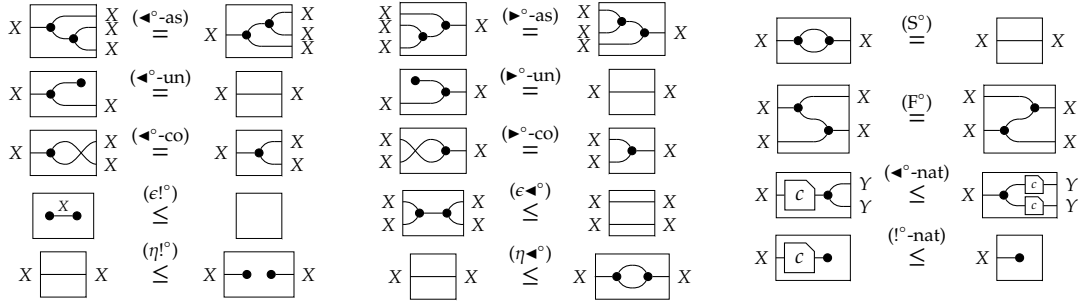

    \mylabel{ax:comPlusAssoc}{\ensuremath{\copier[+]}\text{-as}}
    \mylabel{ax:comPlusUnit}{\ensuremath{\copier[+]}\text{-un}}
    \mylabel{ax:comPlusComm}{\ensuremath{\copier[+]}\text{-co}}
    \mylabel{ax:monPlusAssoc}{\ensuremath{\cocopier[+]}\text{-as}}
    \mylabel{ax:monPlusUnit}{\ensuremath{\cocopier[+]}\text{-un}}
    \mylabel{ax:monPlusComm}{\ensuremath{\cocopier[+]}\text{-co}}
    \mylabel{ax:plusSpecFrob}{\text{S}\ensuremath{^\circ}}
    \mylabel{ax:plusFrob}{\text{F}\ensuremath{^\circ}}
    \mylabel{ax:comPlusLaxNat}{\ensuremath{\copier[+]}\text{-nat}}
    \mylabel{ax:discPlusLaxNat}{\ensuremath{\discard[+]}\text{-nat}}
    \mylabel{ax:plusCodiscDisc}{\ensuremath{\epsilon\discard[+]}}
    \mylabel{ax:plusDiscCodisc}{\ensuremath{\eta\discard[+]}}
    \mylabel{ax:plusCocopyCopy}{\ensuremath{\epsilon\copier[+]}}
    \mylabel{ax:plusCopyCocopy}{\ensuremath{\eta\copier[+]}}
    \[
            \begin{array}{@{}c c c c c@{}}
            \begin{array}{@{}c@{}c@{}c@{}}
                    
    \InputIfFileExists{axiomsNEW/cb/plus/comAssoc1.tikz}{}{\input{tikz/axiomsNEW/cb/plus/comAssoc1.tikz}}
  & \Leq{\ref*{ax:comPlusAssoc}}    & 
    \InputIfFileExists{axiomsNEW/cb/plus/comAssoc2.tikz}{}{\input{tikz/axiomsNEW/cb/plus/comAssoc2.tikz}}
 \\
                    
    \InputIfFileExists{axiomsNEW/cb/plus/comUnit.tikz}{}{\input{tikz/axiomsNEW/cb/plus/comUnit.tikz}}
    & \Leq{\ref*{ax:comPlusUnit}}     & \idCirc[+][X] \\
                    
    \InputIfFileExists{axiomsNEW/cb/plus/comComm.tikz}{}{\input{tikz/axiomsNEW/cb/plus/comComm.tikz}}
    & \Leq{\ref*{ax:comPlusComm}}     & \copierCirc[+][X] \\
                    
    \InputIfFileExists{axiomsNEW/cb/plus/codiscDisc.tikz}{}{\input{tikz/axiomsNEW/cb/plus/codiscDisc.tikz}}
 & \Lleq{\ref*{ax:plusCodiscDisc}} & \emptyCirc[+] \\
                    \idCirc[+][X]                          & \Lleq{\ref*{ax:plusDiscCodisc}} & 
    \InputIfFileExists{axiomsNEW/top.tikz}{}{\input{tikz/axiomsNEW/top.tikz}}
 \\
            \end{array} & \!\!\!\! &
            \begin{array}{@{}c@{}c@{}c@{}}
                    
    \InputIfFileExists{axiomsNEW/cb/plus/monAssoc1.tikz}{}{\input{tikz/axiomsNEW/cb/plus/monAssoc1.tikz}}
      & \Leq{\ref*{ax:monPlusAssoc}}    & 
    \InputIfFileExists{axiomsNEW/cb/plus/monAssoc2.tikz}{}{\input{tikz/axiomsNEW/cb/plus/monAssoc2.tikz}}
 \\
                    
    \InputIfFileExists{axiomsNEW/cb/plus/monUnit.tikz}{}{\input{tikz/axiomsNEW/cb/plus/monUnit.tikz}}
        & \Leq{\ref*{ax:monPlusUnit}}     & \idCirc[+][X] \\
                    
    \InputIfFileExists{axiomsNEW/cb/plus/monComm.tikz}{}{\input{tikz/axiomsNEW/cb/plus/monComm.tikz}}
        & \Leq{\ref*{ax:monPlusComm}}     & \cocopierCirc[+][X] \\
                    
    \InputIfFileExists{axiomsNEW/cb/plus/cocopierCopier.tikz}{}{\input{tikz/axiomsNEW/cb/plus/cocopierCopier.tikz}}
 & \Lleq{\ref*{ax:plusCocopyCopy}} & 
    \InputIfFileExists{axiomsNEW/id2P.tikz}{}{\input{tikz/axiomsNEW/id2P.tikz}}
 \\
                    \idCirc[+][X]                              & \Lleq{\ref*{ax:plusCopyCocopy}} & 
    \InputIfFileExists{axiomsNEW/cb/plus/specFrob.tikz}{}{\input{tikz/axiomsNEW/cb/plus/specFrob.tikz}}

            \end{array} & \!\!\!\! &
            \begin{array}{@{}c@{}c@{}c@{}}
                    
    \InputIfFileExists{axiomsNEW/cb/plus/specFrob.tikz}{}{\input{tikz/axiomsNEW/cb/plus/specFrob.tikz}}
      & \Leq{\ref*{ax:plusSpecFrob}}    & \idCirc[+][X] \\[8pt]
                    
    \InputIfFileExists{axiomsNEW/cb/plus/frob1.tikz}{}{\input{tikz/axiomsNEW/cb/plus/frob1.tikz}}
         & \Leq{\ref*{ax:plusFrob}}        & 
    \InputIfFileExists{axiomsNEW/cb/plus/frob2.tikz}{}{\input{tikz/axiomsNEW/cb/plus/frob2.tikz}}
 \\[10pt]
                    
    \InputIfFileExists{axiomsNEW/cb/plus/copierLaxNat1.tikz}{}{\input{tikz/axiomsNEW/cb/plus/copierLaxNat1.tikz}}
 & \Lleq{\ref*{ax:comPlusLaxNat}}  & 
    \InputIfFileExists{axiomsNEW/cb/plus/copierLaxNat2.tikz}{}{\input{tikz/axiomsNEW/cb/plus/copierLaxNat2.tikz}}
 \\
                    
    \InputIfFileExists{axiomsNEW/cb/plus/discardLaxNat.tikz}{}{\input{tikz/axiomsNEW/cb/plus/discardLaxNat.tikz}}
 & \Lleq{\ref*{ax:discPlusLaxNat}} & \discardCirc[+][X][X]
            \end{array}
            \end{array}
    \]
    \caption{Axioms of Cartesian bicategories}\label{fig:cb axioms}
\end{figure}

\begin{definition}\label{def:maps} Let $c\colon X \to Y$ be an arrow of a cartesian bicategory $\Cat{C}$. It is a \emph{map} if 
    \begin{equation}\label{eq:def:map} 
    \InputIfFileExists{axiomsNEW/cb/plus/copierLaxNat1.tikz}{}{\input{tikz/axiomsNEW/cb/plus/copierLaxNat1.tikz}}
 \geq 
    \InputIfFileExists{axiomsNEW/cb/plus/copierLaxNat2.tikz}{}{\input{tikz/axiomsNEW/cb/plus/copierLaxNat2.tikz}}
 \qquad \text{ and }\qquad 
    \InputIfFileExists{axiomsNEW/cb/plus/discardLaxNat.tikz}{}{\input{tikz/axiomsNEW/cb/plus/discardLaxNat.tikz}}
 \!\!\!\geq \discardCirc[+][X] \text{.} \end{equation}
\end{definition}
It is easy to see that maps form a monoidal subcategory of $\Cat{C}$~\cite{carboni1987cartesian}, hereafter denoted by $\map{(\Cat{C})}$. Since, by \eqref{ax:comPlusLaxNat}, \eqref{ax:discPlusLaxNat} and \eqref{eq:def:map}, comonoids are natural w.r.t. maps, Fox theorem \cite{fox1976coalgebras} guarantees that $\map{(\Cat{C})}$ is a category with finite products.

In a cartesian bicategory $\Cat{C}$, each homset $\Cat{C}[X,Y]$ carries the structure of inf-semilattice, defined for all $c,d\colon X \to Y$ as in \eqref{eq:def:cap} below. %
Furthermore, the equation \eqref{eqdagger} defines an identity-on-objects isomorphism of cartesian bicategories $\op{(\cdot)}\colon \Cat{C} \to \opposite{\Cat{C}}$.

\noindent\begin{minipage}{0.6\textwidth}
 \begin{equation}\label{eq:def:cap}c \wedge d \defeq \intersectionCirc{c}{d}[X][Y] \qquad  \top \defeq \topCirc[X][Y]
    \end{equation}
\end{minipage}
\begin{minipage}{0.4\textwidth}
\begin{equation}\label{eqdagger}
\op{c} \defeq \daggerCirc[+]{c}[Y][X]
\end{equation}
\end{minipage}
The reader can check, using \eqref{eq:whiteRel} and \eqref{eq:comonoidsREL} that in $\Relp$, $\op{c}\colon Y \to X$ is the opposite of the relation $c$, namely $\{(y,x)\mid (x,y)\in c\}$.
It is well known that a relation $c$ is a function iff it is left adjoint to $\op{c}$. More generally in a cartesian bicategory $c$ is a map iff it is left adjoint to $\op{c}$. Summarising:
\begin{proposition}\label{lemma:meet semilattice}\label{prop:map adj}
Let $\Cat{C}$ be a cartesian bicategory and $c\colon X\to Y$ an arrow of $\Cat{C}$. The following hold:
\begin{enumerate}
\item every homset carries the inf-semilattice structure, defined as in \eqref{eq:def:cap};
\item there is an isomorphism of cartesian bicategories $\op{(\cdot)}\colon \Cat{C} \to \opposite{\Cat{C}}$, defined as in \eqref{eqdagger};
\item $c$ is a map iff $c$ is left adjoint to $\op{c}$;
\item $\map{(\Cat{C})}$ is a category with finite products; moreover, a morphism of cartesian bicategories $F\colon \Cat{C} \to \Cat{D}$ restricts to a functor $\tilde{F}\colon \map{(\Cat{C})} \to \map{(\Cat{D})}$ preserving finite products.
\end{enumerate}
\end{proposition}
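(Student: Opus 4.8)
The plan is to treat the four items in order; each reduces to a string-diagrammatic manipulation of the axioms in \cref{fig:cb axioms}, and since the statements are essentially those of Carboni and Walters \cite{carboni1987cartesian}, the only content is verifying the graphical identities.

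For \emph{(1)}, I would first unfold $c\wedge d$ from \eqref{eq:def:cap}: it copies its input with $\copier[+]$, runs $c$ and $d$ in parallel via $\tensor[+]$, then merges with $\cocopier[+]$. To see it is a lower bound I would collapse one branch, postcomposing the parallel pair with $\discard[+]$ on the appropriate wire and invoking the lax naturality \eqref{ax:discPlusLaxNat} together with the (co)unit laws \eqref{ax:comPlusUnit}, \eqref{ax:monPlusUnit}, obtaining $c\wedge d \leq c$ and, symmetrically, $c\wedge d\leq d$. For the universal property, given $e\leq c$ and $e\leq d$, I would rewrite $e$ by special Frobenius \eqref{ax:plusSpecFrob} as $e \seq[+]\bigl(\copier[+]\seq[+]\cocopier[+]\bigr)$, push $\copier[+]$ across $e$ by lax naturality \eqref{ax:comPlusLaxNat} to get $e\seq[+]\copier[+]\leq\copier[+]\seq[+](e\tensor[+]e)$, then bound $e\tensor[+]e\leq c\tensor[+]d$ by monotonicity and postcompose with $\cocopier[+]$, concluding $e\leq c\wedge d$. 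That $\top$ of \eqref{eq:def:cap} is the greatest element follows from $\id[+][Y] \leq \discard[+][Y]\seq[+]\codiscard[+][Y]$ (axiom \eqref{ax:plusDiscCodisc}) and $\discard[+]$-lax naturality \eqref{ax:discPlusLaxNat}.

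For \emph{(2)} and \emph{(3)} I would exploit that the Frobenius law \eqref{ax:plusFrob} and special Frobenius \eqref{ax:plusSpecFrob}, together with the (co)unit laws, make $(\copier[+][X],\codiscard[+][X],\cocopier[+][X],\discard[+][X])$ a special commutative Frobenius algebra; hence each $X$ is self-dual with cup $\codiscard[+][X]\seq[+]\copier[+][X]$ and cap $\cocopier[+][X]\seq[+]\discard[+][X]$, and I would first derive the two snake (yanking) identities for these. Then $\op{(\cdot)}$ of \eqref{eqdagger} -- obtained by bending both wires of $c$ around the cap and cup -- is a well-defined identity-on-objects, contravariant, poset-enriched functor; it is an involution by the snakes, it is strong monoidal, and it exchanges $\copier[+]\leftrightarrow\cocopier[+]$ and $\discard[+]\leftrightarrow\codiscard[+]$ exactly by the Frobenius equation, so it is an isomorphism $\Cat{C}\to\opposite{\Cat{C}}$ of cartesian bicategories, which is \emph{(2)}. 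For \emph{(3)}, if $c$ is a map then bending the wires in the two inequalities of \eqref{eq:def:map} along the cup/cap turns ``$\copier[+]$ lax-natural w.r.t.\ $c$'' into the unit $\id[+][X]\leq c\seq[+]\op{c}$ and ``$\discard[+]$ lax-natural w.r.t.\ $c$'' into the counit $\op{c}\seq[+]c\leq\id[+][Y]$, so $c\dashv\op{c}$; conversely, from an adjunction $c\dashv\op{c}$ the standard yanking argument -- using that $\op{(\cdot)}$ is involutive, so $\copier[+]$ and $\discard[+]$ are the $\op{(\cdot)}$-images of $\cocopier[+]$ and $\codiscard[+]$ -- recovers the two inequalities of \eqref{eq:def:map}.

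For \emph{(4)}, I would note, as already observed before the statement, that maps form a monoidal subcategory of $\Cat{C}$ on which the comonoid structure is natural, so Fox's theorem \cite{fox1976coalgebras} yields that $\map{(\Cat{C})}$ has finite products, with terminal object $\unittensor$, binary product $\tensor[+]$, and projections named by the $\discard[+]$'s. Finally, a morphism $F$ of cartesian bicategories is poset-enriched strong symmetric monoidal and preserves monoids and comonoids, hence preserves both inequalities of \eqref{eq:def:map}; therefore it sends maps to maps, restricts to $\tilde F\colon\map{(\Cat{C})}\to\map{(\Cat{D})}$, and this restriction preserves finite products because it preserves $\unittensor$, $\tensor[+]$ and the $\discard[+]$'s naming the projections. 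The main obstacle is the diagrammatic work concentrated in \emph{(2)}--\emph{(3)}: proving the snake identities and translating the two map conditions into, and back out of, the unit and counit of the adjunction $c\dashv\op{c}$; but these are routine rewrites with the Frobenius and adjointness axioms, and are precisely the classical cartesian-bicategory facts of \cite{carboni1987cartesian}.
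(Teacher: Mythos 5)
Your proposal is essentially the classical Carboni--Walters argument, which is precisely what the paper relies on: the paper gives no proof of this proposition of its own, presenting it as a summary of known facts and deferring items (1)--(3) to \cite{carboni1987cartesian} and item (4) to Fox's theorem \cite{fox1976coalgebras} together with the observation, made just before \cref{def:maps}, that comonoids are natural with respect to maps. Your reconstructions of (1), (2) and (4) are correct and match that intended route; for (1), note that once $c\wedge d$ is shown to be a greatest lower bound and $\top$ a top element, the semilattice equations need no separate verification.

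There is one concrete slip in item (3): you pair the two map conditions with the wrong halves of the adjunction. Totality, i.e.\ $\discard[+][X]\leq c\seq[+]\discard[+][Y]$, is what yields the unit $\id[+][X]\leq c\seq[+]\op{c}$, while single-valuedness, i.e.\ $\copier[+][X]\seq[+](c\tensor[+]c)\leq c\seq[+]\copier[+][Y]$, is what yields the counit $\op{c}\seq[+]c\leq\id[+][Y]$ --- not the other way around as you state. As literally written, your step would fail: in $\Relp$ the empty relation on a nonempty set satisfies the $\copier[+]$-condition (and indeed the counit) but certainly not $\id[+][X]\leq c\seq[+]\op{c}$. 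The fix is simply to swap the two attributions; the wire-bending mechanism you describe (the self-duality induced by the Frobenius structure, together with inequalities such as \cref{prop:wrong way}) then goes through, and your converse direction, deriving both inequalities of \eqref{eq:def:map} from the unit and counit by yanking, is correct as stated.
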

Hereafter, we draw $\boxOpCirc[+]{c}[X][Y]$ for $\op{(\boxCirc[+]{c}[X][Y])}$ and $\funcCirc[+]{c}[X][Y]$ for a map $c \colon X \to Y$.

We mentioned that $\Relm$ with $\copier[-][X]$, $\discard[-][X]$, $\cocopier[-][X]$ and $\codiscard[-][X]$ defined in \eqref{eq:comonoidsREL} forms a cocartesian bicategory. To prove this, it is enough to observe that the complement $\nega{}$ is a poset-enriched symmetric monoidal isomorphism $\nega{} \colon \co{(\Relp)} \to \Relm$ preserving (co)monoids.

\subsection{Linear Bicategories}\label{sec:linbic}
\begin{figure}[t]
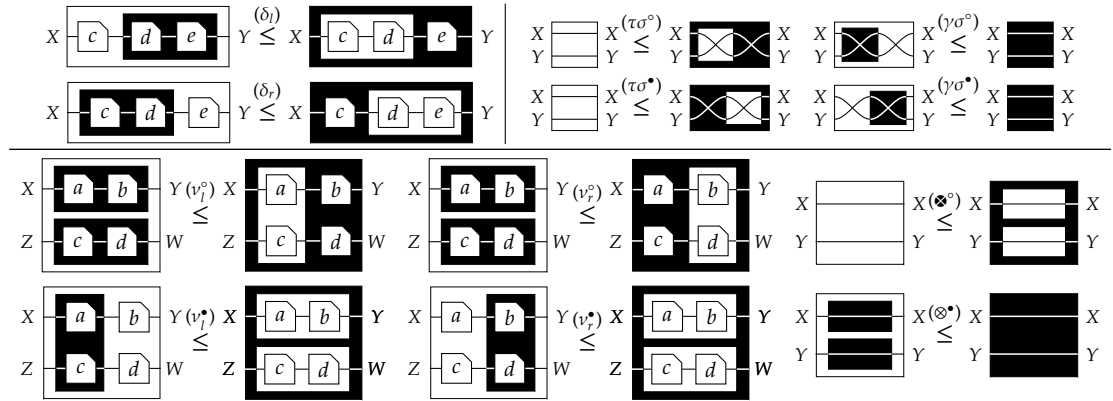

    \mylabel{ax:leftLinDistr}{\ensuremath{\delta_l}}
    \mylabel{ax:rightLinDistr}{\ensuremath{\delta_r}}
    \mylabel{ax:linStrn1}{\ensuremath{\nu^\circ_l}}
    \mylabel{ax:linStrn2}{\ensuremath{\nu^\circ_r}}
    \mylabel{ax:linStrn3}{\ensuremath{\nu^\bullet_l}}
    \mylabel{ax:linStrn4}{\ensuremath{\nu^\bullet_r}}
    \mylabel{ax:tensorPlusIdMinus}{\ensuremath{\tensor[+]^\bullet}}
    \mylabel{ax:tensorMinusIdPlus}{\ensuremath{\tensor[-]^\circ}}
    \mylabel{ax:tauSymmPlus}{\ensuremath{\tau\symm[+]}}
    \mylabel{ax:tauRPlus}{\ensuremath{\tau R^\circ}}
    \mylabel{ax:gammaSymmPlus}{\ensuremath{\gamma\symm[+]}}
    \mylabel{ax:gammaRPlus}{\ensuremath{\gamma R^\circ}}
    \mylabel{ax:tauSymmMinus}{\ensuremath{\tau\symm[-]}}
    \mylabel{ax:tauRMinus}{\ensuremath{\tau R^\bullet}}
    \mylabel{ax:gammaSymmMinus}{\ensuremath{\gamma\symm[-]}}
    \mylabel{ax:gammaRMinus}{\ensuremath{\gamma R^\bullet}}
    \[
        \begin{array}{@{}c@{}c@{}c@{}}
            \multicolumn{3}{c}{
                \begin{array}{@{}c@{}|c@{}}
                    \begin{array}{@{}c@{}c@{}c@{}}
                        
    \InputIfFileExists{axiomsNEW/leftLinDistr1.tikz}{}{\input{tikz/axiomsNEW/leftLinDistr1.tikz}}
  & \!\!\Lleq{\ref*{ax:leftLinDistr}}\!\!  & 
    \InputIfFileExists{axiomsNEW/leftLinDistr2.tikz}{}{\input{tikz/axiomsNEW/leftLinDistr2.tikz}}
 \\[10pt]
                        
    \InputIfFileExists{axiomsNEW/rightLinDistr1.tikz}{}{\input{tikz/axiomsNEW/rightLinDistr1.tikz}}
 & \!\!\Lleq{\ref*{ax:rightLinDistr}}\!\! & 
    \InputIfFileExists{axiomsNEW/rightLinDistr2.tikz}{}{\input{tikz/axiomsNEW/rightLinDistr2.tikz}}

                    \end{array}
                    &
                    \begin{array}{@{}c@{}c@{}c@{}c@{}c@{}c@{}}
                        
    \InputIfFileExists{axiomsNEW/idXYP.tikz}{}{\input{tikz/axiomsNEW/idXYP.tikz}}
 & \!\!\Lleq{\ref*{ax:tauSymmPlus}}\!\!   & 
    \InputIfFileExists{axiomsNEW/linadj/symMsym.tikz}{}{\input{tikz/axiomsNEW/linadj/symMsym.tikz}}
 & 
    \InputIfFileExists{axiomsNEW/linadj2/symPsym.tikz}{}{\input{tikz/axiomsNEW/linadj2/symPsym.tikz}}
     & \!\!\Lleq{\ref*{ax:gammaSymmPlus}}\!\!   & 
    \InputIfFileExists{axiomsNEW/idXYM.tikz}{}{\input{tikz/axiomsNEW/idXYM.tikz}}
 \\
                        
    \InputIfFileExists{axiomsNEW/idXYP.tikz}{}{\input{tikz/axiomsNEW/idXYP.tikz}}
 & \!\!\Lleq{\ref*{ax:tauSymmMinus}}\!\!   & 
    \InputIfFileExists{axiomsNEW/linadj2/symMsym.tikz}{}{\input{tikz/axiomsNEW/linadj2/symMsym.tikz}}
 & 
    \InputIfFileExists{axiomsNEW/linadj/symPsym.tikz}{}{\input{tikz/axiomsNEW/linadj/symPsym.tikz}}
     & \!\!\Lleq{\ref*{ax:gammaSymmMinus}}\!\!   & 
    \InputIfFileExists{axiomsNEW/idXYM.tikz}{}{\input{tikz/axiomsNEW/idXYM.tikz}}
                    \end{array}
                \end{array}
            } \\
            \midrule
                \begin{array}{@{} c @{} c @{} c @{}}
                    
    \InputIfFileExists{axiomsNEW/linStr1_1.tikz}{}{\input{tikz/axiomsNEW/linStr1_1.tikz}}
 & \!\!\Lleq{\ref*{ax:linStrn1}}\!\! & 
    \InputIfFileExists{axiomsNEW/linStr1_2.tikz}{}{\input{tikz/axiomsNEW/linStr1_2.tikz}}
 \\[20pt]
                    
    \InputIfFileExists{axiomsNEW/linStr3_1.tikz}{}{\input{tikz/axiomsNEW/linStr3_1.tikz}}
 & \!\!\Lleq{\ref*{ax:linStrn3}}\!\! & 
    \InputIfFileExists{axiomsNEW/linStr3_2.tikz}{}{\input{tikz/axiomsNEW/linStr3_2.tikz}}

                \end{array}
            &
                \begin{array}{@{} c @{} c @{} c @{}}
                    
    \InputIfFileExists{axiomsNEW/linStr1_1.tikz}{}{\input{tikz/axiomsNEW/linStr1_1.tikz}}
 & \!\!\Lleq{\ref*{ax:linStrn2}}\!\! & 
    \InputIfFileExists{axiomsNEW/linStr2_2.tikz}{}{\input{tikz/axiomsNEW/linStr2_2.tikz}}
 \\[20pt]
                    
    \InputIfFileExists{axiomsNEW/linStr4_1.tikz}{}{\input{tikz/axiomsNEW/linStr4_1.tikz}}
 & \!\!\Lleq{\ref*{ax:linStrn4}}\!\! & 
    \InputIfFileExists{axiomsNEW/linStr3_2.tikz}{}{\input{tikz/axiomsNEW/linStr3_2.tikz}}

                \end{array}
            &
            \begin{array}{@{} c @{} c @{} c @{}}
                
    \InputIfFileExists{axiomsNEW/id2Pbig.tikz}{}{\input{tikz/axiomsNEW/id2Pbig.tikz}}
   & \!\!\Lleq{\ref*{ax:tensorMinusIdPlus}}\!\!  & 
    \InputIfFileExists{axiomsNEW/idPMinusidP.tikz}{}{\input{tikz/axiomsNEW/idPMinusidP.tikz}}
 \\[20pt]
                
    \InputIfFileExists{axiomsNEW/idMPlusidM.tikz}{}{\input{tikz/axiomsNEW/idMPlusidM.tikz}}
 & \!\!\Lleq{\ref*{ax:tensorPlusIdMinus}}\!\! & 
    \InputIfFileExists{axiomsNEW/id2Mbig.tikz}{}{\input{tikz/axiomsNEW/id2Mbig.tikz}}

            \end{array}
        \end{array}
    \]
    \caption{Axioms of closed symmetric monoidal linear bicategories}\label{fig:closed lin axioms}
\end{figure}

We have seen that $\Relp$ forms a cartesian bicategory, and $\Relm$ a cocartesian bicategory. %
The next step consists in merging them into one entity and study their algebraic interactions. However, the coexistence of two different compositions $\seq[+]$ and $\seq[-]$ on the same class of objects and arrows brings us out of the realm of ordinary categories. The appropriate setting is provided by \emph{linear bicategories} \cite{cockett2000introduction}
by Cockett, Koslowski and Seely.

\begin{definition}\label{def:linear bicategory}
A \emph{linear bicategory} $(\Cat{C}, \seq[+], \id[+], \seq[-], \id[-])$ consists of two poset-enriched categories $(\Cat{C}, \seq[+], \id[+])$ and $(\Cat{C}, \seq[-], \id[-])$ with the same class of objects, arrows and orderings (but possibly different identities and compositions) such that
$\seq[+]$ linearly distributes over $\seq[-]$, i.e., \eqref{ax:leftLinDistr} and \eqref{ax:rightLinDistr} in Figure~\ref{fig:closed lin axioms} hold.

A \emph{symmetric monoidal linear bicategory} $(\Cat{C}, \seq[+], \id[+], \seq[-], \id[-], \tensor[+],\symm[+], \tensor[-], \symm[-], \unittensor)$, shortly  $(\Cat{C},\tensor[+], \tensor[-], \unittensor)$, consists of
a linear bicategory $(\Cat{C}, \seq[+], \id[-], \seq[-], \id[-])$ and  two poset-enriched symmetric monoidal categories $(\Cat{C}, \tensor[+],  \unittensor)$ and $(\Cat{C}, \tensor[-], \unittensor)$ s.t.\  $\tensor[+]$ and $\tensor[-]$ agree on objects, i.e., $X \tensor[+]Y= X\tensor[-]Y$, share the same unit $\unittensor$ and
\begin{enumerate}\setcounter{enumi}{1}
\item \label{eq:linearstr1}\label{eq:linearstr2} there are linear strengths for $(\tensor[+],\tensor[-])$, i.e., the inequalities \eqref{ax:linStrn1}, \eqref{ax:linStrn2}, \eqref{ax:linStrn3} and \eqref{ax:linStrn4}  hold;
\item  $\tensor[-]$  preserves $\id[+]$ colaxly and $\tensor[+]$  preserves $\id[-]$ laxly, i.e., %
\eqref{ax:tensorPlusIdMinus} and \eqref{ax:tensorMinusIdPlus} hold.
\end{enumerate}
A \emph{morphism of symmetric monoidal linear bicategories} $F\colon (\Cat{C_1}, \tensor[+], \tensor[-],  \unittensor) \to (\Cat{C_2},\tensor[+], \tensor[-], \unittensor)$ consists of two poset-enriched symmetric monoidal  functors $F^\circ \colon(\Cat{C_1},  \tensor[+], \unittensor) \to (\Cat{C_2}, \tensor[+],  \unittensor)$ and $F^\bullet \colon(\Cat{C_1}, \ \tensor[-], \unittensor) \to (\Cat{C_2},  \tensor[-],  \unittensor)$ that agree on objects and arrows: $F^{\circ} (X) = F^{\bullet}(X)$ and $F^{\circ} (c) = F^{\bullet}(c)$.%
\end{definition}

All linear bicategories in this paper are symmetric monoidal. Hence, we usually omit the adjective \emph{symmetric monoidal} and refer to them simply as linear bicategories.
In linear bicategories one can define \emph{linear} adjoints: for $a \colon X \to Y$ and $b \colon Y \to X$, $a$ is  \emph{left linear adjoint} to $b$, or $b$ is \emph{right linear adjoint} to $a$, written $b \Vdash a$, if $\id[+][X] \leq a \seq[-] b$  and $b \seq[+] a \leq \id[-][Y]$.

\begin{definition}
A linear bicategory $(\Cat{C},\tensor[+],\tensor[-],\unittensor)$ is said to be \emph{closed} if every $a\colon X \to Y$ has both a left and a right linear adjoint and, in particular, the white symmetry $\symm[+]$ is both left and right linear adjoint to the black symmetry $\symm[-]$ ($\symm[-] \Vdash \symm[+] \Vdash \symm[-]$), i.e.\ \eqref{ax:tauSymmPlus}, \eqref{ax:gammaSymmPlus}, \eqref{ax:tauSymmMinus} and \eqref{ax:gammaSymmMinus} in Figure~\ref{fig:closed lin axioms} hold.
\end{definition}

Our main example is the closed linear bicategory $\Rel$ of sets and relations. The white structure is the symmetric monoidal category $\Relp$ and the black structure is $\Relm$. Observe that the two have the same objects, arrows and ordering. The white and black monoidal products $\tensor[+]$ and $\tensor[-]$ agree on objects (they are the cartesian product of sets) and have common unit object (the singleton set $\unittensor$). By \eqref{eq:whiteRel} and \eqref{eq:blackRel}, one can easily check all the inequalities in \cref{fig:closed lin axioms}.
Both left and right linear adjoints of a relation $c \subseteq X \times Y $ are given by $\op{\nega{c}}$.

\subsection{First-Order Bicategories}\label{sec:fobic}

\begin{figure}[t]
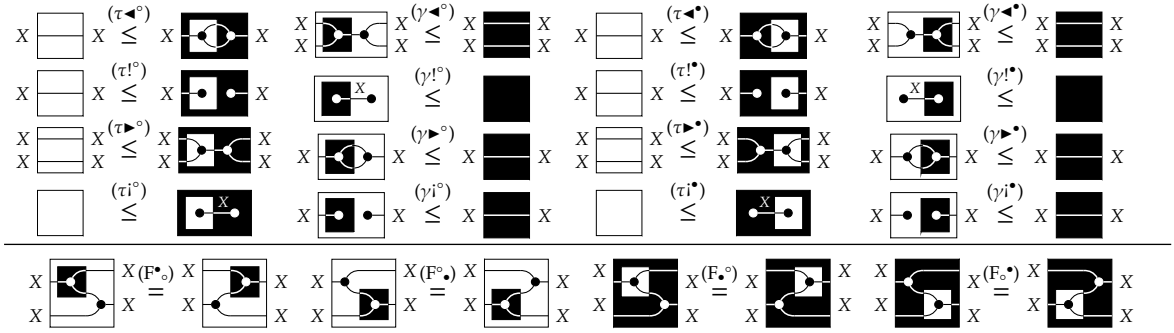

    \mylabel{ax:tauCopierPlus}{\ensuremath{\tau\copier[+]}}
    \mylabel{ax:tauDiscardPlus}{\ensuremath{\tau\discard[+]}}
    \mylabel{ax:tauCocopierPlus}{\ensuremath{\tau\cocopier[+]}}
    \mylabel{ax:tauCodiscardPlus}{\ensuremath{\tau\codiscard[+]}}

    \mylabel{ax:gammaCopierPlus}{\ensuremath{\gamma\copier[+]}}
    \mylabel{ax:gammaDiscardPlus}{\ensuremath{\gamma\discard[+]}}
    \mylabel{ax:gammaCocopierPlus}{\ensuremath{\gamma\cocopier[+]}}
    \mylabel{ax:gammaCodiscardPlus}{\ensuremath{\gamma\codiscard[+]}}

    \mylabel{ax:tauCopierMinus}{\ensuremath{\tau\copier[-]}}
    \mylabel{ax:tauDiscardMinus}{\ensuremath{\tau\discard[-]}}
    \mylabel{ax:tauCocopierMinus}{\ensuremath{\tau\cocopier[-]}}
    \mylabel{ax:tauCodiscardMinus}{\ensuremath{\tau\codiscard[-]}}

    \mylabel{ax:gammaCopierMinus}{\ensuremath{\gamma\copier[-]}}
    \mylabel{ax:gammaDiscardMinus}{\ensuremath{\gamma\discard[-]}}
    \mylabel{ax:gammaCocopierMinus}{\ensuremath{\gamma\cocopier[-]}}
    \mylabel{ax:gammaCodiscardMinus}{\ensuremath{\gamma\codiscard[-]}}

    \mylabel{ax:bwFrob}{\text{F}\ensuremath{\tiny{\begin{array}{@{}c@{}c@{}} \bullet & \\[-3pt] & \circ \end{array}}}}
    \mylabel{ax:bwFrob2}{\text{F}\ensuremath{\tiny{\begin{array}{@{}c@{}c@{}} \circ & \\[-3pt] & \bullet \end{array}}}}
    \mylabel{ax:wbFrob}{\text{F}\ensuremath{\tiny{\begin{array}{@{}c@{}c@{}}  & \circ  \\[-3pt] \bullet & \end{array}}}}
    \mylabel{ax:wbFrob2}{\text{F}\ensuremath{\tiny{\begin{array}{@{}c@{}c@{}}  & \bullet  \\[-3pt] \circ & \end{array}}}}
    \[
        \hspace*{-1em}
        \begin{array}{@{}c@{}c@{}c@{}c@{}}
            \begin{array}{@{}c@{}c@{}c@{}}
                \idCirc[+][X] & \!\!\Lleq{\ref*{ax:tauCopierPlus}}\!\! & 
    \InputIfFileExists{axiomsNEW/linadj/comMmon.tikz}{}{\input{tikz/axiomsNEW/linadj/comMmon.tikz}}
 \\
                \idCirc[+][X] & \!\!\Lleq{\ref*{ax:tauDiscardPlus}}\!\! & 
    \InputIfFileExists{axiomsNEW/linadj/bangMcobang.tikz}{}{\input{tikz/axiomsNEW/linadj/bangMcobang.tikz}}
 \\
                
    \InputIfFileExists{axiomsNEW/id2P.tikz}{}{\input{tikz/axiomsNEW/id2P.tikz}}
 & \!\!\Lleq{\ref*{ax:tauCocopierPlus}}\!\! &  
    \InputIfFileExists{axiomsNEW/linadj/monMcom.tikz}{}{\input{tikz/axiomsNEW/linadj/monMcom.tikz}}
 \\
                \emptyCirc[+]         & \!\!\Lleq{\ref*{ax:tauCodiscardPlus}}\!\! & 
    \InputIfFileExists{axiomsNEW/linadj/cobangMbang.tikz}{}{\input{tikz/axiomsNEW/linadj/cobangMbang.tikz}}

            \end{array} &
            \begin{array}{@{}c@{}c@{}c@{}}
                
    \InputIfFileExists{axiomsNEW/linadj2/monPcom.tikz}{}{\input{tikz/axiomsNEW/linadj2/monPcom.tikz}}
     & \!\!\Lleq{\ref*{ax:gammaCopierPlus}}\!\! & 
    \InputIfFileExists{axiomsNEW/id2M.tikz}{}{\input{tikz/axiomsNEW/id2M.tikz}}
 \\
                
    \InputIfFileExists{axiomsNEW/linadj2/cobangPbang.tikz}{}{\input{tikz/axiomsNEW/linadj2/cobangPbang.tikz}}
 & \!\!\Lleq{\ref*{ax:gammaDiscardPlus}}\!\! & \emptyCirc[-]  \\
                
    \InputIfFileExists{axiomsNEW/linadj/comPmon.tikz}{}{\input{tikz/axiomsNEW/linadj/comPmon.tikz}}
     & \!\!\Lleq{\ref*{ax:gammaCocopierPlus}}\!\! & \idCirc[-][X] \\
                
    \InputIfFileExists{axiomsNEW/linadj/bangPcobang.tikz}{}{\input{tikz/axiomsNEW/linadj/bangPcobang.tikz}}
 & \!\!\Lleq{\ref*{ax:gammaCodiscardPlus}}\!\! & \idCirc[-][X]
            \end{array} &
            \begin{array}{@{}c@{}c@{}c@{}}
                \idCirc[+][X] & \!\!\Lleq{\ref*{ax:tauCopierMinus}}\!\! & 
    \InputIfFileExists{axiomsNEW/linadj2/comMmon.tikz}{}{\input{tikz/axiomsNEW/linadj2/comMmon.tikz}}
 \\
                \idCirc[+][X] & \!\!\Lleq{\ref*{ax:tauDiscardMinus}}\!\! & 
    \InputIfFileExists{axiomsNEW/linadj2/bangMcobang.tikz}{}{\input{tikz/axiomsNEW/linadj2/bangMcobang.tikz}}
 \\
                
    \InputIfFileExists{axiomsNEW/id2P.tikz}{}{\input{tikz/axiomsNEW/id2P.tikz}}
 & \!\!\Lleq{\ref*{ax:tauCocopierMinus}}\!\! &  
    \InputIfFileExists{axiomsNEW/linadj2/monMcom.tikz}{}{\input{tikz/axiomsNEW/linadj2/monMcom.tikz}}
 \\
                \emptyCirc[+]         & \!\!\Lleq{\ref*{ax:tauCodiscardMinus}}\!\! & 
    \InputIfFileExists{axiomsNEW/linadj2/cobangMbang.tikz}{}{\input{tikz/axiomsNEW/linadj2/cobangMbang.tikz}}

            \end{array} &\!\!\!\!\!
            \begin{array}{@{}c@{}c@{}c@{}}
                
    \InputIfFileExists{axiomsNEW/linadj/monPcom.tikz}{}{\input{tikz/axiomsNEW/linadj/monPcom.tikz}}
     & \!\!\Lleq{\ref*{ax:gammaCopierMinus}}\!\! & 
    \InputIfFileExists{axiomsNEW/id2M.tikz}{}{\input{tikz/axiomsNEW/id2M.tikz}}
 \\
                
    \InputIfFileExists{axiomsNEW/linadj/cobangPbang.tikz}{}{\input{tikz/axiomsNEW/linadj/cobangPbang.tikz}}
 & \!\!\Lleq{\ref*{ax:gammaDiscardMinus}}\!\! & \emptyCirc[-]  \\
                
    \InputIfFileExists{axiomsNEW/linadj2/comPmon.tikz}{}{\input{tikz/axiomsNEW/linadj2/comPmon.tikz}}
     & \!\!\Lleq{\ref*{ax:gammaCocopierMinus}}\!\! & \idCirc[-][X] \\
                
    \InputIfFileExists{axiomsNEW/linadj2/bangPcobang.tikz}{}{\input{tikz/axiomsNEW/linadj2/bangPcobang.tikz}}
 & \!\!\Lleq{\ref*{ax:gammaCodiscardMinus}}\!\! & \idCirc[-][X]
            \end{array} \\
            \midrule
            \multicolumn{4}{c}{
            \begin{array}{@{}c@{}c@{}c@{}c@{}c@{}c@{}c@{}c@{}c@{}c@{}c@{}c}
                
    \InputIfFileExists{axiomsNEW/bwS2.tikz}{}{\input{tikz/axiomsNEW/bwS2.tikz}}
 & \!\!\Leq{\ref*{ax:bwFrob}}\!\! & 
    \InputIfFileExists{axiomsNEW/bwZ2.tikz}{}{\input{tikz/axiomsNEW/bwZ2.tikz}}
  & 
    \InputIfFileExists{axiomsNEW/bwS.tikz}{}{\input{tikz/axiomsNEW/bwS.tikz}}
 & \!\!\Leq{\ref*{ax:bwFrob2}}\!\! & 
    \InputIfFileExists{axiomsNEW/bwZ.tikz}{}{\input{tikz/axiomsNEW/bwZ.tikz}}
 & 
    \InputIfFileExists{axiomsNEW/wbS2.tikz}{}{\input{tikz/axiomsNEW/wbS2.tikz}}
 & \!\!\Leq{\ref*{ax:wbFrob}}\!\! & 
    \InputIfFileExists{axiomsNEW/wbZ2.tikz}{}{\input{tikz/axiomsNEW/wbZ2.tikz}}
  & 
    \InputIfFileExists{axiomsNEW/wbS.tikz}{}{\input{tikz/axiomsNEW/wbS.tikz}}
 & \!\!\Leq{\ref*{ax:wbFrob2}}\!\! & 
    \InputIfFileExists{axiomsNEW/wbZ.tikz}{}{\input{tikz/axiomsNEW/wbZ.tikz}}

            \end{array}
            }
        \end{array}
    \]
    \caption{Additional axioms for fo-bicategories}\label{fig:fo bicat axioms}

\end{figure}

After (co)cartesian and linear bicategories, we can recall first-order bicategories from \cite{bonchi2024diagrammatic}.
\begin{definition}\label{def:fobicategory}
    A \emph{first-order bicategory} $\Cat{C}$ %
    consists of a closed linear bicategory $(\Cat{C}, \tensor[+], \tensor[-],\unittensor)$, a cartesian bicategory %
 $(\Cat{C},\copier[+], \cocopier[+])$ and a cocartesian bicategory $(\Cat{C}, \copier[-], \cocopier[-])$, such that
\begin{enumerate}%
\item  the white comonoid  $(\copier[+], \discard[+])$ is left and right linear adjoint to black monoid $(\cocopier[-], \codiscard[-])$ and $(\cocopier[+], \codiscard[+])$ is left and right linear adjoint to $(\copier[-], \discard[-])$ %
i.e., the 16 inequalities in the top of \cref{fig:fo bicat axioms} hold;
\item white and black (co)monoids satisfy the linear Frobenius laws, i.e.\ \eqref{ax:bwFrob}, \eqref{ax:bwFrob2}, \eqref{ax:wbFrob}, \eqref{ax:wbFrob2} hold.
\end{enumerate}
A \emph{morphism of fo-bicategories} is a morphism of linear bicategories \emph{and} of (co)cartesian bicategories. %
We denote by $\FOBic$ the category of fo-bicategories and their morphisms.

\end{definition}
We have seen that $\Rel$ is a closed linear bicategory, $\Relp$ a cartesian bicategory and $\Relm$ a cocartesian bicategory.
Given~\eqref{eq:comonoidsREL}, it is easy to check the inequalities in \cref{fig:fo bicat axioms}. %

If $\Cat{C}$ is a fo-bicategory, then $\co{\Cat{C}}$ is a fo-bicategory when swapping white and black structures. Similarly, $\opposite{\Cat{C}}$ is a fo-bicategory when swapping monoids and comonoids.

In a fo-bicategory $\Cat{C}$, left and right linear adjoints of an arrow $c$ coincide and are denoted by $\rla{c}$. The assignment $c \mapsto \rla{c}$ gives rise to an identity-on-objects isomorphism of fo-bicategories $\rla{(\cdot)} \colon \Cat{C} \to \opposite{(\co{\Cat{C}})}$. Similarly, $\op{(\cdot)} \colon \Cat{C} \to \opposite{\Cat{C}}$ in \eqref{eqdagger} is also an isomorphism of fo-bicategories.

\noindent\begin{minipage}{0.75\linewidth}
\hspace*{0.3cm} Since the diagram on the right commutes, %
one can define the complement as the diagonal of the square, namely
$\nega{(\cdot)} \defeq \op{(\rla{(\cdot)})}$. Clearly $\nega{} \colon \Cat{C} \to \co{\Cat{C}}$ is an isomorphism of fo-bicategories.
Moreover, it induces a boolean algebra on each homset of $\Cat{C}$.
\end{minipage}
\begin{minipage}{0.25\linewidth}\vspace{-0.3cm}
\[{%
\xymatrix@R=5mm{\Cat{C} \ar[r]|{\op{(\cdot)}} \ar[d]_{\rla{(\cdot)}} & \opposite{\Cat{C}} \ar[d]^{\rla{(\cdot)}} \\
\opposite{(\co{\Cat{C}})} \ar[r]|{\op{(\cdot)}} & \co{\Cat{C}}}}\]
\end{minipage}

\begin{proposition}\label{prop:enrichment}
Let $\Cat{C}$ be a fo-bicategory. Then, every homset of $\Cat{C}$ is a boolean algebra.
\end{proposition}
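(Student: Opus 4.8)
The plan is to equip each homset $\Cat{C}[X,Y]$ with a bounded lattice structure, and then to verify the De~Morgan laws, distributivity, and the complement laws, in that order.

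\textbf{Bounded lattice.} First, by \cref{prop:map adj}(1) each $\Cat{C}[X,Y]$ is an inf-semilattice with meet $\wedge$ and top $\top$ as in \eqref{eq:def:cap}. Since $\Cat{C}$ is a fo-bicategory, its cocartesian part makes $\co{\Cat{C}}$ a cartesian bicategory, so applying \cref{prop:map adj}(1) to $\co{\Cat{C}}$ and reversing the order gives binary joins $c\vee d \defeq \copier[-][X]\seq[-](c\tensor[-]d)\seq[-]\cocopier[-][Y]$ and a bottom element $\bot\defeq\discard[-][X]\seq[-]\codiscard[-][Y]$ on $\Cat{C}[X,Y]$. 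A poset that has all finite meets and joins is automatically a bounded lattice (the absorption laws are immediate), so nothing further is needed here.

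\textbf{De~Morgan.} Next I would take $\nega{(\cdot)}=\op{(\rla{(\cdot)})}$ as the negation. As recalled above, this is an isomorphism of fo-bicategories $\Cat{C}\to\co{\Cat{C}}$; it is an involution (since $\op{(\cdot)}$ and $\rla{(\cdot)}$ are, and they commute) and, being a morphism, it is poset-enriched, hence order-reversing on each homset. As a morphism of cartesian bicategories $\Cat{C}\to\co{\Cat{C}}$ it preserves identities, the monoidal products and the (co)monoids, so it carries the meet-defining diagram \eqref{eq:def:cap} of $\Cat{C}$ (built from $\copier[+],\cocopier[+]$ and $\seq[+]$) to the corresponding diagram of $\co{\Cat{C}}$ (built from $\copier[-],\cocopier[-]$ and $\seq[-]$); that is, $\nega{(c\wedge d)}=\nega{c}\vee\nega{d}$ and $\nega{\top}=\bot$. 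Instantiating at $\nega{c},\nega{d}$ and using the involution gives the dual laws $\nega{(c\vee d)}=\nega{c}\wedge\nega{d}$ and $\nega{\bot}=\top$. Thus the homset is a bounded lattice with an order-reversing involutive negation satisfying De~Morgan, and it only remains to prove distributivity and the complement laws.

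\textbf{Distributivity.} Writing $\wedge$ with the white structure and $\vee$ with the black one, the inclusion $a\wedge(b\vee c)\geq(a\wedge b)\vee(a\wedge c)$ holds in any lattice. For the converse I would use the genuinely linear axioms: the linear distributivities \eqref{ax:leftLinDistr}, \eqref{ax:rightLinDistr}, the linear strengths \eqref{ax:linStrn1}--\eqref{ax:linStrn4}, and the white--black Frobenius laws \eqref{ax:bwFrob}--\eqref{ax:wbFrob2}, which together let one slide a white (co)copier past a black one and so rewrite the nested $\seq[+]$-over-$\seq[-]$ pattern of $a\wedge(b\vee c)$ into $(a\wedge b)\vee(a\wedge c)$ by a string-diagrammatic computation. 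By the De~Morgan laws it is enough to prove one of the two distributive laws.

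\textbf{Complements, and the hard part.} Finally I would show $c\wedge\nega{c}=\bot$ (equivalently, via De~Morgan and order-reversal, $c\vee\nega{c}=\top$). Here $\nega{c}=\op{(\rla{c})}$, so $\rla{c}=\op{(\nega{c})}$ is the (two-sided) linear adjoint of $c$, whence $\id[+][X]\leq c\seq[-]\rla{c}$ and $\rla{c}\seq[+]c\leq\id[-][Y]$; combining these with the linear adjunctions between the white and black (co)monoids (the $\tau$/$\gamma$ inequalities of \cref{fig:fo bicat axioms}) and the linear Frobenius laws, one rewrites $\discard[-][X]\seq[-]\codiscard[-][Y]$ and $c\wedge\nega{c}$ into one another by a diagram chase, obtaining $c\wedge\nega{c}=\bot$; hence every homset is a Boolean algebra. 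I expect the distributivity step — and, secondarily, the complement computation — to be the main obstacle: these are the only points where the cartesian and cocartesian axioms alone do not suffice and the linear distributivity, linear strengths and linear Frobenius laws are genuinely required, whereas everything else is bookkeeping.
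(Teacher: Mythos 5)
Your proposal is correct and follows essentially the same route as the paper, which (recalling the result from the cited work on fo-bicategories) simply observes that $\nega{(\cdot)} \defeq \op{(\rla{(\cdot)})}$ is an isomorphism of fo-bicategories $\Cat{C} \to \co{\Cat{C}}$ exchanging the white and black structures, so that the meet/top coming from the cartesian part and the join/bottom coming from the cocartesian part are interchanged by an order-reversing involution. The paper supplies no further detail for this statement, and your account correctly locates the only genuinely non-trivial steps -- distributivity (for which the single inequality $a\wedge(b\vee c)\leq(a\wedge b)\vee c$, the homset-level shadow of \eqref{ax:leftLinDistr}--\eqref{ax:linStrn4}, already suffices) and the complement laws, both of which need the linear rather than the (co)cartesian axioms -- even though those two diagrammatic computations remain sketched rather than carried out.
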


\begin{proposition}\label{prop:map}
Let $F\colon \Cat{C} \to \Cat{D}$ be a morphism of fo-bicategories. Then, $\nega{F(c)} =F(\nega{c})$ for all arrows $c$, and hence $F$ preserves the boolean structure on the homsets.
\end{proposition}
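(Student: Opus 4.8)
The plan is to peel the complement back into the two operations it is defined from. By the displayed diagram preceding \cref{prop:enrichment}, $\nega{(\cdot)}=\op{(\rla{(\cdot)})}$, so it suffices to show separately that $F$ commutes with the dagger $\op{(\cdot)}$ of \eqref{eqdagger} and with the linear-adjoint operation $\rla{(\cdot)}$, and then to compose these two facts. Preservation of the remaining boolean operations on homsets will then follow formally, since all of them are built from $\wedge$, $\top$ and $\nega{(\cdot)}$.

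First I would check that $F$ preserves the dagger, i.e.\ $\op{F(c)}=F(\op{c})$. By \eqref{eqdagger} the arrow $\op{c}$ is obtained from $c$ by bending its input and output wires with the cups and caps definable from the white comonoid $(\copier[+],\discard[+])$ and white monoid $(\cocopier[+],\codiscard[+])$, using only $\seq[+]$, $\tensor[+]$ and white symmetries. A morphism of fo-bicategories is in particular a morphism of cartesian bicategories, hence a poset-enriched strong symmetric monoidal functor for $(\tensor[+],\unittensor)$ that preserves $\copier[+],\discard[+],\cocopier[+],\codiscard[+]$; therefore it commutes with each building block of this construction, hence with $\op{(\cdot)}$. (Equivalently: by \cref{prop:map adj}(2), $\op{(\cdot)}$ is definable structure of a cartesian bicategory and so is preserved by its morphisms.)

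Next I would check that $F$ preserves linear adjoints, i.e.\ $\rla{F(c)}=F(\rla{c})$. Recall that $b\Vdash a$ means $\id[+][X]\leq a\seq[-]b$ and $b\seq[+]a\leq\id[-][Y]$. As a morphism of symmetric monoidal linear bicategories, $F$ is monotone and sends $\seq[+],\id[+],\seq[-],\id[-]$ to the corresponding data of $\Cat{D}$ (note that $F^\bullet$ is a symmetric monoidal, hence ordinary, functor for $(\Cat{C},\seq[-],\id[-])$, and it agrees with $F^\circ$ on objects and arrows); consequently $b\Vdash a$ implies $F(b)\Vdash F(a)$. Applying this to $\rla{c}\Vdash c$ shows that $F(\rla{c})$ is a right linear adjoint of $F(c)$; since linear adjoints in a poset-enriched linear bicategory are unique (the standard one-line argument using the linear distributors $\delta_l,\delta_r$ and the identity laws for $\seq[+],\seq[-]$), we get $F(\rla{c})=\rla{F(c)}$. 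Combining with the previous paragraph yields $\nega{F(c)}=\op{(\rla{F(c)})}=\op{(F(\rla{c}))}=F(\op{(\rla{c})})=F(\nega{c})$, which is the first assertion.

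For the second assertion, recall from \cref{prop:enrichment} (and \eqref{eq:def:cap}) that the boolean algebra on $\Cat{C}[X,Y]$ has meet $\wedge$ and top $\top$ given by the white intersection and $\topCirc$, complement $\nega{(\cdot)}$, and the remaining operations definable from these ($\bot=\nega{\top}$ and $c\vee d=\nega{(\nega{c}\wedge\nega{d})}$). Since $F$ preserves $\copier[+],\cocopier[+],\discard[+],\codiscard[+]$ together with $\seq[+]$ and $\tensor[+]$, it preserves $\wedge$ and $\top$; being poset-enriched it is monotone; and it preserves $\nega{(\cdot)}$ by the above. Hence $F$ restricts on each homset to a homomorphism of boolean algebras $\Cat{C}[X,Y]\to\Cat{D}[F X,F Y]$. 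The only genuinely non-bookkeeping point is the preservation of linear adjoints, and even that reduces to the two ingredients just highlighted: that a morphism of linear bicategories transports the \emph{black} composition/identity, and uniqueness of linear adjoints in the poset-enriched setting.
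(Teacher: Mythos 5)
Your proof is correct and follows exactly the route the paper intends: the paper defines $\nega{(\cdot)}$ as the composite $\op{(\rla{(\cdot)})}$ and leaves the proof of this proposition implicit (it is recalled from the cited work on fo-bicategories), so the natural argument is precisely your decomposition into preservation of $\op{(\cdot)}$ (definable from the white (co)monoids, hence preserved by any morphism of cartesian bicategories) and preservation of $\rla{(\cdot)}$ (via transport of the defining inequalities plus uniqueness of linear adjoints in the poset-enriched setting). The concluding step, that $\wedge$, $\top$ and $\nega{(\cdot)}$ generate the whole boolean structure, is likewise the intended bookkeeping.
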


The next properties of maps (\cref{def:maps}) plays a key role in our work.
\begin{proposition}\label{prop:maps}%
For all maps $f\colon X \to Y$ and arrows $c\colon Y\to Z$, it holds that $f \seq[+] \nega{c} = \nega{(f \seq[+] c)}$ %
\end{proposition}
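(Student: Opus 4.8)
The plan is to derive the statement from the more symmetric identity
\[
f \seq[+] d \;=\; \nega{f} \seq[-] d \qquad \text{for every arrow } d\colon Y \to Z,
\]
together with the De Morgan law $\nega{(f \seq[+] c)} = \nega{f} \seq[-] \nega{c}$. The latter holds for \emph{all} composable arrows — not just maps — because $\nega{(\cdot)}\colon \Cat{C} \to \co{\Cat{C}}$ is an isomorphism of fo-bicategories and the white composition $\seq[+]$ of $\Cat{C}$ is, by definition of $\co{(\cdot)}$, the black composition of $\co{\Cat{C}}$; concretely it follows from $\nega{(\cdot)} = \op{(\rla{(\cdot)})}$ by noting $\rla{(f \seq[+] c)} = \rla{c} \seq[-] \rla{f}$ and $\op{(\rla{c} \seq[-] \rla{f})} = \op{(\rla{f})} \seq[-] \op{(\rla{c})}$. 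Granting the displayed identity, I instantiate it at $d \defeq \nega{c}$ and obtain $f \seq[+] \nega{c} = \nega{f} \seq[-] \nega{c} = \nega{(f \seq[+] c)}$, which is the claim.

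So everything reduces to the displayed identity, and I would prove it by two inequalities using just two ingredients. First, since $f$ is a map, \cref{prop:map adj} gives $f \dashv \op{f}$ in the white structure, i.e.\ $\id[+][X] \leq f \seq[+] \op{f}$ and $\op{f} \seq[+] f \leq \id[+][Y]$. Second, from the commuting square defining $\nega{(\cdot)}$ one has $\nega{f} = \rla{(\op{f})}$, hence $\op{f} = \rla{(\nega{f})}$, so $\op{f}$ is the linear adjoint of $\nega{f}$: $\id[+][X] \leq \nega{f} \seq[-] \op{f}$ and $\op{f} \seq[+] \nega{f} \leq \id[-][Y]$ (this second ingredient works for any arrow $f$). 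Then $\nega{f} \seq[-] d \leq f \seq[+] d$ is obtained by the chain
\begin{align*}
\nega{f} \seq[-] d
&= \id[+][X] \seq[+] (\nega{f} \seq[-] d)
\;\leq\; (f \seq[+] \op{f}) \seq[+] (\nega{f} \seq[-] d)
\;=\; f \seq[+] \bigl(\op{f} \seq[+] (\nega{f} \seq[-] d)\bigr)\\
&\leq\; f \seq[+] \bigl((\op{f} \seq[+] \nega{f}) \seq[-] d\bigr)
\;\leq\; f \seq[+] (\id[-][Y] \seq[-] d)
\;=\; f \seq[+] d,
\end{align*}
using in turn the unit of $\seq[+]$, the unit of the map adjunction, associativity of $\seq[+]$, left linear distributivity \eqref{ax:leftLinDistr}, the counit $\op{f} \seq[+] \nega{f} \leq \id[-][Y]$, and the unit of $\seq[-]$. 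The reverse inequality $f \seq[+] d \leq \nega{f} \seq[-] d$ is the mirror computation: start from $f \seq[+] d = \id[+][X] \seq[+] (f \seq[+] d) \leq (\nega{f} \seq[-] \op{f}) \seq[+] (f \seq[+] d)$, push $\nega{f}$ out with right linear distributivity \eqref{ax:rightLinDistr} to reach $\nega{f} \seq[-] \bigl((\op{f} \seq[+] f) \seq[+] d\bigr)$, and finish with the counit $\op{f} \seq[+] f \leq \id[+][Y]$ and the unit of $\seq[+]$. Combining the two inequalities yields the identity.

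The only point requiring care is the opposite/co bookkeeping in the De Morgan step: one must keep track of the fact that passing to $\co{\Cat{C}}$ swaps $\seq[+]$ and $\seq[-]$ (and reverses the order) while passing to $\opposite{\Cat{C}}$ reverses each composition, so that $\nega{(f \seq[+] c)}$ indeed comes out as $\nega{f} \seq[-] \nega{c}$, with the \emph{black} composition. The core identity itself is a two-line distributivity calculation once the map adjunction and the linear adjunction $\op{f} \Vdash \nega{f}$ are in hand; no induction or case analysis is needed, and the hypothesis that $f$ is a map enters only through $f \dashv \op{f}$.
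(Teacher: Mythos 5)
Your proof is correct. The paper itself does not spell out a proof of \cref{prop:maps} (it is recalled from the cited work on fo-bicategories and no argument for it appears in the appendix, which instead proves the \emph{converse} implication that \eqref{eq:prop mappe} plus a boolean structure yields the fo-bicategory axioms), so there is no line-by-line comparison to make; but your derivation is sound and uses only ingredients available in the paper. The two key facts — that $f\dashv \op{f}$ in the white structure because $f$ is a map (\cref{prop:map adj}.3), and that $\op{f}=\rla{(\nega{f})}$ for an arbitrary arrow, giving $\id[+][X]\leq \nega{f}\seq[-]\op{f}$ and $\op{f}\seq[+]\nega{f}\leq \id[-][Y]$ — combine with the two linear distributivities \eqref{ax:leftLinDistr}, \eqref{ax:rightLinDistr} exactly as you describe to give $f\seq[+]d=\nega{f}\seq[-]d$, and the De Morgan step $\nega{(f\seq[+]c)}=\nega{f}\seq[-]\nega{c}$ is immediate from $\nega{}\colon\Cat{C}\to\co{\Cat{C}}$ being an isomorphism of fo-bicategories that swaps the white and black structures. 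The typing and the direction of each adjunction inequality check out, and the hypothesis that $f$ is a map is used only where it must be. One cosmetic remark: the intermediate identity $f\seq[+]d=\nega{f}\seq[-]d$ is strictly stronger than the stated proposition and is worth isolating as a lemma, since it is the mechanism behind the analogous statement for comaps (\cref{lemma:comaps}) as well.
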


\subsection{Freely Generated First-Order Bicategories}
We conclude this section by giving to the reader a taste of how fo-bicategories relate to first-order theories. 
First, we recall from~\cite{bonchi2024diagrammatic} the freely generated fo-bicategory $\FOB$.

    Given a monoidal signature $\sign$, namely a set of symbols $R \colon n \to m$ with arity $n$ and coarity $m$, $\FOB$ is the fo-bicategory whose objects are natural numbers and arrows $c \colon n \to m$ are string diagrams generated by the following rules:
    \begin{center}
        $
        \def\arraystretch{3}
        \begin{array}{ccccc}
            \inferrule{ }{\emptyCirc[+] \colon 0 \to 0}
            &
            \inferrule{ }{\idCirc[+] \colon 1 \to 1}
            &
            \inferrule{ }{\symmCirc[+] \colon 2 \to 2}
            &
            \inferrule{R \colon n \to m \in \sign}{\boxCirc[+]{R} \colon n \to m}
            &
            \inferrule{\Circ{c} \colon n \to m, \Circ{d} \colon m \to o}{\seqCirc[+]{c}{d}[n][o] \colon n \to o}
            \\
            \inferrule{ }{\copierCirc[+] \colon 1 \to 2}
            &
            \inferrule{ }{\discardCirc[+] \colon 1 \to 0}
            &
            \inferrule{ }{\cocopierCirc[+] \colon 2 \to 1}
            &
            \inferrule{ }{\codiscardCirc[+] \colon 0 \to 1}
            &
            \inferrule{\Circ{c} \colon n \to m, \Circ{d} \colon o \to p}{\tensorCirc[+]{c}{d}[n][m][o][p] \colon n+o \to m +p}
            \\
            \inferrule{ }{\emptyCirc[-] \colon 0 \to 0}
            &
            \inferrule{ }{\idCirc[-] \colon 1 \to 1}
            &
            \inferrule{ }{\symmCirc[-] \colon 2 \to 2}
            &
            \inferrule{R \colon n \to m \in \sign}{\boxOpCirc[-]{R} \colon m \to n}
            &
            \inferrule{\Circ{c} \colon n \to m, \Circ{d} \colon m \to o}{\seqCirc[-]{c}{d}[n][o] \colon n \to o}
            \\
            \inferrule{ }{\copierCirc[-] \colon 1 \to 2}
            &
            \inferrule{ }{\discardCirc[-] \colon 1 \to 0}
            &
            \inferrule{ }{\cocopierCirc[-] \colon 2 \to 1}
            &
            \inferrule{ }{\codiscardCirc[-] \colon 0 \to 1}
            &
            \inferrule{\Circ{c} \colon n \to m, \Circ{d} \colon o \to p}{\tensorCirc[-]{c}{d}[n][m][o][p] \colon n+o \to m +p}
        \end{array}
        $
    \end{center}
    More precisely, arrows are equivalence classes of string diagrams w.r.t $\syninclusion \cap \syninclusionop$, where $\syninclusion$ is the precongruence (w.r.t. $\seq[+], \tensor[+], \seq[-]$ and $\tensor[-]$) generated by the axioms in Figures~\ref{fig:cb axioms},\ref{fig:closed lin axioms},\ref{fig:fo bicat axioms},\ref{fig:cocb axioms} (with $X,Y,Z,W$ replaced by natural numbers, and $a,b,c,d$ by diagrams of the appropriate type) and the axioms forcing $\boxCirc[+]{R}$ and $\boxOpCirc[-]{R}$ to be linear adjoints:
    \begin{center}
        $
        \begin{array}{@{}c@{}c@{}c@{}c}
            \!\!\idCirc[+][n] \!\!\leq\!\! 
    \InputIfFileExists{axiomsNEW/linadj/rMrop.tikz}{}{\input{tikz/axiomsNEW/linadj/rMrop.tikz}}
 & 
    \InputIfFileExists{axiomsNEW/linadj/ropPr.tikz}{}{\input{tikz/axiomsNEW/linadj/ropPr.tikz}}
  \!\!\leq\!\! \idCirc[-][m] &
            \idCirc[+][m] \!\!\leq\!\!  
    \InputIfFileExists{axiomsNEW/linadj2/rMrop.tikz}{}{\input{tikz/axiomsNEW/linadj2/rMrop.tikz}}
 & 
    \InputIfFileExists{axiomsNEW/linadj2/ropPr.tikz}{}{\input{tikz/axiomsNEW/linadj2/ropPr.tikz}}
  \!\!\leq\!\! \idCirc[-][n]
        \end{array}
        $
    \end{center}

To give semantics to these diagrams we need \emph{interpretations}, i.e. pairs $\interpretation = (X, \rho)$, where $X$ is a set and $\rho$ is a function assigning to each $R  \colon n \to m \in \sign$ a relation $\rho(R) \colon X^n \to X^m$. Since $\FOB$ is the free fo-bicategory, for any interpretation $\interpretation$ there exists a unique morphism of fo-bicategories $\interpretationFunctor \colon \FOB \to \Rel$ such that  $\interpretationFunctor(1) = X$ and $\interpretationFunctor(\boxCirc[+]{R}[n][m]) = \rho(R) \subseteq X^n \times X^m$.
Intuitively, $\interpretationFunctor$ is defined inductively by \eqref{eq:whiteRel}, \eqref{eq:blackRel} and \eqref{eq:comonoidsREL} with the free cases provided by $\interpretation$.

\medskip

A \emph{diagrammatic first-order theory} is a pair $\T{T} = (\sign, \T{I})$ where $\sign$ is a monoidal signature and $\T{I}$ is a set of \emph{axioms}: pairs $(c,d)$ for $c,d \colon n \to m$ in $\FOB$, standing for $c \leq d$. An interpretation $\interpretation$ is a \emph{model} of $\T{T}$ if and only if, for all $(c,d) \in \T{I}$, $\interpretationFunctor(c) \subseteq \interpretationFunctor(d)$.
As illustrated in~\cite{bonchi2024diagrammatic}, one can generate the fo-bicategory $\FOB[\T{T}]$ and, in the spirit of Lawvere's functorial semantics \cite{LawvereOriginalPaper}, models of $\T{T}$ are in one-to-one correspondence with morphisms $F \colon \FOB[\T{T}] \to \Rel$.

\begin{example}
    Consider the theory $\T{T} = (\sign, \T{I})$, where $\sign = \{ R \colon 1 \to 1 \}$ and $\mathbb{I}$ be as follows:
    \begin{center}
        $\{\, (\; \idCirc[+] \;,\; \boxCirc[+]{R} \;) ,\quad
        (\; \seqCirc[+]{R}{R} \;,\; \boxCirc[+]{R} \;) ,\quad
        (\; \intersectionCirc{R}{R}[][][w][op] \;,\; \idCirc[+] \;) ,\quad
        (\; \topCirc \;,\; \unionCirc{R}{R}[][][w][op] \;) \,\}$.
    \end{center}
An interpretation is a set $X$ and a relation $R \subseteq X \times X$. It is a model iff $R$ is an order, i.e., reflexive ($\id[+][X]\subseteq R$), transitive ($R \seq[+]R \subseteq R$), antisymmetric ($R\cap \op{R} \subseteq \id[+]$) and total ($\top \subseteq R\cup \op{R}$).
\end{example}

\begin{remark}
    A direct encoding of traditional first-order theories into diagrammatic ones is illustrated in~\cite{bonchi2024diagrammatic}. Shortly, a predicate symbol $P$ of arity $n$ becomes a symbol $P \colon n \to 0 \in \sign$, drawn as $\predicateCirc[+]{P}[n]$, and a $n$-ary function symbol $f$ becomes $f \colon n \to 1 \in \sign$, drawn as $\funcCirc[+]{f}[n]$. 
    
    \noindent\begin{minipage}{0.85\textwidth}
        For instance, the formula $\exists x . P(x) \wedge Q(x, f(y))$ is rendered as on the right, where $\codiscardCirc[+]$ plays the role of $\exists$ and $\copierCirc[+]$ that of $\wedge$.
        Note that both predicate and function symbols of traditional first-order theories are regarded as symbols of 
    \end{minipage}
    \begin{minipage}{0.15\textwidth}
        \[\begin{tikzpicture}
            \begin{pgfonlayer}{nodelayer}
                \node [style=none] (81) at (0, -0.2) {};
                \node [style=none] (112) at (-2, 1.5) {};
                \node [style=none] (113) at (-2, -1.5) {};
                \node [style=none] (114) at (1.5, -1.5) {};
                \node [style=none] (115) at (1.5, 1.5) {};
                \node [{boxStyle/+}] (123) at (0.5, -0.5) {$Q$};
                \node [style={funcStyle/+}, scale=0.8] (126) at (-1, -0.75) {$f$};
                \node [style=none] (128) at (0.5, -0.2) {};
                \node [style=none] (129) at (0.5, -0.75) {};
                \node [style=none] (130) at (-2, -0.75) {};
                \node [{boxStyle/+}] (131) at (0.5, 0.65) {$P$};
                \node [style={dotStyle/+}] (132) at (-1.25, 0.225) {};
                \node [style=none] (133) at (0, 0.65) {};
                \node [style={dotStyle/+}] (134) at (-0.75, 0.225) {};
            \end{pgfonlayer}
            \begin{pgfonlayer}{edgelayer}
                \draw [{bgStyle/+}] (114.center)
                     to (113.center)
                     to (112.center)
                     to (115.center)
                     to cycle;
                \draw (128.center) to (81.center);
                \draw (126) to (129.center);
                \draw [in=180, out=0] (130.center) to (126);
                \draw (133.center) to (131);
                \draw (132) to (134);
                \draw [bend left] (134) to (133.center);
                \draw [bend right] (134) to (81.center);
            \end{pgfonlayer}
        \end{tikzpicture}
        \]
    \end{minipage}

    \noindent the monoidal signature $\sign$. Function symbols are constrained to represent functions by adding to $\T{I}$ the axioms of maps, i.e., the inequalities in \eqref{eq:def:map}.

\end{remark}

\section{From Elementary-Existential Doctrines to Boolean Hyperdoctrines}\label{sec:hyp}
The notion of hyperdoctrine was introduced by Lawvere in a series
of seminal papers \cite{AF,EHCSAF}, in order to provide an algebraic framework for first-order (intuitionistic) logic. 
Over the years, various generalizations and specializations of this concept have been formulated and applied across multiple domains in the fields of logic and computer science.

In this work, we employ a generalization of the notion of hyperdoctrine  introduced by Maietti and Rosolini in \cite{QCFF,EQC,UEC}, namely that of \emph{elementary and existential doctrine}.

\subsection{Elementary and Existential Doctrines}
Elementary and existential doctrines can be thought of as a categorification of the so-called ``regular fragment'' of first-order intuitionistic logic, i.e. the $(\exists,=,\top, \wedge)$-fragment.

Hereafter $\langle f,g\rangle$ denotes the pairing of $f$ and $g$ and $\Delta_X$ denotes $\langle \id[+][X], \id[+][X] \rangle$.

\begin{definition}\label{def primary doctrine}
    An \emph{elementary and existential doctrine}~is a functor
    $\doctrine{\Cat{C}}{P}$ from the opposite of a category $\Cat{C}$ with finite products to the category of inf-semilattices such that:
    \begin{itemize}
        \item for every $Y$ in $\Cat{C}$ there exists an element $\delta_Y$ in $P(Y\times Y)$, called \emph{equality predicate}, such that for a morphism $\freccia{X\times Y}{\id[+][X]\times \Delta_Y}{X\times Y\times Y}$ in $\Cat{C}$  and every element $\alpha$ in $P(X\times Y)$, the assignment
        \[ \exists_{\id[+][X]\times \Delta_Y}(\alpha) \defeq P_{\angbr{\pr_1}{\pr_2}}(\alpha)\wedge P_{\angbr{\pr_2}{\pr_3}}(\delta_Y)\]
         determines a left adjoint to the functor $\freccia{P(X\times Y \times Y)}{P_{\id[+][X]\times \Delta_Y}}{P(X\times Y)}$;
 \item for any projection $\freccia{X\times  Y }{\pr_X}{X}$, the functor $\freccia{P(X)}{{P_{\pr_X}}}{P(X\times Y)}$ has a left adjoint $\exists_{\pr_X}$, and these satisfy the \emph{Beck-Chevalley condition} and \emph{Frobenius reciprocity}, see~\cite[Sec. 2]{QCFF}.
    \end{itemize}
   
    \end{definition}

\begin{remark}\label{rem:generalised adjoint hyperdoctrine}
        In an elementary and existential doctrine, for every $\freccia{X}{f}{Y}$ of $\Cat{C}$ the functor $P_f$ has a left adjoint $\exists_f$ that can be computed as
        $\exists_{\pr_Y}(P_{f\times {\id[+][X]}_Y}(\delta_Y)\wedge P_{\pr_X}(\alpha))$
        for $\alpha$ in $P(X)$, where $\pr_X$ and $\pr_Y$ are the projections from $X\times Y$. These left ajoints satisfy the Frobenius reciprocity but not necessarily the Beck-Chevalley condition. See  \cite[Rem. 6.4]{MaiettiTrotta21}.
 \end{remark}

\begin{definition}%
\label{def:morphism of primary doctrine}
Let $\doctrine{\Cat{C}}{P}$ and $\doctrine{\Cat{D}}{R}$ be two elementary and existential doctrines. A \emph{morphism of elementary and existential doctrines} is given by a pair $(F,\mathfrak{b})$ where    

          \noindent \begin{minipage}{0.65\linewidth}
            \begin{itemize}
            \item $\freccia{\Cat{C}}{F}{\Cat{D}}$ is a finite product preserving functor;
            \item $\freccia{ P}{\mathfrak{b}}{ F^{\op} \seq[+] R}$ is a natural transformation;
            \end{itemize}
        satisfying the following conditions:
        \end{minipage}     
 \begin{minipage}{0.35\linewidth}           \[\begin{tikzcd}[row sep=1ex]
               \Cat{C}^{\op} \\
               && \infsl \\
               \Cat{D}^{\op}
               \arrow[""{name=0, anchor=center, inner sep=0}, "R"', from=3-1, to=2-3]
               \arrow[""{name=1, anchor=center, inner sep=0}, "P", from=1-1, to=2-3]
               \arrow["F^{\op}"', from=1-1, to=3-1]
               \arrow["\mathfrak{b}"', shorten <=4pt, shorten >=4pt, from=1, to=0]
            \end{tikzcd}\]
          \end{minipage}
          \vspace{-0.7cm}
        \begin{enumerate}
            \item  for every object $X$ of $\Cat{C}$, $\mathfrak{b}_{X\times X}(\delta_X)= \delta_{FX\times FX}$;
        \item  for every  $\freccia{X\times Y}{\pr_X}{X}$ of $\Cat{C}$ and  for every  $\alpha$ in $P(X\times Y)$, 
            $\exists_{F(\pr_X)}\mathfrak{b}_{X\times Y}(\alpha)=\mathfrak{b}_X(\exists_{\pr_X}(\alpha))$.
        \end{enumerate}
\end{definition}

We write $\EED$ for the category of elementary and existential doctrines and morphisms. %

            \begin{example}\label{ex_sub_on_regular}
            The powerset functor $\doctrine{\set}{\Pow}$ is the archetypal example of an elementary and existential doctrine. More generally, for any regular category $\Cat{C}$, the subobjects functor $\doctrine{\Cat{C}}{\Sub_{\Cat{C}}}$ is an elementary and existential doctrine, see~\cite{EQC,QCFF}. This assignment extends to an inclusion of the category $\mathbb{REG}$ of regular categories into $\EED$.
                \end{example}
           
            \begin{example}\label{example_cartbicat_provide_eed}
                For a cartesian bicategory $\Cat{C}$, the functor $\doctrine{\mathsf{Map}(\Cat{C})}{\Cat{C}[-,I]}$ is an elementary and existential doctrine, where the actions of left adjoints is given $\exists_g(f):=f \seq[+]g^\dagger$~\cite[Thm. 20]{bonchi2021doctrines}. As we will see in \S \ref{sec:adjunction}, this assignment extends to an inclusion of $\CartBic$ into $\EED$.
            \end{example}

           Similarly to cartesian bicategories, elementary and existential doctrines have enough structure to deal with the notion of \emph{functional} (or single-valued) and  \emph{entire} (total) predicates. %
            \begin{definition}[From \cite{TECH}]\label{def_func_and_entire_for_P}
                Let $\doctrine{\Cat{C}}{P}$ be an elementary and existential doctrine. An element $\alpha\in P(X\times Y)$ is said to be \emph{functional from} $X$ to $Y$ if 
                $P_{\angbr{\pi_1}{\pi_2}}(\alpha)\wedge P_{\angbr{\pi_1}{\pi_3}}(\alpha)\leq P_{\angbr{\pi_2}{\pi_3}}(\delta_Y)$ in $P(X\times Y\times Y)$.
                Also, $\alpha$ is said to be \emph{entire from} $X$ \emph{to} $Y$ if $\top_X\leq \exists_{\pr_X}(\alpha)$ in $P(X)$.
            \end{definition}

            \begin{remark}\label{rem_EED_morph_sens_Fe_and_Te_into_Fe_and_Te}
 By definition, a morphism of elementary and existential doctrines preserves both $\exists_{\pr_X}$ and $\delta_Y$. Therefore it preserves functional and entire elements. %
            \end{remark}

\begin{example}
In the doctrine $\doctrine{\set}{\Pow}$ from \cref{ex_sub_on_regular}, an element $\alpha\in \Pow(X\times Y)$ is functional if and only if it defines a partial function from $X$ to $Y$, while it is entire if it provides a total relation from $X$ to $Y$.  
\end{example}
\begin{example}
In the doctrine $\doctrine{\map(\Cat{C})}{\Cat{C}[-,I]}$ from \cref{example_cartbicat_provide_eed}, functional and entire elements are precisely maps of $\Cat{C}$. A detailed proof is in \cref{ex_func_ent_rel_in_CB} in \cref{app:resadj}. 
\end{example}

\subsection{Boolean Hyperdoctrines}
In this section we recall the notion of \emph{boolean hyperdoctrine}, and some useful properties. 

\begin{definition}[boolean hyperdoctrine]\label{def:hyperdoctrine}
    Let $\Cat{C}$ be a category with finite products. A functor $\hyperdoctrine{\Cat{C}}{P}$  is a \emph{boolean hyperdoctrine} if it is an elementary and existential doctrine.
\end{definition}
    A morphism $(F,\mathfrak{b}):P\to R$ of boolean hyperdoctrines is a morphism of elementary and existential doctrines such that $\mathfrak{b}_X$ is a morphism of boolean algebras for all objects $X$ of $\Cat{C}$. We denote by $\BHD$ the category of boolean hyperdoctrines and their morphisms.

It is well-known that in first-order logic the universal quantifier can be derived by the existential quantifier and the negation. The same happens in boolean hyperdoctrines: for all arrows $\freccia{X}{f}{Y}$, the functor $\forall_f(-)\defeq \neg \exists_f \neg (-)$ is a right adjoint to $P_f$ -- see~\cref{app:boolhyp}.

\begin{example}\label{ex:booleancat}
The powerset functor $\hyperdoctrine{\set}{\Pow}$ provides an example of boolean hyperdoctrine. This can be generalized to an arbitrary \emph{boolean category} $\Cat{B}$, namely a coherent category such that every subobject has a complement, see~\cite[Sec. A1.4, p. 38]{SAE}. The subobjects functor on $\Cat{B}$ is a boolean hyperdoctrine $\hyperdoctrine{\Cat{B}}{\Sub_{\Cat{B}}}$.
 \end{example}

\begin{example}\label{ex:syntacticdoc}
Given a standard first-order theory $\textsc{Th}$ in a first-order language $\mathcal{L}$ (for simplicity single sorted), one can consider the functor
$\hyperdoctrine{\mV}{\mathcal{L}^\textsc{Th}}$. The base category $\mathcal{V}$ is the \emph{syntactic} category of $\mathcal{L}$, i.e. the category where objects are  natural numbers and  morphisms are lists of terms, while the predicates of  $\mathcal{L}^\textsc{Th}(n)$ are given by equivalence classes (with respect to provable reciprocal
consequence $\dashv\vdash $) of well-formed formulae with free variables in $\{x_1, \dots, x_n\}$, and the partial order is given by the provable consequences, according to the fixed theory $\textsc{Th}$. In this case, the left adjoint to the weakening functor $\mathcal{L}^\textsc{Th}_{\pr}$ is computed by existentially quantifying the variables that are not involved in the substitution induced by the projection $\pi$. Dually, the right adjoint is computed by quantifying universally.
\end{example}

We conclude this section with a result that, intuitively, is the analogous of \cref{prop:maps}.  %
\begin{lemma}\label{lemma:boolhypcomp}
Let $\hyperdoctrine{\Cat{C}}{P}$ be a boolean hyperdoctrine and $\phi\in P(X\times Y)$ a functional and entire element from $X$ to$Y$. For all $\psi \in P(Y\times Z)$, it holds that 
\[\exists_{\pr_{X\times Z}}(P_{\pr_{X\times Y}}(\phi)\wedge P_{\pr_{Y\times Z}}(\neg \psi)) =  \neg(\, \exists_{\pr_{X\times Z}}(P_{\pr_{X\times Y}}(\phi)\wedge P_{\pr_{Y\times Z}}(\psi))\,) \text{.}\]
\end{lemma}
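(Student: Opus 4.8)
The statement is the hyperdoctrine analogue of \cref{prop:maps}, which asserts that for a map $f$ and an arrow $c$, $f \seq[+] \nega{c} = \nega{(f\seq[+]c)}$. The key observation is that the two expressions in the Lemma are exactly the relational-composition formulas $\exists_{\pr_{X\times Z}}(P_{\pr_{X\times Y}}(\phi)\wedge P_{\pr_{Y\times Z}}(-))$ applied to $\neg\psi$ and to $\psi$ respectively, so the claim is that ``precomposition with the functional entire element $\phi$'' commutes with negation. The plan is to prove the two inequalities $\leq$ and $\geq$ separately, or better, to reduce one to the other using an adjunction-style argument, exploiting that $\phi$ being functional and entire means it behaves like (the graph of) a map.

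\textbf{First step: the easy inequality.} I would first show that $\exists_{\pr_{X\times Z}}(P_{\pr_{X\times Y}}(\phi)\wedge P_{\pr_{Y\times Z}}(\neg\psi)) \leq \neg\bigl(\exists_{\pr_{X\times Z}}(P_{\pr_{X\times Y}}(\phi)\wedge P_{\pr_{Y\times Z}}(\psi))\bigr)$. By the adjunction $\exists \dashv P_{\pr}$ and the fact that in a boolean algebra $a \leq \neg b$ iff $a \wedge b = \bot$ (equivalently $a \leq \neg b$ iff $b \leq \neg a$), it suffices to show that the meet of the two quantified expressions is $\bot_{X\times Z}$, or to push the meet through the left adjoint using Frobenius reciprocity. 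Concretely: $\exists_{\pr}(P_{\pr_{X\times Y}}(\phi)\wedge P_{\pr_{Y\times Z}}(\neg\psi)) \wedge \exists_{\pr}(\cdots\psi)$ — this needs a small argument, since $\exists$ does not commute with $\wedge$ in general; I expect to first bring one conjunct inside via Frobenius, then use that $P_{\pr_{Y\times Z}}(\psi)\wedge P_{\pr_{Y\times Z}}(\neg\psi) = \bot$ together with functionality of $\phi$ (which controls the ``diagonal'' $\delta_Y$ behaviour so that the $Y$-variable in the two copies is forced to coincide), and conclude. The functionality hypothesis, expressed via $P_{\angbr{\pi_1}{\pi_2}}(\phi)\wedge P_{\angbr{\pi_1}{\pi_3}}(\phi)\leq P_{\angbr{\pi_2}{\pi_3}}(\delta_Y)$, is what makes this work.

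\textbf{Second step: the hard inequality.} The converse $\neg\bigl(\exists_{\pr}(P_{\pr_{X\times Y}}(\phi)\wedge P_{\pr_{Y\times Z}}(\psi))\bigr) \leq \exists_{\pr}(P_{\pr_{X\times Y}}(\phi)\wedge P_{\pr_{Y\times Z}}(\neg\psi))$ is where I expect the main difficulty. Here entireness of $\phi$ ($\top_X \leq \exists_{\pr_X}(\phi)$) becomes essential — without it the left side could be $\top$ while the right side is $\bot$. The cleanest route is probably to rewrite the universal quantifier: using $\forall_f(-) = \neg\exists_f\neg(-)$ (recalled just before \cref{ex:booleancat}), the negation of the composition with $\psi$ should be expressible as a $\forall$-composition with $\neg\psi$ mediated by $\phi$, and then one shows this $\forall$-form equals the $\exists$-form precisely because $\phi$ is (the graph of) a total single-valued relation — i.e. for each $x$ there is a \emph{unique} witness $y$, so the universal and existential quantification over that witness collapse. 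I would carry this out by the standard doctrine manipulations: Beck–Chevalley to move substitutions past quantifiers, Frobenius reciprocity to move meets in and out of $\exists$, and the equality $\exists_{\pr_X}(\phi \wedge P_{\pr_X}(\alpha)) = \alpha$-type identities (a form of the ``$\phi$ is entire and functional, so $\exists_\phi$ is an iso on the relevant fibre'' principle) to finish.

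\textbf{Expected obstacle.} The genuinely delicate point is managing the bookkeeping of the projections $\pi_i$ on the triple products $X\times Y\times Z$ (and $X\times Y\times Y\times Z$ when the two copies of the $Y$-witness appear) and correctly invoking Beck–Chevalley at each substitution; the conceptual content is entirely captured by ``$\phi$ functional $+$ entire $\Rightarrow$ $\phi$ is a map $\Rightarrow$ composition with $\phi$ is a boolean-algebra homomorphism on fibres'', which is morally \cref{prop:maps} transported across the adjunction of \cite{bonchi2021doctrines}. An alternative, shorter proof would be to quote that adjunction directly: pass to the cartesian-bicategory side, where $\phi$ corresponds to a map $f$ and the two sides of the equation correspond to $f\seq[+]\nega{c}$ and $\nega{(f\seq[+]c)}$, then apply \cref{prop:maps} and transport back. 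I would present the direct doctrinal computation as the main proof but remark that this is exactly \cref{prop:maps} read through the lens of \cref{example_cartbicat_provide_eed}.
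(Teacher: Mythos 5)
Your main plan follows the paper's own proof essentially step for step: the paper likewise splits the equality into two entailments, proving $\exists y.(\phi(x,y)\wedge\neg\psi(y,z))\vdash\neg\exists y'.(\phi(x,y')\wedge\psi(y',z))$ from functionality (via a boolean case split on $y=y'$ versus $y\neq y'$ in the internal language, which is the same use of $P_{\angbr{\pi_1}{\pi_2}}(\phi)\wedge P_{\angbr{\pi_1}{\pi_3}}(\phi)\leq P_{\angbr{\pi_2}{\pi_3}}(\delta_Y)$ that you sketch), and the converse from entireness together with $\forall_f=\neg\exists_f\neg$ and distributivity, exactly as in your second step. The one caution concerns your proposed ``shorter alternative'': transporting to the bicategory side and invoking \cref{prop:maps} would be circular here, since \cref{prop:maps} is a statement about fo-bicategories, and the present lemma is precisely what is used (in \cref{thm_adj_BHD_PB}) to prove that $\REL(P)$ is peircean and hence an fo-bicategory --- at the point where the lemma is needed, $\REL(P)$ is only known to be a cartesian bicategory, so the direct doctrinal computation you give as the main proof is in fact the only non-circular route.
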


\section{An Adjunction and an Equivalence}\label{sec:adjunction} 

In \cite{bonchi2021doctrines}, cartesian bicategories and elementary existential doctrines are compared. The main results of \cite[Thm. 28]{bonchi2021doctrines} states that there exists the following adjunction. %
\begin{equation}\label{adj_CB_EED}
\begin{tikzcd}
	\CartBic && \EED
	\arrow["\HM"',{name=0, anchor=center, inner sep=0}, curve={height=20pt}, hook, from=1-1, to=1-3] 
	\arrow["\REL"',{name=1, anchor=center, inner sep=0}, curve={height=20pt}, from=1-3, to=1-1]
	\arrow["\dashv"{anchor=center, rotate=-90}, draw=none, from=1, to=0]
\end{tikzcd}
\end{equation}
The embedding $\HM\colon  \CartBic \to \EED$ maps a cartesian bicategory $\Cat{C}$ into the hom-functor $\doctrine{\map(\Cat{C})}{\Cat{C}[-,I]}$ that, as explained in \cref{example_cartbicat_provide_eed}, is an elementary existential doctrine. A morphism of cartesian bicategories $F\colon \Cat{C} \to \Cat{D}$ is mapped to the morphism of doctrines $(\tilde{F},\mathfrak{b}^F)$ where $\tilde{F}\colon \map(\Cat{C}) \to \map(\Cat{D})$ is the functor $F$ restricted to $\map(\Cat{C})$ and $\mathfrak{b}^F_X \colon \Cat{C}[X,I] \to \Cat{D}[F(X),I]$ is defined  as $\mathfrak{b}^F_X(c) \defeq F(c)$ for all objects $X$ of $\Cat{C}$ and arrows $c\in  \Cat{C}[X,I]$.

The functor $\REL\colon  \EED  \to \CartBic$ is a generalisation  to elementary and existential doctrines of the construction of bicategory relations associated with a regular category (see \cite[Ex. 1.4]{carboni1987cartesian}).
For $\doctrine{\Cat{C}}{P}$,  the cartesian bicategory $\REL(P)$ is defined as follows:
\begin{itemize}
    \item objects are those of $\Cat{C}$; for objects $X,Y$, the homsets $\REL(P)[X,Y]$ are the posets $P(X\times Y)$;
    \item the identity for an object $X$ is the equality predicate $\delta_X$ in $P(X \times X)$;
    \item composition of $\phi\colon X \to Y$ and $\psi \colon Y\to Z$ is given by  $\exists_{\pr_{X\times Z}}(P_{\pr_{X\times Y}}(\phi)\wedge P_{\pr_{Y\times Z}}(\psi))$.
\end{itemize}
For a morphism of doctrines $(F,\mathfrak{b})\colon P \to Q$, the morphism of cartesian bicategories $\REL(F,\mathfrak{b})\colon \REL(P) \to \REL(Q)$ is defined for all objects $X$ as $ \REL(F,\mathfrak{b}) (X) \defeq F(X)$ and for all arrows $\phi \colon X\to Y$ in $\REL(P)$, i.e., elements $\phi\in P(X\times Y)$, as  $\REL(F,\mathfrak{b})(\phi) \defeq \mathfrak{b}_{X\times Y}(\phi)$. The reader is referred to \cite{bonchi2021doctrines} or to \Cref{app_section_adj} for further details on the adjunction in \eqref{adj_CB_EED}.

\medskip

Another result in \cite[Thm. 35]{bonchi2021doctrines} shows that the adjunction in \eqref{adj_CB_EED} restricts to an equivalence 
\begin{equation}\label{eq:theequivalence}
\CartBic\equiv \overline{\EED}
\end{equation}
where $\overline{\EED}$ is a full subcategory of $\EED$ whose objects are particularly well-behaved doctrines. %
For the sake of readability, we will make clear in \S \ref{sec:equivalence} what these doctrines are.

\section{Peircean Bicategories}\label{sec:pb}
In this section we introduce the notion of \emph{peircean bicategory}, and we prove that such a new notion provides an alternative presentation of fo-bicategories. The name peircean is due to the fact that, like in Peirce's algebra of relations~\cite{peirce1883_studies-in-logic.-by-members-of-the-johns-hopkins-university}, and differently from fo-bicategories, the structure of boolean algebra is taken as a primitive.

\begin{definition}\label{def:peircean-bicategory}
A \emph{peircean bicategory} consists of a cartesian bicategory $(\Cat{C}, \copier[+], \cocopier[+])$ such that
\begin{enumerate}
\item every homset $\Cat{C}[X,Y]$ carries a Boolean algebra $(\Cat{C}[X,Y], \vee, \bot, \wedge, \top, \nega{})$;
\item for all maps $f\colon X \to Y$ and arrows $c\colon Y \to Z$,
\begin{equation}\label{eq:prop mappe}\tag{$\nega{}\mathcal{M}$}
    f \seq[+] \nega{c} = \nega{(f \seq[+] c)}\text{.}
\end{equation}
\end{enumerate}
A \emph{morphism of peircean bicategories} is a morphism of cartesian bicategories $F \colon \Cat{C}\to \Cat{D}$ such that $F(\nega{c})=\nega{F(c)}$.
We write $\PeirceBic$ for the category of peircean bicategories and their morphisms.
\end{definition}
By Propositions \ref{prop:enrichment} and \ref{prop:maps} every fo-bicategory is a peircean bicategory. By Proposition \ref{prop:map} every morphism of fo-bicategories is a morphism of peircean bicategories.

Vice versa, every peircean bicategory  $(\Cat{C}, \copier[+], \cocopier[+])$
gives rise to a fo-bicategory. The black structure $(\Cat{C}, \copier[-], \cocopier[-])$ is defined as expected from the white one and $\nega{}$. Namely:
\begin{equation}\label{eq:defnegative}
\begin{array}{cccc}
c \seq[-] d \defeq \nega{(\nega{c} \seq[+] \nega{d})} 
&
 \id[-][X] \defeq \nega{\id[+][X]}
&
 c \tensor[-]d \defeq \nega{(\nega{c}\tensor[+] \nega{d})}
 &
 \symm[-][X][Y] \defeq \nega{\symm[+][X][Y]}
 \\
 \copier[-][X] \defeq \nega{\copier[+][X]}
&
 \discard[-][X] \defeq \nega{\discard[+][X]}
&
 \cocopier[-][X] \defeq \nega{\cocopier[+][X]}
&
\codiscard[-][X] \defeq \nega{\codiscard[+][X]}
\end{array}
\end{equation}
With this definition, it is immediate to see that $\nega{} \colon (\co{\Cat{C}}, \copier[+], \cocopier[+]) \to (\Cat{C}, \copier[-], \cocopier[-])$ is an isomorphism and thus to conclude that $(\Cat{C}, \copier[-], \cocopier[-])$ is a cocartesian bicategory. Proving that $(\Cat{C}, \copier[+], \cocopier[+])$ and $(\Cat{C}, \copier[-], \cocopier[-])$ give rise to a fo-bicategory
is the main technical effort of this paper: the diagrammatic proof in Appendix~\ref{app:pb} crucially exploits the boolean properties and \eqref{eq:prop mappe}.

\begin{theorem}\label{thm_equiv_FOBic_PeirceBic}
There is an isomorphism of categories $\FOBic \cong \PeirceBic$.
\end{theorem}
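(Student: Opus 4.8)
The plan is to exhibit the two assignments described in the text as functors that are mutually inverse, both on objects and on morphisms. Since the paragraph preceding the statement already does most of the conceptual work, the proof is essentially a bookkeeping argument, and I would organise it as follows.

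\textbf{From peircean to first-order bicategories.} First I would verify that the black structure defined in \eqref{eq:defnegative} actually satisfies the axioms of \cref{fig:fo bicat axioms} (together with those of \cref{fig:closed lin axioms} governing the interaction of the two compositions). As already observed, $\nega{} \colon (\co{\Cat{C}},\copier[+],\cocopier[+]) \to (\Cat{C},\copier[-],\cocopier[-])$ is an isomorphism by construction, hence $(\Cat{C},\copier[-],\cocopier[-])$ is automatically a cocartesian bicategory. What remains is: (i) the linear distributivity laws \eqref{ax:leftLinDistr}, \eqref{ax:rightLinDistr}; (ii) the linear strengths \eqref{ax:linStrn1}--\eqref{ax:linStrn4} and the lax/colax preservation laws \eqref{ax:tensorPlusIdMinus}, \eqref{ax:tensorMinusIdPlus}; (iii) closedness, i.e.\ existence of linear adjoints and the symmetry conditions \eqref{ax:tauSymmPlus}, \eqref{ax:gammaSymmPlus}, \eqref{ax:tauSymmMinus}, \eqref{ax:gammaSymmMinus}; (iv) the 16 linear-adjointness inequalities at the top of \cref{fig:fo bicat axioms}; and (v) the four linear Frobenius laws \eqref{ax:bwFrob}--\eqref{ax:wbFrob2}. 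This is precisely the content deferred to Appendix~\ref{app:pb}; in the body I would state that each of these follows by rewriting the black operations via \eqref{eq:defnegative}, using the De~Morgan and involutivity laws of the boolean algebras on the homsets together with \eqref{eq:prop mappe} to push negations past maps — in particular past $\copier[+]$, $\discard[+]$ and the symmetries, which are maps. Call the resulting fo-bicategory $\mathcal{B}(\Cat{C})$, with the same underlying cartesian bicategory.

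\textbf{The two passages are mutually inverse.} In one direction: if we start from a fo-bicategory $\Cat{C}$, Propositions~\ref{prop:enrichment} and~\ref{prop:maps} make it a peircean bicategory, and applying $\mathcal{B}$ recovers a fo-bicategory whose white structure is unchanged and whose black structure is reconstructed by \eqref{eq:defnegative}; but in any fo-bicategory $\nega{(\cdot)} \colon \Cat{C} \to \co{\Cat{C}}$ is already an isomorphism of fo-bicategories (stated in the excerpt just before \cref{prop:enrichment}), which means exactly that the black operations satisfy $c\seq[-]d = \nega{(\nega c \seq[+] \nega d)}$, $\id[-] = \nega{\id[+]}$, and so on — so $\mathcal{B}$ returns the original black structure on the nose. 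In the other direction: starting from a peircean bicategory $\Cat{C}$, forming $\mathcal{B}(\Cat{C})$ and then reading off its peircean structure gives back the same cartesian bicategory, the same homset boolean algebras (the boolean algebra induced by a fo-bicategory on a homset, via \cref{prop:enrichment}, uses $\wedge$, $\top$ and $\nega{}$, all of which are already the given ones, and $\vee,\bot$ are then forced), and \eqref{eq:prop mappe} is the same axiom. Hence the object assignments are inverse bijections.

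\textbf{Morphisms.} A morphism of fo-bicategories is by definition a morphism of linear \emph{and} of (co)cartesian bicategories, and by \cref{prop:map} it preserves $\nega{}$, hence it is a morphism of peircean bicategories; conversely a morphism $F$ of peircean bicategories preserves the cartesian structure and $\nega{}$, so by \eqref{eq:defnegative} it preserves $\seq[-]$, $\id[-]$, $\tensor[-]$, $\symm[-]$, $\copier[-]$, $\discard[-]$, $\cocopier[-]$, $\codiscard[-]$ as well, i.e.\ it is a morphism of linear and cocartesian bicategories, hence of fo-bicategories. Both identity-on-underlying-data assignments on morphisms are clearly functorial and mutually inverse, and they are compatible with the object bijections above. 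This gives the isomorphism of categories $\FOBic \cong \PeirceBic$. The main obstacle is entirely in the first step — checking that \eqref{eq:defnegative} really does produce all the fo-bicategory axioms — and the key leverage there is \eqref{eq:prop mappe}: it is exactly the ingredient needed to commute negation with the map-components appearing in the linear strengths and linear Frobenius equations, which is why the (otherwise purely boolean) manipulations go through.
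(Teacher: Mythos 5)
Your proposal takes essentially the same route as the paper: fo-bicategories are peircean by Propositions~\ref{prop:enrichment}, \ref{prop:maps} and \ref{prop:map}; conversely the black structure \eqref{eq:defnegative} is verified to satisfy the cocartesian, closed-linear and linear-Frobenius axioms through a chain of diagrammatic lemmas whose decisive ingredient is exactly \eqref{eq:prop mappe} (and its extension to comaps), as you indicate; and morphisms correspond because preserving $\nega{}$ forces preservation of the entire black structure via \eqref{eq:defnegative}. Your explicit check that the two object assignments are mutually inverse (using that $\nega{}$ is already an isomorphism of fo-bicategories in one direction, and that $\vee,\bot$ are determined by $\wedge,\top,\nega{}$ in the other) is a welcome elaboration of a point the paper leaves implicit, but it does not constitute a different argument.
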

Note that, differently from \cref{def:fobicategory}, \cref{def:peircean-bicategory} is not purely axiomatic, since the property 2 requires $f$ to be a map. However, the notion of a peircean bicategory is notably more succinct than that of a fo-bicategory, making it more convenient for our purposes. %

\section{An Equational Presentation of Boolean Hyperdoctrines}\label{sec:restrictingadjunction}

The main purpose of this section is to establish a formal link between fo-bicategories and boolean hyperdoctrines. In particular, we are going to show that the adjunction presented in \eqref{adj_CB_EED} restricts to an adjunction 
between $\FOBic$ and $\mathbb{BHD}$. \Cref{thm_equiv_FOBic_PeirceBic}  allows us to conveniently work with peircean bicategories. We commence with the following result. %

\begin{proposition}\label{prop_inclusion_PD_BHD}
Let $\Cat{C}$ be a peircean bicategory. Then $\HM(\Cat{C})$ %
is a boolean hyperdoctrine. 
\end{proposition}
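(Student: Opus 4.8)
The plan is to unwind the definitions and check the extra axioms that a boolean hyperdoctrine requires on top of an elementary and existential doctrine. Recall that $\HM(\Cat{C})$ is the hom-functor $\doctrine{\map(\Cat{C})}{\Cat{C}[-,I]}$, which by \cref{example_cartbicat_provide_eed} (i.e.\ \cite[Thm.~20]{bonchi2021doctrines}) is already an elementary and existential doctrine for any cartesian bicategory $\Cat{C}$. Since $\map(\Cat{C})$ is a category with finite products (\cref{prop:map adj}), the only thing left to verify, according to \cref{def:hyperdoctrine}, is that each homset $\Cat{C}[X,I]$ carries a boolean algebra structure and that the doctrine's inf-semilattice structure on $\HM(\Cat{C})(X) = \Cat{C}[X,I]$ agrees with (the meet of) this boolean algebra.

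First I would observe that, since $\Cat{C}$ is a peircean bicategory, by clause~1 of \cref{def:peircean-bicategory} every homset $\Cat{C}[X,Y]$ — and in particular $\Cat{C}[X,I]$ — carries a boolean algebra $(\Cat{C}[X,I], \vee, \bot, \wedge, \top, \nega{})$. The meet $\wedge$ and top $\top$ of this boolean algebra are exactly the inf-semilattice operations that the cartesian bicategory structure puts on $\Cat{C}[X,I]$ via \eqref{eq:def:cap}: this is part of what it means for the boolean algebra structure to be compatible with the poset-enrichment, and it is precisely the content needed so that the underlying inf-semilattice of $\HM(\Cat{C})$ coincides with the boolean algebra one. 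Hence $\HM(\Cat{C})$ is a functor $\map(\Cat{C})^{\op} \to \bool$ whose underlying inf-semilattice-valued functor is the elementary and existential doctrine of \cref{example_cartbicat_provide_eed}; by \cref{def:hyperdoctrine} this is exactly a boolean hyperdoctrine.

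The one point that deserves care — and is the main (small) obstacle — is checking that the reindexing maps $P_f$ of $\HM(\Cat{C})$ are morphisms of boolean algebras, not merely of inf-semilattices; this is not literally demanded by \cref{def:hyperdoctrine} of a boolean hyperdoctrine, but it is implicitly needed for the universal quantifier $\forall_f = \neg\exists_f\neg$ to behave correctly and it is the natural sanity check. For a map $f \colon X \to Y$ in $\map(\Cat{C})$, reindexing along $f$ sends $c \in \Cat{C}[Y,I]$ to $f \seq[+] c \in \Cat{C}[X,I]$. Since $f$ is a map, \cref{prop:maps} — equivalently clause~2 of \cref{def:peircean-bicategory}, the law \eqref{eq:prop mappe} — gives $f \seq[+] \nega{c} = \nega{(f \seq[+] c)}$, so $P_f$ preserves complements; it preserves $\wedge$ and $\top$ because $f$, being a map (hence a left adjoint to $\op f$ by \cref{prop:map adj}), is in particular a lax, and by adjointness actually strict, comonoid homomorphism, so precomposition with $f$ commutes with $\wedge$ via \eqref{ax:comPlusLaxNat} and \eqref{eq:def:map}. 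Preservation of $\wedge$ together with complements forces preservation of $\vee$ and $\bot$ as well. Thus each $P_f$ is a boolean algebra morphism, completing the verification that $\HM(\Cat{C})$ is a boolean hyperdoctrine.
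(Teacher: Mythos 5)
Your proof is correct and follows essentially the same route as the paper: cite the existing elementary-and-existential-doctrine structure from \cite{bonchi2021doctrines}, note that each $\Cat{C}[X,I]$ is a boolean algebra by clause~1 of \cref{def:peircean-bicategory}, and use \eqref{eq:prop mappe} to show reindexing along a map preserves negation (preservation of $\wedge$ and $\top$ being free since reindexing is already an inf-semilattice morphism). One small correction: checking that each $P_f$ is a boolean algebra morphism \emph{is} literally demanded by \cref{def:hyperdoctrine}, since a boolean hyperdoctrine is a functor into $\bool$, the category of boolean algebras and their morphisms — but you verify it anyway, so nothing is missing.
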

  \begin{proof}%
  By \eqref{adj_CB_EED}, $\doctrine{\map(\Cat{C})}{\Cat{C}[-,I]}$ is an elementary and existential doctrine and, by definition of peircean bicategories, $\Cat{C}[X,I]$ is a boolean algebra for all objects $X$. To conclude that $\hyperdoctrine{\map(\Cat{C})}{\Cat{C}[-,I]}$, one has only to show that, for all maps $f\colon X \to Y$, $\Cat{C}[f,I]\colon \Cat{C}[Y,I] \to \Cat{C}[X,I]$ is a morphism of boolean algebras. Since, by \eqref{adj_CB_EED}, $\Cat{C}[f,I]$ is a morphism of inf-semilattices, it is enough to show that it preserves negation: for all $c\in \Cat{C}[Y,I] $
  \begin{align}
  \Cat{C}[f,I](\nega{c}) & = f\seq[+]\nega{c} \tag{Definition of $\Cat{C}[-,I]$} \\
  &= \nega{(f \seq[+]c)} \tag{\ref{eq:prop mappe}}\\
  &=\nega{\Cat{C}[f,I](c)} \tag{Definition of $\Cat{C}[-,I]$}
  \end{align}
    \end{proof}

The above proposition allows us to characterize peircean bicategories as follows:
\begin{corollary}\label{cor_cart_bic_is_peircea_iff_hyperdoc}
	Let $\Cat{C}$ be a cartesian bicategory. Then it is a peircean bicategory if and only if $\HM(\Cat{C})$ is a boolean hyperdoctrine.
\end{corollary}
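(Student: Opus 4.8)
The statement bundles two implications. The direction ``$\Cat{C}$ peircean $\Rightarrow$ $\HM(\Cat{C})$ a boolean hyperdoctrine'' is exactly \cref{prop_inclusion_PD_BHD}, so the work lies entirely in the converse: assuming that the doctrine $\HM(\Cat{C})$ (namely $\Cat{C}[-,I]$ on $\map(\Cat{C})$) is a boolean hyperdoctrine, I would verify the two clauses of \cref{def:peircean-bicategory} for $\Cat{C}$.

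The structural input is the identification $\Cat{C}\cong\REL(\HM(\Cat{C}))$ of cartesian bicategories, which holds because $\HM$ is a full embedding participating in the adjunction \eqref{adj_CB_EED} (this is the fact underlying the equivalence \eqref{eq:theequivalence}). Unfolding the definition of $\REL$, and using that finite products in $\map(\Cat{C})$ are computed by $\tensor[+]$, this isomorphism restricts for each $X,Y$ to a poset isomorphism $\theta_{X,Y}\colon \Cat{C}[X,Y]\xrightarrow{\;\cong\;}\Cat{C}[X\tensor[+]Y,I]=\HM(\Cat{C})(X\tensor[+]Y)$ which (i) sends white composition $\seq[+]$ to the composition of $\REL(\HM(\Cat{C}))$ (the relational composite, an $\exists$ of a meet of reindexings), and (ii) sends maps of $\Cat{C}$ to functional and entire elements of $\HM(\Cat{C})$ -- these being precisely the maps of $\REL(\HM(\Cat{C}))$, cf.\ \cref{example_cartbicat_provide_eed}, \cref{prop:map adj}, and the characterisation recalled after \cref{def_func_and_entire_for_P}.

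With this in hand, clause~1 is immediate: since $\HM(\Cat{C})$ is a boolean hyperdoctrine each fibre $\Cat{C}[X\tensor[+]Y,I]$ is a Boolean algebra, hence so is $\Cat{C}[X,Y]$ by transport along $\theta_{X,Y}$; and $\theta_{X,Y}$ being a poset isomorphism, the induced $\wedge$ and $\top$ coincide with the operations of \eqref{eq:def:cap} (which by \cref{prop:map adj} are the poset operations). For clause~2, fix a map $f\colon X\to Y$ and an arbitrary arrow $c\colon Y\to Z$ and set $\phi\defeq\theta_{X,Y}(f)$, $\psi\defeq\theta_{Y,Z}(c)$; by (ii), $\phi$ is functional and entire. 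Applying \cref{lemma:boolhypcomp} in the boolean hyperdoctrine $\HM(\Cat{C})$ to $\phi$ and $\psi$ and transporting both sides back along $\theta$ -- using (i) to recover $\seq[+]$ and clause~1 to identify the fibrewise negations on $\HM(\Cat{C})(Y\tensor[+]Z)$ and $\HM(\Cat{C})(X\tensor[+]Z)$ with those on $\Cat{C}[Y,Z]$ and $\Cat{C}[X,Z]$ -- yields $\theta_{X,Z}(f\seq[+]\nega{c})=\theta_{X,Z}(\nega{(f\seq[+]c)})$, whence \eqref{eq:prop mappe} by injectivity of $\theta_{X,Z}$.

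The only non-routine ingredient is the second paragraph -- the isomorphism $\Cat{C}\cong\REL(\HM(\Cat{C}))$ and its compatibilities (i)--(ii) with composition and maps -- and this is already supplied by \cite{bonchi2021doctrines}, so the proof reduces to assembling these facts and invoking \cref{lemma:boolhypcomp}. The point that requires care is that, for a general cartesian bicategory, the Boolean negation on the non-predicate homsets $\Cat{C}[X,Y]$ is not given a priori but produced by the transport in clause~1; one must therefore make sure that \cref{lemma:boolhypcomp}, whose statement lives in the fibres of the hyperdoctrine (i.e.\ in the homsets $\Cat{C}[X\tensor[+]Y,I]$), really does deliver the identity \eqref{eq:prop mappe} for exactly those transported negations -- which is precisely what compatibilities (i) and (ii) guarantee.
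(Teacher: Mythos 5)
Your proposal is correct and matches what the paper intends: the forward direction is exactly \cref{prop_inclusion_PD_BHD}, and your converse --- transporting the boolean structure along the counit isomorphism $\Cat{C}\cong\REL(\HM(\Cat{C}))$ of \eqref{adj_CB_EED} and deriving \eqref{eq:prop mappe} from \cref{lemma:boolhypcomp} via the correspondence between maps and functional--entire elements --- is precisely the argument the paper packages as \cref{prop_maps_rel_P} and \cref{thm_adj_BHD_PB} (stated just after the corollary), specialised to $P=\HM(\Cat{C})$. The paper leaves this direction implicit, so your write-up is a faithful filling-in of the same route rather than a different one.
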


To prove that, for any boolean hyperdoctrine $P$, $\REL(P)$ is a peircean bicategory, we need to establish a formal correspondence between \cref{def:maps} and \cref{def_func_and_entire_for_P}. %
\begin{proposition}\label{prop_maps_rel_P}
    Let $\doctrine{\Cat{C}}{P}$ be an elementary and existential doctrine. Then the maps of   $\mathsf{Rel}(P)$ are precisely the functional and entire elements of $P$.
\end{proposition}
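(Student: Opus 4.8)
The plan is to unfold both definitions into the language of the doctrine $P$ and show they coincide. Recall that in $\mathsf{Rel}(P)$ an arrow $X \to Y$ is an element $\phi \in P(X \times Y)$, the identity on $X$ is $\delta_X$, and composition of $\phi\colon X\to Y$ with $\psi\colon Y\to Z$ is $\exists_{\pr_{X\times Z}}(P_{\pr_{X\times Y}}(\phi)\wedge P_{\pr_{Y\times Z}}(\psi))$. By Proposition~\ref{prop:map adj}, $\phi$ is a map in a cartesian bicategory iff it is left adjoint to $\op{\phi}$, i.e.\ iff $\id[+][X] \leq \phi \seq[+] \op{\phi}$ and $\op{\phi} \seq[+] \phi \leq \id[+][Y]$. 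First I would identify $\op{\phi}$ in $\mathsf{Rel}(P)$: since $\op{(\cdot)}$ is the identity-on-objects isomorphism $\mathsf{Rel}(P) \to \opposite{\mathsf{Rel}(P)}$ defined by \eqref{eqdagger}, and the cartesian-bicategory structure of $\mathsf{Rel}(P)$ is the one from \cite{bonchi2021doctrines}, one checks that $\op{\phi} = P_{\angbr{\pr_2}{\pr_1}}(\phi) \in P(Y\times X)$ — i.e.\ $\phi$ with its two components swapped. This is the routine computation that I would carry out by chasing through the definition of $\daggerCirc[+]{}$ in terms of cups, caps, and $\phi$, using the symmetry of $\delta$ (namely $P_{\angbr{\pr_2}{\pr_1}}(\delta_X)=\delta_X$).

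Next I would translate the two adjunction inequalities. Unfolding the composite $\phi\seq[+]\op{\phi}\colon X\to X$ gives $\exists_{\pr_{X\times X}}(P_{\pr_{X\times Y}}(\phi)\wedge P_{\pr_{Y\times X}}(\op{\phi}))$, which after reindexing (using that $\op{\phi}$ is $\phi$ with swapped arguments) becomes $\exists_{\pr_{13}}(P_{\angbr{\pr_1}{\pr_2}}(\phi)\wedge P_{\angbr{\pr_3}{\pr_2}}(\phi))$ in $P(X\times X)$, where the projections are from $X\times Y\times X$. The inequality $\delta_X \leq \phi\seq[+]\op{\phi}$ is then, after pulling back along $\Delta_X$ and using the adjunction $\exists_{\pr_X}\dashv P_{\pr_X}$ together with the Beck--Chevalley condition, precisely the statement $\top_X \leq \exists_{\pr_X}(\phi)$, i.e.\ that $\phi$ is \emph{entire} (Definition~\ref{def_func_and_entire_for_P}). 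Symmetrically, $\op{\phi}\seq[+]\phi\colon Y\to Y$ unfolds to $\exists_{\pr_{Y\times Y}}(P_{\angbr{\pr_1}{\pr_2}}(\phi)\wedge P_{\angbr{\pr_1}{\pr_3}}(\phi))$ over $X\times Y\times Y$, and the inequality $\op{\phi}\seq[+]\phi \leq \delta_Y$ is, by the adjunction $\exists_{\pr_X}\dashv P_{\pr_X}$, equivalent to $P_{\angbr{\pr_1}{\pr_2}}(\phi)\wedge P_{\angbr{\pr_1}{\pr_3}}(\phi) \leq P_{\angbr{\pr_2}{\pr_3}}(\delta_Y)$, i.e.\ that $\phi$ is \emph{functional}. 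Putting the two equivalences together yields: $\phi$ is a map iff $\phi$ is functional and entire.

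The main obstacle I anticipate is bookkeeping with projections and the correct application of the Beck--Chevalley condition and Frobenius reciprocity when moving quantifiers past meets and substitutions — in particular, justifying the reduction of $\delta_X \leq \exists(\ldots)$ to $\top_X \leq \exists_{\pr_X}(\phi)$, which uses that $P_{\Delta_X}(\delta_X) = \top_X$ (a consequence of the elementary structure) and that $\exists_{\pr}$ commutes appropriately with reindexing. None of these steps is deep, but getting the indices exactly right is the delicate part; it may be cleanest to phrase the whole argument using the characterization of composition in $\mathsf{Rel}(P)$ from \cite{bonchi2021doctrines} and the properties recorded in Remark~\ref{rem:generalised adjoint hyperdoctrine}, rather than expanding everything from scratch. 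I would also remark that the ``only if'' and ``if'' directions are genuinely the two halves of the adjunction $\phi \dashv \op{\phi}$, so no separate argument is needed once both inequalities have been identified with functionality and entireness.
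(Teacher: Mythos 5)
Your argument is correct, but it follows a genuinely different route from the paper's. You work directly with the characterization of maps as left adjoints to their daggers (Proposition~\ref{prop:map adj}.3), compute $\op{\phi}$ in $\REL(P)$ as the swap $P_{\angbr{\pr_2}{\pr_1}}(\phi)$, and then unfold the two adjunction inequalities $\delta_X\leq\phi\seq[+]\op{\phi}$ and $\op{\phi}\seq[+]\phi\leq\delta_Y$ into the internal language of $P$, identifying them with entireness and functionality respectively; this requires the adjunction $\exists_{\Delta_X}\dashv P_{\Delta_X}$ together with $\exists_{\Delta_X}(\top_X)=\delta_X$, and one instance of Beck--Chevalley for the pullback of the projection $X\times Y\times X\to X\times X$ along $\Delta_X$ --- which is indeed among the squares for which the BCC is guaranteed in an elementary and existential doctrine, so the step you flag as delicate does go through. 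The paper instead avoids all index bookkeeping by composing two previously established facts: the unit $\eta_P=(\Gamma_P,\rho)$ of the adjunction \eqref{adj_CB_EED} is a morphism of elementary and existential doctrines whose components $\rho_{X\times Y}$ are isomorphisms, hence it preserves \emph{and reflects} functional and entire elements (\cref{rem_EED_morph_sens_Fe_and_Te_into_Fe_and_Te}); and, by the diagrammatic \cref{ex_func_ent_rel_in_CB}, in any cartesian bicategory the functional and entire elements of the doctrine $\Cat{C}[-,I]$ are exactly the maps. Your approach buys a self-contained, purely doctrinal computation that does not depend on the adjunction machinery of \S\ref{sec:adjunction}; the paper's approach buys brevity and reuses a lemma (\cref{ex_func_ent_rel_in_CB}) that is needed elsewhere anyway. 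Both are valid; if you write yours up in full, the one point to make explicit is the verification that $\op{\phi}$ is the swap, since that is where the Frobenius structure of $\REL(P)$ actually enters.
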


\begin{proposition}\label{thm_adj_BHD_PB}
    Let $P$ be a boolean hyperdoctrine. Then $\REL(P)$ is a peircean bicategory. %
\end{proposition}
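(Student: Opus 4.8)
The plan is to verify the requirements of \cref{def:peircean-bicategory} for $\REL(P)$, and most of them come essentially for free. That $\REL(P)$ is a cartesian bicategory is immediate from the adjunction \eqref{adj_CB_EED}, since the functor $\REL$ has codomain $\CartBic$. The first condition of \cref{def:peircean-bicategory}, that each homset be a boolean algebra, holds because $\REL(P)[X,Y]$ is by definition the poset $P(X\times Y)$, and $P$, being a boolean hyperdoctrine, takes values in boolean algebras; the boolean structure required on the homsets is literally this one. So the only real point is the second condition, equation \eqref{eq:prop mappe}.

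For that, I would first invoke \cref{prop_maps_rel_P} to identify a map $f\colon X\to Y$ of $\REL(P)$ with a functional and entire element $\phi\in P(X\times Y)$. Then, given an arrow $c\colon Y\to Z$ of $\REL(P)$, i.e.\ an element $c\in P(Y\times Z)$, I would unfold the definition of composition in $\REL(P)$, namely $\phi\seq[+]\psi=\exists_{\pr_{X\times Z}}(P_{\pr_{X\times Y}}(\phi)\wedge P_{\pr_{Y\times Z}}(\psi))$, on both sides of \eqref{eq:prop mappe}. After this unfolding, the equation $f\seq[+]\nega{c}=\nega{(f\seq[+]c)}$ is precisely the statement of \cref{lemma:boolhypcomp}, applied with this $\phi$ and with $\psi\defeq c$. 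One should also note that the negation on $\REL(P)[X,Z]=P(X\times Z)$ appearing on the right-hand side is exactly the boolean negation in $P(X\times Z)$, which is how the boolean structure on homsets was fixed in the first condition, so there is no mismatch.

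The upshot is that, once \cref{prop_maps_rel_P} and \cref{lemma:boolhypcomp} are available, the proposition is a short bookkeeping argument and there is no genuine obstacle at this level: the real content has been pushed into \cref{lemma:boolhypcomp}, whose proof relies on the derived right adjoints $\forall_f = \neg \exists_f \neg$ in a boolean hyperdoctrine together with the Beck--Chevalley condition and Frobenius reciprocity. If anything is delicate, it is confined there. A minor thing to double-check while writing out the details is that the hypotheses of \cref{lemma:boolhypcomp} — $\phi$ functional \emph{and} entire from $X$ to $Y$ — match exactly the two conditions that \cref{prop_maps_rel_P} attaches to being a map, so that the lemma applies verbatim.
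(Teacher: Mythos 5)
Your proposal is correct and follows essentially the same route as the paper's proof: cartesian bicategory structure from the adjunction \eqref{adj_CB_EED}, boolean algebras on homsets directly from the hyperdoctrine, and then \eqref{eq:prop mappe} obtained by combining \cref{prop_maps_rel_P} with \cref{lemma:boolhypcomp} after unfolding the definition of composition in $\REL(P)$. The only difference is cosmetic: the paper writes out the three-line equational chain explicitly, whereas you describe it; the substance is identical.
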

\begin{proof} %
By \eqref{adj_CB_EED}, $\REL(P)$ is a cartesian bicategory. Since  $P(X)$ is a boolean algebra for all objects $X$, then each hom-set $\REL(P)[X,Y]$ -- by definition  $P(X\times Y)$ -- is a boolean algebra. To conclude that $\REL(P)$ is a peircean bicategory, it is enough to show that $\eqref{eq:prop mappe}$ holds, that is %
\[\phi\seq[+] \neg \psi=\neg(\phi\seq[+] \psi)\] for all maps $\phi\in \REL(P)[X,Y]$ and arrows $\psi \in \REL(P)[Y,Z]$.
By Proposition~\ref{prop_maps_rel_P}, %
$\phi$ is a functional and entire element of $P$. %
Thus, one can rely on \cref{lemma:boolhypcomp} to conclude that  
\begin{align}
\phi\seq[+] \neg \psi & = \exists_{\pr_{X\times Z}}(P_{\pr_{X\times Y}}(\phi)\wedge P_{\pr_{Y\times Z}}(\neg \psi)) \tag{Defintion of $\REL(P)$} \\
&= \neg(\, \exists_{\pr_{X\times Z}}(P_{\pr_{X\times Y}}(\phi)\wedge P_{\pr_{Y\times Z}}(\psi))\,) \tag{\cref{lemma:boolhypcomp}} \\
&= \neg(\phi\seq[+]  \psi) \tag{Defintion of $\REL(P)$} 
\end{align}
\end{proof}

By \cref{prop_inclusion_PD_BHD,thm_adj_BHD_PB} proving the following result amounts to a few routine checks.
\begin{theorem}\label{thm:main}
The adjunction in \eqref{adj_CB_EED}, restricts to the adjunction below on the left.
\[\begin{tikzcd}
	\PeirceBic && \BHD
	\arrow["\HM"',{name=0, anchor=center, inner sep=0}, curve={height=20pt}, hook, from=1-1, to=1-3] 
	\arrow["\REL"',{name=1, anchor=center, inner sep=0}, curve={height=20pt}, from=1-3, to=1-1]
	\arrow["\dashv"{anchor=center, rotate=-90}, draw=none, from=1, to=0]
\end{tikzcd}
\text{Thus, by \Cref{thm_equiv_FOBic_PeirceBic}, there is an adjunction }
\begin{tikzcd}
	\FOBic && \BHD
	\arrow[""',{name=0, anchor=center, inner sep=0}, curve={height=20pt}, hook, from=1-1, to=1-3] 
	\arrow[""',{name=1, anchor=center, inner sep=0}, curve={height=20pt}, from=1-3, to=1-1]
	\arrow["\dashv"{anchor=center, rotate=-90}, draw=none, from=1, to=0]
\end{tikzcd}\text{.}
\]
\end{theorem}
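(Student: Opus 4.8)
\textbf{Proof plan for \cref{thm:main}.}
The strategy is to show that the adjunction $\HM \dashv \REL$ of \eqref{adj_CB_EED} restricts along the full subcategory inclusions $\PeirceBic \hookrightarrow \CartBic$ and $\BHD \hookrightarrow \EED$. Since $\PeirceBic$ is a full subcategory of $\CartBic$ (a morphism of peircean bicategories is simply a morphism of cartesian bicategories that additionally preserves negation, but this preservation is automatic once the objects are peircean --- see below) and $\BHD$ is a full subcategory of $\EED$ (a morphism of boolean hyperdoctrines is a morphism of e.e.\ doctrines whose $\mathfrak b$-components preserve the boolean structure, which again will be automatic here), the general fact to invoke is: if $L \dashv R$ with $L\colon \mathcal A \to \mathcal B$, and $\mathcal A' \subseteq \mathcal A$, $\mathcal B' \subseteq \mathcal B$ are full subcategories such that $L$ restricts to $L'\colon \mathcal A' \to \mathcal B'$ and $R$ restricts to $R'\colon \mathcal B' \to \mathcal A'$, then $L' \dashv R'$ with the unit and counit obtained by restriction. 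Concretely, the four things to check are: (i) $\HM$ sends peircean bicategories to boolean hyperdoctrines; (ii) $\REL$ sends boolean hyperdoctrines to peircean bicategories; (iii) $\HM$ on morphisms of peircean bicategories lands among morphisms of boolean hyperdoctrines, and dually for $\REL$; (iv) the unit $\eta_{\Cat C}\colon \Cat C \to \REL(\HM(\Cat C))$ and counit $\varepsilon_P\colon \HM(\REL(P)) \to P$ of \eqref{adj_CB_EED}, when $\Cat C$ is peircean and $P$ is a boolean hyperdoctrine, are morphisms in $\PeirceBic$ and $\BHD$ respectively. The triangle identities are then inherited verbatim from \eqref{adj_CB_EED}.

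Step (i) is exactly \cref{prop_inclusion_PD_BHD}, and step (ii) is exactly \cref{thm_adj_BHD_PB}, so both are already done. For (iii): a morphism $F\colon \Cat C \to \Cat D$ of peircean bicategories is a morphism of cartesian bicategories with $F(\nega c) = \nega{F(c)}$; under $\HM$ it becomes the pair $(\tilde F, \mathfrak b^F)$ with $\mathfrak b^F_X(c) = F(c)$, so the requirement that $\mathfrak b^F_X$ be a morphism of boolean algebras reduces to: $\mathfrak b^F_X$ preserves meets and top (already known, as it is a morphism of e.e.\ doctrines by \eqref{adj_CB_EED}) and preserves negation, which is precisely $F(\nega c) = \nega{F(c)}$. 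Conversely, for a morphism $(F,\mathfrak b)\colon P \to Q$ of boolean hyperdoctrines, $\REL(F,\mathfrak b)$ acts on arrows $\phi \in P(X\times Y)$ as $\mathfrak b_{X\times Y}(\phi)$; since $\mathfrak b_{X\times Y}$ is a morphism of boolean algebras it preserves negation, so $\REL(F,\mathfrak b)(\nega\phi) = \nega{\REL(F,\mathfrak b)(\phi)}$, i.e.\ $\REL(F,\mathfrak b)$ is a morphism of peircean bicategories. For (iv), note that $\eta_{\Cat C}$ and $\varepsilon_P$ are already morphisms of cartesian bicategories (resp.\ of e.e.\ doctrines); one checks that they preserve negation (resp.\ that their $\mathfrak b$-components preserve the boolean structure). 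Here one can argue abstractly: $\REL(\HM(\Cat C))$ is a peircean bicategory by (i)+(ii), $\HM(\REL(P))$ is a boolean hyperdoctrine by (ii)+(i), and in a peircean bicategory (resp.\ boolean hyperdoctrine) negation is \emph{uniquely determined} by the cartesian-bicategory (resp.\ e.e.-doctrine) structure together with the boolean structure --- in fact $\nega{(\cdot)}$ is the complement in the boolean algebra on each homset, which is unique once it exists --- so any morphism of the underlying structures that is compatible with the boolean order automatically preserves it. Alternatively, one verifies preservation of negation directly from the explicit formulas for $\eta$ and $\varepsilon$ given in \cite{bonchi2021doctrines}; this is the "few routine checks" referred to in the statement.

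The second assertion, that there is an adjunction $\FOBic \leftrightarrows \BHD$, is then immediate: by \cref{thm_equiv_FOBic_PeirceBic} there is an isomorphism of categories $\FOBic \cong \PeirceBic$, and composing the equivalence (isomorphism) with the restricted adjunction $\HM \dashv \REL\colon \PeirceBic \leftrightarrows \BHD$ yields an adjunction between $\FOBic$ and $\BHD$, with left adjoint the composite $\FOBic \xrightarrow{\cong} \PeirceBic \xhookrightarrow{\HM} \BHD$ and right adjoint $\BHD \xrightarrow{\REL} \PeirceBic \xrightarrow{\cong} \FOBic$.

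\textbf{Main obstacle.} The only genuinely non-formal point is (iv): confirming that the unit and counit of \eqref{adj_CB_EED} are compatible with negation. The abstract uniqueness-of-complements argument handles this cleanly provided one has already established (via \cref{prop_inclusion_PD_BHD} and \cref{thm_adj_BHD_PB}) that $\REL(\HM(\Cat C))$ and $\HM(\REL(P))$ are peircean/boolean; the subtlety is to make sure the negation \emph{used} in those constructions agrees with the complement in the ambient boolean algebra, which is true by construction in \cref{def:peircean-bicategory} and \cref{def:hyperdoctrine}. Everything else is bookkeeping inherited from \cite{bonchi2021doctrines}.
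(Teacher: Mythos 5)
Your decomposition is the same as the paper's: objects via \cref{prop_inclusion_PD_BHD} and \cref{thm_adj_BHD_PB}, morphisms via preservation of negation by $F$ (resp.\ by the components $\mathfrak{b}_{X\times Y}$, which are boolean algebra morphisms by definition), unit and counit inherited from \eqref{adj_CB_EED}, and the $\FOBic$ statement obtained by composing with the isomorphism of \cref{thm_equiv_FOBic_PeirceBic}. Steps (i)--(iii) match the paper's proof essentially verbatim.

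The one place where your argument is imprecise is the abstract justification of step (iv). Uniqueness of complements in a boolean algebra does \emph{not} imply that ``any morphism of the underlying structures that is compatible with the boolean order automatically preserves'' negation: an inf-semilattice morphism (or a mere monotone map) between boolean algebras need not preserve complements --- for the uniqueness argument to apply the map must also preserve $\vee$ and $\bot$, which is essentially what one is trying to establish. What actually closes this step, and what the paper uses, is that the relevant maps are \emph{isomorphisms}: the counit of \eqref{adj_CB_EED} is an isomorphism of cartesian bicategories, and the components $\rho_X$ of the unit are order-isomorphisms $P(X)\cong P(X\times I)$; an order-isomorphism between boolean algebras determines, and hence preserves, the entire boolean structure, so negation comes along for free. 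Your fallback of checking the explicit formulas for $\eta$ and $\varepsilon$ would also work, but the isomorphism observation is the clean route; with that substitution your proof coincides with the paper's.
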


\section{Boolean Hyperdoctrines Representing First-Order Bicategories}\label{sec:equivalence}\label{sec:the two equivalences}
As anticipated in \S \ref{sec:adjunction}, the adjunction in \eqref{adj_CB_EED} becomes an equivalence for certain well-behaved doctrines. Definitions \ref{def_comp_diagonals} and \ref{def:RUC} state the conditions that such doctrines must satisfy.

\begin{definition}\label{def_comp_diagonals}
An elementary and existential doctrine $\doctrine{\Cat{C}}{P}$ has \emph{comprehensive diagonals} if for the equality predicate $\delta_X\in P(X)$ it holds that $P_{\Delta_X}(\delta_X)=\top_X$ and every arrow $\freccia{Y}{f}{X\times X}$ such that $P_f(\delta_X)=\top_Y$ factors (uniquely) through $\Delta_X$ .%
\end{definition}
Intuitively, a doctrine has comprehensive diagonals if its equality is \emph{extensional}, namely if a formula $t_1=t_2$ is true, then the terms $t_1$ and $t_2$ are syntactically equal. In the language of cartesian bicategories, for two maps $t_1,t_2$, this can be stated by means of diagrams as
\begin{equation}\label{eq:extensionalitydiag}
    \text{if }\seqCirc[+]{t_1}{t_2}[X][X][f][fop] = \topCirc[X][X] \text{ then }\funcCirc[+]{t_1}[X][Y] = \funcCirc[+]{t_2}[X][Y]\text{.}
\end{equation}
While it is sometimes meaningful to consider syntactic doctrines (e.g. \cref{ex:syntacticdoc}) in which the equality is not extensional, in several semantical doctrines this condition is satisfied. %
\begin{definition}\label{def:RUC}
Let $\doctrine{\Cat{C}}{P}$ be an elementary existential doctrine. We say that $P$ satisfies the \emph{Rule of Unique Choice} (RUC) if for every entire functional element $\phi$ in $P(X\times Y)$ there exists an arrow $\freccia{X}{f}{Y}$ such that $\top_X\leq P_{\angbr{\id[+][X]}{f}}{(\phi)}$.
\end{definition}
The reader can think that a doctrine has (RUC) if for every element (intuitively formula) that is entire and functional, there exists an arrow in $\Cat{C}$ (intuitively a term) that represents it.

\begin{example}
   The doctrine $\doctrine{\set}{\Pow}$ has comprehensive diagonals, and  it satisfies the (RUC) (since every functional and total relation can be represented by a function). More generally, every 
 subobject doctrine $\doctrine{\Cat{C}}{{\Sub_{\Cat{C}}}}$ on a regular category, as presented in  \Cref{ex_sub_on_regular} satisfies the (RUC) and it has comprehensive diagonals, as observed in \cite{TECH}.
\end{example}

\begin{example}
The doctrine  $\doctrine{\mathsf{Map}(\Cat{C})}{\Cat{C}[-,I]}$ presented in \Cref{example_cartbicat_provide_eed} satisfies the (RUC) and it has comprehensive diagonals, as proved in \cite{bonchi2021doctrines}. The reader can find a diagrammatic proof of \eqref{eq:extensionalitydiag} in \Cref{prop:func ext} in \cref{app:background}.
\end{example}

Hereafter --and in the equivalence in \eqref{eq:theequivalence}-- $\overline{\EED}$ is the full subcategory of $\EED$ whose objects are doctrines satisfying (RUC) and with comprehensive diagonals. Similarly $\overline{\BHD}$ is the full subcategory of $\BHD$ whose objects are boolean hyperdoctrines satisfying (RUC) and with comprehensive diagonals.

By means of \cref{thm:main}, it is easy to prove that the equivalence in \eqref{eq:theequivalence} restricts as follows.
\begin{theorem}\label{thm:theequivalence}
$\PeirceBic \equiv \overline{\BHD}$ and thus, by \Cref{thm_equiv_FOBic_PeirceBic}, $\FOBic\equiv  \overline{\BHD}$.
\end{theorem}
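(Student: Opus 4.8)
The plan is to obtain the first equivalence by restricting the adjunction $\HM \dashv \REL$ one further step: by \cref{thm:main} it already restricts from \eqref{adj_CB_EED} to an adjunction between $\PeirceBic$ and $\BHD$, and I claim this in turn restricts to an \emph{equivalence} between $\PeirceBic$ and $\overline{\BHD}$. The statement $\FOBic \equiv \overline{\BHD}$ is then immediate by composing with the isomorphism $\FOBic \cong \PeirceBic$ of \cref{thm_equiv_FOBic_PeirceBic}.

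The first thing I would check is that the two functors land in the intended subcategories. For a peircean bicategory $\Cat{C}$, \cref{prop_inclusion_PD_BHD} gives that $\HM(\Cat{C})$ is a boolean hyperdoctrine; moreover $\HM(\Cat{C})$ is the doctrine of \cref{example_cartbicat_provide_eed}, which satisfies (RUC) and has comprehensive diagonals by the results of \cite{bonchi2021doctrines} recalled after \cref{def:RUC}, so $\HM(\Cat{C}) \in \overline{\BHD}$. Conversely, for $P \in \overline{\BHD}$, \cref{thm_adj_BHD_PB} gives $\REL(P) \in \PeirceBic$. Hence $\HM$ corestricts to $\PeirceBic \to \overline{\BHD}$ and $\REL$ restricts to $\overline{\BHD} \to \PeirceBic$; since $\overline{\BHD}$ is a full subcategory of $\BHD$, the adjunction of \cref{thm:main} restricts to an adjunction $\HM \dashv \REL$ between $\PeirceBic$ and $\overline{\BHD}$, whose unit and counit are the restrictions of those of \eqref{adj_CB_EED}.

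It remains to see that this restricted unit and counit are isomorphisms. For the unit $\eta_{\Cat{C}}\colon \Cat{C} \to \REL\HM(\Cat{C})$: since the equivalence $\CartBic \equiv \overline{\EED}$ of \eqref{eq:theequivalence} is realised by $\HM$ and $\REL$ and $\HM$ is a full embedding, $\eta_{\Cat{C}}$ is an iso of cartesian bicategories for \emph{every} $\Cat{C}$, in particular for every peircean one; and an iso of cartesian bicategories between peircean bicategories automatically preserves negation (it is an order-isomorphism on each homset, hence preserves the boolean structure determined by the meet-semilattice and the complement), so it is an iso in $\PeirceBic$. For the counit $\epsilon_P \colon \HM\REL(P) \to P$: by definition $\overline{\BHD} \subseteq \overline{\EED}$, and on $\overline{\EED}$ the counit is an iso in $\EED$ by \eqref{eq:theequivalence}; since both $\HM\REL(P)$ and $P$ are boolean hyperdoctrines, an $\EED$-isomorphism between them is automatically a $\BHD$-isomorphism, by the same remark about order-isomorphisms of boolean algebras. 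Thus $\epsilon_P$ is an iso in $\overline{\BHD}$, yielding $\PeirceBic \equiv \overline{\BHD}$, and then $\FOBic \equiv \overline{\BHD}$ by \cref{thm_equiv_FOBic_PeirceBic}.

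I do not expect a genuinely hard step here: the argument is a purely formal restriction of an idempotent adjunction already known to be an equivalence on $\overline{\EED}$. The only points needing care are (i) confirming that $\HM$ of a peircean bicategory has (RUC) and comprehensive diagonals, which is inherited from \cite{bonchi2021doctrines}, and (ii) the minor observations that an iso of cartesian bicategories (resp. of $\EED$) between objects carrying a boolean structure on their homsets automatically respects that structure, which is what lets the unit and counit be read as isomorphisms in $\PeirceBic$ and $\overline{\BHD}$ rather than merely in $\CartBic$ and $\EED$.
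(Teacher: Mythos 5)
Your proposal is correct and follows essentially the same route as the paper: both restrict the equivalence $\CartBic \equiv \overline{\EED}$ of \eqref{eq:theequivalence} using \cref{prop_inclusion_PD_BHD} (plus the (RUC)/comprehensive-diagonals example) on one side and \cref{thm_adj_BHD_PB} on the other, the only cosmetic difference being that the paper phrases the conclusion as ``$\HM$ is fully faithful and essentially surjective onto $\overline{\BHD}$'' while you phrase it as ``the restricted unit and counit are isomorphisms''. Note only that the paper's adjunction is $\REL \dashv \HM$, so what you call the unit and counit are swapped relative to the paper's convention; this does not affect the substance of your argument.
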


\section{Comparing Boolean Categories with First-Order Bicategories}\label{sec:tabulation}
In this section we show that boolean categories, described in \cref{ex:booleancat}, correspond to those fo-bicategories that are \emph{functionally complete}, a property introduced in~\cite{carboni1987cartesian}. %
\begin{definition}
A cartesian bicategory $(\Cat{C}, \copier[+], \cocopier[+])$ is \emph{functionally complete} if for every arrow $\freccia{X}{r}{I}$ there exists a map $\freccia{X_r}{i}{X}$, called \emph{tabulation of} $r$, such that %
\[    \seqCirc[+]{i}{i}[X_r][X_r][f][fop]  = \idCirc[+][X_r] \quad\text{ and }\quad \tabulation[+]{i}[X][fop] \;\;= \predicateCirc[+]{r}[X] \; .\]
\end{definition}
\begin{example}
    Let us consider the category $\Relp$. The tabulation of a relation $\freccia{X}{r}{I}$ is given by the subset $X_r\subseteq X$ of those elements of $X$ on which $r$ is defined, together with the trivial inclusion $i\colon X_r\hookrightarrow X$. 
\end{example}
The previous example emphasizes the essential intuition behind the concept of tabulation, namely, that a tabulation represents the ``domain of definition of a relation''. %
A notion aiming to abstract the same concept has been introduced in the context of fibrations in~\cite{CLTT} and, as particular instance, in the context of doctrines in~\cite{QCFF}, under the name of \emph{comprehensions}.

\begin{definition}\label{def_doct_with_comp}
    Let $\doctrine{\Cat{C}}{P}$ be an elementary and existential doctrine and $\alpha$ be an element of $P(X)$.  A \emph{comprehension} of $\alpha$ is an arrow $\freccia{X_{\alpha}}{{\comp{\alpha}}}{X}$ such that $P_{\comp{\alpha}}(\alpha)=\top_{X_{\alpha}}$ and, for every $\freccia{Y}{f}{X}$ such that $P_f(\alpha)=\top_Y$, there exists a unique arrow $\freccia{Y}{g}{X_{\alpha}}$ such that $f=g \seq[+]\comp{\alpha} $.  We say that $P$ \emph{has comprehensions} if every $\alpha$ has a comprehension. We say that $P$ has \emph{full comprehensions} if it has comprehensions and, $\alpha\leq \beta$ whenever $\comp{\alpha}$ factors through $\comp{\beta}$.
    \end{definition}
    In the light of the previous definition, we can rephrase \Cref{def_comp_diagonals}, saying that an elementary doctrine has comprehensive diagonals if $\Delta_X$ is the comprehension of $\delta_X$.
\begin{example}
    Every subobject doctrine $\doctrine{\Cat{C}}{{\Sub_{\Cat{C}}}}$ on a regular category, as presented in  \Cref{ex_sub_on_regular} has full comprehensions, as observed in \cite{TECH}. In this case, the comprehension of a subobject is given by the subobject itself.
\end{example}
   
We prove that the notion of tabulation and that of full comprehension happen to be equivalent when we consider the doctrines associated with a cartesian bicategory.

\begin{theorem}\label{thm_func_comp_cb_iff_comprehensions}
A cartesian bicategory $(\Cat{C}, \copier[+], \cocopier[+])$ is functionally complete if and only if the elementary and existential doctrine $\doctrine{\map(\Cat{C})}{\Cat{C}[-,I]}$ has full comprehensions. 
\end{theorem}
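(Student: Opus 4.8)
The plan is to observe that, for a fixed arrow $\freccia{X}{r}{I}$ — equivalently, an element $r$ of $P(X)$ where $P=\Cat{C}[-,I]$ is the hom-functor of $\HM(\Cat{C})$ — a tabulation of $r$ and a comprehension of $r$ are essentially \emph{the same datum}, namely a map $\freccia{X_r}{i}{X}$, with the two equations defining a tabulation being a repackaging of the comprehension equation $P_i(r)=\top_{X_r}$ together with the comprehension universal property. So I would prove the two implications by translating between these presentations. The standing tools are: \cref{prop:map adj} (a map $i$ is left adjoint to $\op i$, so $\id[+][X_r]\leq i\seq[+]\op i$ and $\op i\seq[+]i\leq\id[+][X]$); the fact that $\discard[+][X]$ is the top of $\Cat{C}[X,I]$; that $\op{(\cdot)}$ is a contravariant identity-on-objects isomorphism; that two maps comparable in the order are equal; that $\HM(\Cat{C})$ has comprehensive diagonals (so $\Delta_X=\comp{\delta_X}$), and that in this doctrine $\exists_g(\beta)=\op g\seq[+]\beta$ by \cref{example_cartbicat_provide_eed}; and finally that a cartesian bicategory is compact closed with $\delta_X$ as its ``cap'', so that $h\mapsto(\id[+][X_r]\tensor[+]h)\seq[+]\delta_{X_r}$ is injective on $\Cat{C}[X_r,X_r]$.

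\emph{($\Rightarrow$)} Assume $\Cat{C}$ functionally complete. Given $\alpha\in P(X)=\Cat{C}[X,I]$, let $\freccia{X_\alpha}{i}{X}$ be a tabulation of $\alpha$; I claim it is a full comprehension of $\alpha$. First, $i\seq[+]\alpha=i\seq[+]\op i\seq[+]\discard[+][X_\alpha]=\id[+][X_\alpha]\seq[+]\discard[+][X_\alpha]=\discard[+][X_\alpha]=\top_{X_\alpha}$, using the two tabulation equations; thus $P_i(\alpha)=\top_{X_\alpha}$. For the universal property, given a map $\freccia{Y}{f}{X}$ with $P_f(\alpha)=\top_Y$, put $g\defeq f\seq[+]\op i\colon Y\to X_\alpha$. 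One checks $g$ is a map (it is total because $g\seq[+]\discard[+][X_\alpha]=f\seq[+]\alpha=\top_Y$, and single-valued because $\op g\seq[+]g=i\seq[+]\op f\seq[+]f\seq[+]\op i\leq i\seq[+]\op i=\id[+][X_\alpha]$), and $g\seq[+]i=f\seq[+]\op i\seq[+]i\leq f$; since $g\seq[+]i$ and $f$ are both maps, $g\seq[+]i=f$. Uniqueness is immediate: any map $g'$ with $g'\seq[+]i=f$ satisfies $g'=g'\seq[+]i\seq[+]\op i=f\seq[+]\op i=g$. Finally, comprehension is full: if $i_\alpha=h\seq[+]i_\beta$ for a map $h$, then $\alpha=\op{i_\alpha}\seq[+]\discard[+][X_\alpha]=\op{i_\beta}\seq[+](\op h\seq[+]\discard[+][X_\alpha])\leq\op{i_\beta}\seq[+]\discard[+][X_\beta]=\beta$, using monotonicity of $\seq[+]$ and that $\discard[+][X_\beta]$ is top.

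\emph{($\Leftarrow$)} Assume $\HM(\Cat{C})$ has full comprehensions. Given $\freccia{X}{r}{I}$, viewed as $r\in P(X)$, let $c\defeq\comp{r}\colon X_r\to X$ be its comprehension, a map. Since comprehension is full, $\exists_{\comp{r}}(\top_{X_r})=r$, and by \cref{example_cartbicat_provide_eed} this reads $\op c\seq[+]\discard[+][X_r]=r$, which is the second tabulation equation. It remains to prove $c\seq[+]\op c=\id[+][X_r]$. From the uniqueness clause, $c$ is monic in $\map(\Cat{C})$. Let $\kappa\defeq P_{c\tensor[+]c}(\delta_X)=(c\tensor[+]c)\seq[+]\delta_X\in P(X_r\times X_r)$ and let $\comp{\kappa}\colon K\to X_r\times X_r$ be its comprehension; writing $p_j\defeq\comp{\kappa}\seq[+]\pi_j$, the equation $P_{\comp{\kappa}}(\kappa)=\top_K$ unfolds to $\langle p_1\seq[+]c,\,p_2\seq[+]c\rangle\seq[+]\delta_X=\top_K$, which by comprehensive diagonals forces $p_1\seq[+]c=p_2\seq[+]c$, hence $p_1=p_2$ since $c$ is monic. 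Thus $\comp{\kappa}=\langle p_1,p_2\rangle$ factors through $\Delta_{X_r}=\comp{\delta_{X_r}}$, so fullness yields $\kappa\leq\delta_{X_r}$; as $\delta_{X_r}\leq P_{c\tensor[+]c}(\delta_X)$ in any elementary doctrine, $\kappa=\delta_{X_r}$. Since $(c\tensor[+]c)\seq[+]\delta_X=(\id[+][X_r]\tensor[+](c\seq[+]\op c))\seq[+]\delta_{X_r}$ by the cartesian-bicategory calculus, we get $(\id[+][X_r]\tensor[+](c\seq[+]\op c))\seq[+]\delta_{X_r}=\delta_{X_r}$, and injectivity of $h\mapsto(\id[+][X_r]\tensor[+]h)\seq[+]\delta_{X_r}$ gives $c\seq[+]\op c=\id[+][X_r]$.

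\emph{Main obstacle.} Translating the defining \emph{equations} is routine; the delicate point is reconciling the two \emph{universal properties}. In the $(\Rightarrow)$ direction this is the verification that $g=f\seq[+]\op i$ is a genuine map (totality coming from $P_f(\alpha)=\top$). In the $(\Leftarrow)$ direction it is the derivation of $c\seq[+]\op c=\id[+][X_r]$ from comprehension data: this is where the hypotheses are used in full strength — a bare comprehension would not suffice, and one needs both comprehensive diagonals of $\HM(\Cat{C})$ and fullness, exploited through the kernel-pair argument above together with the compact-closed presentation of $\op{(\cdot)}$ via the copiers and discards.
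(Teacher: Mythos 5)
Your proof is correct, and while the forward direction matches the paper's argument, the backward direction takes a genuinely different (and more detailed) route. For ($\Rightarrow$) the paper invokes \cref{lem_carboni_tabulations} (the Carboni--Walters tabulation lemma) as a black box; you instead re-derive it inline by exhibiting the factorization $g = f \seq[+] \op{i}$ and checking directly that it is a map — same content, just self-contained. Your fullness argument is identical to the paper's. For ($\Leftarrow$) both proofs extract $\op{\comp{r}} \seq[+] \discard[+][X_r] = r$ from $\exists_{\comp{r}}(\top_{X_r}) = r$ (the paper via \cref{lem_a_is_exists_comp_top}), but then the paper disposes of the remaining equation $\comp{r} \seq[+] \op{\comp{r}} = \id[+][X_r]$ in a single sentence, asserting it holds ``because comprehensions are monomorphisms in $\map(\Cat{C})$.'' That implication — a monomorphism of $\map(\Cat{C})$ satisfies $c \seq[+] \op{c} = \id[+]$ — is not automatic in an arbitrary cartesian bicategory, so the paper is compressing an argument that really does need the comprehension structure. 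Your kernel-pair argument supplies exactly that missing content: you form $\kappa = P_{c\tensor[+]c}(\delta_X)$, use comprehensive diagonals and mono-ness to show $\comp{\kappa}$ factors through $\Delta_{X_r}$, invoke fullness to get $\kappa = \delta_{X_r}$, and then use the cap-sliding and cap-injectivity facts (\cref{prop:sliding through cap} and \cref{prop:cap property}) to conclude. This buys a complete, checkable derivation of the one step the paper leaves implicit, at the cost of some extra diagrammatic bookkeeping; the only dependencies you should make sure to cite are that $\HM(\Cat{C})$ has comprehensive diagonals (established in the paper's examples via~\cite{bonchi2021doctrines}) and the standard inequality $\delta_{X_r} \leq P_{c\tensor[+]c}(\delta_X)$ of elementary doctrines.
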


By combining \cref{thm_func_comp_cb_iff_comprehensions} with \cref{thm:theequivalence}  and Proposition 5.3 from \cite{TECH}, that characterizes doctrines satisfying (RUC) with comprehensive diagonals and full comprehensions (see also \Cref{app_section_tablutation_and equivalences}), we obtain equivalences between  $\FOBic_f$, the full subcategory of $\FOBic$ of the functionally complete fo-bicategories, $\overline{\BHD}_c$, the full subcategory of $\overline{\BHD}$ of boolean doctrines with full comprehensions and $\mathbb{BC}$, the category of boolean categories. %

\begin{corollary}\label{cor:final}
    $\FOBic_f \equiv \overline{\BHD}_c \equiv \mathbb{BC}$.
\end{corollary}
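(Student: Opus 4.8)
The plan is to derive \cref{cor:final} by restricting the two equivalences already in hand — $\FOBic \equiv \overline{\BHD}$ from \cref{thm:theequivalence} for the left-hand equivalence, and Proposition~5.3 of \cite{TECH} together with the subobject-doctrine construction for the right-hand one — and then composing them. Throughout, the only genuinely new ingredient is the object-wise identification of the relevant full subcategories.

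\emph{Step 1: $\FOBic_f \equiv \overline{\BHD}_c$.} Recall that the equivalence $\FOBic \equiv \overline{\BHD}$ of \cref{thm:theequivalence} is the composite of the isomorphism $\FOBic \cong \PeirceBic$ of \cref{thm_equiv_FOBic_PeirceBic} with the restriction of the adjunction $\HM \dashv \REL$ to $\PeirceBic$ and $\overline{\BHD}$. Since $\FOBic_f \subseteq \FOBic$ and $\overline{\BHD}_c \subseteq \overline{\BHD}$ are both \emph{full} subcategories, it suffices to check that this equivalence matches them on objects; fullness then upgrades the object-wise match to an equivalence of the subcategories. For the forward direction: a fo-bicategory is in particular a cartesian bicategory $\Cat{C}$, and $\HM(\Cat{C})$ is the doctrine $\doctrine{\map(\Cat{C})}{\Cat{C}[-,I]}$, so by \cref{thm_func_comp_cb_iff_comprehensions} $\Cat{C}$ is functionally complete iff $\HM(\Cat{C})$ has full comprehensions, whence $\HM$ sends $\FOBic_f$ into $\overline{\BHD}_c$. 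For the backward direction: given $P \in \overline{\BHD}_c$, the counit of the equivalence gives $\HM(\REL(P)) \cong P$ in $\overline{\BHD}$; having full comprehensions is stable under equivalence of doctrines, so $\HM(\REL(P))$ has full comprehensions, and hence by \cref{thm_func_comp_cb_iff_comprehensions} the cartesian bicategory $\REL(P)$ is functionally complete, i.e.\ $\REL$ sends $\overline{\BHD}_c$ into $\FOBic_f$. Therefore the equivalence of \cref{thm:theequivalence} restricts to $\FOBic_f \equiv \overline{\BHD}_c$.

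\emph{Step 2: $\overline{\BHD}_c \equiv \mathbb{BC}$.} On the one hand, $\Cat{B} \mapsto \Sub_{\Cat{B}}$ sends a boolean category to its subobject hyperdoctrine, which — by the boolean refinement of \cref{ex_sub_on_regular} and the fact that in a coherent category subobjects furnish comprehensions — satisfies (RUC), has comprehensive diagonals, and has full comprehensions, hence lands in $\overline{\BHD}_c$; this is the object part of a functor $\mathbb{BC} \to \overline{\BHD}_c$. On the other hand, Proposition~5.3 of \cite{TECH} characterizes the elementary and existential doctrines satisfying (RUC) with comprehensive diagonals and full comprehensions as exactly those equivalent to the subobject doctrine of a regular category, and does so equivalentially. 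Carrying along the boolean structure on the fibres $P(X)$ on both sides — which on the categorical side amounts to each subobject lattice being a boolean algebra, i.e.\ to every subobject having a complement, which is precisely what distinguishes $\mathbb{BC}$ among regular/coherent categories — this restricts to an equivalence $\overline{\BHD}_c \equiv \mathbb{BC}$. Composing Steps~1 and~2 yields $\FOBic_f \equiv \overline{\BHD}_c \equiv \mathbb{BC}$.

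The logical skeleton is light — it is just ``restrict and compose two known equivalences'' — so the main obstacle I expect is the bookkeeping in Step~2: one must unwind precisely how Proposition~5.3 of \cite{TECH} is stated (in particular, whether (RUC) together with full comprehensions already recovers the regular-epi/mono factorization and the coherent structure needed to land in $\mathbb{BC}$, rather than only among regular categories) and verify that the boolean-algebra structure on the doctrine fibres transports to and from complemented subobjects compatibly with morphisms on both sides. A secondary but routine point in Step~1 is that $\HM$ and $\REL$ preserve the boolean structure and the defining properties of $\overline{\BHD}$; this is already guaranteed by \cref{thm:main,thm:theequivalence}, so no fresh verification is needed there.
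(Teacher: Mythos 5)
Your proposal is correct and follows essentially the same route as the paper: the left equivalence is obtained by restricting $\FOBic \equiv \overline{\BHD}$ from \cref{thm:theequivalence} along the correspondence between functional completeness and full comprehensions (\cref{thm_func_comp_cb_iff_comprehensions}), and the right equivalence is the boolean specialization of the $\mathbb{REG} \equiv \overline{\EED}_c$ characterization from Proposition~5.3 of \cite{TECH}, using that a boolean category is by definition one whose subobject doctrine is a boolean hyperdoctrine. The concern you flag about Step~2 is resolved exactly as you suggest: booleanness of the fibres corresponds to complemented subobjects, which is precisely what singles out $\mathbb{BC}$ inside the regular categories, so the [TECH] equivalence restricts without further work.
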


\section{Conclusions, Related and Future work}\label{sec:conclusion}

Theorems \ref{thm:main}, \ref{thm:theequivalence} and Corollary \ref{cor:final} provide a solid bridge between functional and relational approaches to classical logic. The former rely on categorical structures that are usually defined by means of exactness properties; the latter on fo-bicategories which enjoy a purely equational presentation, much in the spirit of Boole's  algebra and Peirce's calulus. 

To achieve our result, we found it extremely convenient to introduce the notion of peircean bicategories that, by  \cref{thm_equiv_FOBic_PeirceBic}, provide a far handier characterisation of fo-bicategories.

The isomorphism between fo-bicategories and peircean bicategories might also be useful to establish a correspondence with \emph{allegories} \cite{freyd1990categories}, likely the most influential approach to categorical relational algebra. Since cartesian bicategories are equivalent to unitary pretabular allegories \cite{knijnenburg1994two}, we expect that such allegories where, additionally, homsets carry boolean algebras and the negations satisfy \eqref{eq:prop mappe} are equivalent to fo-bicategories.  Despite searching the literature on allegories, we did not find analogous structures. Interestingly, the property \eqref{eq:prop mappe} can be proven in any Peirce allegories, as shown in Proposition 4.6.1 in~\cite{olivier1997peirce}.

Boolean hyperdoctrines are used in~\cite{brady2000_a-categorical-interpretation-of-c.s.-peirces-propositional-logic-alpha} as a categorical treatment of another work of Peirce: \emph{existential graphs}~\cite{roberts1973_the-existential-graphs-of-charles-s.-peirce}. While the latter share some similarities with the graphical language of fo-bicategories there is one notable difference: negation is a primitive operator rather than a derived one, as it happens for instance also in~\cite{Haydon2020} and \Cref{def:peircean-bicategory}. In~\cite{bonchi2024diagrammatic} and in \S \ref{sec:pb}, it is emphasised how this choice makes the resulting calculus less algebraic in flavour, having to deal with convoluted rules such as the one for (de)iteration or properties which are not purely equational, such as~\eqref{eq:prop mappe}.
Inspired by~\cite{brady2000_a-categorical-interpretation-of-c.s.-peirces-propositional-logic-alpha}, another graphical language~\cite{mellies2016bifibrational} akin to Peirce's graphs is based on a decomposition of a hyperdoctrine into a bifibration. In this work, the categorical treatment revolves around the notion of monoidal \emph{chiralities}~\cite{mellies2016dialogue}, which are much more closer in spirit to fo-bicategories. We believe that our results might set an initial step towards a connection between fo-bicategories and chiralities.

A recent work~\cite{dagnino_et_al:LIPIcs.FSCD.2023.25} proposes a relational understanding of doctrines. However, these corresponds to the regular fragment of first-order logic, and thus it might by intriguing to understand the role of the additional black structure of first-order bicategories in this setting.
Finally, it is also worth mentioning a closely related research line, exemplified by works such as \cite{hyland04,PYM_2007}, along with the references therein. These work primarily focus on the categorical approach to classical proof theory, involving suitable variants of poset-enriched categories.

As future work we also aim to investigate how our characterizations can be extended to higher-order classical logic, which is categorically represented through the notion of \emph{tripos}~\cite{TT,TTT}. Indeed, we believe that the constructions and results presented in this work, together with notion of tripos, can serve as a guide for defining a variant of fo-bicategories --hopefully, purely equational-- capable of representing higher-order classical logic.

An important result in the theory of databases~\cite{chandra1977optimal} shows that the problem of query inclusion (entailment of regular-logic formulas) is equivalent to the existence of  morphisms of hypergraphs. This combinatorial characterisation found a neat algebraic understanding in~\cite{GCQ} by means of cartesian bicategories. We hope that~\cref{thm_equiv_FOBic_PeirceBic} may lead to an analogous combinatorial understanding of first-order logic.

\bibliography{references}

\appendix

\section{Appendix to Section~\ref{sec:background}}\label{app:background}

In this appendix we collect some useful results about (co)cartesian bicategories. Moreover, we report in diagrammatic form the axioms of cocartesian bicategories (Figure~\ref{fig:cocb axioms}) and we summarise  in Table~\ref{table:daggerproperties} the properites of the isomorphism $\op{(\cdot)}\colon \Cat{C} \to \opposite{\Cat{C}}$.

\begin{figure}[t]
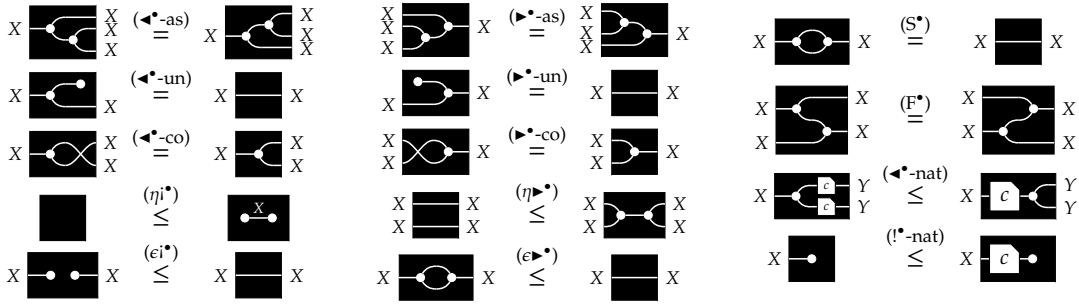

    \mylabel{ax:comMinusAssoc}{\ensuremath{\copier[-]}\text{-as}}
    \mylabel{ax:comMinusUnit}{\ensuremath{\copier[-]}\text{-un}}
    \mylabel{ax:comMinusComm}{\ensuremath{\copier[-]}\text{-co}}
    \mylabel{ax:monMinusAssoc}{\ensuremath{\cocopier[-]}\text{-as}}
    \mylabel{ax:monMinusUnit}{\ensuremath{\cocopier[-]}\text{-un}}
    \mylabel{ax:monMinusComm}{\ensuremath{\cocopier[-]}\text{-co}}
    \mylabel{ax:minusSpecFrob}{\text{S}\ensuremath{^\bullet}}
    \mylabel{ax:minusFrob}{\text{F}\ensuremath{^\bullet}}
    \mylabel{ax:comMinusLaxNat}{\ensuremath{\copier[-]}\text{-nat}}
    \mylabel{ax:discMinusLaxNat}{\ensuremath{\discard[-]}\text{-nat}}
    \mylabel{ax:minusCodiscDisc}{\ensuremath{\eta\codiscard[-]}}
    \mylabel{ax:minusDiscCodisc}{\ensuremath{\epsilon\codiscard[-]}}
    \mylabel{ax:minusCocopyCopy}{\ensuremath{\eta\cocopier[-]}}
    \mylabel{ax:minusCopyCocopy}{\ensuremath{\epsilon\cocopier[-]}}
    \[
            \begin{array}{@{}c c c c c@{}}
            \begin{array}{@{}c@{}c@{}c@{}}
                    
    \InputIfFileExists{axiomsNEW/cb/minus/comAssoc1.tikz}{}{\input{tikz/axiomsNEW/cb/minus/comAssoc1.tikz}}
 & \Leq{\ref*{ax:comMinusAssoc}}    & 
    \InputIfFileExists{axiomsNEW/cb/minus/comAssoc2.tikz}{}{\input{tikz/axiomsNEW/cb/minus/comAssoc2.tikz}}
 \\
                    
    \InputIfFileExists{axiomsNEW/cb/minus/comUnit.tikz}{}{\input{tikz/axiomsNEW/cb/minus/comUnit.tikz}}
   & \Leq{\ref*{ax:comMinusUnit}}     & \idCirc[-][X] \\
                    
    \InputIfFileExists{axiomsNEW/cb/minus/comComm.tikz}{}{\input{tikz/axiomsNEW/cb/minus/comComm.tikz}}
   & \Leq{\ref*{ax:comMinusComm}}     & \copierCirc[-][X] \\
                    \emptyCirc[-]                          & \Lleq{\ref*{ax:minusCodiscDisc}} & 
    \InputIfFileExists{axiomsNEW/cb/minus/codiscDisc.tikz}{}{\input{tikz/axiomsNEW/cb/minus/codiscDisc.tikz}}
 \\
                    
    \InputIfFileExists{axiomsNEW/bottom.tikz}{}{\input{tikz/axiomsNEW/bottom.tikz}}
             & \Lleq{\ref*{ax:minusDiscCodisc}} & \idCirc[-][X]
            \end{array} & \!\!\!\! &
            \begin{array}{@{}c@{}c@{}c@{}}
                    
    \InputIfFileExists{axiomsNEW/cb/minus/monAssoc1.tikz}{}{\input{tikz/axiomsNEW/cb/minus/monAssoc1.tikz}}
 & \Leq{\ref*{ax:monMinusAssoc}}    & 
    \InputIfFileExists{axiomsNEW/cb/minus/monAssoc2.tikz}{}{\input{tikz/axiomsNEW/cb/minus/monAssoc2.tikz}}
 \\
                    
    \InputIfFileExists{axiomsNEW/cb/minus/monUnit.tikz}{}{\input{tikz/axiomsNEW/cb/minus/monUnit.tikz}}
   & \Leq{\ref*{ax:monMinusUnit}}     & \idCirc[-][X] \\
                    
    \InputIfFileExists{axiomsNEW/cb/minus/monComm.tikz}{}{\input{tikz/axiomsNEW/cb/minus/monComm.tikz}}
   & \Leq{\ref*{ax:monMinusComm}}     & \cocopierCirc[-][X] \\
                    
    \InputIfFileExists{axiomsNEW/id2M.tikz}{}{\input{tikz/axiomsNEW/id2M.tikz}}
               & \Lleq{\ref*{ax:minusCocopyCopy}} & 
    \InputIfFileExists{axiomsNEW/cb/minus/cocopierCopier.tikz}{}{\input{tikz/axiomsNEW/cb/minus/cocopierCopier.tikz}}
 \\
                    
    \InputIfFileExists{axiomsNEW/cb/minus/specFrob.tikz}{}{\input{tikz/axiomsNEW/cb/minus/specFrob.tikz}}
  & \Lleq{\ref*{ax:minusCopyCocopy}} & \idCirc[-][X]
            \end{array} & \!\!\!\! &
            \begin{array}{@{}c@{}c@{}c@{}}
                    
    \InputIfFileExists{axiomsNEW/cb/minus/specFrob.tikz}{}{\input{tikz/axiomsNEW/cb/minus/specFrob.tikz}}
      & \Leq{\ref*{ax:minusSpecFrob}}    & \idCirc[-][X] \\[8pt]
                    
    \InputIfFileExists{axiomsNEW/cb/minus/frob1.tikz}{}{\input{tikz/axiomsNEW/cb/minus/frob1.tikz}}
         & \Leq{\ref*{ax:minusFrob}}        & 
    \InputIfFileExists{axiomsNEW/cb/minus/frob2.tikz}{}{\input{tikz/axiomsNEW/cb/minus/frob2.tikz}}
 \\[10pt]
                    
    \InputIfFileExists{axiomsNEW/cb/minus/copierLaxNat2.tikz}{}{\input{tikz/axiomsNEW/cb/minus/copierLaxNat2.tikz}}
 & \Lleq{\ref*{ax:comMinusLaxNat}}  & 
    \InputIfFileExists{axiomsNEW/cb/minus/copierLaxNat1.tikz}{}{\input{tikz/axiomsNEW/cb/minus/copierLaxNat1.tikz}}
 \\
                    \discardCirc[-][X][X]                      & \Lleq{\ref*{ax:discMinusLaxNat}} & 
    \InputIfFileExists{axiomsNEW/cb/minus/discardLaxNat.tikz}{}{\input{tikz/axiomsNEW/cb/minus/discardLaxNat.tikz}}

            \end{array}
            \end{array}
    \]
    \caption{Axioms of Cocartesian bicategories}\label{fig:cocb axioms}
\end{figure}

\begin{theorem}\label{th:spider}
    Any \emph{connected} diagram $c \colon X^n \to X^m$ made out of $\id[+], \symm[+], \copier[+], \discard[+], \cocopier[+]$ and $\codiscard[+]$ is equal to $\;\; n \left\{ 
    \InputIfFileExists{spider.tikz}{}{\input{tikz/spider.tikz}}
 \right\} m \;$.
\end{theorem}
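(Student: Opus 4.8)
The plan is to recognise that the white fragment $(X,\copier[+],\discard[+],\cocopier[+],\codiscard[+])$ of a cartesian bicategory is, by the axioms of \cref{fig:cb axioms} -- (co)associativity, (co)unitality and (co)commutativity of the comonoid and the monoid, the Frobenius law \eqref{ax:plusFrob}, the special law \eqref{ax:plusSpecFrob}, and the symmetric-monoidal coherence -- exactly a \emph{special commutative Frobenius algebra} on $X$ inside $(\Cat{C},\tensor[+],\unittensor)$. Hence \cref{th:spider} is an instance of the classical normal-form (``spider'') theorem for such algebras -- essentially the statement that the free such PROP is the PROP of cospans of finite sets -- and I would prove it along the standard lines. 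First I would fix the canonical \emph{spider} $S_{n,m}\colon X^n\to X^m$: fold the $n$ inputs into a single wire using a tree of $\cocopier[+]$'s (and $\codiscard[+]$ when $n=0$), then unfold that wire into $m$ outputs using a tree of $\copier[+]$'s (and $\discard[+]$ when $m=0$); (co)associativity, (co)unitality and (co)commutativity make this independent of the chosen tree and ordering, and I would set $S_{0,0}\defeq\codiscard[+]\seq[+]\discard[+]$, the ``circle''.

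Next I would establish three lemmas. \textbf{(a)}~\emph{Leg absorption}: a spider absorbs on any of its legs a $\copier[+]$ or $\discard[+]$ on the output side, a $\cocopier[+]$ or $\codiscard[+]$ on the input side, and a $\symm[+]$ permuting its legs -- immediate from (co)associativity/(co)unitality/(co)commutativity and naturality of $\symm[+]$. \textbf{(b)}~\emph{Single-wire fusion}: two spiders joined along exactly one wire equal a single spider $S_{n_1+n_2-1,\,m_1+m_2-1}$; this is the crux, and follows from the Frobenius law \eqref{ax:plusFrob} together with (a). \textbf{(c)}~\emph{Multi-wire fusion}: two spiders joined along $k\ge 1$ wires equal a single spider; fuse one wire by (b), observe that the remaining $k-1$ connections become self-loops on the merged spider, and collapse each such self-loop using the special law \eqref{ax:plusSpecFrob} (after sliding its legs next to the hub via (a)).

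With (a)--(c) in hand, I would induct on the number of generators of a connected diagram $c\colon X^n\to X^m$. In the base case $c$ is a single generator, and the only \emph{connected} generators are $\id[+]=S_{1,1}$, $\copier[+]=S_{1,2}$, $\cocopier[+]=S_{2,1}$, $\discard[+]=S_{1,0}$ and $\codiscard[+]=S_{0,1}$ (a lone $\symm[+]$, or two parallel identities, is disconnected and hence outside the hypothesis). For the inductive step, the ``box graph'' of $c$ (generators as vertices, shared wires as edges) is connected with at least two vertices, so it has a non-cut vertex; picking a generator $g$ there, $c$ minus $g$ is a connected sub-diagram $c'$ with fewer generators, so by induction $c'$ is a spider, $g$ is a spider too (or, if $g=\symm[+]$, mere plumbing absorbed through (a)), and since $c$ is connected $g$ is wired to $c'$ along at least one wire. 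After moving $g$ next to $c'$ using the monoidal coherence isomorphisms and naturality of $\symm[+]$, lemma (c) fuses them into $S_{n,m}$. The degenerate cases ($n=m=0$, and the empty diagram) are treated separately: a connected legless diagram collapses to $\codiscard[+]\seq[+]\discard[+]=S_{0,0}$ by (a) and \eqref{ax:plusSpecFrob}.

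The main obstacle is not the algebra -- all of it is concentrated in lemmas (b) and (c) -- but the combinatorial bookkeeping of the induction: making precise, purely in terms of the diagrammatic syntax built from $\seq[+]$, $\tensor[+]$ and $\symm[+]$, the notions of ``removing a generator'', the ``box graph'', and ``connected'', exhibiting the non-cut generator, repositioning $g$ through coherence and naturality, and -- crucially -- checking that every intermediate diagram produced by the induction and by the fusions stays connected, so that (c) keeps applying. A secondary nuisance is pinning down the degenerate cases cleanly, including fixing the convention for $S_{0,0}$ and verifying that a connected legless diagram is exactly the circle scalar.
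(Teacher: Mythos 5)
Your argument is correct in outline: the paper does not prove this theorem itself but simply cites \cite{Lack2004a,Coecke2008}, and what you describe (canonical spider via a monoid tree followed by a comonoid tree, leg absorption from the (co)monoid and symmetry laws, single-wire fusion from the Frobenius law \eqref{ax:plusFrob}, self-loop elimination from the special law \eqref{ax:plusSpecFrob}, and induction on the number of generators via a non-cut vertex of the box graph) is precisely the standard normal-form proof for special commutative Frobenius algebras carried out in those references. The combinatorial bookkeeping you flag, namely formalising connectedness and ``removing a generator'' for terms built from $\seq[+]$, $\tensor[+]$ and $\symm[+]$ considered up to the laws of symmetric monoidal categories, is a genuine obligation but is routine and is exactly what the cited works make precise, so there is no gap in substance.
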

\begin{proof}
    See~\cite{Lack2004a,Coecke2008}.
\end{proof}

\begin{remark}
    Theorem~\ref{th:spider} is known as the \emph{spider theorem} and it holds in any special Frobenius algebra and thus, in particular, in a cartesian bicategory. A direct consequence of the spider theorem is that we can ``rewire'' parts of a diagram involving the (co)monoid structures, as long as the connectivity is preserved. This is useful in several graphical derivations for arranging diagrams into a desired shape.
\end{remark}

\begin{lemma}\label{prop:cap property}
    For any $c,d \colon X \to Y$, $\!\!\boxCirc[+]{c}[X][Y] \!\!\!=\!\!\! \boxCirc[+]{d}[X][Y]\!\!$ if and only if $\cappedCirc[+]{c}[X][Y] = \cappedCirc[+]{d}[X][Y]$.
\end{lemma}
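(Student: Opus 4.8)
The statement to prove is \Cref{prop:cap property}: for $c,d\colon X\to Y$, the boxes $\boxCirc[+]{c}[X][Y]$ and $\boxCirc[+]{d}[X][Y]$ are equal if and only if their ``capped'' forms $\cappedCirc[+]{c}[X][Y]$ and $\cappedCirc[+]{d}[X][Y]$ are equal. The plan is to prove the two implications separately. The forward implication is immediate: the cap $\cappedCirc[+]{\,\cdot\,}[X][Y]$ is obtained from a diagram by post-composing (via $\seq[+]$ and $\tensor[+]$) with a fixed context built only from $\copier[+]$, $\cocopier[+]$, $\id[+]$ and $\symm[+]$; since $\seq[+]$ and $\tensor[+]$ are congruences for the equality of arrows, $c = d$ forces $\cappedCirc[+]{c}[X][Y] = \cappedCirc[+]{d}[X][Y]$. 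So the content is entirely in the converse.

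For the converse, the idea is to show that the capping operation is \emph{invertible}: there is a ``uncapping'' context such that plugging $\cappedCirc[+]{c}[X][Y]$ into it returns $\boxCirc[+]{c}[X][Y]$, for every $c$. Concretely, $\cappedCirc[+]{c}[X][Y]$ should be $c$ with its left wire bent around to the right via a cap $\eta$-like diagram built from the comonoid/monoid structure (the dual of the cup/cap used to define $\op{(\cdot)}$ in \eqref{eqdagger}). To recover $c$ one bends the wire back using the matching co-unit-like diagram, i.e.\ one composes with the opposite cup/cap. The key step is then to verify the ``snake'' (yanking) equation: bending a wire one way and then back is the identity. This is exactly the kind of identity guaranteed by the special Frobenius structure of a cartesian bicategory; in fact it follows from the spider theorem (\Cref{th:spider}), since the combined context built from $\copier[+],\cocopier[+],\id[+],\discard[+],\codiscard[+]$ with the wire routed out and back is a connected diagram with one input and one output on each side, hence equal to the identity wires (or more precisely, composing the two contexts collapses to $\id[+]\tensor[+]\id[+]$ on the relevant boundary). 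Once the snake equation is in hand, applying the uncapping context to both sides of $\cappedCirc[+]{c}[X][Y] = \cappedCirc[+]{d}[X][Y]$ and simplifying via the spider theorem yields $\boxCirc[+]{c}[X][Y] = \boxCirc[+]{d}[X][Y]$.

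The main obstacle is bookkeeping: one must set up the precise cup/cap diagrams so that the wire-types match (the cap introduces an $X\tensor[+]X$ or $Y\tensor[+]Y$ boundary that must be contracted correctly), and then check the snake equation using the Frobenius and (co)unit axioms of Figure~\ref{fig:cb axioms}, possibly invoking \Cref{th:spider} to avoid a long chain of rewrites. I would also double-check that the capped diagram as drawn by \verb|\cappedCirc| indeed corresponds to the cup construction I have in mind (reading off the tikz picture), since the whole argument hinges on recognising $\cappedCirc[+]{c}[X][Y]$ as ``$c$ with a bent wire''. Modulo that identification, the proof is short: forward direction by congruence, backward direction by exhibiting an inverse context and yanking.
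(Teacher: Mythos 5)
Your proposal matches the paper's proof, which states exactly that one direction is trivial and the other follows from the Frobenius axioms; your elaboration via the uncapping context and the snake/yanking equation (justified by the special Frobenius structure, equivalently \Cref{th:spider}) is precisely what that terse remark amounts to. No gap.
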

\begin{proof}
    One direction is trivial. The other direction follows from the Frobenius axioms.
\end{proof}

\begin{proposition}\label{prop:wrong way}
    For any $c \colon X \to Y$ the following inequality holds $
    \begin{tikzpicture}
	\begin{pgfonlayer}{nodelayer}
		\node [{boxStyle/+}] (134) at (-0.5, 0.5) {$c$};
		\node [style=none] (135) at (1.25, 0) {};
		\node [style=none] (136) at (1.25, 1.225) {};
		\node [style=none] (137) at (1.25, -1) {};
		\node [style=none] (138) at (-1.25, -1) {};
		\node [style=none] (139) at (-1.25, 1.225) {};
		\node [style={dotStyle/+}] (144) at (0.5, 0) {};
		\node [style=none] (145) at (-0.25, -0.5) {};
		\node [style=none] (146) at (-1.25, 0.5) {};
		\node [style=none] (147) at (-1.25, -0.5) {};
		\node [style=none] (150) at (-0.25, 0.5) {};
	\end{pgfonlayer}
	\begin{pgfonlayer}{edgelayer}
		\draw [{bgStyle/+}] (138.center)
			 to (137.center)
			 to (136.center)
			 to (139.center)
			 to cycle;
		\draw [style={wStyle/+}, bend right] (145.center) to (144);
		\draw [style={wStyle/+}] (144) to (135.center);
		\draw [style={wStyle/+}] (134) to (146.center);
		\draw [style={wStyle/+}] (147.center) to (145.center);
		\draw [style={wStyle/+}] (134) to (150.center);
		\draw [style={wStyle/+}, bend left] (150.center) to (144);
	\end{pgfonlayer}
\end{tikzpicture}
}
 \leq 
    \begin{tikzpicture}
	\begin{pgfonlayer}{nodelayer}
		\node [{boxOpStyle/+}] (134) at (-3, -0.25) {$c$};
		\node [style={boxStyle/+}] (135) at (-1, 0.25) {$c$};
		\node [style=none] (136) at (-0.25, 1.225) {};
		\node [style=none] (137) at (-0.25, -1) {};
		\node [style=none] (138) at (-3.75, -1) {};
		\node [style=none] (139) at (-3.75, 1.225) {};
		\node [style={dotStyle/+}] (144) at (-2, 0.25) {};
		\node [style=none] (145) at (-2.75, 0.75) {};
		\node [style=none] (146) at (-3.75, 0.75) {};
		\node [style=none] (147) at (-3.75, -0.25) {};
		\node [style=none] (150) at (-2.75, -0.25) {};
		\node [style=none] (152) at (-0.25, 0.25) {};
	\end{pgfonlayer}
	\begin{pgfonlayer}{edgelayer}
		\draw [{bgStyle/+}] (138.center)
			 to (137.center)
			 to (136.center)
			 to (139.center)
			 to cycle;
		\draw [style={wStyle/+}, bend left] (145.center) to (144);
		\draw [style={wStyle/+}] (144) to (135);
		\draw [style={wStyle/+}] (134) to (150.center);
		\draw [style={wStyle/+}, bend right] (150.center) to (144);
		\draw [style={wStyle/+}] (134) to (147.center);
		\draw [style={wStyle/+}] (146.center) to (145.center);
		\draw [style={wStyle/+}] (135) to (152.center);
	\end{pgfonlayer}
\end{tikzpicture}
}
$.
\end{proposition}
\begin{proof}
    See Lemma 4.3 in~\cite{Bonchi2017c}.
\end{proof}

\begin{proposition}\label{prop:sliding through cap}
    For any $c \colon X \to Y$ the following equality holds $
    \begin{tikzpicture}
	\begin{pgfonlayer}{nodelayer}
		\node [{boxStyle/+}] (134) at (-0.5, -0.25) {$c$};
		\node [style={dotStyle/+}] (135) at (1, 0.25) {$$};
		\node [style=none] (136) at (1.5, 1.225) {};
		\node [style=none] (137) at (1.5, -1) {};
		\node [style=none] (138) at (-1.25, -1) {};
		\node [style=none] (139) at (-1.25, 1.225) {};
		\node [style={dotStyle/+}] (144) at (0.5, 0.25) {};
		\node [style=none] (145) at (-0.25, 0.75) {};
		\node [style=none] (146) at (-1.25, 0.75) {};
		\node [style=none] (147) at (-1.25, -0.25) {};
		\node [style=none] (150) at (-0.25, -0.25) {};
		\node [style=label] (151) at (-1.725, -0.25) {$X$};
		\node [style=label] (152) at (-1.725, 0.75) {$Y$};
	\end{pgfonlayer}
	\begin{pgfonlayer}{edgelayer}
		\draw [{bgStyle/+}] (138.center)
			 to (137.center)
			 to (136.center)
			 to (139.center)
			 to cycle;
		\draw [style={wStyle/+}, bend left] (145.center) to (144);
		\draw [style={wStyle/+}] (144) to (135);
		\draw [style={wStyle/+}] (134) to (150.center);
		\draw [style={wStyle/+}, bend right] (150.center) to (144);
		\draw [style={wStyle/+}] (134) to (147.center);
		\draw [style={wStyle/+}] (146.center) to (145.center);
	\end{pgfonlayer}
\end{tikzpicture}
}
 = 
    \begin{tikzpicture}
	\begin{pgfonlayer}{nodelayer}
		\node [{boxOpStyle/+}] (134) at (-0.5, 0.475) {$c$};
		\node [style={dotStyle/+}] (135) at (1, -0.025) {$$};
		\node [style=none] (136) at (1.5, -1) {};
		\node [style=none] (137) at (1.5, 1.225) {};
		\node [style=none] (138) at (-1.25, 1.225) {};
		\node [style=none] (139) at (-1.25, -1) {};
		\node [style={dotStyle/+}] (144) at (0.5, -0.025) {};
		\node [style=none] (145) at (-0.25, -0.525) {};
		\node [style=none] (146) at (-1.25, -0.525) {};
		\node [style=none] (147) at (-1.25, 0.475) {};
		\node [style=none] (150) at (-0.25, 0.475) {};
		\node [style=label] (151) at (-1.725, 0.475) {$Y$};
		\node [style=label] (152) at (-1.725, -0.525) {$X$};
	\end{pgfonlayer}
	\begin{pgfonlayer}{edgelayer}
		\draw [{bgStyle/+}] (138.center)
			 to (137.center)
			 to (136.center)
			 to (139.center)
			 to cycle;
		\draw [style={wStyle/+}, bend right] (145.center) to (144);
		\draw [style={wStyle/+}] (144) to (135);
		\draw [style={wStyle/+}] (134) to (150.center);
		\draw [style={wStyle/+}, bend left] (150.center) to (144);
		\draw [style={wStyle/+}] (134) to (147.center);
		\draw [style={wStyle/+}] (146.center) to (145.center);
	\end{pgfonlayer}
\end{tikzpicture}
}
$.
\end{proposition}
\begin{proof}
    \[ 
    }
 \stackrel{\eqref{eqdagger}}{=} 
    \begin{tikzpicture}
        \begin{pgfonlayer}{nodelayer}
            \node [style={dotStyle/+}] (135) at (4.25, 0.1) {$$};
            \node [style=none] (136) at (4.75, -1.625) {};
            \node [style=none] (137) at (4.75, 1.6) {};
            \node [style=none] (138) at (-1.25, 1.6) {};
            \node [style=none] (139) at (-1.25, -1.625) {};
            \node [style={dotStyle/+}] (144) at (3.75, 0.1) {};
            \node [style=none] (145) at (2.25, -1.15) {};
            \node [style=none] (146) at (-1.25, -1.15) {};
            \node [style=none] (147) at (-1.25, -0.65) {};
            \node [style=none] (150) at (2.25, 1.35) {};
            \node [style=label] (151) at (-1.725, -0.65) {$Y$};
            \node [style=label] (152) at (-1.725, -1.15) {$X$};
            \node [{boxStyle/+}] (153) at (0.75, 0.35) {$c$};
            \node [style={dotStyle/+}] (154) at (2.25, -0.15) {$$};
            \node [style={dotStyle/+}] (155) at (1.75, -0.15) {};
            \node [style=none] (156) at (1, -0.65) {};
            \node [style=none] (159) at (1, 0.35) {};
            \node [style={dotStyle/+}] (160) at (-0.75, 0.85) {$$};
            \node [style={dotStyle/+}] (161) at (-0.25, 0.85) {};
            \node [style=none] (162) at (0.5, 0.35) {};
            \node [style=none] (163) at (0.5, 1.35) {};
            \node [style=none] (164) at (2.25, 1.35) {};
        \end{pgfonlayer}
        \begin{pgfonlayer}{edgelayer}
            \draw [{bgStyle/+}] (138.center)
                 to (137.center)
                 to (136.center)
                 to (139.center)
                 to cycle;
            \draw [style={wStyle/+}, bend right] (145.center) to (144);
            \draw [style={wStyle/+}] (144) to (135);
            \draw [style={wStyle/+}, bend left] (150.center) to (144);
            \draw [style={wStyle/+}] (146.center) to (145.center);
            \draw [style={wStyle/+}, bend right] (156.center) to (155);
            \draw [style={wStyle/+}] (155) to (154);
            \draw [style={wStyle/+}] (153) to (159.center);
            \draw [style={wStyle/+}, bend left] (159.center) to (155);
            \draw [style={wStyle/+}, bend left] (162.center) to (161);
            \draw [style={wStyle/+}] (161) to (160);
            \draw [style={wStyle/+}, bend right] (163.center) to (161);
            \draw [style={wStyle/+}] (153) to (162.center);
            \draw [style={wStyle/+}] (163.center) to (164.center);
            \draw [style={wStyle/+}] (147.center) to (156.center);
            \draw [style={wStyle/+}] (164.center) to (150.center);
        \end{pgfonlayer}
    \end{tikzpicture}
    \stackrel{\text{\Cref{th:spider}}}{=} 
    \begin{tikzpicture}
        \begin{pgfonlayer}{nodelayer}
            \node [{boxStyle/+}] (134) at (0, 0.475) {$c$};
            \node [style={dotStyle/+}] (135) at (1.5, -0.025) {$$};
            \node [style=none] (136) at (2, -1) {};
            \node [style=none] (137) at (2, 1.225) {};
            \node [style=none] (138) at (-1.25, 1.225) {};
            \node [style=none] (139) at (-1.25, -1) {};
            \node [style={dotStyle/+}] (144) at (1, -0.025) {};
            \node [style=none] (145) at (0.25, -0.525) {};
            \node [style=none] (146) at (-1.25, -0.525) {};
            \node [style=none] (147) at (-1.25, 0.475) {};
            \node [style=none] (150) at (0.25, 0.475) {};
            \node [style=label] (151) at (-1.725, 0.475) {$Y$};
            \node [style=label] (152) at (-1.725, -0.525) {$X$};
            \node [style=none] (153) at (-0.25, -0.525) {};
        \end{pgfonlayer}
        \begin{pgfonlayer}{edgelayer}
            \draw [{bgStyle/+}] (138.center)
                 to (137.center)
                 to (136.center)
                 to (139.center)
                 to cycle;
            \draw [style={wStyle/+}, bend right] (145.center) to (144);
            \draw [style={wStyle/+}] (144) to (135);
            \draw [style={wStyle/+}] (134) to (150.center);
            \draw [style={wStyle/+}, bend left] (150.center) to (144);
            \draw [style={wStyle/+}] (145.center) to (153.center);
            \draw [style={wStyle/+}, in=0, out=-180] (153.center) to (147.center);
            \draw [style={wStyle/+}, in=0, out=180] (134) to (146.center);
        \end{pgfonlayer}
    \end{tikzpicture}    
    \structuralcong
    \begin{tikzpicture}
        \begin{pgfonlayer}{nodelayer}
            \node [{boxStyle/+}] (134) at (-0.5, -0.275) {$c$};
            \node [style={dotStyle/+}] (135) at (1.75, 0.225) {$$};
            \node [style=none] (136) at (2, -1) {};
            \node [style=none] (137) at (2, 1.225) {};
            \node [style=none] (138) at (-1.25, 1.225) {};
            \node [style=none] (139) at (-1.25, -1) {};
            \node [style={dotStyle/+}] (144) at (1.25, 0.225) {};
            \node [style=none] (145) at (0.75, -0.275) {};
            \node [style=none] (146) at (-1.25, -0.275) {};
            \node [style=none] (147) at (-0.25, 0.725) {};
            \node [style=none] (150) at (0.75, 0.725) {};
            \node [style=label] (151) at (-1.725, 0.725) {$Y$};
            \node [style=label] (152) at (-1.725, -0.275) {$X$};
            \node [style=none] (153) at (-1.25, 0.725) {};
        \end{pgfonlayer}
        \begin{pgfonlayer}{edgelayer}
            \draw [{bgStyle/+}] (138.center)
                 to (137.center)
                 to (136.center)
                 to (139.center)
                 to cycle;
            \draw [style={wStyle/+}, bend right=45] (145.center) to (144);
            \draw [style={wStyle/+}] (144) to (135);
            \draw [style={wStyle/+}, bend left=45] (150.center) to (144);
            \draw [style={wStyle/+}] (146.center) to (134);
            \draw [style={wStyle/+}, in=-180, out=0, looseness=0.75] (147.center) to (145.center);
            \draw [style={wStyle/+}, in=180, out=0] (134) to (150.center);
            \draw [style={wStyle/+}] (153.center) to (147.center);
        \end{pgfonlayer}
    \end{tikzpicture}    
    \stackrel{\eqref{ax:monPlusAssoc}}{=}    
    
    }
 \]
\end{proof}

\begin{table}[!htb]
    \caption{Properties of $\op{(\cdot)} \colon \Cat{C} \to \opposite{\Cat{C}}$}\label{table:daggerproperties}
    \hspace*{-1.2em}
    \centering
    \begin{tabular}{c}
    $
    \begin{array}{cccc}
        \toprule
        \multicolumn{2}{c}{
            \text{if }c\leq d\text{ then }\op{c} \leq \op{d}
        }
        &
        \multicolumn{2}{c}{
            \op{(\op{c})}= c
        }
        \\
    \op{(c \seq[+] d)} = \op{d} \seq[+] \op{c}
    &\op{(\id[+][X])}=\id[+][X]
    &\op{(\cocopier[+][X])}= \copier[+][X]
    &\op{(\codiscard[+][X])}= \discard[+][X]
    \\
    \op{(c \tensor[+] d)} = \op{c} \tensor[+] \op{d}
    &\op{(\symm[+][X][Y])} = \symm[+][Y][X]
    &\op{(\copier[+][X])}= \cocopier[+][X]
    &\op{(\discard[+][X])}= \codiscard[+][X]
    \\
    \multicolumn{2}{c}{
        \op{(c \wedge d)} = \op{c} \wedge \op{d}        
    }
    &
    \multicolumn{2}{c}{
        \op{\top} = \top       
    }
    \\
    \bottomrule
    \end{array}
    $
    \end{tabular}
    \end{table}

    \begin{proposition}[Extensional equality]\label{prop:func ext}
        For any $t_1, t_2 \colon X \to Y$ maps,
        \begin{center}
            if $ \seqCirc[+]{t_1}{t_2}[X][X][f][fop] = \topCirc[X][X]$ then $\funcCirc[+]{t_1}[X][Y] = \funcCirc[+]{t_2}[X][Y]$.
        \end{center}
    \end{proposition}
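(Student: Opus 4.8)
The plan is to derive the inequalities $t_2 \le t_1$ and $t_1 \le t_2$ from the hypothesis and then conclude by antisymmetry of the homset order. The ingredients are all available from the preceding material: since $t_1$ and $t_2$ are maps, each of them is a left adjoint to its dagger (\cref{prop:map adj}(3)), so in particular the counit inequalities $\op{t_1} \seq[+] t_1 \le \id[+][Y]$ and $\op{t_2} \seq[+] t_2 \le \id[+][Y]$ hold; the diagram $\topCirc[X][X]$ appearing in the hypothesis is the top element of $\Cat{C}[X,X]$, whence $\id[+][X] \le \topCirc[X][X]$; and $\op{(\cdot)}\colon\Cat{C}\to\opposite{\Cat{C}}$ is an order-preserving involution with $\op{(\id[+][X])} = \id[+][X]$ and $\op{(c\seq[+]d)} = \op{d}\seq[+]\op{c}$ (\cref{prop:map adj}(2) and \Cref{table:daggerproperties}).

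The first move is to isolate the only consequence of the hypothesis that is really needed. Because $\topCirc[X][X]$ dominates every arrow $X \to X$, the equality $t_1 \seq[+] \op{t_2} = \topCirc[X][X]$ gives $\id[+][X] \le t_1 \seq[+] \op{t_2}$; applying the order-preserving operation $\op{(\cdot)}$ to this inequality and simplifying with \Cref{table:daggerproperties} yields the mirror inequality $\id[+][X] \le t_2 \seq[+] \op{t_1}$.

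The core is a one-line triangle-identity computation, carried out twice. Using $\id[+][X] \le t_1 \seq[+] \op{t_2}$, associativity and monotonicity of $\seq[+]$, and the counit $\op{t_2} \seq[+] t_2 \le \id[+][Y]$:
\[ t_2 \;=\; \id[+][X] \seq[+] t_2 \;\le\; (t_1 \seq[+] \op{t_2}) \seq[+] t_2 \;=\; t_1 \seq[+] (\op{t_2} \seq[+] t_2) \;\le\; t_1 \seq[+] \id[+][Y] \;=\; t_1 \text{,} \]
and symmetrically, from $\id[+][X] \le t_2 \seq[+] \op{t_1}$ and $\op{t_1} \seq[+] t_1 \le \id[+][Y]$:
\[ t_1 \;=\; \id[+][X] \seq[+] t_1 \;\le\; (t_2 \seq[+] \op{t_1}) \seq[+] t_1 \;=\; t_2 \seq[+] (\op{t_1} \seq[+] t_1) \;\le\; t_2 \seq[+] \id[+][Y] \;=\; t_2 \text{.} \]
Therefore $t_1 = t_2$, which is exactly the asserted equality of maps.

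I do not expect any real obstacle: once stated correctly, the proposition is an immediate consequence of the adjointness of maps with their daggers. The two things worth flagging are that the full equation with $\topCirc[X][X]$ is much stronger than necessary — only $\id[+][X] \le t_1 \seq[+] \op{t_2}$ is used — and that this inequality is self-dual under $\op{(\cdot)}$, which is what provides the second half of the symmetric argument without any extra hypothesis. A fully diagrammatic version is equally easy: both chains above are standard ``yanking'' rewrites, the single inequality step in each being the map axiom \eqref{eq:def:map}; but the algebraic presentation above is the most concise.
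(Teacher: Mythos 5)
Your proof is correct and follows essentially the same route as the paper's: both arguments dualize the hypothesis with $\op{(\cdot)}$ to get the mirrored inequality, insert the composite $t_i\seq[+]\op{t_j}$ in place of the identity (via $\id[+][X]\leq\top_{X,X}$, i.e.\ \eqref{ax:plusDiscCodisc}), and contract with the counit $\op{t_j}\seq[+]t_j\leq\id[+][Y]$ from \cref{prop:map adj}.3. The only (accurate) refinement you add is the observation that just the inequality $\id[+][X]\leq t_1\seq[+]\op{t_2}$ is needed rather than the full equality with $\top$.
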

    \begin{proof}
        First observe that if $ \seqCirc[+]{t_1}{t_2}[X][X][f][fop] = \topCirc[X][X]$, then by the properties in Table~\ref{table:daggerproperties} the following holds
        \begin{equation}\label{ipotesi dagger}
            \seqCirc[+]{t_2}{t_1}[X][X][f][fop] = \op{(\seqCirc[+]{t_1}{t_2}[X][X][f][fop])} = \op{(\topCirc[X][X])} = \topCirc[X][X].
        \end{equation}
        To conclude we show the two inclusions separately:
        
        \noindent
        \begin{minipage}{0.48\textwidth}
            \begin{align*}
    \funcCirc[+]{t_1}[X][Y] &\leq 
    \begin{tikzpicture}
        \begin{pgfonlayer}{nodelayer}
            \node [style=none] (112) at (-2.5, 0.75) {};
            \node [style=none] (113) at (-2.5, -0.75) {};
            \node [style=none] (114) at (2.5, -0.75) {};
            \node [style=none] (115) at (2.5, 0.75) {};
            \node [style=none] (121) at (2.5, 0) {};
            \node [{funcStyle/+}] (123) at (1.5, 0) {$t_1$};
            \node [style=label] (124) at (-3, 0) {$X$};
            \node [style=label] (125) at (3, 0) {$Y$};
            \node [style=none] (126) at (-2.5, 0) {};
            \node [{dotStyle/+}] (127) at (-1.25, 0) {};
            \node [{dotStyle/+}] (128) at (-0.25, 0) {};
        \end{pgfonlayer}
        \begin{pgfonlayer}{edgelayer}
            \draw [{bgStyle/+}] (114.center)
                 to (113.center)
                 to (112.center)
                 to (115.center)
                 to cycle;
            \draw [{wStyle/+}] (123) to (121.center);
            \draw [style={wStyle/+}] (127) to (126.center);
            \draw [style={wStyle/+}] (128) to (123);
        \end{pgfonlayer}
    \end{tikzpicture}  \tag{\ref{ax:plusDiscCodisc}}    \\
    &= 
    \begin{tikzpicture}
        \begin{pgfonlayer}{nodelayer}
            \node [style=none] (112) at (-2.5, 0.75) {};
            \node [style=none] (113) at (-2.5, -0.75) {};
            \node [style=none] (114) at (2.5, -0.75) {};
            \node [style=none] (115) at (2.5, 0.75) {};
            \node [style=none] (121) at (2.5, 0) {};
            \node [{funcStyle/+}] (123) at (1.5, 0) {$t_1$};
            \node [style=label] (124) at (-3, 0) {$X$};
            \node [style=label] (125) at (3, 0) {$Y$};
            \node [style=none] (126) at (-2.5, 0) {};
            \node [{funcStyle/+}] (127) at (-1.5, 0) {$t_2$};
            \node [{funcOpStyle/+}] (128) at (0, 0) {$t_1$};
        \end{pgfonlayer}
        \begin{pgfonlayer}{edgelayer}
            \draw [{bgStyle/+}] (114.center)
                 to (113.center)
                 to (112.center)
                 to (115.center)
                 to cycle;
            \draw [{wStyle/+}] (123) to (121.center);
            \draw [style={wStyle/+}] (128) to (127);
            \draw [style={wStyle/+}] (127) to (126.center);
            \draw [style={wStyle/+}] (128) to (123);
        \end{pgfonlayer}
    \end{tikzpicture}   \tag{\ref{ipotesi dagger}} \\
    &\leq 
    \funcCirc[+]{t_2}[X][Y] \tag{Prop. \ref{prop:map adj}.3}
\end{align*}
        \end{minipage}
        \quad
        \vline
        \begin{minipage}{0.48\textwidth}
            \begin{align*}
    \funcCirc[+]{t_2}[X][Y] &\leq 
    \begin{tikzpicture}
        \begin{pgfonlayer}{nodelayer}
            \node [style=none] (112) at (-2.5, 0.75) {};
            \node [style=none] (113) at (-2.5, -0.75) {};
            \node [style=none] (114) at (2.5, -0.75) {};
            \node [style=none] (115) at (2.5, 0.75) {};
            \node [style=none] (121) at (2.5, 0) {};
            \node [{funcStyle/+}] (123) at (1.5, 0) {$t_2$};
            \node [style=label] (124) at (-3, 0) {$X$};
            \node [style=label] (125) at (3, 0) {$Y$};
            \node [style=none] (126) at (-2.5, 0) {};
            \node [{dotStyle/+}] (127) at (-1.25, 0) {};
            \node [{dotStyle/+}] (128) at (-0.25, 0) {};
        \end{pgfonlayer}
        \begin{pgfonlayer}{edgelayer}
            \draw [{bgStyle/+}] (114.center)
                 to (113.center)
                 to (112.center)
                 to (115.center)
                 to cycle;
            \draw [{wStyle/+}] (123) to (121.center);
            \draw [style={wStyle/+}] (127) to (126.center);
            \draw [style={wStyle/+}] (128) to (123);
        \end{pgfonlayer}
    \end{tikzpicture}  \tag{\ref{ax:plusDiscCodisc}}    \\
    &= 
    \begin{tikzpicture}
        \begin{pgfonlayer}{nodelayer}
            \node [style=none] (112) at (-2.5, 0.75) {};
            \node [style=none] (113) at (-2.5, -0.75) {};
            \node [style=none] (114) at (2.5, -0.75) {};
            \node [style=none] (115) at (2.5, 0.75) {};
            \node [style=none] (121) at (2.5, 0) {};
            \node [{funcStyle/+}] (123) at (1.5, 0) {$t_2$};
            \node [style=label] (124) at (-3, 0) {$X$};
            \node [style=label] (125) at (3, 0) {$Y$};
            \node [style=none] (126) at (-2.5, 0) {};
            \node [{funcStyle/+}] (127) at (-1.5, 0) {$t_1$};
            \node [{funcOpStyle/+}] (128) at (0, 0) {$t_2$};
        \end{pgfonlayer}
        \begin{pgfonlayer}{edgelayer}
            \draw [{bgStyle/+}] (114.center)
                 to (113.center)
                 to (112.center)
                 to (115.center)
                 to cycle;
            \draw [{wStyle/+}] (123) to (121.center);
            \draw [style={wStyle/+}] (128) to (127);
            \draw [style={wStyle/+}] (127) to (126.center);
            \draw [style={wStyle/+}] (128) to (123);
        \end{pgfonlayer}
    \end{tikzpicture}   \tag{Hyp.} \\
    &\leq 
    \funcCirc[+]{t_1}[X][Y] \tag{Prop. \ref{prop:map adj}.3}
\end{align*}
        \end{minipage}
    \end{proof}

\section{Appendix to Section \ref{sec:hyp}}\label{app:hyp}

\subsection{A few properties of Boolean hyperdoctrines}\label{app:boolhyp}
In this section we recall some useful properties of boolean hyperdoctrines, and we prove a lemma which will be crucial for establishing the precise connection with peircean bicategories. All the results we are going to show here are quite natural from the perspective of first-order classical logic, and their proofs are straightforward. 

First, it is well-known in first-order classical logic that the universal quantifier can be defined combining the existential quantifier with the negation. In the following lemma we provide a proof using the language of f.o. boolean hyperdoctrine of this fact.
\begin{lemma}\label{lem_bollean_hd_has_right_adjoints}
        Let $\hyperdoctrine{\Cat{C}}{P}$ be a boolean hyperdoctrine. Then for every arrow $\freccia{A}{f}{B}$, the functor 
        \[\forall_f(-):=\neg \exists_f \neg (-)\]
        provides a right adjoint to $P_f$. Moreover, if $\exists_f$ satisfies BCC then also $\forall_f$ satisfies BCC.
 \end{lemma}
    \begin{proof}
        The proof is a straightforward generalization of the ordinary proof in first-order classical logic. Indeed:
        \[\alpha \leq \forall_fP_f(\alpha)=\neg \exists_f\neg P_f(\alpha)\]
        holds if and only if 
        \[\exists_f \neg P_f(\alpha)\leq \neg \alpha\]
        because $P$ is boolean. But this holds because it is equivalent to
        \[\ P_f(\neg\alpha)\leq  P_f(\neg \alpha).\]
        To prove that 
        \[P_f\forall_f (\beta )\leq \beta\]
        we have to use again the assumption that $P$ is boolean. In fact we have that 
        \[P_f\forall_f (\beta ) =\neg P_f\exists_f (\neg \beta)\leq \neg \neg \beta\]
        because $\neg \beta \leq P_f\exists_f (\neg \beta)$ (since $\exists_f\dashv P_f$), and using the fact that $\neg \neg \beta=\beta$ we can conclude that $P_f\forall_f (\beta )\leq \beta$.
        Now we prove that if $\exists_f$ satisfies BCC then also $\forall_f$ satisfies BCC. So let us consider a pullback%
        \[\begin{tikzcd}
            D & C \\
            A & B
            \arrow["{g'}"', from=1-1, to=2-1]
            \arrow["f"', from=2-1, to=2-2]
            \arrow["g", from=1-2, to=2-2]
            \arrow["{f'}", from=1-1, to=1-2]
            \arrow["\lrcorner"{anchor=center, pos=0.125}, draw=none, from=1-1, to=2-2]
        \end{tikzcd}\]
        and suppose that $P_g\exists_f(\alpha)=\exists_{f'}P_{g'}(\alpha)$ for every $\alpha$ in $P(A)$. From this we can deduce that $\neg P_g\exists_f(\alpha)= P_g\neg \exists_f(\alpha)=\neg \exists_{f'}P_{g'}(\alpha)$, and then that  $P_g\forall_f(\neg \alpha)=\forall_{f'}P_{g'}(\neg \alpha)$ for every $\alpha$ element of $ P(A)$. Therefore, in particular it  holds for the element $\beta:=\neg \alpha$, and hence (using the fact that $P$ is boolean) we can conclude that 
        $P_g\forall_f(\alpha)=\forall_{f'}P_{g'}(\alpha)$.
    \end{proof}
\begin{remark}[Frobenius reciprocity]
    Employing the preservation of the implication $\to$ by the functor $P_f$, it is straightforward to check that every first-order boolean hyperdoctrine $\hyperdoctrine{\Cat{C}}{P}$ satisfies the so-called \emph{Frobenius reciprocity} (FR), namely:
    \[ \exists_f(P_f(\alpha)\wedge \beta)= \alpha \wedge \exists_f (\beta)\mbox{ and }  \forall_f(P_f(\alpha)\rightarrow \beta)= \alpha \rightarrow \forall_f (\beta)\]
    for every  morphism $\freccia{X}{f}{Y}$, $\alpha$ in $P(Y)$ and $\beta$ in $P (X)$. See \cite[Rem. 1.3]{TT}.
    However, it is not guarantee that of Beck-Chevalley conditions with respect to pullbacks along $f$. See \cite{MaiettiTrotta21} for more details.
\end{remark}

\begin{lemma}
    Let $\hyperdoctrine{\Cat{C}}{P}$ be a boolean hyperdoctrine and $\phi\in P(X\times Y)$ a functional and entire element from $X$ to$Y$. For all $\psi \in P(Y\times Z)$, it holds that 
    \[\exists_{\pr_{X\times Z}}(P_{\pr_{X\times Y}}(\phi)\wedge P_{\pr_{Y\times Z}}(\neg \psi)) =  \neg(\, \exists_{\pr_{X\times Z}}(P_{\pr_{X\times Y}}(\phi)\wedge P_{\pr_{Y\times Z}}(\psi))\,)\text{.}\]
    \end{lemma}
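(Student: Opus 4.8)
\textbf{Proof plan for Lemma~\ref{lemma:boolhypcomp}.}

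The plan is to reduce the identity to the interplay between the existential quantifier $\exists_{\pr_{X\times Z}}$ along the projection $\pr_{X\times Z}\colon X\times Y\times Z \to X\times Z$ and negation, exploiting that $\phi$ being functional and entire means exactly that $\phi$, viewed as a relation, is a \emph{single-valued total} predicate and hence ``behaves like a function''. Concretely, I would first observe that for a functional and entire $\phi\in P(X\times Y)$, the element $P_{\pr_{X\times Y}}(\phi)\in P(X\times Y\times Z)$ is itself functional and entire from $X\times Z$ to $Y$ (where I identify $X\times Y\times Z \cong (X\times Z)\times Y$ up to the evident reindexing isomorphism); this is a routine consequence of the Beck-Chevalley condition and the definition of functional/entire in \cref{def_func_and_entire_for_P}, since reindexing along a projection preserves both properties. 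So it suffices to prove the following cleaner statement: if $\theta\in P(W\times Y)$ is functional and entire from $W$ to $Y$, then for every $\chi\in P(W\times Y)$ one has $\exists_{\pr_W}(\theta \wedge \neg\chi) = \neg\,\exists_{\pr_W}(\theta\wedge\chi)$, where $\pr_W\colon W\times Y\to W$. Here $W = X\times Z$, $\theta = P_{\pr_{X\times Y}}(\phi)$ (transported) and $\chi = P_{\pr_{Y\times Z}}(\psi)$ (transported).

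To prove this auxiliary statement I would argue by two inequalities, both using \cref{lem_bollean_hd_has_right_adjoints} to pass between $\exists_{\pr_W}$ and $\forall_{\pr_W} = \neg\exists_{\pr_W}\neg$, together with Frobenius reciprocity. For the inequality $\exists_{\pr_W}(\theta\wedge\neg\chi) \leq \neg\exists_{\pr_W}(\theta\wedge\chi)$: by booleanness this is equivalent to $\exists_{\pr_W}(\theta\wedge\neg\chi) \wedge \exists_{\pr_W}(\theta\wedge\chi) = \bot$, and using that $\theta$ is functional (so $\theta$ ``picks a unique $y$'') one shows that both existentials, when pulled back along $\pr_W$ and met with $\theta$, force $\neg\chi$ and $\chi$ to hold at the same point — the uniqueness encoded by functionality, spelled out via the equality predicate $\delta_Y$, collapses the two witnesses. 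For the reverse inequality $\neg\exists_{\pr_W}(\theta\wedge\chi) \leq \exists_{\pr_W}(\theta\wedge\neg\chi)$: rewrite the left side as $\forall_{\pr_W}(\neg(\theta\wedge\chi)) = \forall_{\pr_W}(\neg\theta \vee \neg\chi)$; using entirety of $\theta$, i.e. $\top_W \leq \exists_{\pr_W}(\theta)$, one knows there is always a $\theta$-witness, and Frobenius reciprocity in the form $\forall_{\pr_W}(P_{\pr_W}(\alpha)\to\beta) = \alpha \to \forall_{\pr_W}(\beta)$ together with functionality lets one transport the universal statement into the desired existential one. The cleanest route is probably to use the map $f\colon W\to Y$ extracted from $\theta$ via the (RUC)-like reasoning available in a hyperdoctrine with comprehensive diagonals — but since \cref{lemma:boolhypcomp} is stated for arbitrary boolean hyperdoctrines, I should \emph{not} assume (RUC); instead I would keep everything in terms of $\theta$, $\delta_Y$, and the adjunctions $\exists_{\pr_W}\dashv P_{\pr_W}\dashv\forall_{\pr_W}$.

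The main obstacle I anticipate is doing the ``functionality collapses the two witnesses'' step purely equationally, without invoking a term $f$ representing $\phi$. The subtlety is that functionality is phrased as $P_{\langle\pi_1,\pi_2\rangle}(\theta)\wedge P_{\langle\pi_1,\pi_3\rangle}(\theta)\leq P_{\langle\pi_2,\pi_3\rangle}(\delta_Y)$ on $W\times Y\times Y$, so to use it I must first lift the computation from $P(W)$ and $P(W\times Y)$ up to $P(W\times Y\times Y)$, apply the functionality inequality there, then push back down using Beck-Chevalley for the relevant projections and Frobenius reciprocity; keeping track of which reindexings are projections (where BCC applies) versus which are diagonals or pairings (where the elementary structure and the explicit formula for $\exists_{\id\times\Delta}$ from \cref{def primary doctrine} must be used) is where the bookkeeping gets delicate. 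Once that diagram-chase is set up correctly, both inequalities should follow by the standard adjointness manipulations; the payoff is exactly the statement needed in \cref{thm_adj_BHD_PB} to show $\REL(P)$ is peircean.
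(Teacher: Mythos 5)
Your plan matches the paper's proof in all essentials: both arguments establish the two inequalities separately, using functionality (together with booleanness) for the direction $\exists(\phi\wedge\neg\psi)\leq\neg\exists(\phi\wedge\psi)$ and entireness together with the derived universal quantifier $\forall=\neg\exists\neg$ for the converse; the paper simply carries out these same doctrine-level manipulations in the internal language with variables $x,y,z$ rather than after your repackaging with $W=X\times Z$. The ``witness-collapsing'' step you flag as the main obstacle is resolved in the paper exactly as you would want, purely equationally and without any representing term: booleanness gives $\phi(x,y)\wedge\psi(y,z)=\phi(x,y)\wedge\psi(y,z)\wedge(y=y'\vee y\neq y')$, the case $y=y'$ is immediate, and the case $y\neq y'$ yields $\neg\phi(x,y')$ directly from the functionality inequality.
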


    \begin{proof}[Proof of \cref{lemma:boolhypcomp}]
    The proof happens to be straightforward if we employ the classical arguments of natural deduction. Here we provide a completely algebric proof.
    For sake of readability, here we employ the notation given by the internal language of $P$, writing $\exists y$ instead of the left adjoint $\exists_{\pr_Y}$ and using the predicates $\phi(x,y)$ and $\psi(y,z)$ to denote the elements $\phi\in P(X\times Y)$  and $\psi \in P(Y\times Z)$. Given the well-established correspondence between a doctrine and its internal language, proving that the two previous predicates a equivalent using the logical rules of first-order classical logic is equivalent to prove that they are equal in $P$. Hereafter we thus prove
    \[ \exists y. (\phi(x,y)\wedge \neg \psi(y,z))=\neg \exists y'. (\phi(x,y')\wedge  \psi(y',z))\] 
    
    Therefore, we start by proving 
    \begin{equation}\label{eq_primo_vero_neg_commuta_mappe}
        \exists y'. (\phi(x,y')\wedge \neg \psi(y',z))\vdash\neg \exists y. (\phi(x,y)\wedge  \psi(y,z)) 
    \end{equation}
    The crucial point to conclude that \eqref{eq_primo_vero_neg_commuta_mappe} holds is to show that 
    \begin{equation}\label{eq_seconda_prova_neg_commuta_mappe} 
        \phi (x,y)\wedge \psi (y,z)\vdash \neg \phi (x,y') \vee \psi (y',z).
    \end{equation}
    holds. In fact, from \eqref{eq_seconda_prova_neg_commuta_mappe} we can deduce the validity of \eqref{eq_primo_vero_neg_commuta_mappe} because \eqref{eq_seconda_prova_neg_commuta_mappe} implies that 
    \[ \exists y. (\phi (x,y)\wedge \psi (y,z))\vdash \neg \phi (x,y') \vee \psi (y',z)\]
    and hence, by applying to both sides $\neg$, we obtain
    \[ \phi (x,y') \wedge \neg  \psi (y',z)\vdash\neg \exists y. (\phi (x,y)\wedge \psi (y,z)) \]
    and hence that 
    \[ \exists y'.(\phi (x,y') \wedge \neg  \psi (y',z))\vdash\neg \exists y. (\phi (x,y)\wedge \psi (y,z)) \]
    
    So we have to prove the validity of \eqref{eq_seconda_prova_neg_commuta_mappe}.
    Now since we are working with boolean algebras, we have that $\phi (x,y)\wedge \psi (y,z)=\phi (x,y)\wedge \psi (y,z)\wedge (y=y'\vee y\neq y')$. Therefore, \eqref{eq_seconda_prova_neg_commuta_mappe}  is equivalent to 
    \begin{equation}\label{eq_quarta_prova_neg_commuta_mappe} 
        (\phi (x,y)\wedge \psi (y,z) \wedge y=y')\vee (\phi (x,y)\wedge \psi (y,z) \wedge y\neq y')\vdash \neg \phi (x,y') \vee \psi (y',z).
    \end{equation}
    To prove \eqref{eq_quarta_prova_neg_commuta_mappe}, it is enough to prove that both 
    \begin{equation}\label{eq_quinta}
        (\phi (x,y)\wedge \psi (y,z) \wedge y=y')\vdash \neg \phi (x,y') \vee \psi (y',z)
    \end{equation}
    and 
    \begin{equation}\label{eq_sesta}
        (\phi (x,y)\wedge \psi (y,z) \wedge y\neq y')\vdash \neg \phi (x,y') \vee \psi (y',z)
    \end{equation}
    hold. Now notice that \eqref{eq_sesta} holds trivially, because $\phi (x,y)\wedge \psi (y,z) \wedge y=y')\vdash \psi (y',z)$. To prove \eqref{eq_sesta} we have to employ the functionality of $\phi$. In fact, by definition of functionality we have that
    \[\phi (x,y)\wedge \phi (x,y')\vdash y=y'\]
    and then, using the fact that $y=y'$ is equivalent to $(y \neq  y')\to \bot$ (were $y\neq y'$ denotes $\neg (y=y')$) we can conclude that 
    \begin{equation}\label{eq_settima}
        \phi (x,y)\wedge  y\neq y' \vdash \neg \phi (x,y')
    \end{equation}
    and hence that $ \phi (x,y)\wedge  y\neq y' \vdash \neg \phi (x,y')\vee \psi(y',z)$. Therefeore, since $\phi (x,y)\wedge \psi (y,z) \wedge y\neq y'\vdash \phi (x,y)\wedge  y\neq y'$, we can conclude by transitivity that also \eqref{eq_sesta} holds.
    
    This concludes the proof that \eqref{eq_seconda_prova_neg_commuta_mappe}, and hence, \eqref{eq_primo_vero_neg_commuta_mappe}  hold.

    \bigskip
    Now we have to prove that
    \begin{equation}\label{eq_prima_secondo_verso}
       \neg \exists y. (\phi(x,y)\wedge  \psi(y,z)) \vdash  \exists y'. (\phi(x,y')\wedge \neg \psi(y',z))
    \end{equation}
    First, notice that \eqref{eq_prima_secondo_verso} is equivalent to
    \begin{equation}\label{eq_seconda_secondo_verso}
        \forall y'. (\neg \phi(x,y')\vee  \psi(y',z)) \vdash  \exists y. (\phi(x,y)\wedge  \psi(y,z)).
     \end{equation}
    Now, in order to prove \eqref{eq_seconda_secondo_verso}, we start by employing the assumption that $\phi$ is total, namely:
    \begin{equation}\label{eq_totality_phi}
        \top\vdash \exists y'.\phi(x,y').
    \end{equation}
    In fact, \eqref{eq_totality_phi} implies that 
    \[\forall y'.\neg \phi(x,y')\vdash \bot\]
    and since in every boolean algebra we have that $\psi (y',z)\wedge \neg \psi (y',z)\vdash \bot$, we have that
    \begin{equation}\label{eq_terza_secondo_verso}
        \forall y'.\neg \phi(x,y')=\forall y'.(\neg \phi(x,y')\vee \bot )=\forall y'.(\neg \phi(x,y')\vee(\psi (y',z)\wedge \neg \psi (y',z)))\vdash \bot 
    \end{equation}
    From \eqref{eq_terza_secondo_verso}, using the distributivity of $\vee$, and the fact that the universal quantifier of a disjunction is equivalent to the disjunction of two universal quantifiers (categorically, right adjoints preserve coproducts), we can conclude that 
    \[\forall y'.(\neg \phi(x,y')\vee\psi (y',z))\wedge \forall y.(\neg \phi(x,y) \wedge  \neg \psi (y,z))\vdash \bot\]
    and hence that
    \[\forall y'.(\neg \phi(x,y')\vee\psi (y',z))\vdash \neg  \forall y.(\neg \phi(x,y) \wedge  \neg \psi (y,z)).\]
    Now, since $\neg  \forall y.(\neg \phi(x,y) \wedge  \neg \psi (y,z))=\exists y. (\phi(x,y)\wedge \psi(y,z))$ we can conclude that \eqref{eq_seconda_secondo_verso}, and then \eqref{eq_prima_secondo_verso} hold.
    \end{proof}

\section{Appendix to \cref{sec:adjunction}}\label{app_section_adj}

In \cref{sec:adjunction} we have recalled the adjunction between the category of cartesian bicategories $\CartBic$  and that of elementary and existential doctrines $\EED$, i.e. \Cref{adj_CB_EED}, from \cite{bonchi2021doctrines}. 
The interested reader may find all details in Sections 5, 6 and 7 of \cite{bonchi2021doctrines} however, for its convenience, we recall in this appendix some interesting facts that are omitted in the main text.

We start by recalling some details about the $\REL(-)$ construction. The objects, arrows and composition of the category $\REL(P)$ associated to an elementary and existential doctrine $\doctrine{\Cat{C}}{P}$ are described in \cref{sec:adjunction}. To see why this is a cartesian bicategory it is convenient to first illustrate the monoidal product. For $\phi\colon X\to Y$ and $\psi\colon U \to V$ arrows of $\REL(P)$, namely $\phi \in P(X\times Y)$ and $\psi \in P(U\times V)$, the arrow $\phi \tensor[+][][] \psi\colon X \tensor[+][][] U \to Y\tensor[+][][] V$ is defined as 
\[ \phi \tensor[+][][] \psi \defeq P_{\langle\pi_X,\pi_Y\rangle}(\phi) \wedge  P_{\langle\pi_U,\pi_V\rangle}(\psi)\]
where $\langle\pi_X,\pi_Y\rangle$ and $\langle\pi_U,\pi_V\rangle$ are the projections from $X\times U \times Y \times Z $ to, respectively, $X\times Y$ and $U\times V$.

The rest of the structure of cartesian bicategory is inherited from the finite products base category $\Cat{C}$ of $P$ by means of the \emph{graph functor} $\Gamma_P \colon \Cat{C} \to \REL(P)$. In particular, the graph functor acts as the identity on objects and mapping each arrow $f\colon X \to Y$ in 
\[\Gamma_P(f) \defeq P_{f\times id_Y} (\delta_Y) \in P(X\times Y) = \REL(P)[X,Y]\text{.}\]
For instance, for the doctrine $\doctrine{\set}{\Pow}$  from \cref{ex_sub_on_regular}, the functor $\Gamma_{\Pow} \colon \set \to \Relp$ maps every function $f$ to its graph.
            
At this point, it is worth to observe that the arrow $P_{id_Y\times f}(\delta_Y)$ is right adjoint to $\Gamma_P(f)$ (see \cite[Pro. 23]{bonchi2021doctrines}) and thus $\Gamma_P(f)$ is a map. Therefore, $\Gamma_P$ restricts to a finite-product preserving functor $\Cat{C} \to \map(\REL (P))$. One can thus define the (co)monoid structure of $\REL (P)$ as
\[
\begin{array}{lcrlcr}
\copier[+][X] &\defeq& \Gamma_P(\copier[+][X]) \qquad & \qquad \cocopier[+][X] &\defeq& P_{id_{X\times X}\times \copier[+][X]}(\delta_{X\times X}) \\
\discard[+][X] &\defeq& \Gamma_P(\discard[+][X]) \qquad & \qquad \codiscard[+][X] &\defeq& P_{id_\unittensor \times \discard[+][X]}(\delta_{\unittensor}) \\
\end{array}
\]
where on the right hand side of the above equation $\copier[+][X]\colon X \to X \times X$ and $\discard[+][X]\colon X \to \unittensor$ are copier and discard of $\Cat{C}$ (they exist since $\Cat{C}$ has finite products).

\medskip

The graph functor $\freccia{\Cat{C}}{\Gamma_P}{\map(\REL(P))}$ is also used for defining the unit of the adjunction \eqref{adj_CB_EED}.
For every elementary and existential doctrine $\doctrine{\Cat{C}}{P}$, the morphism of elementary and existential doctrines $\freccia{P}{\eta_P}{\HM(\REL(P))}$ is 
   \begin{equation}\label{eq:unit}
    \begin{tikzcd}
      {\Cat{C}^{\op}} \\
      && \infsl \\
      {\mathsf{Map}(\mathsf{Rel}(P))^{\op}}
      \arrow["{{\REL(P)[-,1]}}"'{pos=0.3}, from=3-1, to=2-3]
      \arrow[""{name=0p, anchor=center, inner sep=0}, phantom, from=3-1, to=2-3, start anchor=center, end anchor=center]
      \arrow["P"{pos=0.45}, from=1-1, to=2-3]
      \arrow[""{name=1p, anchor=center, inner sep=0}, phantom, from=1-1, to=2-3, start anchor=center, end anchor=center]
      \arrow["{{\Gamma_P}^{\op}}"', from=1-1, to=3-1]
      \arrow["\rho"', shorten <=4pt, shorten >=4pt, from=1p, to=0p]
    \end{tikzcd}
\end{equation}
  where 
 
  \begin{itemize}
    \item $\freccia{\Cat{C}}{\Gamma_P}{\map(\REL(P))}$ is the graph-functor;
    \item each component $\freccia{P(X)}{\rho_X}{\REL(P)[X,\unittensor]}\defeq P(X\times I)$ is given by the isomorphism  $P(X)\cong P(X\times I)$ obtained by applying the functor $P$ to the right unitor $X\times I\cong X$ in $\Cat{C}$.
  \end{itemize}

\section{Appendix to Section~\ref{sec:pb}}\label{app:pb}

In this appendix we prove that the axioms of fo-bicategories hold in peircean bicategories. In the end we show the isomorphism $\FOBic \equiv \PeirceBic$ (\Cref{thm_equiv_FOBic_PeirceBic}). Before the main results, we need to prove a few useful properties of peircean bicategories.

\medskip

Since most of the proofs in this appendix are diagrammatic, it is worth remarking that negation behaves graphically as a \emph{colour switch}. Thus, for example $\nega{(\copierCirc[+][X])} = \copierCirc[-][X]$ and $\nega{(\discardCirc[-][X])} = \discardCirc[+][X]$.  For a generic arrow $\Circ{c}$, we depict its negation as $\Circ[-]{c}$. 

\medskip 
Moreover, it is convenient to visualize in diagrams~\eqref{eq:prop mappe} on two particular cases, namely when we take as map $\copier[+]$ or $\discard[+]$:
\begin{center}
    $\input{tikz/mapsCopier} \qquad\qquad \input{tikz/mapsDiscard.tex}$.
\end{center}

\medskip

\begin{lemma}\label{lemma:cup and bot}
    For all $c,d\colon X \to Y$, $c\vee d = \copier[-][X] \seq[-] (c \tensor[-] d) \seq[-] \cocopier[-][Y]$ and $\bot = \discard[-][X] \seq[-] \codiscard[-][Y]$, graphically rendered as follows
    \begin{center}
        $c \vee d = \unionCirc{c}{d}[X][Y] \qquad\qquad  \bot = \bottomCirc[X][Y]$
    \end{center}
    \end{lemma}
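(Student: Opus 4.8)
The statement to prove is \cref{lemma:cup and bot}: in a peircean bicategory, $c\vee d = \copier[-][X]\seq[-](c\tensor[-]d)\seq[-]\cocopier[-][Y]$ and $\bot=\discard[-][X]\seq[-]\codiscard[-][Y]$. The strategy is to transport the analogous $\wedge$ and $\top$ facts across the colour-switch isomorphism $\nega{}$. Recall from \eqref{eq:def:cap} that in any cartesian bicategory $c\wedge d = \copier[+][X]\seq[+](c\tensor[+]d)\seq[+]\cocopier[+][Y]$ and $\top=\discard[+][X]\seq[+]\codiscard[+][Y]$ (the latter up to the obvious reading of $\topCirc$). These are statements about the white structure alone, so they hold in the underlying cartesian bicategory of any peircean bicategory without extra hypotheses.

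First I would record that, by De Morgan in the Boolean algebra on each homset, $c\vee d = \nega{(\nega{c}\wedge\nega{d})}$ and $\bot = \nega{\top}$. Next I apply $\nega{}$ to the white $\wedge$/$\top$ formulas. Here the key input is the definition of the black structure in a peircean bicategory, \eqref{eq:defnegative}: $\copier[-][X]=\nega{\copier[+][X]}$, $\cocopier[-][Y]=\nega{\cocopier[+][Y]}$, $\discard[-][X]=\nega{\discard[+][X]}$, $\codiscard[-][Y]=\nega{\codiscard[+][Y]}$, together with $c\seq[-]d=\nega{(\nega{c}\seq[+]\nega{d})}$ and $c\tensor[-]d=\nega{(\nega{c}\tensor[+]\nega{d})}$. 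Equivalently, and more cleanly, I use that $\nega{}\colon(\co{\Cat{C}},\copier[+],\cocopier[+])\to(\Cat{C},\copier[-],\cocopier[-])$ is an isomorphism of cartesian bicategories (noted just after \eqref{eq:defnegative}); such an isomorphism sends the white $\wedge$-formula for $\nega{c},\nega{d}$ to the black $\vee$-formula for $c,d$. Concretely:
\[
\copier[-][X]\seq[-](c\tensor[-]d)\seq[-]\cocopier[-][Y]
= \nega{\bigl(\copier[+][X]\seq[+](\nega{c}\tensor[+]\nega{d})\seq[+]\cocopier[+][Y]\bigr)}
= \nega{(\nega{c}\wedge\nega{d})}
= c\vee d,
\]
using \eqref{eq:defnegative} and $\nega{\nega{c}}=c$ in the first step, \eqref{eq:def:cap} in the second, and De Morgan in the third. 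The $\bot$ case is the same computation with $\discard$ and $\codiscard$ in place of $\copier$ and $\cocopier$ and no $c,d$:
\[
\discard[-][X]\seq[-]\codiscard[-][Y] = \nega{(\discard[+][X]\seq[+]\codiscard[+][Y])} = \nega{\top} = \bot.
\]

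I do not expect any serious obstacle here: the lemma is essentially bookkeeping, and the only thing to be careful about is the direction of the ordering (hence the use of $\co{\Cat{C}}$) so that $\nega{}$ is genuinely a morphism of cartesian bicategories rather than of cocartesian ones, and the coherence of the unit isomorphisms $X\tensor[+]\unittensor\cong X$ implicit in reading $\discard[+][X]\seq[+]\codiscard[+][X]$ as $\top$. The mild subtlety, if any, is purely notational: one must confirm that the black $\tensor[-]$, $\seq[-]$ in the claimed formula are exactly the ones defined in \eqref{eq:defnegative}, which is immediate. The diagrammatic rendering in the statement is then just the colour-switched picture of \eqref{eq:def:cap}.
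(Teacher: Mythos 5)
Your proposal is correct and follows essentially the same route as the paper's proof: De Morgan to reduce $\vee$ and $\bot$ to $\nega{(\nega{c}\wedge\nega{d})}$ and $\nega{\top}$, then \eqref{eq:def:cap} for the white formulas, then \eqref{eq:defnegative} to fold the negations into the black structure. The only difference is cosmetic — you run the chain of equalities from the black expression back to $c\vee d$, whereas the paper runs it in the other direction.
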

    \begin{proof} $ $
    
        \noindent\begin{minipage}{0.5\linewidth}
            \begin{align}
                c\vee d &= \nega{(\nega{c} \wedge \nega{d})} \tag{De Morgan}\\ 
                &= \nega{(   \copier[+][X] \seq[+] (\nega{c} \tensor[+] \nega{d}) \seq[+] \cocopier[+][Y]  )} \tag{\ref{eq:def:cap}}\\
                & =  \nega{(   \nega{\copier[-][X]} \seq[+] \nega{(c \tensor[-] d)} \seq[+] \nega{\cocopier[-][Y]}  )}  \tag{\ref{eq:defnegative}}\\
                & = \copier[-][X] \seq[-] (c \tensor[-] d) \seq[-] \cocopier[-][Y]  \tag{\ref{eq:defnegative}}
                \end{align}
        \end{minipage}
        \quad \vline
        \begin{minipage}{0.4\linewidth}
            \begin{align}
                \bot &= \nega{ \top} \tag{De Morgan}\\ 
                &= \nega{(   \discard[+][X] \seq[+] \codiscard[+][Y]  )} \tag{\ref{eq:def:cap}}\\
                & =  \nega{(   \nega{\discard[-][X]} \seq[+] \nega{\codiscard[-][Y]}   )}  \tag{\ref{eq:defnegative}}\\
                & = \discard[-][X] \seq[-] \codiscard[-][Y] \tag{\ref{eq:defnegative}}
                \end{align}
        \end{minipage}

        \qedhere
    \end{proof}
    
    Given that $(\Cat{C}, \copier[-], \cocopier[-])$ is a cocartesian bicategory, there is an isomorphism $\opp{(\cdot)} \colon \Cat{C} \to \opposite{\Cat{C}}$ that is the identity on objects and for all arrows $c \colon X \to Y$, $\opp{c} \colon Y \to X$ is defined as follows.
    \begin{equation}\label{eqddagger}
        \opp{c} \defeq \daggerCirc[b]{c}[Y][X]
    \end{equation}
    
    With this definition at hand it is immediate to show that $\nega{(\op{c})} = \opp{(\nega{c})}$. Moreover, we show that the two isomorphisms $\op{(\cdot)}$ and $\opp{(\cdot)}$ actually coincide (\Cref{cor:daggers are the same}).

    \begin{proposition}\label{prop:implications in boolean algebra}
        For any $c, d \colon X \to Y$ the following are equivalent:
        \begin{center}
            \begin{enumerate*}
                \item $c \leq d \qquad$ 
                \item $\nega{d} \leq \nega{c} \qquad$
                \item $\top \leq \nega{c} \vee d \qquad$
                \item $c \wedge \nega{d} \leq \bot \qquad$
            \end{enumerate*}
        \end{center}
    \end{proposition}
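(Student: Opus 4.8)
The statement to prove is Proposition~\ref{prop:implications in boolean algebra}: for $c,d\colon X\to Y$ in a peircean bicategory, the four conditions (1)~$c\le d$, (2)~$\nega d\le\nega c$, (3)~$\top\le\nega c\vee d$, (4)~$c\wedge\nega d\le\bot$ are all equivalent. The plan is to exploit that each homset $\Cat{C}[X,Y]$ is a Boolean algebra by Definition~\ref{def:peircean-bicategory}, so all four claims are purely lattice-theoretic facts about an arbitrary Boolean algebra; the bicategorical structure plays no role here beyond supplying the Boolean algebra. I would therefore prove it as a cycle of implications $(1)\Rightarrow(2)\Rightarrow(3)\Rightarrow(4)\Rightarrow(1)$, or equivalently show each is equivalent to $(1)$.

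First, $(1)\Leftrightarrow(2)$: in any Boolean algebra, and indeed any bounded lattice with an order-reversing involution, $c\le d$ iff $\nega d\le\nega c$, since $\nega{(\cdot)}$ is an order-reversing bijection with $\nega{\nega c}=c$. Next, $(1)\Rightarrow(3)$: if $c\le d$ then $d=c\vee d\le\nega c\vee d$ would not quite work directly, so instead note $\nega c\vee d\ge\nega c\vee c=\top$ using $c\le d$ to replace, or more cleanly: $\top=\nega c\vee c\le\nega c\vee d$ since $c\le d$ and $\vee$ is monotone; hence $\top\le\nega c\vee d$, i.e. equality, giving~(3). Conversely $(3)\Rightarrow(1)$: from $\nega c\vee d=\top$ meet both sides with $c$ to get $c=c\wedge(\nega c\vee d)=(c\wedge\nega c)\vee(c\wedge d)=\bot\vee(c\wedge d)=c\wedge d$, using distributivity and $c\wedge\nega c=\bot$; hence $c\le d$. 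Finally $(3)\Leftrightarrow(4)$ is De~Morgan duality: $\top\le\nega c\vee d$ iff $\nega{(\nega c\vee d)}\le\nega\top$ (by $(1)\Leftrightarrow(2)$ applied to these elements) iff $c\wedge\nega d\le\bot$, since $\nega{(\nega c\vee d)}=c\wedge\nega d$ and $\nega\top=\bot$. This closes the equivalences.

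I do not anticipate any real obstacle: the only mild point of care is to use only Boolean-algebra laws that are genuinely available, namely that $\nega{(\cdot)}$ is an order-reversing involution, that $\vee,\wedge$ are the lattice operations with $c\vee\nega c=\top$ and $c\wedge\nega c=\bot$, that $\bot$ and $\top$ are bottom and top, and that the lattice is distributive --- all of which are part of the data of a Boolean algebra assumed in Definition~\ref{def:peircean-bicategory}. One could also phrase the whole argument through $(1)\Leftrightarrow(2)$ and the single identity $c\le d\iff c\wedge\nega d=\bot$, then deduce the rest; either organization is short. The presentation in the paper will presumably just record these equivalences, possibly in the internal (diagrammatic) syntax via Lemma~\ref{lemma:cup and bot}, but the mathematical content is entirely the Boolean-algebra calculation sketched above.
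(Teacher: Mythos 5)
Your proof is correct and takes the same approach as the paper, which simply observes that all four conditions are equivalent because $\Cat{C}[X,Y]$ is a Boolean algebra; you have merely spelled out the standard Boolean-algebra calculation that the paper leaves implicit.
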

    \begin{proof}
        The four inclusions are equivalent since $\Cat{C}[X,Y]$ is a Boolean algebra.
    \end{proof}

    \begin{proposition}\label{prop:comaps counit}
        The following equality holds $\bottomCirc[X][Y] = 
    \InputIfFileExists{comapsCounits.tikz}{}{\input{tikz/comapsCounits.tikz}}
$
    \end{proposition}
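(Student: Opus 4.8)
The plan is to prove the equality by a short diagrammatic computation that ultimately rests on a single application of the map property \eqref{eq:prop mappe}. First I would recall the relevant presentations of the two extreme elements of each homset Boolean algebra: by \eqref{eq:def:cap} we have $\top = \discardCirc[+][X]\seq[+]\codiscardCirc[+][Y]$, i.e. the top of $\Cat{C}[X,Y]$ is obtained by composing the white discard on $X$ with the white codiscard on $Y$ through the unit $\unittensor$; and by \Cref{lemma:cup and bot} the diagram $\bottomCirc[X][Y]$ denotes the bottom element $\bot = \nega{\top}$ of $\Cat{C}[X,Y]$.

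Next I would rewrite the right-hand side $\tikzfig{comapsCounits}$ using the colour-switch description of the black structure. By \eqref{eq:defnegative} every black constant is the negation of the corresponding white one, and negation acts graphically as a colour switch; using this together with the spider theorem (\Cref{th:spider}) to rearrange the white (co)monoid wiring so that the single internal wire of the diagram passes through $\unittensor$, the right-hand side takes the form $\discardCirc[+][X]\seq[+]\nega{(\codiscardCirc[+][Y])}$, where $\nega{(\codiscardCirc[+][Y])}=\codiscardCirc[-][Y]$.

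The crucial step is then to note that $\discard[+][X]\colon X \to \unittensor$ is a map — it is the (unique) terminal map into $\unittensor$ in $\map(\Cat{C})$, hence satisfies the two inequalities of \cref{def:maps}, as one checks from coherence — so \eqref{eq:prop mappe} applies with $f \defeq \discard[+][X]$ and $c \defeq \codiscard[+][Y]$, giving
\[
\discardCirc[+][X]\seq[+]\nega{(\codiscardCirc[+][Y])}\;=\;\nega{\bigl(\discardCirc[+][X]\seq[+]\codiscardCirc[+][Y]\bigr)}\;=\;\nega{\top}\;=\;\bot\;=\;\bottomCirc[X][Y]\text{,}
\]
which is exactly the claimed equality.

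I expect the only genuine subtlety to be the diagram bookkeeping: establishing that $\tikzfig{comapsCounits}$, up to the coherence isomorphisms for $\unittensor$ and up to \Cref{th:spider}, really is of the shape $\discardCirc[+][X]\seq[+]\nega{(\codiscardCirc[+][Y])}$, so that a single use of \eqref{eq:prop mappe} (with the white discard in the role of the map and the white codiscard in the role of the arbitrary arrow) closes the argument; if instead the right-hand side is more naturally read with the colours on the other side, one applies the same reasoning to $\op{(\cdot)}$ of the equation, using that $\op{(\discard[+][X])}=\codiscard[+][X]$ and the properties collected in Table~\ref{table:daggerproperties}. All remaining ingredients — De~Morgan, the presentations of $\top$ and $\bot$ via (co)discards, and the colour-switch \eqref{eq:defnegative} — are routine.
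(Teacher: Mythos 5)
There is a genuine gap, and it sits exactly at the step you flag as "diagram bookkeeping". The diagram $\tikzfig{comapsCounits}$ is $\nega{(\discard[+][X])} \seq[+] \codiscard[+][Y]$, i.e.\ $\discard[-][X]\seq[+]\codiscard[+][Y]$: the negation sits on the \emph{discard}, while the codiscard stays white. Your rewriting turns it into $\discard[+][X]\seq[+]\nega{(\codiscard[+][Y])}$, which is a different diagram (it is the paper's $\tikzfig{comapsCounits2}$, and the paper disposes of it in one line by exactly the application of \eqref{eq:prop mappe} you describe). Neither \Cref{th:spider} nor the coherence isomorphisms for $\unittensor$ can move a negation from one factor of a composite to the other: the spider theorem only rearranges the white Frobenius wiring and cannot change which generator carries the colour switch. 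Equating $\nega{c}\seq[+]\op{f}$ with $\nega{(c\seq[+]\op{f})}$ is precisely \Cref{lemma:comaps}, which in the paper is proved \emph{after} and \emph{from} this proposition, so the move you need is (an instance of) what is being established.

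Your fallback of applying $\op{(\cdot)}$ to the equation is circular for the same reason: after transposing you are left with $\op{(\nega{\discard[+][X]})}$, and to continue you must commute $\op{(\cdot)}$ past $\nega{(\cdot)}$ (\Cref{lemma:dagger preserves negation}) or know $\op{(\bot)}=\bot$ (\Cref{lemma:dagger preserves bottom}); both results are derived downstream of \Cref{prop:comaps counit}. The paper instead proves the two inclusions separately: $\leq$ is trivial since the left-hand side is $\bot$, and for $\geq$ it passes to the "capped" versions via \Cref{prop:cap property}, uses \eqref{ax:monPlusUnit} to absorb the white codiscard into the monoid, and then applies \eqref{eq:prop mappe} to recognise the resulting $\discard[-][X]\tensor[+]\discard[+][Y]$ as $\bot_{X\tensor[+]Y,\unittensor}$, which is below everything. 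Your argument correctly handles the easy half but does not reach the statement actually being claimed.
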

    \begin{proof}
        We prove the two inclusions separately. The $\leq$ inclusion is trivial since $\bottomCirc[X][Y]$ is $\bot_{X,Y}$. For the other inclusion observe that $\bottomCirc[X][Y] \stackrel{\eqref{eq:prop mappe}}{=} 
    \InputIfFileExists{comapsCounits2.tikz}{}{\input{tikz/comapsCounits2.tikz}}
$ and thus what is left to prove is $
    \InputIfFileExists{comapsCounits.tikz}{}{\input{tikz/comapsCounits.tikz}}
 \leq 
    \InputIfFileExists{comapsCounits2.tikz}{}{\input{tikz/comapsCounits2.tikz}}
$. We prove it by means of \Cref{prop:cap property} as follows:
        \input{tikz/comapsCounitsProof.tex}
        where the last inequality holds since the left-hand side is $\bot_{X \tensor[+] Y, I}$.
    \end{proof}

    \begin{lemma}\label{lemma:dagger preserves bottom}
        The following equality holds, $\op{(\bot_{X,Y})} = \bot_{Y,X}$.
    \end{lemma}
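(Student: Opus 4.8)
The plan is to prove $\op{(\bot_{X,Y})} = \bot_{Y,X}$ by exploiting the fact that $\op{(\cdot)}$ is an isomorphism of cartesian bicategories (Proposition~\ref{prop:map adj}.2) together with the characterisation of $\bot$ in terms of the black structure from Lemma~\ref{lemma:cup and bot}, namely $\bot_{X,Y} = \discard[-][X] \seq[-] \codiscard[-][Y]$. The key observation is that $\op{(\cdot)}$ and the black dagger $\opp{(\cdot)}$ of the cocartesian bicategory $(\Cat{C}, \copier[-], \cocopier[-])$ coincide (this is \Cref{cor:daggers are the same}, stated just before the lemma). So it suffices to compute $\opp{(\bot_{X,Y})}$, which is purely a cocartesian-bicategory calculation dual to the familiar fact that $\op{\top_{X,Y}} = \top_{Y,X}$ (recorded in Table~\ref{table:daggerproperties}).

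First I would invoke Lemma~\ref{lemma:cup and bot} to write $\bot_{X,Y} = \discard[-][X] \seq[-] \codiscard[-][Y]$. Then, using that $\opp{(\cdot)}\colon \Cat{C} \to \opposite{\Cat{C}}$ is a (contravariant) isomorphism of the cocartesian bicategory, I would compute
\[
\opp{(\bot_{X,Y})} = \opp{(\discard[-][X] \seq[-] \codiscard[-][Y])} = \opp{(\codiscard[-][Y])} \seq[-] \opp{(\discard[-][X])} = \discard[-][Y] \seq[-] \codiscard[-][X] = \bot_{Y,X}\text{,}
\]
where the middle step uses that $\opp{(\cdot)}$ reverses $\seq[-]$ and the fact that for a cocartesian bicategory $\opp{(\codiscard[-][Y])} = \discard[-][Y]$ and $\opp{(\discard[-][X])} = \codiscard[-][X]$ — these are exactly the black-structure analogues of the entries $\op{(\codiscard[+][X])} = \discard[+][X]$ and $\op{(\discard[+][X])} = \codiscard[+][X]$ in Table~\ref{table:daggerproperties}, obtained by applying $\nega{}$ (which turns the white cartesian bicategory into the black cocartesian one and intertwines $\op{(\cdot)}$ with $\opp{(\cdot)}$). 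Finally, by \Cref{cor:daggers are the same}, $\op{(\bot_{X,Y})} = \opp{(\bot_{X,Y})} = \bot_{Y,X}$, which concludes the proof.

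Alternatively — and perhaps more in the diagrammatic spirit of the appendix — one could argue directly: $\bot_{X,Y}$ is by Lemma~\ref{lemma:cup and bot} the diagram $\bottomCirc[X][Y]$, i.e.\ a black discard followed by a black codiscard; applying the white dagger $\op{(\cdot)}$, which acts on a string diagram by horizontal reflection and swapping white (co)monoid generators, but here there are \emph{no} white generators and the black part is self-dual under this reflection up to swapping $\discard[-]$ with $\codiscard[-]$, immediately yields $\bottomCirc[Y][X] = \bot_{Y,X}$. The main (and only) obstacle is making precise how $\op{(\cdot)}$ interacts with the black structure; this is entirely handled by \Cref{cor:daggers are the same} and the observation that $\nega{}$ is an isomorphism carrying $(\Cat{C},\copier[+],\cocopier[+])$ to $(\co{\Cat{C}},\copier[-],\cocopier[-])$ and hence the white dagger to the black one, so that Table~\ref{table:daggerproperties} transfers verbatim. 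With that in hand the proof is a two-line computation.
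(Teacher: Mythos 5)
Your proposal is circular. The step on which everything hinges is $\op{(\bot_{X,Y})} = \opp{(\bot_{X,Y})}$, which you justify by \Cref{cor:daggers are the same}. But in the paper's development that result is only \emph{announced} before \Cref{lemma:dagger preserves bottom}; its proof comes after, and its dependency chain runs straight back through the lemma you are trying to prove: \Cref{cor:daggers are the same} is derived from \Cref{lemma:dagger preserves negation} (via $\op{c} = \op{(\nega{\nega{c}})} = \nega{(\op{(\nega{c})})} = \opp{c}$), and the proof of \Cref{lemma:dagger preserves negation} uses precisely $\op{(\bot_{X,Y})} = \bot_{Y,X}$ to conclude $\op{(\nega{c})}\wedge\op{c} = \op{(\nega{c}\wedge c)} = \op{(\bot_{X,Y})} = \bot_{Y,X}$. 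So you may not assume that the white dagger and the black dagger agree at this stage. The remainder of your computation — that $\opp{(\bot_{X,Y})} = \opp{(\discard[-][X]\seq[-]\codiscard[-][Y])} = \discard[-][Y]\seq[-]\codiscard[-][X] = \bot_{Y,X}$ — is unobjectionable, since it only uses the cocartesian structure and the definition of $\opp{(\cdot)}$ in \eqref{eqddagger}; but it computes the wrong dagger. Your ``alternative'' diagrammatic argument suffers from exactly the same defect, as you yourself note that the interaction of $\op{(\cdot)}$ with the black generators is ``entirely handled by \Cref{cor:daggers are the same}.''

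What the paper actually does is a direct diagrammatic computation of $\op{(\bot_{X,Y})}$ that never invokes the coincidence of the two daggers: it expands $\bot_{X,Y}$ as $\discard[-][X]\seq[-]\codiscard[-][Y]$ (by \Cref{lemma:cup and bot}), applies the \emph{white} dagger \eqref{eqdagger} literally (white cups and caps around the black generators), and then eliminates the resulting mixed white/black picture using \Cref{prop:comaps counit}, repeated applications of \eqref{eq:prop mappe} to push negations through $\copier[+]$, $\discard[+]$ and their duals, the (co)unit laws \eqref{ax:comPlusUnit} and \eqref{ax:monPlusUnit}, and structural congruence, arriving at $\discard[-][Y]\seq[-]\codiscard[-][X] = \bot_{Y,X}$. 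To repair your argument you would need to supply an independent proof that $\op{(\cdot)}$ and $\opp{(\cdot)}$ agree on $\bot_{X,Y}$ (or at least that $\op{(\cdot)}$ sends black (co)units to black (co)units), and that is essentially the content of the paper's direct computation.
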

    \begin{proof}
        \[
            \begin{array}{c@{\;\;}c@{\;}c@{\;}c@{\;}c@{\;}c@{\;}c}
                \op{(\bot_{X,Y})} 
                &\dstackrel{\eqref{eqdagger}}{\text{\Cref{lemma:cup and bot}}}{=}&
                \begin{tikzpicture}
                    \begin{pgfonlayer}{nodelayer}
                        \node [{dotStyle/+}] (107) at (-1, 0.375) {};
                        \node [style=none] (108) at (-0.5, 0.75) {};
                        \node [style=none] (109) at (-0.5, 0) {};
                        \node [style=none] (122) at (-2, -1.125) {};
                        \node [style=none] (123) at (-2, 1.125) {};
                        \node [style=none] (124) at (2.5, 1.125) {};
                        \node [style=none] (125) at (2.5, -1.125) {};
                        \node [{dotStyle/+}] (127) at (-1.625, 0.375) {};
                        \node [{dotStyle/+}] (128) at (1.5, -0.375) {};
                        \node [style=none] (129) at (1, 0) {};
                        \node [style=none] (130) at (1, -0.75) {};
                        \node [{dotStyle/+}] (131) at (2.125, -0.375) {};
                        \node [style=none] (132) at (2.5, 0.75) {};
                        \node [style=none] (133) at (-2, -0.75) {};
                        \node [style=label] (135) at (-2.475, -0.75) {$Y$};
                        \node [style=label] (136) at (2.975, 0.75) {$X$};
                        \node [style=none] (137) at (-0.5, -0.375) {};
                        \node [style=none] (138) at (-0.5, 0.375) {};
                        \node [style=none] (139) at (1, 0.375) {};
                        \node [style=none] (140) at (1, -0.375) {};
                        \node [style={dotStyle/-}] (141) at (0.5, 0) {};
                        \node [style={dotStyle/-}] (142) at (0, 0) {};
                    \end{pgfonlayer}
                    \begin{pgfonlayer}{edgelayer}
                        \draw [{bgStyle/+}] (124.center)
                            to (123.center)
                            to (122.center)
                            to (125.center)
                            to cycle;
                        \draw [{wStyle/+}, bend left] (109.center) to (107);
                        \draw [{wStyle/+}, bend left] (107) to (108.center);
                        \draw [{wStyle/+}] (107) to (127);
                        \draw [{wStyle/+}, bend right] (130.center) to (128);
                        \draw [{wStyle/+}, bend right] (128) to (129.center);
                        \draw [{wStyle/+}] (128) to (131);
                        \draw [{wStyle/+}] (132.center) to (108.center);
                        \draw [{wStyle/+}] (130.center) to (133.center);
                        \draw [{bgStyle/-}] (139.center)
                            to (138.center)
                            to (137.center)
                            to (140.center)
                            to cycle;
                        \draw [style={wStyle/-}] (129.center) to (141);
                        \draw [style={wStyle/-}] (142) to (109.center);
                    \end{pgfonlayer}
                \end{tikzpicture}
                &\stackrel{\text{\Cref{prop:comaps counit}}}{=}&
                \begin{tikzpicture}
                    \begin{pgfonlayer}{nodelayer}
                        \node [{dotStyle/+}] (107) at (-1, 0.375) {};
                        \node [style=none] (108) at (-0.5, 0.75) {};
                        \node [style=none] (109) at (-0.5, 0) {};
                        \node [style=none] (122) at (-2, -1.125) {};
                        \node [style=none] (123) at (-2, 1.125) {};
                        \node [style=none] (124) at (2.5, 1.125) {};
                        \node [style=none] (125) at (2.5, -1.125) {};
                        \node [{dotStyle/+}] (127) at (-1.625, 0.375) {};
                        \node [{dotStyle/+}] (128) at (1.5, -0.375) {};
                        \node [style={dotStyle/+}] (129) at (1, 0) {};
                        \node [style=none] (130) at (1, -0.75) {};
                        \node [{dotStyle/+}] (131) at (2.125, -0.375) {};
                        \node [style=none] (132) at (2.5, 0.75) {};
                        \node [style=none] (133) at (-2, -0.75) {};
                        \node [style=label] (135) at (-2.475, -0.75) {$Y$};
                        \node [style=label] (136) at (2.975, 0.75) {$X$};
                        \node [style=none] (137) at (-0.5, -0.375) {};
                        \node [style=none] (138) at (-0.5, 0.375) {};
                        \node [style=none] (139) at (0.5, 0.375) {};
                        \node [style=none] (140) at (0.5, -0.375) {};
                        \node [style={dotStyle/-}] (142) at (0, 0) {};
                    \end{pgfonlayer}
                    \begin{pgfonlayer}{edgelayer}
                        \draw [{bgStyle/+}] (124.center)
                            to (123.center)
                            to (122.center)
                            to (125.center)
                            to cycle;
                        \draw [{wStyle/+}, bend left] (109.center) to (107);
                        \draw [{wStyle/+}, bend left] (107) to (108.center);
                        \draw [{wStyle/+}] (107) to (127);
                        \draw [{wStyle/+}, bend right] (130.center) to (128);
                        \draw [{wStyle/+}, bend right] (128) to (129);
                        \draw [{wStyle/+}] (128) to (131);
                        \draw [{wStyle/+}] (132.center) to (108.center);
                        \draw [{wStyle/+}] (130.center) to (133.center);
                        \draw [{bgStyle/-}] (139.center)
                            to (138.center)
                            to (137.center)
                            to (140.center)
                            to cycle;
                        \draw [style={wStyle/-}] (142) to (109.center);
                    \end{pgfonlayer}
                \end{tikzpicture}            
                &\stackrel{\eqref{eq:prop mappe}}{=}&
                \begin{tikzpicture}
                    \begin{pgfonlayer}{nodelayer}
                        \node [{dotStyle/+}] (107) at (-1, 0.375) {};
                        \node [style=none] (108) at (-0.5, 0.75) {};
                        \node [style={dotStyle/+}] (109) at (-0.5, 0) {};
                        \node [style=none] (122) at (-2, -1.125) {};
                        \node [style=none] (123) at (-2, 1.125) {};
                        \node [style=none] (124) at (2.5, 1.125) {};
                        \node [style=none] (125) at (2.5, -1.125) {};
                        \node [{dotStyle/+}] (127) at (-1.625, 0.375) {};
                        \node [{dotStyle/+}] (128) at (1.5, -0.375) {};
                        \node [style={dotStyle/+}] (129) at (1, 0) {};
                        \node [style=none] (130) at (1, -0.75) {};
                        \node [{dotStyle/+}] (131) at (2.125, -0.375) {};
                        \node [style=none] (132) at (2.5, 0.75) {};
                        \node [style=none] (133) at (-2, -0.75) {};
                        \node [style=label] (135) at (-2.475, -0.75) {$Y$};
                        \node [style=label] (136) at (2.975, 0.75) {$X$};
                        \node [style=none] (137) at (-0.175, -0.375) {};
                        \node [style=none] (138) at (-0.175, 0.375) {};
                        \node [style=none] (139) at (0.675, 0.375) {};
                        \node [style=none] (140) at (0.675, -0.375) {};
                    \end{pgfonlayer}
                    \begin{pgfonlayer}{edgelayer}
                        \draw [{bgStyle/+}] (124.center)
                            to (123.center)
                            to (122.center)
                            to (125.center)
                            to cycle;
                        \draw [{wStyle/+}, bend left] (109) to (107);
                        \draw [{wStyle/+}, bend left] (107) to (108.center);
                        \draw [{wStyle/+}] (107) to (127);
                        \draw [{wStyle/+}, bend right] (130.center) to (128);
                        \draw [{wStyle/+}, bend right] (128) to (129);
                        \draw [{wStyle/+}] (128) to (131);
                        \draw [{wStyle/+}] (132.center) to (108.center);
                        \draw [{wStyle/+}] (130.center) to (133.center);
                        \draw [{bgStyle/-}] (139.center)
                            to (138.center)
                            to (137.center)
                            to (140.center)
                            to cycle;
                    \end{pgfonlayer}
                \end{tikzpicture}
                \\
                &\dstackrel{\eqref{ax:comPlusUnit}}{\eqref{ax:monPlusUnit}}{=}&
                \begin{tikzpicture}
                    \begin{pgfonlayer}{nodelayer}
                        \node [{dotStyle/+}] (107) at (-1, 0.75) {};
                        \node [style=none] (108) at (-0.5, 0.75) {};
                        \node [style=none] (122) at (-2, -1.125) {};
                        \node [style=none] (123) at (-2, 1.125) {};
                        \node [style=none] (124) at (2.5, 1.125) {};
                        \node [style=none] (125) at (2.5, -1.125) {};
                        \node [{dotStyle/+}] (128) at (1.5, -0.75) {};
                        \node [style=none] (130) at (1, -0.75) {};
                        \node [style=none] (132) at (2.5, 0.75) {};
                        \node [style=none] (133) at (-2, -0.75) {};
                        \node [style=label] (135) at (-2.475, -0.75) {$Y$};
                        \node [style=label] (136) at (2.975, 0.75) {$X$};
                        \node [style=none] (137) at (-0.175, -0.375) {};
                        \node [style=none] (138) at (-0.175, 0.375) {};
                        \node [style=none] (139) at (0.675, 0.375) {};
                        \node [style=none] (140) at (0.675, -0.375) {};
                    \end{pgfonlayer}
                    \begin{pgfonlayer}{edgelayer}
                        \draw [{bgStyle/+}] (124.center)
                             to (123.center)
                             to (122.center)
                             to (125.center)
                             to cycle;
                        \draw [{wStyle/+}] (107) to (108.center);
                        \draw [{wStyle/+}] (130.center) to (128);
                        \draw [{wStyle/+}] (132.center) to (108.center);
                        \draw [{wStyle/+}] (130.center) to (133.center);
                        \draw [{bgStyle/-}] (139.center)
                             to (138.center)
                             to (137.center)
                             to (140.center)
                             to cycle;
                    \end{pgfonlayer}
                \end{tikzpicture}                
                &\structuralcong&
                \begin{tikzpicture}
                    \begin{pgfonlayer}{nodelayer}
                        \node [{dotStyle/+}] (107) at (1.25, 0) {};
                        \node [style=none] (108) at (2, 0) {};
                        \node [style=none] (122) at (-2, -0.875) {};
                        \node [style=none] (123) at (-2, 0.875) {};
                        \node [style=none] (124) at (2.5, 0.875) {};
                        \node [style=none] (125) at (2.5, -0.875) {};
                        \node [{dotStyle/+}] (128) at (-0.75, 0) {};
                        \node [style=none] (130) at (-1.5, 0) {};
                        \node [style=none] (132) at (2.5, 0) {};
                        \node [style=none] (133) at (-2, 0) {};
                        \node [style=label] (135) at (-2.475, 0) {$Y$};
                        \node [style=label] (136) at (2.975, 0) {$X$};
                        \node [style=none] (137) at (-0.175, -0.375) {};
                        \node [style=none] (138) at (-0.175, 0.375) {};
                        \node [style=none] (139) at (0.675, 0.375) {};
                        \node [style=none] (140) at (0.675, -0.375) {};
                    \end{pgfonlayer}
                    \begin{pgfonlayer}{edgelayer}
                        \draw [{bgStyle/+}] (124.center)
                            to (123.center)
                            to (122.center)
                            to (125.center)
                            to cycle;
                        \draw [{wStyle/+}] (107) to (108.center);
                        \draw [{wStyle/+}] (130.center) to (128);
                        \draw [{wStyle/+}] (132.center) to (108.center);
                        \draw [{wStyle/+}] (130.center) to (133.center);
                        \draw [{bgStyle/-}] (139.center)
                            to (138.center)
                            to (137.center)
                            to (140.center)
                            to cycle;
                    \end{pgfonlayer}
                \end{tikzpicture} 
                &\stackrel{\eqref{eq:prop mappe}}{=}&
                \begin{tikzpicture}
                    \begin{pgfonlayer}{nodelayer}
                        \node [{dotStyle/+}] (107) at (1.25, 0) {};
                        \node [style=none] (108) at (2, 0) {};
                        \node [style=none] (122) at (-2, -0.875) {};
                        \node [style=none] (123) at (-2, 0.875) {};
                        \node [style=none] (124) at (2.5, 0.875) {};
                        \node [style=none] (125) at (2.5, -0.875) {};
                        \node [{dotStyle/-}] (128) at (-0.75, 0) {};
                        \node [style=none] (130) at (-1.425, 0) {};
                        \node [style=none] (132) at (2.5, 0) {};
                        \node [style=none] (133) at (-2, 0) {};
                        \node [style=label] (135) at (-2.475, 0) {$Y$};
                        \node [style=label] (136) at (2.975, 0) {$X$};
                        \node [style=none] (137) at (-1.425, -0.625) {};
                        \node [style=none] (138) at (-1.425, 0.625) {};
                        \node [style=none] (139) at (-0.075, 0.625) {};
                        \node [style=none] (140) at (-0.075, -0.625) {};
                    \end{pgfonlayer}
                    \begin{pgfonlayer}{edgelayer}
                        \draw [{bgStyle/+}] (124.center)
                             to (123.center)
                             to (122.center)
                             to (125.center)
                             to cycle;
                        \draw [{wStyle/+}] (107) to (108.center);
                        \draw [{wStyle/+}] (132.center) to (108.center);
                        \draw [{wStyle/+}] (130.center) to (133.center);
                        \draw [{bgStyle/-}] (139.center)
                             to (138.center)
                             to (137.center)
                             to (140.center)
                             to cycle;
                        \draw [style={wStyle/-}] (130.center) to (128);
                    \end{pgfonlayer}
                \end{tikzpicture}                 
                \\
                &\stackrel{\text{\Cref{prop:comaps counit}}}{=}&
                \begin{tikzpicture}
                    \begin{pgfonlayer}{nodelayer}
                        \node [{dotStyle/-}] (107) at (0.75, 0) {};
                        \node [style=none] (108) at (1.5, 0) {};
                        \node [style=none] (122) at (-2, -0.875) {};
                        \node [style=none] (123) at (-2, 0.875) {};
                        \node [style=none] (124) at (2.5, 0.875) {};
                        \node [style=none] (125) at (2.5, -0.875) {};
                        \node [{dotStyle/-}] (128) at (-0.25, 0) {};
                        \node [style=none] (130) at (-1, 0) {};
                        \node [style=none] (132) at (2.5, 0) {};
                        \node [style=none] (133) at (-2, 0) {};
                        \node [style=label] (135) at (-2.475, 0) {$Y$};
                        \node [style=label] (136) at (2.975, 0) {$X$};
                    \end{pgfonlayer}
                    \begin{pgfonlayer}{edgelayer}
                        \draw [{bgStyle/-}] (124.center)
                            to (123.center)
                            to (122.center)
                            to (125.center)
                            to cycle;
                        \draw [{wStyle/-}] (107) to (108.center);
                        \draw [{wStyle/-}] (130.center) to (128);
                        \draw [{wStyle/-}] (132.center) to (108.center);
                        \draw [{wStyle/-}] (130.center) to (133.center);
                    \end{pgfonlayer}
                \end{tikzpicture}
                &\stackrel{\text{\Cref{lemma:cup and bot}}}{=}&
                \bot_{Y,X}
            \end{array}
        \]
    \end{proof}
    
    \begin{lemma}\label{lemma:dagger preserves negation}
    For any $c \colon X \to Y$, $\op{(\nega{c})} = \nega{(\op{c})}$.
    \end{lemma}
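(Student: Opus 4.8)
The plan is to show that the isomorphism $\op{(\cdot)}$ acts on each homset as a morphism of Boolean algebras; negation is then preserved simply because it is definable from the rest of the structure. First I would recall, from Table~\ref{table:daggerproperties}, that $\op{(\cdot)}\colon \Cat{C}[X,Y]\to\Cat{C}[Y,X]$ is monotone and satisfies $\op{(\op{c})}=c$; hence it is an order isomorphism (its inverse being itself, again monotone), and therefore preserves every meet and join that exists in a homset. In particular, besides the equalities $\op{(c\wedge d)}=\op{c}\wedge\op{d}$ and $\op{\top}=\top$ already recorded in Table~\ref{table:daggerproperties}, this yields $\op{(c\vee d)}=\op{c}\vee\op{d}$ for the Boolean join, while $\op{(\bot_{X,Y})}=\bot_{Y,X}$ is exactly \Cref{lemma:dagger preserves bottom}. (The Boolean meet and top on a homset of a peircean bicategory are those induced by~\eqref{eq:def:cap}, so Table~\ref{table:daggerproperties} does apply here.)

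Next, fixing an arrow $c\colon X\to Y$ and computing inside the Boolean algebra $\Cat{C}[Y,X]$, I would write
\[\op{c}\wedge\op{(\nega{c})}=\op{(c\wedge\nega{c})}=\op{(\bot_{X,Y})}=\bot_{Y,X},\qquad\op{c}\vee\op{(\nega{c})}=\op{(c\vee\nega{c})}=\op{(\top_{X,Y})}=\top_{Y,X}\text{.}\]
Thus $\op{(\nega{c})}$ is a complement of $\op{c}$ in $\Cat{C}[Y,X]$, and since complements in a Boolean algebra are unique we conclude $\op{(\nega{c})}=\nega{(\op{c})}$, as required.

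The only point requiring a little care is the preservation of the Boolean join $\vee$, which is not listed in Table~\ref{table:daggerproperties} (that table concerns only the inf-semilattice fragment coming from the cartesian bicategory structure); here it is obtained for free from the observation that $\op{(\cdot)}$ is an order isomorphism. Alternatively, one can dispense with $\vee$ entirely and argue via pseudocomplements: using only that $\op{(\cdot)}$ is a monotone involution preserving $\wedge$ (Table~\ref{table:daggerproperties}) and $\bot$ (\Cref{lemma:dagger preserves bottom}), one shows that for every $e\colon Y\to X$ one has $e\wedge\op{c}=\bot_{Y,X}$ iff $e\leq\op{(\nega{c})}$ --- apply $\op{(\cdot)}$, use the Boolean-algebra equivalence $\op{e}\wedge c=\bot_{X,Y}\iff\op{e}\leq\nega{c}$ in $\Cat{C}[X,Y]$, then apply $\op{(\cdot)}$ again --- which identifies $\op{(\nega{c})}$ as the pseudocomplement, hence the complement, of $\op{c}$. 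Either way there is no serious obstacle: the content is just that $\op{(\cdot)}$ preserves enough structure that negation comes along for the ride, with \Cref{lemma:dagger preserves bottom} supplying the one ingredient ($\bot$) not already contained in Table~\ref{table:daggerproperties}. If one prefers to avoid invoking uniqueness of complements explicitly, \Cref{prop:implications in boolean algebra} can be used to recast the two displayed equalities above as the two inequalities $\op{(\nega{c})}\leq\nega{(\op{c})}$ and $\nega{(\op{c})}\leq\op{(\nega{c})}$.
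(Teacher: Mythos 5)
Your argument is correct. The first half coincides with the paper's: both proofs compute $\op{(\nega{c})}\wedge\op{c}=\op{(\nega{c}\wedge c)}=\op{(\bot_{X,Y})}=\bot_{Y,X}$ using meet-preservation from Table~\ref{table:daggerproperties} together with \Cref{lemma:dagger preserves bottom}, and the paper then invokes \Cref{prop:implications in boolean algebra}.4 to extract the inclusion $\op{(\nega{c})}\leq\nega{(\op{c})}$ exactly as you suggest in your closing remark. Where you diverge is in obtaining the reverse inclusion: you establish that $\op{(\nega{c})}$ is a full complement of $\op{c}$ by additionally checking $\op{c}\vee\op{(\nega{c})}=\top_{Y,X}$ (which requires the extra observation that $\op{(\cdot)}$, being a monotone involution and hence an order isomorphism, preserves the Boolean join -- a fact not listed in Table~\ref{table:daggerproperties}, as you correctly flag), and then appeal to uniqueness of complements. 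The paper instead avoids $\vee$ entirely: it applies the already-proved inequality to the arrow $\op{c}$ and uses $\op{(\op{c})}=c$ together with monotonicity to get $\nega{(\op{c})}=\op{\op{\nega{(\op{c})}}}\leq\op{\nega{\op{\op{c}}}}=\op{(\nega{c})}$. The paper's involution trick is slightly leaner in that it reuses the first inequality and needs no join-preservation argument; your complement-characterization (and the pseudocomplement variant you sketch, which is closest in spirit to the paper since it too rests on the equivalence $e\wedge c=\bot\iff e\leq\nega{c}$) is a touch more structural and makes transparent that $\op{(\cdot)}$ is a Boolean-algebra isomorphism on homsets, which is the conceptual content of the lemma. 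Both routes are sound.
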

    \begin{proof}
        We prove the two inclusions separately. 
        We prove $\leq$, on the left, by means of \Cref{prop:implications in boolean algebra}.4. For $\geq$, on the right, we use the properties of $\op{(\cdot)}$ in~\Cref{table:daggerproperties} and the inclusion proved on the left.
        
        \noindent\begin{minipage}{0.55\linewidth}
            \begin{align*}
                \op{(\nega{c})} \wedge \nega{\nega{(\op{c})}} 
                &= \op{(\nega{c})} \wedge \op{c} \tag{\Cref{def:peircean-bicategory}.1} \\
                &= \op{(\nega{c} \wedge c)} \tag{\Cref{table:daggerproperties}} \\
                &= \op{(\bot_{X,Y})} \tag{\Cref{def:peircean-bicategory}.1} \\
                &= \bot_{Y,X} \tag{\Cref{lemma:dagger preserves bottom}}
            \end{align*}
        \end{minipage}
        \quad \vline
        \begin{minipage}{0.4\linewidth}
            \begin{align*}
                \nega{(\op{c})} &= \op{\op{\nega{(\op{c})}}} \tag{\Cref{table:daggerproperties}} \\
                &\leq \op{\nega{\op{\op{c}}}} \tag{$\op{(\nega{c})} \leq \nega{(\op{c})}$} \\
                &= \op{(\nega{c})} \tag{\Cref{table:daggerproperties}}
            \end{align*}
        \end{minipage}

        \qedhere
    \end{proof}

    \begin{lemma}\label{cor:daggers are the same}
        For any $c \colon X \to Y$, $\daggerCirc[+]{c}[Y][X] = \daggerCirc[b]{c}[Y][X]$.
    \end{lemma}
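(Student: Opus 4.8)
The plan is to derive the statement purely from \Cref{lemma:dagger preserves negation} together with the observation recorded just after \eqref{eqddagger}, namely that $\nega{(\op{c})}=\opp{(\nega{c})}$ for every arrow $c\colon X \to Y$. First I would recall why this last identity is immediate: by \eqref{eqdagger} the arrow $\op{c}$ is built from $c$ by pre/post-composing with white cups and caps, and since $\nega{(\cdot)}$ acts on diagrams as a colour switch (turning every white generator black and $c$ into $\nega{c}$, as noted at the start of this appendix), $\nega{(\op{c})}$ is precisely the diagram obtained from $\nega{c}$ using the black cups and caps, which by \eqref{eqddagger} is exactly $\opp{(\nega{c})}$.

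Next, \Cref{lemma:dagger preserves negation} gives $\op{(\nega{c})}=\nega{(\op{c})}$. Chaining the two equalities yields $\op{(\nega{c})}=\opp{(\nega{c})}$ for every arrow $c\colon X\to Y$. Finally, since each homset $\Cat{C}[X,Y]$ carries a Boolean algebra (\Cref{def:peircean-bicategory}.1), negation is an involutive bijection on $\Cat{C}[X,Y]$; hence every arrow $d\colon X\to Y$ can be written as $d=\nega{(\nega{d})}$, and instantiating the previous identity at $c:=\nega{d}$ gives $\op{d}=\op{(\nega{(\nega{d})})}=\opp{(\nega{(\nega{d})})}=\opp{d}$, i.e. $\daggerCirc[+]{d}[Y][X]=\daggerCirc[b]{d}[Y][X]$, as required.

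I do not expect a real obstacle here: the substantive work has already been carried out in \Cref{lemma:dagger preserves bottom} and \Cref{lemma:dagger preserves negation}. The only points needing a little care are spelling out the colour-switch description that identifies $\nega{(\op{(\cdot)})}$ with $\opp{(\nega{(\cdot)})}$, and observing that the substitution $c\mapsto\nega{c}$ is legitimate precisely because $\nega{(\cdot)}$ is a bijection on each homset.
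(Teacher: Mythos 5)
Your proposal is correct and follows essentially the same route as the paper: the paper's proof is the single chain $\op{c}=\op{(\nega{\nega{c}})}=\nega{(\op{(\nega{c})})}=\opp{(\nega{\nega{c}})}=\opp{c}$, which uses exactly your three ingredients (double negation from \Cref{def:peircean-bicategory}.1, \Cref{lemma:dagger preserves negation}, and the colour-switch identity $\nega{(\op{(\cdot)})}=\opp{(\nega{(\cdot)})}$ from \eqref{eqddagger}), merely performing the substitution $c\mapsto\nega{c}$ at the start rather than at the end.
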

    \begin{proof}
        \begin{align*}
            \op{c} 
            \; \stackrel{\text{\Cref{def:peircean-bicategory}.1}}{=} \;
            \op{\left(\nega{\nega{c}}\right)}
            \; \stackrel{\text{\Cref{lemma:dagger preserves negation}}}{=} \;
            \nega{\left( \op{(\nega{c})} \right)}
            \; \stackrel{\eqref{eqddagger}}{=} \;
            \opp{\left(\nega{\nega{c}}\right)}
            \; \stackrel{\text{\Cref{def:peircean-bicategory}.1}}{=} \;
            \opp{c} 
        \end{align*}
    \end{proof}

    \Cref{cor:daggers are the same} is instrumental in proving the following result that extends~\eqref{eq:prop mappe} to \emph{comaps}, i.e. all those arrows $f$, such that $f^\dagger$ is a map.

    \begin{lemma}\label{lemma:comaps}
        For all maps $f \colon X \to Y$ and arrows $c \colon Z \to Y$, $\nega{c} \seq[+] \op{f} = \nega{(c \seq[+] \op{f})}$.
    \end{lemma}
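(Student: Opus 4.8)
The plan is to prove the equality by transporting it across the isomorphism $\op{(\cdot)}\colon \Cat{C}\to\opposite{\Cat{C}}$ and reducing it to a single instance of \eqref{eq:prop mappe} (i.e.\ \cref{prop:maps}). Since $\op{(\cdot)}$ is an order-isomorphism and in particular injective (by $\op{(\op{c})}=c$ in \Cref{table:daggerproperties}), it suffices to establish the equality after applying $\op{(\cdot)}$ to both sides, that is, to show $\op{(\nega{c}\seq[+]\op{f})} = \op{(\nega{(c\seq[+]\op{f})})}$.

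First I would compute the left-hand side. Using contravariance of $\op{(\cdot)}$ on $\seq[+]$ together with $\op{(\op{f})}=f$ from \Cref{table:daggerproperties}, and then \Cref{lemma:dagger preserves negation}, one obtains
\[
\op{(\nega{c}\seq[+]\op{f})} \;=\; \op{(\op{f})}\seq[+]\op{(\nega{c})} \;=\; f\seq[+]\op{(\nega{c})} \;=\; f\seq[+]\nega{(\op{c})}\text{.}
\]
At this point $f\colon X\to Y$ is a map and $\op{c}\colon Y\to Z$ is an ordinary arrow, so \eqref{eq:prop mappe} applies on the nose and gives $f\seq[+]\nega{(\op{c})} = \nega{(f\seq[+]\op{c})}$.

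Next I would rewrite the right-hand side of the target. Again by contravariance of $\op{(\cdot)}$ and $\op{(\op{f})}=f$ we have $f\seq[+]\op{c} = \op{(\op{f})}\seq[+]\op{c} = \op{(c\seq[+]\op{f})}$, hence $\nega{(f\seq[+]\op{c})} = \nega{(\op{(c\seq[+]\op{f})})}$; one more application of \Cref{lemma:dagger preserves negation} moves the negation inside, yielding $\nega{(\op{(c\seq[+]\op{f})})} = \op{(\nega{(c\seq[+]\op{f})})}$. Chaining the displayed equalities gives $\op{(\nega{c}\seq[+]\op{f})} = \op{(\nega{(c\seq[+]\op{f})})}$, and applying $\op{(\cdot)}$ to both sides (using involutivity) concludes.

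I do not expect a genuine obstacle: the conceptual work has already been done in \Cref{lemma:dagger preserves negation} (and \Cref{cor:daggers are the same}, which ensures the dagger in play is the well-behaved one), and what remains is a short bookkeeping of contravariance and negation. The only point requiring care is the variance and typing of the composites: one must keep track that passing to $\op{f}$ swaps domain and codomain, so that each invocation of \eqref{eq:prop mappe} is made for a genuine map acting on the correct side; this is routine but worth checking explicitly when writing up the final computation.
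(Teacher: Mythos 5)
Your proof is correct, and every step type-checks: $\op{(\nega{c}\seq[+]\op{f})}$ and $\op{(\nega{(c\seq[+]\op{f})})}$ both reduce to $\nega{(f\seq[+]\op{c})}$ via the contravariance and involutivity of $\op{(\cdot)}$ from \Cref{table:daggerproperties}, the compatibility $\op{(\nega{c})}=\nega{(\op{c})}$ of \Cref{lemma:dagger preserves negation}, and a single correctly-typed application of \eqref{eq:prop mappe} with the map $f\colon X\to Y$ acting on the left of $\op{c}\colon Y\to Z$; injectivity of the involution then finishes the argument, and there is no circularity since \Cref{lemma:dagger preserves negation} is established independently. The route differs from the paper's in execution rather than in spirit: the paper carries out the same conjugation diagrammatically, unfolding $\op{f}$ via the cups and caps of \eqref{eqdagger}, sliding $c$ through the cap with \Cref{prop:sliding through cap}, applying \eqref{eq:prop mappe} inside the resulting picture, and then recolouring the (co)units using \Cref{cor:daggers are the same} so that the final diagram is recognized as $\nega{(c\seq[+]\op{f})}$ via \eqref{eqddagger}. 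Your algebraic version dispenses with the string-diagram bookkeeping and with \Cref{prop:sliding through cap} and \Cref{cor:daggers are the same} entirely, at the cost of leaning on \Cref{lemma:dagger preserves negation} as the sole bridge between $\op{(\cdot)}$ and $\nega{(\cdot)}$; it is arguably the cleaner write-up, while the paper's diagrammatic form makes the recolouring of the compact-closed structure explicit, which is the form in which the lemma is actually used in the later graphical derivations.
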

    \begin{proof}
        \[
            \begin{array}{c@{}c@{}c@{}c@{}c@{}c@{}c}
                \seqCirc[+]{c}{f}[Z][X][n][fop]
                &\stackrel{\eqref{eqdagger}}{=}&
                \begin{tikzpicture}
                    \begin{pgfonlayer}{nodelayer}
                        \node [style=none] (81) at (-2, -0.6) {};
                        \node [style=none] (112) at (-2, 1.4) {};
                        \node [style=none] (113) at (-2, -1.35) {};
                        \node [style=none] (114) at (2.5, -1.35) {};
                        \node [style=none] (115) at (2.5, 1.4) {};
                        \node [style=none] (121) at (2.5, 0.9) {};
                        \node [{boxStyle/-}] (122) at (-1.25, -0.6) {$c$};
                        \node [{funcStyle/+}] (123) at (0.75, 0.15) {$f$};
                        \node [style=label] (124) at (-2.5, -0.6) {$Z$};
                        \node [style=label] (125) at (3, 0.9) {$X$};
                        \node [style={dotStyle/+}] (126) at (1.75, -0.225) {};
                        \node [style={dotStyle/+}] (127) at (2.25, -0.225) {};
                        \node [style=none] (128) at (1.25, 0.15) {};
                        \node [style=none] (129) at (1.25, -0.6) {};
                        \node [style={dotStyle/+}] (130) at (-0.25, 0.525) {};
                        \node [style={dotStyle/+}] (131) at (-0.75, 0.525) {};
                        \node [style=none] (132) at (0.25, 0.9) {};
                        \node [style=none] (133) at (0.25, 0.15) {};
                    \end{pgfonlayer}
                    \begin{pgfonlayer}{edgelayer}
                        \draw [{bgStyle/+}] (114.center)
                            to (113.center)
                            to (112.center)
                            to (115.center)
                            to cycle;
                        \draw [{wStyle/+}] (81.center) to (122);
                        \draw [style={wStyle/+}, bend right] (129.center) to (126);
                        \draw [style={wStyle/+}, bend left] (128.center) to (126);
                        \draw [style={wStyle/+}] (126) to (127);
                        \draw [style={wStyle/+}, bend left] (133.center) to (130);
                        \draw [style={wStyle/+}, bend right] (132.center) to (130);
                        \draw [style={wStyle/+}] (130) to (131);
                        \draw [style={wStyle/+}] (133.center) to (123);
                        \draw [style={wStyle/+}] (123) to (128.center);
                        \draw [style={wStyle/+}] (132.center) to (121.center);
                        \draw [style={wStyle/+}] (129.center) to (122);
                    \end{pgfonlayer}
                \end{tikzpicture}       
                &\stackrel{\text{\Cref{prop:sliding through cap}}}{=}&
                \begin{tikzpicture}
                    \begin{pgfonlayer}{nodelayer}
                        \node [style=none] (81) at (-2, -0.75) {};
                        \node [style=none] (112) at (-2, 1.25) {};
                        \node [style=none] (113) at (-2, -1.25) {};
                        \node [style=none] (114) at (2.75, -1.25) {};
                        \node [style=none] (115) at (2.75, 1.25) {};
                        \node [style=none] (121) at (2.75, 0.75) {};
                        \node [{funcStyle/+}] (123) at (-0.25, 0) {$f$};
                        \node [style=label] (124) at (-2.5, -0.75) {$Z$};
                        \node [style=label] (125) at (3.25, 0.75) {$X$};
                        \node [style={dotStyle/+}] (126) at (2, -0.375) {};
                        \node [style={dotStyle/+}] (127) at (2.5, -0.375) {};
                        \node [style=none] (128) at (1.5, 0) {};
                        \node [style=none] (129) at (1.5, -0.75) {};
                        \node [style={dotStyle/+}] (130) at (-1.25, 0.375) {};
                        \node [style={dotStyle/+}] (131) at (-1.75, 0.375) {};
                        \node [style=none] (132) at (-0.75, 0.75) {};
                        \node [style=none] (133) at (-0.75, 0) {};
                        \node [{boxOpStyle/-}] (134) at (1, 0) {$c$};
                    \end{pgfonlayer}
                    \begin{pgfonlayer}{edgelayer}
                        \draw [{bgStyle/+}] (114.center)
                            to (113.center)
                            to (112.center)
                            to (115.center)
                            to cycle;
                        \draw [style={wStyle/+}, bend right] (129.center) to (126);
                        \draw [style={wStyle/+}, bend left] (128.center) to (126);
                        \draw [style={wStyle/+}] (126) to (127);
                        \draw [style={wStyle/+}, bend left] (133.center) to (130);
                        \draw [style={wStyle/+}, bend right] (132.center) to (130);
                        \draw [style={wStyle/+}] (130) to (131);
                        \draw [style={wStyle/+}] (133.center) to (123);
                        \draw [style={wStyle/+}] (132.center) to (121.center);
                        \draw [style={wStyle/+}] (123) to (134);
                        \draw [style={wStyle/+}] (134) to (128.center);
                        \draw [style={wStyle/+}] (81.center) to (129.center);
                    \end{pgfonlayer}
                \end{tikzpicture}   
                &\stackrel{\eqref{eq:prop mappe}}{=}&                                            
                \begin{tikzpicture}
                    \begin{pgfonlayer}{nodelayer}
                        \node [style=none] (81) at (-2.25, -1) {};
                        \node [style=none] (112) at (-2.25, 1.25) {};
                        \node [style=none] (113) at (-2.25, -1.25) {};
                        \node [style=none] (114) at (3, -1.25) {};
                        \node [style=none] (115) at (3, 1.25) {};
                        \node [style=none] (121) at (3, 1) {};
                        \node [{funcStyle/-}] (123) at (-0.25, 0) {$f$};
                        \node [style=label] (124) at (-2.75, -1) {$Z$};
                        \node [style=label] (125) at (3.5, 1) {$X$};
                        \node [style={dotStyle/+}] (126) at (2.25, -0.5) {};
                        \node [style={dotStyle/+}] (127) at (2.75, -0.5) {};
                        \node [style=none] (128) at (1.75, 0) {};
                        \node [style=none] (129) at (1.75, -1) {};
                        \node [style={dotStyle/+}] (130) at (-1.5, 0.525) {};
                        \node [style={dotStyle/+}] (131) at (-2, 0.525) {};
                        \node [style=none] (132) at (-1, 1) {};
                        \node [style=none] (133) at (-1, 0) {};
                        \node [{boxOpStyle/-}] (134) at (1, 0) {$c$};
                        \node [style=none] (135) at (-1, 0.75) {};
                        \node [style=none] (136) at (-1, -0.75) {};
                        \node [style=none] (137) at (1.75, -0.75) {};
                        \node [style=none] (138) at (1.75, 0.75) {};
                    \end{pgfonlayer}
                    \begin{pgfonlayer}{edgelayer}
                        \draw [{bgStyle/+}] (114.center)
                            to (113.center)
                            to (112.center)
                            to (115.center)
                            to cycle;
                        \draw [style={wStyle/+}, bend right] (129.center) to (126);
                        \draw [style={wStyle/+}, bend left] (128.center) to (126);
                        \draw [style={wStyle/+}] (126) to (127);
                        \draw [style={wStyle/+}, bend left] (133.center) to (130);
                        \draw [style={wStyle/+}, bend right] (132.center) to (130);
                        \draw [style={wStyle/+}] (130) to (131);
                        \draw [style={wStyle/+}] (132.center) to (121.center);
                        \draw [style={wStyle/+}] (81.center) to (129.center);
                        \draw [{bgStyle/-}] (137.center)
                            to (136.center)
                            to (135.center)
                            to (138.center)
                            to cycle;
                        \draw [style={wStyle/-}] (128.center) to (134);
                        \draw [style={wStyle/-}] (134) to (123);
                        \draw [style={wStyle/-}] (123) to (133.center);
                    \end{pgfonlayer}
                \end{tikzpicture} 
                \\
                &\stackrel{\text{\Cref{cor:daggers are the same}}}{=}&
                \begin{tikzpicture}
                    \begin{pgfonlayer}{nodelayer}
                        \node [style=none] (81) at (-2, -1) {};
                        \node [style=none] (112) at (-2, 1.25) {};
                        \node [style=none] (113) at (-2, -1.25) {};
                        \node [style=none] (114) at (2.75, -1.25) {};
                        \node [style=none] (115) at (2.75, 1.25) {};
                        \node [style=none] (121) at (2.75, 1) {};
                        \node [{funcStyle/-}] (123) at (-0.25, 0) {$f$};
                        \node [style=label] (124) at (-2.5, -1) {$Z$};
                        \node [style=label] (125) at (3.25, 1) {$X$};
                        \node [style={dotStyle/-}] (126) at (2, -0.5) {};
                        \node [style={dotStyle/-}] (127) at (2.5, -0.5) {};
                        \node [style=none] (128) at (1.5, 0) {};
                        \node [style=none] (129) at (1.5, -1) {};
                        \node [style={dotStyle/-}] (130) at (-1.25, 0.525) {};
                        \node [style={dotStyle/-}] (131) at (-1.75, 0.525) {};
                        \node [style=none] (132) at (-0.75, 1) {};
                        \node [style=none] (133) at (-0.75, 0) {};
                        \node [{boxOpStyle/-}] (134) at (1, 0) {$c$};
                    \end{pgfonlayer}
                    \begin{pgfonlayer}{edgelayer}
                        \draw [{bgStyle/-}] (114.center)
                             to (113.center)
                             to (112.center)
                             to (115.center)
                             to cycle;
                        \draw [style={wStyle/-}, bend right] (129.center) to (126);
                        \draw [style={wStyle/-}, bend left] (128.center) to (126);
                        \draw [style={wStyle/-}] (126) to (127);
                        \draw [style={wStyle/-}, bend left] (133.center) to (130);
                        \draw [style={wStyle/-}, bend right] (132.center) to (130);
                        \draw [style={wStyle/-}] (130) to (131);
                        \draw [style={wStyle/-}] (132.center) to (121.center);
                        \draw [style={wStyle/-}] (81.center) to (129.center);
                        \draw [style={wStyle/-}] (128.center) to (134);
                        \draw [style={wStyle/-}] (134) to (123);
                        \draw [style={wStyle/-}] (123) to (133.center);
                    \end{pgfonlayer}
                \end{tikzpicture}                       
                &\stackrel{\eqref{eqddagger}}{=}&
                \seqCirc[-]{c}{f}[Z][X][n][nfop]
            \end{array}                       
        \]
    \end{proof}

    It is convenient to visualize in diagrams \Cref{lemma:comaps} on two particular cases, namely when we take as comap $\cocopier[+]$ or $\codiscard[+]$:
    \begin{center}
        $\input{tikz/mapsCocopier} \qquad\qquad \input{tikz/mapsCodiscard.tex}$.
    \end{center}

    \medskip
    
    The structure in~\eqref{eq:defnegative} can also be used to define a compact closed structure in $(\Cat{C}, \copier[-], \cocopier[-])$. In other words, in a peircean bicategory one can \emph{bend} the wires in two ways. These bending operations are shown to be the same (\Cref{cor:cc are the same}). 
    
    \begin{proposition}\label{prop:partial cc}
        For any $c \colon X \tensor[+] Y \to \unittensor$, $
    \InputIfFileExists{partialCC/lhs.tikz}{}{\input{tikz/partialCC/lhs.tikz}}
 \leq 
    \InputIfFileExists{partialCC/rhs.tikz}{}{\input{tikz/partialCC/rhs.tikz}}
$.
    \end{proposition}
    \begin{proof}
        We prove it by means of \Cref{prop:implications in boolean algebra}.3 as follows.
        \[
            \begin{array}{c@{\,}c@{\,}c@{\,}c@{\,}c}
                \nega{\left( 
    \InputIfFileExists{partialCC/lhs.tikz}{}{\input{tikz/partialCC/lhs.tikz}}
 \right)} \vee 
    \InputIfFileExists{partialCC/rhs.tikz}{}{\input{tikz/partialCC/rhs.tikz}}
 
                &
                \dstackrel{\eqref{eq:defnegative}}{\text{\Cref{lemma:cup and bot}}}{=}
                & 
                
    \InputIfFileExists{partialCC/step1.tikz}{}{\input{tikz/partialCC/step1.tikz}}

                &
                \stackrel{\text{\Cref{th:spider}}}{=}
                &
                
    \InputIfFileExists{partialCC/step2.tikz}{}{\input{tikz/partialCC/step2.tikz}}
 \\[20pt]
                &
                \stackrel{\text{\Cref{def:peircean-bicategory}.1}}{=}
                &
                
    \InputIfFileExists{partialCC/step3.tikz}{}{\input{tikz/partialCC/step3.tikz}}

                &
                \stackrel{\eqref{eq:prop mappe}}{=}
                &
                
    \InputIfFileExists{partialCC/step4.tikz}{}{\input{tikz/partialCC/step4.tikz}}
 \\
                &
                \stackrel{\;\;\;\;\;\;\eqref{ax:comMinusUnit}\;\;\;\;\;\;}{=}
                &
                
    \InputIfFileExists{partialCC/step5.tikz}{}{\input{tikz/partialCC/step5.tikz}}

                &
                \structuralcong
                &
                
    \InputIfFileExists{partialCC/step6.tikz}{}{\input{tikz/partialCC/step6.tikz}}
 \\[12pt]
                &
                \stackrel{\text{\Cref{lemma:comaps}}}{=}
                &
                
    \InputIfFileExists{partialCC/step7.tikz}{}{\input{tikz/partialCC/step7.tikz}}

                &
                \stackrel{\eqref{eq:def:cap}}{=}
                &
                \top_{X,Y}      
            \end{array}
        \]
    \end{proof}
    
    \begin{lemma}\label{lemma:cc preserves negation}
       For any $c \colon X \to Y$, $\cappedCirc[+]{c}[X][Y][b] = \nega{\left( \cappedCirc[+]{c}[X][Y] \,\,\right)}$.
    \end{lemma}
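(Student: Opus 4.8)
\textbf{Proof proposal for \Cref{lemma:cc preserves negation}.}

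The statement asserts that the two ways of bending the output wire of an arrow $c\colon X\to Y$ agree with the negation: bending via the \emph{black} cap/cup structure $\eqref{eq:defnegative}$ applied to $c$ is the same as negating the \emph{white} bending of $c$. The plan is to unfold the left-hand side $\cappedCirc[+]{c}[X][Y][b]$ using the definitions in $\eqref{eq:defnegative}$ of the black (co)monoids and compact structure as colour switches of the white ones, then push the negation through. Concretely, the black cap $\capCirc[-]$ is $\nega{(\capCirc[+])}$ and similarly for the cup; so the black bending of $c$ is literally a diagram built from $\nega{\copier[+]}$, $\nega{\codiscard[+]}$ (or whichever (co)monoid pieces the $\cappedCirc$-macro expands to) composed with $c$ itself along black composition $\seq[-]$ and black tensor $\tensor[-]$. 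Using $c\seq[-]d = \nega{(\nega c\seq[+]\nega d)}$ and $c\tensor[-]d=\nega{(\nega c\tensor[+]\nega d)}$ from $\eqref{eq:defnegative}$, together with $\nega{\nega c}=c$ (\Cref{def:peircean-bicategory}.1), the whole black diagram collapses to $\nega{(\text{white diagram in }\nega c)}$. The remaining task is to recognise that white diagram: it is the white bending of $\nega c$, i.e. $\cappedCirc[+]{\nega c}[X][Y]$, so we would be left needing $\cappedCirc[+]{\nega c}[X][Y] = \cappedCirc[+]{c}[X][Y]$ \emph{after} one more negation — which is exactly where \Cref{lemma:cc preserves negation}'s content lives.

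A cleaner route, which I expect to be the one actually taken, is to go through the $\dagger$-operation, exploiting that white bending of $c$ can be expressed via $\op c$ (or via caps and $c$) and that $\op{(\cdot)}$ interacts with $\nega{(\cdot)}$ by \Cref{lemma:dagger preserves negation}: $\op{(\nega c)}=\nega{(\op c)}$. Thus the plan is: (i) rewrite $\cappedCirc[+]{c}[X][Y]$ in terms of $\op c$ and white caps using $\eqref{eqdagger}$ and the spider theorem \Cref{th:spider}; (ii) rewrite $\cappedCirc[+]{c}[X][Y][b]$ in terms of $\opp c$ and black caps using $\eqref{eqddagger}$ and $\eqref{eq:defnegative}$; (iii) use \Cref{cor:daggers are the same} ($\op c=\opp c$) to identify the two "transposed" arrows, and push the remaining colour-switch of the bare cap/cup wires through using \Cref{lemma:dagger preserves negation} and the cup-and-bottom characterisation \Cref{lemma:cup and bot}. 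This reduces everything to the already-established facts that $\nega{}$ is a colour switch on the (co)monoid generators and that $\op{(\cdot)}$ and $\opp{(\cdot)}$ coincide.

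I would carry out the steps in the order: first record the diagrammatic expansion of $\cappedCirc[+]{c}[X][Y]$ and $\cappedCirc[+]{c}[X][Y][b]$ (mechanical, using the macros' definitions); next apply $\eqref{eqddagger}$ and $\eqref{eq:defnegative}$ to the black version to extract a global $\nega{}$; then invoke \Cref{lemma:dagger preserves negation} and \Cref{cor:daggers are the same} to match the inner transposed arrow; finally clean up with \Cref{th:spider} so that the surviving cap/cup wiring is in the canonical spider form on both sides. The main obstacle is bookkeeping: making sure the colour switch is applied uniformly to \emph{every} wire and node of the bent diagram (including the bare cup/cap used for bending, not just $c$), since a mismatched colour on even one wire breaks the argument; this is precisely the kind of subtlety that \Cref{lemma:comaps} and the two-particular-cases visualisations were set up to handle, so I expect to need \Cref{lemma:comaps} (the comap version of $\eqref{eq:prop mappe}$) at the point where the negation crosses the bending cup. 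No genuinely new idea is required beyond the lemmas already proved; it is a short diagrammatic chase.
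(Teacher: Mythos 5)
Your reading of the statement is off, and the slip propagates into the proposed argument. The left-hand side $\cappedCirc[+]{c}[X][Y][b]$ is \emph{not} the black bending of $c$: its background, wires and cap are all white, and only the box is black, i.e.\ it is the \emph{white} bending of $\nega{c}$. There are therefore no black (co)monoids in it to unfold via \eqref{eq:defnegative}, so your first route does not engage with the actual left-hand side (and, as you concede, the pure colour-switch computation is circular in any case). Your second route is also circular: sliding the box through the cap (\Cref{prop:sliding through cap}) and replacing $\op{(\nega{c})}$ by $\nega{(\op{c})}$ via \Cref{lemma:dagger preserves negation} reproduces exactly the same problem with $\op{c}$ on the other leg --- one negation still has to cross the bare cap, and that crossing is the entire content of the lemma. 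Moreover \Cref{lemma:comaps} cannot effect this crossing in one application, because the cap $\cocopier[+]\seq[+]\discard[+]$ is not of the form $\op{f}$ for a map $f$: its dagger $\codiscard[+]\seq[+]\copier[+]$ is total but not single-valued, so it is not a comap. Also note that $\nega{c}\tensor[+]\id[+]$ is not $\nega{(c\tensor[+]\id[+])}$ (the latter is $\nega{c}\tensor[-]\id[-]$), so "extracting a global $\nega{}$" from the bent diagram is not the mechanical step you describe.

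The missing idea is that the paper proves the two inclusions separately by Boolean reasoning. For $\leq$ it uses \Cref{prop:implications in boolean algebra}: it suffices to show that $\cappedCirc[+]{c}[X][Y][b] \wedge \cappedCirc[+]{c}[X][Y]$ equals $\bot$, which is done by expanding the meet via \eqref{eq:def:cap}, rearranging with \Cref{th:spider}, and applying \eqref{eq:prop mappe} and \Cref{lemma:comaps} to individual (co)monoid generators one at a time until \Cref{lemma:cup and bot} identifies the result as $\bot$. For $\geq$ it first observes that $\nega{\left(\cappedCirc[+]{c}[X][Y]\right)}$ is the full colour switch $\cappedCirc[-]{c}[X][Y][b]$ and then invokes the one-sided inequality \Cref{prop:partial cc} (stated and proved immediately beforehand for exactly this purpose, again via a complement-equals-$\top$ argument) to trade the black cap for a white one, closing with \Cref{th:spider}. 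None of these ingredients --- the split into two inequalities, the complement-equals-$\bot$/$\top$ reformulation, and \Cref{prop:partial cc} --- appears in your sketch, so the "short diagrammatic chase" you defer to is where essentially all of the work lies.
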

    \begin{proof}
        We prove the two inclusions separately. 
        
        We prove $\leq$ by means of~\Cref{prop:implications in boolean algebra}.4 as follows.
        \[
        \begin{array}{c@{\,}c@{\,}c@{\,}c@{\,}c@{\,}c}
            \cappedCirc[+]{c}[X][Y][b] \wedge \nega{\nega{\left( \cappedCirc[+]{c}[X][Y] \,\,\right)}} 
            &
            \stackrel{\text{\Cref{def:peircean-bicategory}.1}}{=} 
            &
            \cappedCirc[+]{c}[X][Y][b] \wedge \cappedCirc[+]{c}[X][Y]
            &
            \stackrel{\eqref{eq:def:cap}}{=}
            &
            
    \InputIfFileExists{CCneg/step1.tikz}{}{\input{tikz/CCneg/step1.tikz}}

            \\
            &
            \stackrel{\;\;\;\;\text{\Cref{th:spider}}\;\;\;\;}{=} 
            &
            
    \InputIfFileExists{CCneg/step2.tikz}{}{\input{tikz/CCneg/step2.tikz}}

            &
            \stackrel{\text{\Cref{def:peircean-bicategory}.1}}{=}
            &
            
    \InputIfFileExists{CCneg/step3.tikz}{}{\input{tikz/CCneg/step3.tikz}}

            \\
            &
            \stackrel{\text{\Cref{lemma:comaps}}}{=} 
            &
            
    \InputIfFileExists{CCneg/step4.tikz}{}{\input{tikz/CCneg/step4.tikz}}

            &
            \stackrel{\eqref{ax:monPlusUnit}}{=} 
            &
            
    \InputIfFileExists{CCneg/step5.tikz}{}{\input{tikz/CCneg/step5.tikz}}

            \\
            &
            \;\;\;\;\;\;\;\; \structuralcong \;\;\;\;\;\;\;\;
            &
            
    \InputIfFileExists{CCneg/step6.tikz}{}{\input{tikz/CCneg/step6.tikz}}

            &
            \stackrel{\eqref{eq:prop mappe}}{=}  
            &
            
    \InputIfFileExists{CCneg/step7.tikz}{}{\input{tikz/CCneg/step7.tikz}}

            \\
            &
            \stackrel{\text{\Cref{lemma:cup and bot}}}{=} 
            &
            \bot_{X \tensor[+] Y, \unittensor}  
        \end{array}
        \]
        
        For $\geq$ we proceed as follows.
        \begin{align*}
            \nega{\left( \!\cappedCirc[+]{c}[X][Y] \,\,\right)} 
            &\stackrel{\phantom{\Cref{th:spider}}}{=}
            \cappedCirc[-]{c}[X][Y][b]
            \stackrel{\text{\Cref{th:spider}}}{=}\!\!
            \begin{tikzpicture}
                \begin{pgfonlayer}{nodelayer}
                    \node [style=none] (109) at (-2.325, 1) {};
                    \node [style=none] (122) at (-2.325, -0.875) {};
                    \node [style=none] (123) at (-2.325, 1.625) {};
                    \node [style=none] (124) at (0.575, 1.625) {};
                    \node [style=none] (125) at (0.575, -0.875) {};
                    \node [{dotStyle/+}] (128) at (-0.25, 0.625) {};
                    \node [style=none] (129) at (-0.75, 1) {};
                    \node [style=none] (130) at (-0.75, 0.25) {};
                    \node [{dotStyle/+}] (131) at (0.2, 0.625) {};
                    \node [style=none] (133) at (-1, 0.25) {};
                    \node [{boxStyle/-}] (134) at (-1, 1) {$c$};
                    \node [style=label] (135) at (-3.05, 1) {$X$};
                    \node [style=label] (136) at (-3.05, -1.25) {$Y$};
                    \node [{dotStyle/+}] (137) at (-1.5, -0.125) {};
                    \node [style=none] (138) at (-1, 0.25) {};
                    \node [style=none] (139) at (-1, -0.5) {};
                    \node [{dotStyle/+}] (140) at (-1.95, -0.125) {};
                    \node [style=none] (141) at (0.575, -0.5) {};
                    \node [style=none] (142) at (-2.575, -1.625) {};
                    \node [style=none] (143) at (-2.575, 1.875) {};
                    \node [style=none] (144) at (1.9, 1.875) {};
                    \node [style=none] (145) at (1.9, -1.625) {};
                    \node [style=none] (146) at (-2.575, 1) {};
                    \node [{dotStyle/-}] (147) at (1.075, -0.875) {};
                    \node [style=none] (148) at (0.575, -0.5) {};
                    \node [style=none] (149) at (0.575, -1.25) {};
                    \node [{dotStyle/-}] (150) at (1.525, -0.875) {};
                    \node [style=none] (151) at (-2.575, -1.25) {};
                \end{pgfonlayer}
                \begin{pgfonlayer}{edgelayer}
                    \draw [{bgStyle/-}] (144.center)
                         to (143.center)
                         to (142.center)
                         to (145.center)
                         to cycle;
                    \draw [{bgStyle/+}] (124.center)
                         to (123.center)
                         to (122.center)
                         to (125.center)
                         to cycle;
                    \draw [{wStyle/+}, bend right] (130.center) to (128);
                    \draw [{wStyle/+}, bend right] (128) to (129.center);
                    \draw [{wStyle/+}] (128) to (131);
                    \draw [{wStyle/+}] (130.center) to (133.center);
                    \draw [{wStyle/+}] (109.center) to (134);
                    \draw [{wStyle/+}] (129.center) to (134);
                    \draw [{wStyle/+}, bend left] (139.center) to (137);
                    \draw [{wStyle/+}, bend left] (137) to (138.center);
                    \draw [{wStyle/+}] (137) to (140);
                    \draw [style={wStyle/+}] (141.center) to (139.center);
                    \draw [{wStyle/-}, bend right] (149.center) to (147);
                    \draw [{wStyle/-}, bend right] (147) to (148.center);
                    \draw [{wStyle/-}] (147) to (150);
                    \draw [style={wStyle/-}] (149.center) to (151.center);
                    \draw [style={wStyle/-}] (146.center) to (109.center);
                    \draw [style={wStyle/-}] (148.center) to (141.center);
                    \draw [style={wStyle/+}] (138.center) to (133.center);
                \end{pgfonlayer}
            \end{tikzpicture}               
            \stackrel{\text{\Cref{prop:partial cc}}}{\leq}\!\!
            \begin{tikzpicture}
                \begin{pgfonlayer}{nodelayer}
                    \node [style=none] (109) at (-1.75, 1) {};
                    \node [style=none] (122) at (-1.75, -0.125) {};
                    \node [style=none] (123) at (-1.75, 1.625) {};
                    \node [style=none] (124) at (0.575, 1.625) {};
                    \node [style=none] (125) at (0.575, -0.125) {};
                    \node [{dotStyle/+}] (128) at (-0.25, 0.625) {};
                    \node [style=none] (129) at (-0.75, 1) {};
                    \node [style=none] (130) at (-0.75, 0.25) {};
                    \node [{dotStyle/+}] (131) at (0.2, 0.625) {};
                    \node [style=none] (133) at (-1.75, 0.25) {};
                    \node [{boxStyle/-}] (134) at (-1, 1) {$c$};
                    \node [style=label] (135) at (-3.55, 1) {$X$};
                    \node [style=label] (136) at (-3.55, -1.25) {$Y$};
                    \node [{dotStyle/-}] (137) at (-2.25, -0.125) {};
                    \node [style=none] (138) at (-1.75, 0.25) {};
                    \node [style=none] (139) at (-1.75, -0.5) {};
                    \node [{dotStyle/-}] (140) at (-2.7, -0.125) {};
                    \node [style=none] (141) at (-1, -0.5) {};
                    \node [style=none] (142) at (-3.075, -1.625) {};
                    \node [style=none] (143) at (-3.075, 1.875) {};
                    \node [style=none] (144) at (0.825, 1.875) {};
                    \node [style=none] (145) at (0.825, -1.625) {};
                    \node [style=none] (146) at (-3.075, 1) {};
                    \node [{dotStyle/-}] (147) at (-0.25, -0.875) {};
                    \node [style=none] (148) at (-0.75, -0.5) {};
                    \node [style=none] (149) at (-0.75, -1.25) {};
                    \node [{dotStyle/-}] (150) at (0.2, -0.875) {};
                    \node [style=none] (151) at (-3.075, -1.25) {};
                \end{pgfonlayer}
                \begin{pgfonlayer}{edgelayer}
                    \draw [{bgStyle/-}] (144.center)
                         to (143.center)
                         to (142.center)
                         to (145.center)
                         to cycle;
                    \draw [{bgStyle/+}] (124.center)
                         to (123.center)
                         to (122.center)
                         to (125.center)
                         to cycle;
                    \draw [{wStyle/+}, bend right] (130.center) to (128);
                    \draw [{wStyle/+}, bend right] (128) to (129.center);
                    \draw [{wStyle/+}] (128) to (131);
                    \draw [{wStyle/+}] (130.center) to (133.center);
                    \draw [{wStyle/+}] (109.center) to (134);
                    \draw [{wStyle/+}] (129.center) to (134);
                    \draw [{wStyle/-}, bend left] (139.center) to (137);
                    \draw [{wStyle/-}, bend left] (137) to (138.center);
                    \draw [{wStyle/-}, bend right] (149.center) to (147);
                    \draw [{wStyle/-}, bend right] (147) to (148.center);
                    \draw [{wStyle/-}] (147) to (150);
                    \draw [style={wStyle/-}] (149.center) to (151.center);
                    \draw [style={wStyle/-}] (146.center) to (109.center);
                    \draw [style={wStyle/-}] (148.center) to (141.center);
                    \draw [style={wStyle/-}] (133.center) to (138.center);
                    \draw [style={wStyle/-}] (137) to (140);
                    \draw [style={wStyle/-}] (139.center) to (141.center);
                \end{pgfonlayer}
            \end{tikzpicture}                    
            \\
            &\stackrel{\text{\Cref{th:spider}}}{=}\!\!
            \cappedCirc[+]{c}[X][Y][b]    
        \end{align*}
        \qedhere
    \end{proof}

    \begin{lemma}\label{cor:cc are the same}
        For any $c \colon X \to Y$, $\cappedCirc[+]{c}[X][Y] = \cappedCirc[-]{c}[X][Y]$.
    \end{lemma}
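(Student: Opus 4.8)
The plan is to reproduce, for the cup/cap ``bending'' operation, the short argument already used for the two daggers in \Cref{cor:daggers are the same}: there $\op{(\cdot)}=\opp{(\cdot)}$ fell out of \Cref{lemma:dagger preserves negation} (namely $\op{(\nega{c})}=\nega{(\op{c})}$) combined with involutivity of Boolean negation and the colour-switch reading of $\nega{(\cdot)}$. Here the corresponding ingredient is \Cref{lemma:cc preserves negation}, which states $\cappedCirc[+]{c}[X][Y][b] = \nega{\left( \cappedCirc[+]{c}[X][Y] \right)}$, i.e.\ that bending $\nega{c}$ with the white cup equals the negation of the white bending of $c$.

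First I would record the colour-switch fact that is already implicit in the defining equations \eqref{eq:defnegative}: applying $\nega{(\cdot)}$ to a diagram turns every white generator black and every black one white, so in particular $\nega{\left( \cappedCirc[+]{c}[X][Y][b] \right)} = \cappedCirc[-]{c}[X][Y]$, since the white cup becomes black while the black box (which denotes $\nega{c}$) becomes the white box $c$. Then the computation is
\[
\cappedCirc[+]{c}[X][Y]
\;\stackrel{\text{\Cref{def:peircean-bicategory}.1}}{=}\;
\nega{\left( \nega{\left( \cappedCirc[+]{c}[X][Y] \right)} \right)}
\;\stackrel{\text{\Cref{lemma:cc preserves negation}}}{=}\;
\nega{\left( \cappedCirc[+]{c}[X][Y][b] \right)}
\;\stackrel{\text{colour switch}}{=}\;
\cappedCirc[-]{c}[X][Y]\text{,}
\]
where the first equality is the double-negation law of the Boolean algebra on $\Cat{C}[X,Y]$ and the second rewrites the inner $\nega{\left( \cappedCirc[+]{c}[X][Y] \right)}$ via \Cref{lemma:cc preserves negation}.

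I do not expect a genuine obstacle at this stage: the substantive work has already been done, first in the compact-closed half-bending inequality \Cref{prop:partial cc} and then in \Cref{lemma:cc preserves negation} itself (whose two inclusions are where \Cref{prop:partial cc}, the spider theorem, and \eqref{eq:prop mappe} are actually exercised). The only thing to be careful about is the colour bookkeeping — ensuring the colour switch is applied to the cup and to the box simultaneously, rather than to just one of them — which is precisely what the explicit appeal to \eqref{eq:defnegative} is meant to make unambiguous. An equivalent phrasing that mirrors \Cref{cor:daggers are the same} even more literally is: start from $\cappedCirc[+]{c}[X][Y]$, rewrite it as the white bending of $\nega{\nega{c}}$, apply \Cref{lemma:cc preserves negation} to extract the outer negation, and colour-switch back; this is the same four-step pattern.
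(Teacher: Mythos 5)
Your proof is correct and is essentially identical to the paper's: both apply double negation (\Cref{def:peircean-bicategory}.1), then \Cref{lemma:cc preserves negation} to rewrite the inner negation, and finally the colour-switch reading of $\nega{(\cdot)}$ from \eqref{eq:defnegative} for the last equality (which the paper leaves unlabelled). The additional remarks about where the real work lies (\Cref{prop:partial cc} and \Cref{lemma:cc preserves negation}) and about colour bookkeeping are accurate but not needed.
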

    \begin{proof}
        \begin{align*}
            \cappedCirc[+]{c}[X][Y] \stackrel{\text{\Cref{def:peircean-bicategory}.1}}{=} \nega{\nega{\left( \cappedCirc[+]{c}[X][Y] \right)}} \stackrel{\text{\Cref{lemma:cc preserves negation}}}{=} \nega{\left( \cappedCirc[+]{c}[X][Y][b] \right)} = \cappedCirc[-]{c}[X][Y]
        \end{align*}
    \end{proof}
    
    Now we need to prove that the axioms of fo-bicategories hold in a peircean bicategory. We do not show a proof for all of them, but only for a few representative ones. The rest are either derivable or proved in a similar manner.

    \begin{proposition}\label{prop:lin distr and over or}
        For any $c,d,e \colon X \to Y$, $c \wedge (d \vee e) \leq (c \wedge d) \vee e$.
    \end{proposition}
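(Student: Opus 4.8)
The plan is to observe that, despite the name, this statement involves no composition or linear structure at all: the arrows $c,d,e$ all live in the single homset $\Cat{C}[X,Y]$, and $\wedge,\vee$ are exactly the Boolean-algebra operations carried by that homset by clause~1 of \Cref{def:peircean-bicategory}. Hence the inequality is just an instance of distributivity in a distributive lattice together with monotonicity of $\vee$, and the proof is essentially a one-liner.

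Concretely, I would proceed as follows. First, since $(\Cat{C}[X,Y],\vee,\bot,\wedge,\top,\nega{})$ is a Boolean algebra, it is in particular a distributive lattice, so $c \wedge (d \vee e) = (c \wedge d) \vee (c \wedge e)$. Second, from $c \wedge e \leq e$ and the fact that $\vee$ is monotone in each argument, we get $(c \wedge d) \vee (c \wedge e) \leq (c \wedge d) \vee e$. Chaining these two steps yields $c \wedge (d \vee e) \leq (c \wedge d) \vee e$, as required.

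There is no real obstacle here. The only point worth flagging — and presumably the reason the proposition is isolated — is that this "distributive-lattice" form of the inequality is genuinely elementary, whereas the superficially similar linear distributivity of $\seq[+]$ over $\seq[-]$ (the axioms $\delta_l,\delta_r$ of fo-bicategories in \cref{fig:closed lin axioms}) is not, and will instead require the diagrammatic machinery built up in this appendix, in particular \eqref{eq:prop mappe} and Lemmas~\ref{lemma:comaps}, \ref{cor:cc are the same}. So I would keep the proof of \Cref{prop:lin distr and over or} short and purely order-theoretic, explicitly citing \Cref{def:peircean-bicategory}.1 for the Boolean algebra structure.
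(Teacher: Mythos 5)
Your proof is correct and essentially identical to the paper's: both use distributivity of the Boolean algebra on $\Cat{C}[X,Y]$ to rewrite $c \wedge (d \vee e)$ as $(c \wedge d) \vee (c \wedge e)$, and then the elementary fact $c \wedge e \leq e$ (the paper phrases this as $c \wedge e \leq \top \wedge e = e$) together with monotonicity of $\vee$. No differences worth noting.
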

    \begin{proof}
        \begin{align*}
            c \wedge (d \vee e) &= (c \wedge d) \vee (c \wedge e) \tag{Definition~\ref{def:peircean-bicategory}.1} \\
                                  &\leq (c \wedge d) \vee (\top \wedge e) \tag{\Cref{lemma:meet semilattice}}\\
                                  &= (c \wedge d) \vee e \tag{Definition~\ref{def:peircean-bicategory}.1}
        \end{align*}
    \end{proof}

    \begin{lemma}\label{lemma:restricted tensor lin distr}
        For any $c \colon X \to \unittensor, d \colon Y \to \unittensor, e \colon Z \to \unittensor$, $c \tensor[+] (d \tensor[-] e) \leq (c \tensor[+] d) \tensor[-] e$.
    \end{lemma}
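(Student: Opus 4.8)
The plan is to reduce the statement to the purely boolean-algebraic distributivity law already established in \cref{prop:lin distr and over or}, namely $c'\wedge(d'\vee e')\leq(c'\wedge d')\vee e'$ in any homset. Since $\tensor[+]$ and $\tensor[-]$ agree on objects and on the unit ($\unittensor\tensor[-]\unittensor=\unittensor=\unittensor\tensor[+]\unittensor$), both sides of the claimed inequality are arrows $X\tensor[+]Y\tensor[+]Z\to\unittensor$, hence live in the same boolean algebra $\Cat{C}[X\tensor[+]Y\tensor[+]Z,\unittensor]$; it therefore suffices to rewrite them in terms of the weakenings $\hat c:=(\id[+][X]\tensor[+]\discard[+][Y]\tensor[+]\discard[+][Z])\seq[+]c$, and likewise $\hat d$, $\hat e$, of $c,d,e$ to that homset.

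First I would collect a handful of elementary facts, all provable from the cartesian-bicategory axioms together with \eqref{eq:prop mappe} and \eqref{eq:defnegative}: (i) $\top_{W,\unittensor}=\discard[+][W]$ and, by \cref{lemma:cup and bot}, $\bot_{W,\unittensor}=\discard[-][W]$; (ii) for every map $f$ the operation $f\seq[+](-)$ preserves $\wedge$ (using that comonoids are natural w.r.t.\ maps, i.e.\ \eqref{ax:comPlusLaxNat} together with \eqref{eq:def:map} in \cref{def:maps}, plus \eqref{eq:def:cap}), and hence, via \eqref{eq:prop mappe} and De Morgan, also $\vee$; (iii) the unit-weakening identity $\mu\tensor[+]\discard[+][Z]=\mu\tensor[-]\discard[-][Z]$ for every $\mu\colon A\to\unittensor$, obtained by unfolding $\tensor[-]$ with \eqref{eq:defnegative} and applying \eqref{eq:prop mappe} to the map $\id[+][A]\tensor[+]\discard[+][Z]$; (iv) the binary-weakening identity $c\tensor[+]d=(c\tensor[+]\discard[+][Y])\wedge(\discard[+][X]\tensor[+]d)$ for predicates $c\colon X\to\unittensor$, $d\colon Y\to\unittensor$, proved by a short spider/comonoid-unit computation (\cref{th:spider}, \eqref{ax:comPlusUnit}), together with its evident $n$-ary generalisation; and (v) its De Morgan dual $d\tensor[-]e=(d\tensor[-]\discard[-][Z])\vee(\discard[-][Y]\tensor[-]e)$.

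Using these, I would compute $c\tensor[+](d\tensor[-]e)=\hat c\wedge(\hat d\vee\hat e)$: apply (iv) with second factor $d\tensor[-]e$ to obtain $(c\tensor[+]\discard[+][Y\tensor[+]Z])\wedge(\discard[+][X]\tensor[+](d\tensor[-]e))$; the first conjunct is $\hat c$, and for the second one expands $d\tensor[-]e$ by (v), pushes the map $\discard[+][X]\tensor[+](-)$ through the $\vee$ by (ii), and simplifies the two summands to $\hat d$ and $\hat e$ via (iii), (iv) and \eqref{eq:prop mappe}. Dually, $(c\tensor[+]d)\tensor[-]e=(\hat c\wedge\hat d)\vee\hat e$: apply (v) with first factor $c\tensor[+]d$; the summand $(c\tensor[+]d)\tensor[-]\discard[-][Z]$ equals $(c\tensor[+]d)\tensor[+]\discard[+][Z]$ by (iii), which by the $n$-ary form of (iv) and (i) is $\hat c\wedge\hat d\wedge\top_{W,\unittensor}=\hat c\wedge\hat d$; the summand $\discard[-][X\tensor[+]Y]\tensor[-]e=\bot_{X\tensor[+]Y,\unittensor}\tensor[-]e$ reduces to $\hat e$, once more via \eqref{eq:prop mappe}. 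The inequality then follows at once from \cref{prop:lin distr and over or}.

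The real content of the argument sits entirely in \cref{prop:lin distr and over or} (already available) and in fact (ii); everything else is bookkeeping. Accordingly, the main obstacle is not mathematical depth but keeping the coherence isomorphisms under control while carrying out the rewrites (iv)--(v) --- the associators and unitors, and the coherence of the (co)monoid structure on $\unittensor$ that makes $\discard[+][\unittensor]$, $\cocopier[+][\unittensor]$ act as identities. I expect the cleanest presentation fixes $W=X\tensor[+]Y\tensor[+]Z$ once, works diagrammatically throughout so that these isomorphisms are invisible, and invokes the $n$-ary version of (iv) uniformly.
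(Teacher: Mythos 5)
Your proposal is correct and matches the paper's own proof: the paper's diagrammatic derivation is exactly the reduction you describe — expand both sides via the (co)monoid unit laws and \eqref{eq:prop mappe} into $\hat c\wedge(\hat d\vee\hat e)$ and $(\hat c\wedge\hat d)\vee\hat e$ in the homset $\Cat{C}[X\tensor[+]Y\tensor[+]Z,\unittensor]$ (using \eqref{eq:def:cap} and \cref{lemma:cup and bot}), apply \cref{prop:lin distr and over or}, and rewrite back. The only difference is presentational: the paper carries out your bookkeeping steps (iii)–(v) inline as string-diagram rewrites rather than isolating them as named facts.
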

    \begin{proof}
        \input{tikz/tensorLinDistr}
    \end{proof}

    \newpage
    
    \begin{lemma}\label{lemma:restricted seq lin distr}
        For any $c \colon X \tensor[+] Y \to \unittensor, d \colon Y \tensor[+] Z \to \unittensor, e \colon Z \tensor[+] W \to \unittensor$, the following inequality holds
        \[
            
    \begin{tikzpicture}
	\begin{pgfonlayer}{nodelayer}
		\node [style={dotStyle/-}] (81) at (0.175, -0.5) {};
		\node [{dotStyle/-}] (107) at (0.675, -0.5) {};
		\node [style=none] (108) at (1.175, -0.9) {};
		\node [style=none] (109) at (1.175, -0.075) {};
		\node [style=none] (110) at (-0.325, -1.4) {};
		\node [style=none] (111) at (-0.325, 0.425) {};
		\node [style=none] (112) at (2.425, 0.875) {};
		\node [style=none] (113) at (2.425, -1.85) {};
		\node [style=none] (114) at (-0.325, -1.85) {};
		\node [style=none] (115) at (-0.325, 0.875) {};
		\node [style=label] (116) at (-2.25, -1.4) {$W$};
		\node [style={boxStyle/+}] (120) at (1.675, -1.15) {$e$};
		\node [style={boxStyle/+}] (121) at (1.675, 0.175) {$d$};
		\node [style=none] (122) at (1.675, -1.4) {};
		\node [style=none] (123) at (1.675, -0.9) {};
		\node [style=none] (124) at (1.675, -0.075) {};
		\node [style=none] (125) at (1.675, 0.425) {};
		\node [style={dotStyle/+}] (126) at (-1.325, 0.825) {};
		\node [{dotStyle/+}] (127) at (-0.825, 0.825) {};
		\node [style=none] (128) at (-0.325, 0.425) {};
		\node [style=none] (129) at (-0.325, 1.25) {};
		\node [style=none] (131) at (-1.825, 1.75) {};
		\node [style=none] (132) at (2.85, 2.45) {};
		\node [style=none] (133) at (2.85, -2.225) {};
		\node [style=none] (134) at (-1.825, -2.225) {};
		\node [style=none] (135) at (-1.825, 2.45) {};
		\node [style=label] (137) at (-2.25, 1.75) {$X$};
		\node [style={boxStyle/+}] (139) at (1.675, 1.5) {$c$};
		\node [style=none] (141) at (-0.325, 0.425) {};
		\node [style=none] (142) at (1.675, 1.25) {};
		\node [style=none] (143) at (1.675, 1.75) {};
		\node [style=none] (144) at (-1.825, -1.4) {};
	\end{pgfonlayer}
	\begin{pgfonlayer}{edgelayer}
		\draw [{bgStyle/+}] (134.center)
			 to (133.center)
			 to (132.center)
			 to (135.center)
			 to cycle;
		\draw [{bgStyle/-}] (114.center)
			 to (113.center)
			 to (112.center)
			 to (115.center)
			 to cycle;
		\draw [{wStyle/-}, bend right=45] (109.center) to (107);
		\draw [{wStyle/-}, bend right=45] (107) to (108.center);
		\draw [{wStyle/-}] (81) to (107);
		\draw [style={wStyle/-}] (109.center) to (124.center);
		\draw [style={wStyle/-}] (123.center) to (108.center);
		\draw [style={wStyle/-}] (110.center) to (122.center);
		\draw [style={wStyle/-}] (111.center) to (125.center);
		\draw [{wStyle/+}, bend right=45] (129.center) to (127);
		\draw [{wStyle/+}, bend right=45] (127) to (128.center);
		\draw [{wStyle/+}] (126) to (127);
		\draw [style={wStyle/+}] (129.center) to (142.center);
		\draw [style={wStyle/+}] (141.center) to (128.center);
		\draw [style={wStyle/+}] (131.center) to (143.center);
		\draw [style={wStyle/+}] (110.center) to (144.center);
	\end{pgfonlayer}
\end{tikzpicture}
}
 \;\;  \leq 
    \begin{tikzpicture}
	\begin{pgfonlayer}{nodelayer}
		\node [style={dotStyle/+}] (81) at (0.175, 0.725) {};
		\node [{dotStyle/+}] (107) at (0.675, 0.725) {};
		\node [style=none] (108) at (1.175, 1.125) {};
		\node [style=none] (109) at (1.175, 0.3) {};
		\node [style=none] (110) at (-0.325, 1.625) {};
		\node [style=none] (111) at (-0.325, -0.2) {};
		\node [style=none] (112) at (2.425, -0.65) {};
		\node [style=none] (113) at (2.425, 2.075) {};
		\node [style=none] (114) at (-0.325, 2.075) {};
		\node [style=none] (115) at (-0.325, -0.65) {};
		\node [style=label] (116) at (-2.25, 1.625) {$X$};
		\node [style={boxStyle/+}] (120) at (1.675, 1.375) {$c$};
		\node [style={boxStyle/+}] (121) at (1.675, 0.05) {$d$};
		\node [style=none] (122) at (1.675, 1.625) {};
		\node [style=none] (123) at (1.675, 1.125) {};
		\node [style=none] (124) at (1.675, 0.3) {};
		\node [style=none] (125) at (1.675, -0.2) {};
		\node [style={dotStyle/-}] (126) at (-1.325, -0.6) {};
		\node [{dotStyle/-}] (127) at (-0.825, -0.6) {};
		\node [style=none] (128) at (-0.325, -0.2) {};
		\node [style=none] (129) at (-0.325, -1.025) {};
		\node [style=none] (131) at (-1.825, -1.525) {};
		\node [style=none] (132) at (2.85, -2.225) {};
		\node [style=none] (133) at (2.85, 2.45) {};
		\node [style=none] (134) at (-1.825, 2.45) {};
		\node [style=none] (135) at (-1.825, -2.225) {};
		\node [style=label] (137) at (-2.25, -1.525) {$W$};
		\node [style={boxStyle/+}] (139) at (1.675, -1.275) {$e$};
		\node [style=none] (141) at (-0.325, -0.2) {};
		\node [style=none] (142) at (1.675, -1.025) {};
		\node [style=none] (143) at (1.675, -1.525) {};
		\node [style=none] (144) at (-1.825, 1.625) {};
	\end{pgfonlayer}
	\begin{pgfonlayer}{edgelayer}
		\draw [{bgStyle/-}] (134.center)
			 to (133.center)
			 to (132.center)
			 to (135.center)
			 to cycle;
		\draw [{bgStyle/+}] (114.center)
			 to (113.center)
			 to (112.center)
			 to (115.center)
			 to cycle;
		\draw [{wStyle/+}, bend left=45] (109.center) to (107);
		\draw [{wStyle/+}, bend left=45] (107) to (108.center);
		\draw [{wStyle/+}] (81) to (107);
		\draw [style={wStyle/+}] (109.center) to (124.center);
		\draw [style={wStyle/+}] (123.center) to (108.center);
		\draw [style={wStyle/+}] (110.center) to (122.center);
		\draw [style={wStyle/+}] (111.center) to (125.center);
		\draw [{wStyle/-}, bend left=45] (129.center) to (127);
		\draw [{wStyle/-}, bend left=45] (127) to (128.center);
		\draw [{wStyle/-}] (126) to (127);
		\draw [style={wStyle/-}] (129.center) to (142.center);
		\draw [style={wStyle/-}] (141.center) to (128.center);
		\draw [style={wStyle/-}] (131.center) to (143.center);
		\draw [style={wStyle/-}] (110.center) to (144.center);
	\end{pgfonlayer}
\end{tikzpicture}
}

        \]
    \end{lemma}
    \begin{proof}
        \input{tikz/restrictedLinDistr/proof}
    \end{proof}
    
    \newpage

    \begin{lemma}[Linear distributivities]\label{lemma:lin distr}
        For any $c \colon X \to Y, d \colon Y \to Z, e \colon Z \to W$, the following inequalities hold
        \begin{center}
            \begin{enumerate*}
                \item $c \seq[+] (d \seq[-] e) \leq (c \seq[+] d) \seq[-] e \qquad$
                \item $(c \seq[-] d) \seq[+] e \leq c \seq[-] (d \seq[+] e)$
            \end{enumerate*}
        \end{center}
    \end{lemma}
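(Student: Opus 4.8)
Here is the plan. The plan is to deduce inequality~1 from \cref{lemma:restricted seq lin distr} by a standard ``wire‑bending'' argument, and then to obtain inequality~2 from inequality~1 by the negation duality of peircean bicategories.

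For inequality~1, fix $c \colon X \to Y$, $d \colon Y \to Z$ and $e \colon Z \to W$. First I would record the evident monotone refinement of \cref{prop:cap property}: since both the capping operation $a \mapsto \cappedCirc[+]{a}[X][Y]$ and its inverse, the ``unbending'' snake, are built solely from $\seq[+]$, $\tensor[+]$ and the (co)monoid constants, they are monotone; hence $a \leq b$ if and only if $\cappedCirc[+]{a}[X][Y] \leq \cappedCirc[+]{b}[X][Y]$ as arrows $X \tensor[+] Y \to \unittensor$. By this refinement it suffices to prove
\[ \cappedCirc[+]{c \seq[+] (d \seq[-] e)}[X][W] \;\leq\; \cappedCirc[+]{(c \seq[+] d) \seq[-] e}[X][W] \text{.} \]
Next I would rewrite each side as a ``pair of pants'' built from the caps $\cappedCirc[+]{c}[X][Y]$, $\cappedCirc[+]{d}[Y][Z]$, $\cappedCirc[+]{e}[Z][W]$ joined by cups. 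For the outer white composition this uses the identity expressing the cap of a $\seq[+]$‑composite as the two caps joined by a white cup on the shared object, which is a direct consequence of the Frobenius/spider equations (\cref{th:spider}); for the inner black composition $d \seq[-] e$ it uses the analogous identity in the cocartesian structure $(\Cat{C}, \copier[-], \cocopier[-])$, together with \cref{cor:cc are the same} to pass freely between white and black caps. After this rewriting the left‑hand side is exactly the left diagram of \cref{lemma:restricted seq lin distr} instantiated with $\cappedCirc[+]{c}[X][Y]$, $\cappedCirc[+]{d}[Y][Z]$, $\cappedCirc[+]{e}[Z][W]$ in place of $c,d,e$, and the right‑hand side its right diagram; so the inequality is precisely \cref{lemma:restricted seq lin distr}.

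For inequality~2, I would invoke the isomorphism $\nega{(\cdot)} \colon (\co{\Cat{C}}, \copier[+], \cocopier[+]) \to (\Cat{C}, \copier[-], \cocopier[-])$: it is order‑reversing and, by \eqref{eq:defnegative} and \cref{prop:enrichment}, satisfies $\nega{(p \seq[+] q)} = \nega{p} \seq[-] \nega{q}$ and $\nega{(p \seq[-] q)} = \nega{p} \seq[+] \nega{q}$. Applying $\nega{(\cdot)}$ to the instance of inequality~1 at arrows $a \colon X \to Y$, $b \colon Y \to Z$, $c' \colon Z \to W$ gives $(\nega{a} \seq[-] \nega{b}) \seq[+] \nega{c'} \leq \nega{a} \seq[-] (\nega{b} \seq[+] \nega{c'})$; since $\nega{(\cdot)}$ restricts to an involution on each homset, renaming $\nega{a}, \nega{b}, \nega{c'}$ as $c,d,e$ yields $(c \seq[-] d) \seq[+] e \leq c \seq[-] (d \seq[+] e)$, which is inequality~2.

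The step I expect to be the main obstacle is the reduction for inequality~1, namely matching the two capped composites with the specific string diagrams of \cref{lemma:restricted seq lin distr}: this requires the two ``cap of a composition'' identities (white and black) and some coherence bookkeeping for the associators and unitors of $\tensor[+]$ and $\tensor[-]$. Everything else is routine.
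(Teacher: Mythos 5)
Your proof of inequality~1 is correct and follows essentially the same route as the paper: cap the composite, use the spider theorem together with the coincidence of white and black caps (\cref{cor:cc are the same}) to decompose everything into the capped arrows $X\tensor[+]Y\to\unittensor$, $Y\tensor[+]Z\to\unittensor$, $Z\tensor[+]W\to\unittensor$, apply \cref{lemma:restricted seq lin distr}, and unbend; the monotone strengthening of \cref{prop:cap property} that you make explicit is exactly what the paper uses implicitly when it runs an inequality (rather than equality) chain on capped diagrams. The only divergence is inequality~2: the paper repeats an analogous diagrammatic argument, whereas you derive it from inequality~1 by applying the order-reversing involution $\nega{(\cdot)}$ and using $\nega{(p\seq[+]q)}=\nega{p}\seq[-]\nega{q}$ from \eqref{eq:defnegative}; this is a valid and slightly more economical alternative, since at this point of the development the black structure is by definition the $\nega{}$-conjugate of the white one.
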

    \begin{proof}
        We prove $1.$ below by means of \Cref{prop:cap property}. The proof for $2.$ is analogous.
        \input{tikz/linDistr/proof}
    \end{proof}

    \begin{proposition}\label{prop:restricted EM}
        For any $c \colon X \to Y$, the following inequalities hold
        \begin{center}
            \begin{enumerate*}
                \item $
    \InputIfFileExists{restrictedEM.tikz}{}{\input{tikz/restrictedEM.tikz}}
 \, \leq \!\!\capCirc[-][X]\qquad\qquad$ 
                \item $\capCirc[+][X] \, \leq \!\!
    \InputIfFileExists{restrictedNC.tikz}{}{\input{tikz/restrictedNC.tikz}}
$
            \end{enumerate*}
        \end{center}
    \end{proposition}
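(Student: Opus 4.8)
The plan is to treat the two inequalities as colour-duals of each other and to prove only the first. The isomorphism $\nega{(\cdot)}\colon\Cat{C}\to\co{\Cat{C}}$ (from \cref{def:peircean-bicategory} and \eqref{eq:defnegative}) is order-reversing, switches white and black structure, and sends $c\mapsto\nega{c}$, $\nega{c}\mapsto c$ and $\capCirc[+][X]\mapsto\capCirc[-][X]$; since $\tikzfig{restrictedNC}$ is precisely the colour-dual of $\tikzfig{restrictedEM}$, applying $\nega{(\cdot)}$ to statement $1$ yields statement $2$ (note that in $2$ the white cap has moved to the left-hand side, exactly as order-reversal predicts). So it suffices to establish $\tikzfig{restrictedEM}\leq\capCirc[-][X]$.

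For this I would first use that, by \eqref{eq:defnegative}, the black cap is the negation of the white cap, $\capCirc[-][X]=\nega{\left(\capCirc[+][X]\right)}$ (the $c=\id$ instance of \cref{lemma:cc preserves negation}). Then, by \cref{prop:implications in boolean algebra}.4, the target is equivalent to the non-contradiction statement
\[ \tikzfig{restrictedEM}\wedge\capCirc[+][X]\;\leq\;\bot_{X\tensor[+]X,\,\unittensor}\text{,}\]
which is the form I would actually prove. Meeting the $\mathrm{EM}$-diagram with the white cap forces its two boundary wires together through the white Frobenius/comonoid structure, so by the spider theorem (\cref{th:spider}) the diagram can be rewired into one in which the two copies $c$ and $\nega{c}$ lie on a common wire — schematically $\cappedCirc[+]{c}[X][Y]\wedge\cappedCirc[+]{\nega{c}}[X][Y]$, up to recolouring the caps, which is legitimate by \cref{cor:cc are the same}. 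Shuttling the ambient colour across $c$ and $\nega{c}$ during this rearrangement is exactly where \eqref{eq:prop mappe} and \cref{lemma:comaps} enter, together with the unit laws \eqref{ax:comPlusUnit}, \eqref{ax:monPlusUnit}, \eqref{ax:comMinusUnit} for the structural bookkeeping. Once $c$ and $\nega{c}$ are adjacent, \cref{def:peircean-bicategory}.1 gives $c\wedge\nega{c}=\bot$, and $\bot$ is absorbed by the surrounding operations (it is the bottom of the homset and, by \cref{lemma:cup and bot,lemma:dagger preserves bottom}, stable under the black operations and $\op{(\cdot)}$), so the whole diagram collapses to $\bot$, as required. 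Statement $2$ then follows by applying $\nega{(\cdot)}$ to statement $1$.

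The main obstacle will be the middle step: rewiring the $\mathrm{EM}$-diagram, after the meet with the white cap, so that the two predicate boxes genuinely end up on the same wire with matching colours. This needs careful use of \eqref{eq:prop mappe}/\cref{lemma:comaps} to move colours past $c$ and $\nega{c}$, and of the partial compact-closed rewirings of the caps and cups (\cref{prop:partial cc}, \cref{cor:cc are the same}); the boolean part, $c\wedge\nega{c}=\bot$ and its propagation, is routine once the diagram has been brought into that shape. This argument closely mirrors the proof of \cref{lemma:cc preserves negation}, which is the right template to follow.
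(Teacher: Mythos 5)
Your derivation of item $2$ from item $1$ via the order-reversing isomorphism $\nega{(\cdot)}$ is legitimate (the paper simply redoes the analogous derivation), and your core idea --- bring $c$ and $\nega{c}$ onto a common wire, apply $c\wedge\nega{c}=\bot$ from \cref{def:peircean-bicategory}.1, then absorb $\bot$ --- is also the core of the paper's argument. But the paper does not pass through \cref{prop:implications in boolean algebra}.4 at all. It proves $1$ by a direct chain: insert a \emph{black} unit--counit pair $\id[-]\tensor[-]\id[-]\leq\cocopier[-][X]\seq[-]\copier[-][X]$ on the two input wires via \eqref{ax:minusCocopyCopy}; convert the $\copier[-][X]$ half into a white $\copier[+][X]$ using \eqref{eq:prop mappe} (legitimate because $\copier[+]$ is a map sitting on the \emph{left}), so that the two boxes become literally the meet $\nega{c}\wedge c=\bot_{X,Y}$; then write $\bot_{X,Y}=\discard[-][X]\seq[-]\codiscard[-][Y]$ by \cref{lemma:cup and bot}, let the leftover $\cocopier[-][X]$ combine with $\discard[-][X]$ to give the black cap $\cocopier[-][X]\seq[-]\discard[-][X]$ outright, and discharge the trailing $\codiscard[-][Y]\seq[+]\discard[+][Y]$ with \cref{lemma:comaps} and \eqref{ax:plusCodiscDisc}.

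The gap in your route is exactly the step you call routine. After meeting with the white cap and applying \cref{th:spider} and Booleanness, you are left with (essentially) $\cocopier[+][X]\seq[+]\bot_{X,Y}\seq[+]\discard[+][Y]\;\leq\;\bot_{X\tensor[+]X,\unittensor}$ still to prove, and this is not delivered by the lemmas you cite: \cref{lemma:cup and bot} and \cref{lemma:dagger preserves bottom} give the black presentation of $\bot$ and $\op{\bot}=\bot$, not that $\bot$ absorbs white composition with the surrounding comonoid structure. The obstruction is precise: the only colour-switching identities available are \eqref{eq:prop mappe} (a \emph{map} precomposed on the left of a negation) and \cref{lemma:comaps} (a \emph{comap} postcomposed on the right), whereas here $\cocopier[+][X]$ is a comap sitting on the \emph{left} of the negated arrow $\bot=\nega{\top}$ --- exactly the configuration in which negation does not commute with composition (already in $\Rel$ one has $\cocopier[+][X]\seq[+]\nega{\discard[+][X]}=\varnothing$ but $\nega{(\cocopier[+][X]\seq[+]\discard[+][X])}\neq\varnothing$). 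So the reduction via \cref{prop:implications in boolean algebra}.4 has not made the problem easier; it has relocated the entire difficulty into the absorption step. The missing idea is the paper's: create the common wire with the black $\eta$-pair \eqref{ax:minusCocopyCopy} rather than by meeting with the white cap, so that the residual context around $\bot$ is black on the input side and collapses against $\bot$'s black presentation for free.
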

    \begin{proof}
        We prove $1.$ below. The proof for $2.$ is analogous.
        \[
    \begin{array}{c@{\,}c@{\,}c@{\,}c@{\,}c@{\,}c@{\,}c@{\,}c@{\,}c}
        
    \InputIfFileExists{restrictedEM.tikz}{}{\input{tikz/restrictedEM.tikz}}
 
        &\structuralcong&
        \begin{tikzpicture}
            \begin{pgfonlayer}{nodelayer}
                \node [style={dotStyle/+}] (81) at (0.5, 0) {};
                \node [{dotStyle/+}] (107) at (0, 0) {};
                \node [style=none] (108) at (-0.75, -0.5) {};
                \node [style=none] (109) at (-0.75, 0.5) {};
                \node [style=none] (110) at (-3, -0.5) {};
                \node [style=none] (111) at (-3, 0.5) {};
                \node [style=none] (112) at (0.75, 1.125) {};
                \node [style=none] (113) at (0.75, -1.125) {};
                \node [style=none] (114) at (-2.5, -1.125) {};
                \node [style=none] (115) at (-2.5, 1.125) {};
                \node [style=label] (116) at (-3.425, -0.5) {$X$};
                \node [style=label] (117) at (-3.425, 0.5) {$X$};
                \node [style={boxStyle/-}] (119) at (-1.5, 0.5) {$c$};
                \node [style={boxStyle/+}] (120) at (-1.5, -0.5) {$c$};
                \node [style=none] (121) at (1, 1.375) {};
                \node [style=none] (122) at (1, -1.375) {};
                \node [style=none] (123) at (-3, -1.375) {};
                \node [style=none] (124) at (-3, 1.375) {};
                \node [style=none] (125) at (-2.5, -0.5) {};
                \node [style=none] (126) at (-2.5, 0.5) {};
            \end{pgfonlayer}
            \begin{pgfonlayer}{edgelayer}
                \draw [{bgStyle/-}] (123.center)
                     to (122.center)
                     to (121.center)
                     to (124.center)
                     to cycle;
                \draw [{bgStyle/+}] (114.center)
                     to (113.center)
                     to (112.center)
                     to (115.center)
                     to cycle;
                \draw [{wStyle/+}, bend left] (109.center) to (107);
                \draw [{wStyle/+}, bend left] (107) to (108.center);
                \draw [{wStyle/+}] (81) to (107);
                \draw (109.center) to (119);
                \draw [style={wStyle/+}] (108.center) to (120);
                \draw [style={wStyle/+}] (119) to (126.center);
                \draw [style={wStyle/+}] (125.center) to (120);
                \draw [style={wStyle/-}] (125.center) to (110.center);
                \draw [style={wStyle/-}] (111.center) to (126.center);
            \end{pgfonlayer}
        \end{tikzpicture}        
        &\stackrel{\eqref{ax:minusCocopyCopy}}{\leq}&
        \begin{tikzpicture}
            \begin{pgfonlayer}{nodelayer}
                \node [style={dotStyle/+}] (81) at (0.25, 0) {};
                \node [{dotStyle/+}] (107) at (-0.25, 0) {};
                \node [style=none] (108) at (-1, -0.5) {};
                \node [style=none] (109) at (-1, 0.5) {};
                \node [style=none] (110) at (-4.25, -0.5) {};
                \node [style=none] (111) at (-4.25, 0.5) {};
                \node [style=none] (112) at (0.5, 1.125) {};
                \node [style=none] (113) at (0.5, -1.125) {};
                \node [style=none] (114) at (-2.25, -1.125) {};
                \node [style=none] (115) at (-2.25, 1.125) {};
                \node [style=label] (116) at (-4.675, -0.5) {$X$};
                \node [style=label] (117) at (-4.675, 0.5) {$X$};
                \node [style={boxStyle/-}] (119) at (-1.5, 0.5) {$c$};
                \node [style={boxStyle/+}] (120) at (-1.5, -0.5) {$c$};
                \node [style=none] (121) at (0.75, 1.375) {};
                \node [style=none] (122) at (0.75, -1.375) {};
                \node [style=none] (123) at (-4.25, -1.375) {};
                \node [style=none] (124) at (-4.25, 1.375) {};
                \node [style=none] (125) at (-2.25, -0.5) {};
                \node [style=none] (126) at (-2.25, 0.5) {};
                \node [style={dotStyle/-}] (127) at (-3.5, 0) {};
                \node [style={dotStyle/-}] (128) at (-3, 0) {};
            \end{pgfonlayer}
            \begin{pgfonlayer}{edgelayer}
                \draw [{bgStyle/-}] (123.center)
                    to (122.center)
                    to (121.center)
                    to (124.center)
                    to cycle;
                \draw [{bgStyle/+}] (114.center)
                    to (113.center)
                    to (112.center)
                    to (115.center)
                    to cycle;
                \draw [{wStyle/+}, bend left] (109.center) to (107);
                \draw [{wStyle/+}, bend left] (107) to (108.center);
                \draw [{wStyle/+}] (81) to (107);
                \draw (109.center) to (119);
                \draw [style={wStyle/+}] (108.center) to (120);
                \draw [style={wStyle/+}] (119) to (126.center);
                \draw [style={wStyle/+}] (125.center) to (120);
                \draw [style={wStyle/-}] (128) to (127);
                \draw [style={wStyle/-}, bend left] (128) to (126.center);
                \draw [style={wStyle/-}, bend right] (128) to (125.center);
                \draw [style={wStyle/-}, bend left] (127) to (110.center);
                \draw [style={wStyle/-}, bend right] (127) to (111.center);
            \end{pgfonlayer}
        \end{tikzpicture}
        &\stackrel{\eqref{eq:prop mappe}}{=}&
        \begin{tikzpicture}
            \begin{pgfonlayer}{nodelayer}
                \node [style={dotStyle/+}] (81) at (0.5, 0) {};
                \node [{dotStyle/+}] (107) at (0, 0) {};
                \node [style=none] (108) at (-0.75, -0.5) {};
                \node [style=none] (109) at (-0.75, 0.5) {};
                \node [style=none] (110) at (-4, -0.5) {};
                \node [style=none] (111) at (-4, 0.5) {};
                \node [style=none] (112) at (0.75, 1.125) {};
                \node [style=none] (113) at (0.75, -1.125) {};
                \node [style=none] (114) at (-2.75, -1.125) {};
                \node [style=none] (115) at (-2.75, 1.125) {};
                \node [style=label] (116) at (-4.425, -0.5) {$X$};
                \node [style=label] (117) at (-4.425, 0.5) {$X$};
                \node [style={boxStyle/-}] (119) at (-1.15, 0.5) {$c$};
                \node [style={boxStyle/+}] (120) at (-1.15, -0.5) {$c$};
                \node [style=none] (121) at (1, 1.375) {};
                \node [style=none] (122) at (1, -1.375) {};
                \node [style=none] (123) at (-4, -1.375) {};
                \node [style=none] (124) at (-4, 1.375) {};
                \node [style=none] (125) at (-1.5, -0.5) {};
                \node [style=none] (126) at (-1.5, 0.5) {};
                \node [style={dotStyle/-}] (127) at (-3.25, 0) {};
                \node [style={dotStyle/+}] (128) at (-2.25, 0) {};
                \node [style=none] (129) at (-2.75, 0) {};
            \end{pgfonlayer}
            \begin{pgfonlayer}{edgelayer}
                \draw [{bgStyle/-}] (123.center)
                     to (122.center)
                     to (121.center)
                     to (124.center)
                     to cycle;
                \draw [{bgStyle/+}] (114.center)
                     to (113.center)
                     to (112.center)
                     to (115.center)
                     to cycle;
                \draw [{wStyle/+}, bend left] (109.center) to (107);
                \draw [{wStyle/+}, bend left] (107) to (108.center);
                \draw [{wStyle/+}] (81) to (107);
                \draw (109.center) to (119);
                \draw [style={wStyle/+}] (108.center) to (120);
                \draw [style={wStyle/+}] (119) to (126.center);
                \draw [style={wStyle/+}] (125.center) to (120);
                \draw [style={wStyle/+}, bend left] (128) to (126.center);
                \draw [style={wStyle/+}, bend right] (128) to (125.center);
                \draw [style={wStyle/-}, bend left] (127) to (110.center);
                \draw [style={wStyle/-}, bend right] (127) to (111.center);
                \draw [style={wStyle/-}] (129.center) to (127);
                \draw [style={wStyle/+}] (129.center) to (128);
            \end{pgfonlayer}
        \end{tikzpicture}        
        &
        &
        \\
        &\stackrel{\text{\Cref{def:peircean-bicategory}.1}}{=}&
        \begin{tikzpicture}
            \begin{pgfonlayer}{nodelayer}
                \node [style={dotStyle/+}] (81) at (-0.75, 0) {};
                \node [{dotStyle/-}] (107) at (-1.75, 0) {};
                \node [style=none] (110) at (-4.25, -0.5) {};
                \node [style=none] (111) at (-4.25, 0.5) {};
                \node [style=none] (112) at (-0.5, 1.125) {};
                \node [style=none] (113) at (-0.5, -1.125) {};
                \node [style=none] (114) at (-3, -1.125) {};
                \node [style=none] (115) at (-3, 1.125) {};
                \node [style=label] (116) at (-4.675, -0.5) {$X$};
                \node [style=label] (117) at (-4.675, 0.5) {$X$};
                \node [style=none] (121) at (-0.25, 1.375) {};
                \node [style=none] (122) at (-0.25, -1.375) {};
                \node [style=none] (123) at (-4.25, -1.375) {};
                \node [style=none] (124) at (-4.25, 1.375) {};
                \node [style={dotStyle/-}] (127) at (-3.5, 0) {};
                \node [style={dotStyle/-}] (128) at (-2.25, 0) {};
                \node [style=none] (129) at (-3, 0) {};
                \node [style=none] (130) at (-1.25, 0.625) {};
                \node [style=none] (131) at (-1.25, -0.625) {};
                \node [style=none] (132) at (-2.75, -0.625) {};
                \node [style=none] (133) at (-2.75, 0.625) {};
                \node [style=none] (134) at (-1.25, 0) {};
                \node [style=none] (135) at (-2.75, 0) {};
            \end{pgfonlayer}
            \begin{pgfonlayer}{edgelayer}
                \draw [{bgStyle/-}] (123.center)
                    to (122.center)
                    to (121.center)
                    to (124.center)
                    to cycle;
                \draw [{bgStyle/+}] (114.center)
                    to (113.center)
                    to (112.center)
                    to (115.center)
                    to cycle;
                \draw [style={wStyle/-}, bend left] (127) to (110.center);
                \draw [style={wStyle/-}, bend right] (127) to (111.center);
                \draw [style={wStyle/-}] (129.center) to (127);
                \draw [{bgStyle/-}] (132.center)
                    to (131.center)
                    to (130.center)
                    to (133.center)
                    to cycle;
                \draw [style={wStyle/+}] (135.center) to (129.center);
                \draw [style={wStyle/+}] (134.center) to (81);
                \draw [style={wStyle/-}] (134.center) to (107);
                \draw [style={wStyle/-}] (128) to (135.center);
            \end{pgfonlayer}
        \end{tikzpicture}
        &\stackrel{\text{\Cref{lemma:comaps}}}{=}&
        \begin{tikzpicture}
            \begin{pgfonlayer}{nodelayer}
                \node [style={dotStyle/+}] (81) at (0.25, 0) {};
                \node [style=none] (110) at (-4, -0.5) {};
                \node [style=none] (111) at (-4, 0.5) {};
                \node [style=none] (112) at (0.75, 1.125) {};
                \node [style=none] (113) at (0.75, -1.125) {};
                \node [style=none] (114) at (-2.75, -1.125) {};
                \node [style=none] (115) at (-2.75, 1.125) {};
                \node [style=label] (116) at (-4.425, -0.5) {$X$};
                \node [style=label] (117) at (-4.425, 0.5) {$X$};
                \node [style=none] (121) at (1, 1.375) {};
                \node [style=none] (122) at (1, -1.375) {};
                \node [style=none] (123) at (-4, -1.375) {};
                \node [style=none] (124) at (-4, 1.375) {};
                \node [style={dotStyle/-}] (127) at (-3.25, 0) {};
                \node [style={dotStyle/-}] (128) at (-1.5, 0) {};
                \node [style=none] (129) at (-2.75, 0) {};
                \node [style=none] (130) at (-1, 0.625) {};
                \node [style=none] (131) at (-1, -0.625) {};
                \node [style=none] (132) at (-2.25, -0.625) {};
                \node [style=none] (133) at (-2.25, 0.625) {};
                \node [style={dotStyle/+}] (134) at (-0.5, 0) {};
                \node [style=none] (135) at (-2.25, 0) {};
            \end{pgfonlayer}
            \begin{pgfonlayer}{edgelayer}
                \draw [{bgStyle/-}] (123.center)
                     to (122.center)
                     to (121.center)
                     to (124.center)
                     to cycle;
                \draw [{bgStyle/+}] (114.center)
                     to (113.center)
                     to (112.center)
                     to (115.center)
                     to cycle;
                \draw [style={wStyle/-}, bend left] (127) to (110.center);
                \draw [style={wStyle/-}, bend right] (127) to (111.center);
                \draw [style={wStyle/-}] (129.center) to (127);
                \draw [{bgStyle/-}] (132.center)
                     to (131.center)
                     to (130.center)
                     to (133.center)
                     to cycle;
                \draw [style={wStyle/+}] (135.center) to (129.center);
                \draw [style={wStyle/+}] (134) to (81);
                \draw [style={wStyle/-}] (128) to (135.center);
            \end{pgfonlayer}
        \end{tikzpicture}        
        &\stackrel{\eqref{ax:plusCodiscDisc}}{\leq}&
        \begin{tikzpicture}
            \begin{pgfonlayer}{nodelayer}
                \node [style=none] (110) at (-4, -0.5) {};
                \node [style=none] (111) at (-4, 0.5) {};
                \node [style=none] (112) at (0.75, 1.125) {};
                \node [style=none] (113) at (0.75, -1.125) {};
                \node [style=none] (114) at (-2.75, -1.125) {};
                \node [style=none] (115) at (-2.75, 1.125) {};
                \node [style=label] (116) at (-4.425, -0.5) {$X$};
                \node [style=label] (117) at (-4.425, 0.5) {$X$};
                \node [style=none] (121) at (1, 1.375) {};
                \node [style=none] (122) at (1, -1.375) {};
                \node [style=none] (123) at (-4, -1.375) {};
                \node [style=none] (124) at (-4, 1.375) {};
                \node [style={dotStyle/-}] (127) at (-3.25, 0) {};
                \node [style={dotStyle/-}] (128) at (-1.5, 0) {};
                \node [style=none] (129) at (-2.75, 0) {};
                \node [style=none] (130) at (-1, 0.625) {};
                \node [style=none] (131) at (-1, -0.625) {};
                \node [style=none] (132) at (-2.25, -0.625) {};
                \node [style=none] (133) at (-2.25, 0.625) {};
                \node [style=none] (135) at (-2.25, 0) {};
            \end{pgfonlayer}
            \begin{pgfonlayer}{edgelayer}
                \draw [{bgStyle/-}] (123.center)
                     to (122.center)
                     to (121.center)
                     to (124.center)
                     to cycle;
                \draw [{bgStyle/+}] (114.center)
                     to (113.center)
                     to (112.center)
                     to (115.center)
                     to cycle;
                \draw [style={wStyle/-}, bend left] (127) to (110.center);
                \draw [style={wStyle/-}, bend right] (127) to (111.center);
                \draw [style={wStyle/-}] (129.center) to (127);
                \draw [{bgStyle/-}] (132.center)
                     to (131.center)
                     to (130.center)
                     to (133.center)
                     to cycle;
                \draw [style={wStyle/+}] (135.center) to (129.center);
                \draw [style={wStyle/-}] (128) to (135.center);
            \end{pgfonlayer}
        \end{tikzpicture}         
        &\structuralcong&
        \capCirc[-][X]            
    \end{array}
\]
    \end{proof}

    \begin{lemma}[Linear adjoints]\label{lemma:lin adj}
        For any $c \colon X \to Y$, the following inequalities hold
        \begin{center}
            \begin{enumerate*}
                \item $c \seq[+] \op{(\nega{c})} \leq \id[-][X]\qquad$
                \item $\id[+][Y] \leq \op{(\nega{c})} \seq[-] c\qquad$
                \item $\op{(\nega{c})} \seq[+] c  \leq \id[-][Y]\qquad$
                \item $\id[+][X] \leq c \seq[-] \op{(\nega{c})}$
            \end{enumerate*}
        \end{center}
    \end{lemma}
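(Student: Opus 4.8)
The plan is to prove items 1 and 4 directly, reducing each to one of the two inequalities of \Cref{prop:restricted EM}, and then to obtain items 3 and 2 formally by transporting along the isomorphisms $\op{(\cdot)}$ and $\nega{(\cdot)}$. The device for the direct proofs is the cap-property trick: \Cref{prop:cap property} --- whose proof, being built from the Frobenius axioms and monotone operations, equally yields the statement with $=$ replaced by $\leq$ --- shows that an inequality $d \leq e$ between parallel arrows $X \to X$ is equivalent to $\cappedCirc[+]{d}[X][X] \leq \cappedCirc[+]{e}[X][X]$.

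For item 1, I would first establish $\cappedCirc[+]{c \seq[+] \op{\nega{c}}}[X][X] = \tikzfig{restrictedEM}$ (up to the symmetry exchanging the two $X$-wires, which the cap absorbs): unfolding $\op{\nega{c}}$ via \eqref{eqdagger}, the resulting (co)caps simplify --- by \Cref{th:spider}, or equivalently by sliding $\op{\nega{c}}$ around the white cap with \Cref{prop:sliding through cap} --- to the two-box diagram $\tikzfig{restrictedEM}$. By \Cref{prop:restricted EM}.1 this is $\leq \capCirc[-][X]$. It then remains to identify the capped right-hand side: since $\id[-][X] = \nega{\id[+][X]}$, a negated box is a colour switch, and a white identity is a bare wire, $\cappedCirc[+]{\id[-][X]}[X][X] = \cappedCirc[+]{\id[+][X]}[X][X][b]$, which by \Cref{lemma:cc preserves negation} equals $\nega{(\cappedCirc[+]{\id[+][X]}[X][X])} = \nega{\capCirc[+][X]} = \capCirc[-][X]$. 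Chaining, $\cappedCirc[+]{c \seq[+] \op{\nega{c}}}[X][X] \leq \cappedCirc[+]{\id[-][X]}[X][X]$, hence item 1. Item 4 is proved in the same manner: \Cref{cor:cc are the same} lets one present the capped form of $c \seq[-] \op{\nega{c}}$ as the ``restricted non-contradiction'' diagram $\tikzfig{restrictedNC}$, and \Cref{prop:restricted EM}.2 gives $\capCirc[+][X] = \cappedCirc[+]{\id[+][X]}[X][X] \leq \tikzfig{restrictedNC} = \cappedCirc[+]{c \seq[-] \op{\nega{c}}}[X][X]$, so item 4 follows by \Cref{prop:cap property}.

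Items 3 and 2 then follow formally. From \eqref{eq:defnegative} one has $\nega{(a \seq[+] b)} = \nega{a} \seq[-] \nega{b}$ and $\nega{\id[-][X]} = \id[+][X]$, and combining \Cref{lemma:dagger preserves negation} with \eqref{eq:defnegative} and \Cref{table:daggerproperties} one gets $\op{(a \seq[-] b)} = \op{b} \seq[-] \op{a}$, $\op{\id[-][X]} = \id[-][X]$ and $\op{\nega{c}} = \nega{\op{c}}$. Instantiating item 1 at $\op{c}$ (and using $\op{\nega{\op{c}}} = \nega{c}$) yields $\op{c} \seq[+] \nega{c} \leq \id[-][Y]$; transporting this along the monotone functor $\op{(\cdot)}$ gives $\op{\nega{c}} \seq[+] c \leq \id[-][Y]$, i.e.\ item 3. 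Applying $\nega{(\cdot)}$ (which reverses $\leq$) to $\op{c} \seq[+] \nega{c} \leq \id[-][Y]$ gives $\id[+][Y] \leq \op{\nega{c}} \seq[-] c$, i.e.\ item 2; likewise item 4 can alternatively be obtained by applying $\nega{(\cdot)}$ to item 1 instantiated at $\nega{c}$.

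The main obstacle is the diagrammatic identification in the second paragraph: recognising that, once the dagger is unfolded and the diagram is yanked straight with the spider theorem, $\cappedCirc[+]{c \seq[+] \op{\nega{c}}}[X][X]$ is on the nose the ``restricted excluded middle'' diagram $\tikzfig{restrictedEM}$ (and similarly for the $\seq[-]$-version and $\tikzfig{restrictedNC}$), together with the colour-switch bookkeeping that rewrites $\cappedCirc[+]{\id[-][X]}[X][X]$ as $\capCirc[-][X]$. Once these identifications are in place, \Cref{prop:restricted EM} and \Cref{prop:cap property} do the rest, and the passage to items 2 and 3 is purely formal.
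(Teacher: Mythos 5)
Your proof is correct and follows essentially the same route as the paper: the heart of the matter is \Cref{prop:restricted EM}, applied after unfolding the dagger via \eqref{eqdagger} and normalising with \Cref{th:spider}, which is exactly how the paper proves item 1. The only differences are organisational --- you reduce to the capped ($0$-coarity) form via \Cref{prop:cap property} instead of rewriting the full diagram in place, and you obtain items 2 and 3 by transporting item 1 along $\op{(\cdot)}$ and $\nega{(\cdot)}$, which is a clean way of making precise the ``similar/analogous'' proofs the paper leaves implicit.
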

    \begin{proof}
        We prove $1.$ below. $2.$ is proved similarly, exploiting \Cref{prop:restricted EM}.2. The proofs for $3.$ and $4.$ are analogous.
        \[
\begin{array}{c@{}c@{}c@{}c@{}c@{}c@{}c}
    c \seq[+] \op{(\nega{c})} 
    &\stackrel{\eqref{eqdagger}}{=}&
    \begin{tikzpicture}
        \begin{pgfonlayer}{nodelayer}
            \node [style=none] (81) at (-1.425, -0.75) {};
            \node [style=none] (112) at (-1.425, 1.75) {};
            \node [style=none] (113) at (-1.425, -1.5) {};
            \node [style=none] (114) at (2.175, -1.5) {};
            \node [style=none] (115) at (2.175, 1.75) {};
            \node [style=none] (121) at (2.175, 1.25) {};
            \node [{boxStyle/+}] (122) at (-0.675, -0.75) {$c$};
            \node [{boxStyle/-}] (123) at (0.75, 0.25) {$c$};
            \node [style=label] (124) at (-1.925, -0.75) {$X$};
            \node [style=label] (125) at (2.675, 1.25) {$X$};
            \node [style={dotStyle/+}] (126) at (1.5, -0.25) {};
            \node [style={dotStyle/+}] (127) at (1.925, -0.25) {};
            \node [style=none] (128) at (1, 0.25) {};
            \node [style=none] (129) at (1, -0.75) {};
            \node [style={dotStyle/+}] (130) at (0, 0.75) {};
            \node [style={dotStyle/+}] (131) at (-0.425, 0.75) {};
            \node [style=none] (132) at (0.5, 1.25) {};
            \node [style=none] (133) at (0.5, 0.25) {};
        \end{pgfonlayer}
        \begin{pgfonlayer}{edgelayer}
            \draw [{bgStyle/+}] (114.center)
                 to (113.center)
                 to (112.center)
                 to (115.center)
                 to cycle;
            \draw [{wStyle/+}] (81.center) to (122);
            \draw [style={wStyle/+}] (128.center) to (123);
            \draw [style={wStyle/+}, bend left=45] (128.center) to (126);
            \draw [style={wStyle/+}] (126) to (127);
            \draw [style={wStyle/+}, bend left=45] (126) to (129.center);
            \draw [style={wStyle/+}] (129.center) to (122);
            \draw [style={wStyle/+}, bend right=45] (132.center) to (130);
            \draw [style={wStyle/+}] (130) to (131);
            \draw [style={wStyle/+}, bend right=45] (130) to (133.center);
            \draw [style={wStyle/+}] (123) to (133.center);
            \draw [style={wStyle/+}] (121.center) to (132.center);
        \end{pgfonlayer}
    \end{tikzpicture}    
    &\stackrel{\text{\Cref{prop:restricted EM}.1}}{\leq}&
    \begin{tikzpicture}
        \begin{pgfonlayer}{nodelayer}
            \node [style=none] (81) at (-1.675, -0.75) {};
            \node [style=none] (112) at (-1.675, 1.75) {};
            \node [style=none] (113) at (-1.675, -1.5) {};
            \node [style=none] (114) at (0.925, -1.5) {};
            \node [style=none] (115) at (0.925, 1.75) {};
            \node [style=none] (121) at (0.925, 1.25) {};
            \node [style=label] (124) at (-2.175, -0.75) {$X$};
            \node [style=label] (125) at (1.425, 1.25) {$X$};
            \node [style={dotStyle/-}] (126) at (0, -0.25) {};
            \node [style={dotStyle/-}] (127) at (0.425, -0.25) {};
            \node [style=none] (128) at (-0.5, 0.25) {};
            \node [style=none] (129) at (-0.5, -0.75) {};
            \node [style={dotStyle/+}] (130) at (-1, 0.75) {};
            \node [style={dotStyle/+}] (131) at (-1.425, 0.75) {};
            \node [style=none] (132) at (-0.5, 1.25) {};
            \node [style=none] (133) at (-0.5, 0.25) {};
            \node [style=none] (134) at (-0.5, 0.5) {};
            \node [style=none] (135) at (-0.5, -1) {};
            \node [style=none] (136) at (0.675, -1) {};
            \node [style=none] (137) at (0.675, 0.5) {};
            \node [style=none] (138) at (-0.5, 0.25) {};
            \node [style=none] (139) at (-0.5, -0.75) {};
        \end{pgfonlayer}
        \begin{pgfonlayer}{edgelayer}
            \draw [{bgStyle/+}] (114.center)
                 to (113.center)
                 to (112.center)
                 to (115.center)
                 to cycle;
            \draw [{bgStyle/-}] (136.center)
                 to (135.center)
                 to (134.center)
                 to (137.center)
                 to cycle;
            \draw [style={wStyle/-}, bend left=45] (128.center) to (126);
            \draw [style={wStyle/-}] (126) to (127);
            \draw [style={wStyle/-}, bend left=45] (126) to (129.center);
            \draw [style={wStyle/+}, bend right=45] (132.center) to (130);
            \draw [style={wStyle/+}] (130) to (131);
            \draw [style={wStyle/+}, bend right=45] (130) to (133.center);
            \draw [style={wStyle/+}] (121.center) to (132.center);
            \draw [style={wStyle/+}] (138.center) to (133.center);
            \draw [style={wStyle/+}] (139.center) to (81.center);
        \end{pgfonlayer}
    \end{tikzpicture}    
    &\stackrel{\text{\Cref{cor:cc are the same}}}{=}&
    \begin{tikzpicture}
        \begin{pgfonlayer}{nodelayer}
            \node [style=none] (81) at (-1.675, -1) {};
            \node [style=none] (112) at (-1.675, 1.5) {};
            \node [style=none] (113) at (-1.675, -1.5) {};
            \node [style=none] (114) at (1.675, -1.5) {};
            \node [style=none] (115) at (1.675, 1.5) {};
            \node [style=none] (121) at (1.675, 1) {};
            \node [style=label] (124) at (-2.175, -1) {$X$};
            \node [style=label] (125) at (2.175, 1) {$X$};
            \node [style={dotStyle/+}] (126) at (1, -0.5) {};
            \node [style={dotStyle/+}] (127) at (1.425, -0.5) {};
            \node [style=none] (128) at (0.5, 0) {};
            \node [style=none] (129) at (0.5, -1) {};
            \node [style={dotStyle/+}] (130) at (-1, 0.5) {};
            \node [style={dotStyle/+}] (131) at (-1.425, 0.5) {};
            \node [style=none] (132) at (-0.5, 1) {};
            \node [style=none] (133) at (-0.5, 0) {};
            \node [style=none] (134) at (-0.5, 0.25) {};
            \node [style=none] (135) at (-0.5, -0.25) {};
            \node [style=none] (136) at (0.5, -0.25) {};
            \node [style=none] (137) at (0.5, 0.25) {};
            \node [style=none] (138) at (-0.5, 0) {};
            \node [style=none] (139) at (-0.5, -1) {};
        \end{pgfonlayer}
        \begin{pgfonlayer}{edgelayer}
            \draw [{bgStyle/+}] (114.center)
                 to (113.center)
                 to (112.center)
                 to (115.center)
                 to cycle;
            \draw [{bgStyle/-}] (136.center)
                 to (135.center)
                 to (134.center)
                 to (137.center)
                 to cycle;
            \draw [style={wStyle/+}, bend left=45] (128.center) to (126);
            \draw [style={wStyle/+}] (126) to (127);
            \draw [style={wStyle/+}, bend left=45] (126) to (129.center);
            \draw [style={wStyle/+}, bend right=45] (132.center) to (130);
            \draw [style={wStyle/+}] (130) to (131);
            \draw [style={wStyle/+}, bend right=45] (130) to (133.center);
            \draw [style={wStyle/+}] (121.center) to (132.center);
            \draw [style={wStyle/+}] (138.center) to (133.center);
            \draw [style={wStyle/+}] (139.center) to (81.center);
            \draw [style={wStyle/-}] (128.center) to (138.center);
            \draw [style={wStyle/+}] (129.center) to (139.center);
        \end{pgfonlayer}
    \end{tikzpicture}    
    \\
    &\stackrel{\text{\Cref{cor:daggers are the same}}}{=}&
    \begin{tikzpicture}
        \begin{pgfonlayer}{nodelayer}
            \node [style=none] (81) at (-1.675, -1) {};
            \node [style=none] (112) at (-1.675, 1.5) {};
            \node [style=none] (113) at (-1.675, -1.5) {};
            \node [style=none] (114) at (1.925, -1.5) {};
            \node [style=none] (115) at (1.925, 1.5) {};
            \node [style=none] (121) at (1.925, 1) {};
            \node [style=label] (124) at (-2.175, -1) {$X$};
            \node [style=label] (125) at (2.425, 1) {$X$};
            \node [style={dotStyle/-}] (126) at (1.25, -0.5) {};
            \node [style={dotStyle/-}] (127) at (1.675, -0.5) {};
            \node [style=none] (128) at (0.75, 0) {};
            \node [style=none] (129) at (0.75, -1) {};
            \node [style={dotStyle/-}] (130) at (-1, 0.5) {};
            \node [style={dotStyle/-}] (131) at (-1.425, 0.5) {};
            \node [style=none] (132) at (-0.5, 1) {};
            \node [style=none] (133) at (-0.5, 0) {};
            \node [style=none] (138) at (-0.5, 0) {};
            \node [style=none] (139) at (-0.5, -1) {};
        \end{pgfonlayer}
        \begin{pgfonlayer}{edgelayer}
            \draw [{bgStyle/-}] (114.center)
                 to (113.center)
                 to (112.center)
                 to (115.center)
                 to cycle;
            \draw [style={wStyle/-}, bend left=45] (128.center) to (126);
            \draw [style={wStyle/-}] (126) to (127);
            \draw [style={wStyle/-}, bend left=45] (126) to (129.center);
            \draw [style={wStyle/-}, bend right=45] (132.center) to (130);
            \draw [style={wStyle/-}] (130) to (131);
            \draw [style={wStyle/-}, bend right=45] (130) to (133.center);
            \draw [style={wStyle/-}] (121.center) to (132.center);
            \draw [style={wStyle/-}] (138.center) to (133.center);
            \draw [style={wStyle/-}] (139.center) to (81.center);
            \draw [style={wStyle/-}] (128.center) to (138.center);
            \draw [style={wStyle/-}] (129.center) to (139.center);
        \end{pgfonlayer}
    \end{tikzpicture}    
    &\stackrel{\text{\Cref{th:spider}}}{=}&
    \idCirc[-][X]
    &=&
    \id[-][X]           
\end{array}
\]
    \end{proof}

\begin{lemma}[Linear strengths I]\label{lemma:lin str}
    For any $a,b,c,d$ properly typed, the following equalities hold
    \begin{center}
        \begin{enumerate*}
            \item $
    \InputIfFileExists{axiomsNEW/linStr1_1.tikz}{}{\input{tikz/axiomsNEW/linStr1_1.tikz}}
 \leq 
    \InputIfFileExists{axiomsNEW/linStr1_2.tikz}{}{\input{tikz/axiomsNEW/linStr1_2.tikz}}
\qquad\qquad$
            \item $
    \InputIfFileExists{axiomsNEW/linStr1_1.tikz}{}{\input{tikz/axiomsNEW/linStr1_1.tikz}}
 \leq 
    \InputIfFileExists{axiomsNEW/linStr2_2.tikz}{}{\input{tikz/axiomsNEW/linStr2_2.tikz}}
$
            \item $
    \InputIfFileExists{axiomsNEW/linStr3_1.tikz}{}{\input{tikz/axiomsNEW/linStr3_1.tikz}}
 \leq 
    \InputIfFileExists{axiomsNEW/linStr3_2.tikz}{}{\input{tikz/axiomsNEW/linStr3_2.tikz}}
\qquad\qquad$
            \item $
    \InputIfFileExists{axiomsNEW/linStr4_1.tikz}{}{\input{tikz/axiomsNEW/linStr4_1.tikz}}
 \leq 
    \InputIfFileExists{axiomsNEW/linStr3_2.tikz}{}{\input{tikz/axiomsNEW/linStr3_2.tikz}}
$
        \end{enumerate*}
    \end{center}
\end{lemma}
\begin{proof}
    We prove $1.$ by means of \Cref{prop:implications in boolean algebra}.4 below. The proof for $2.$ is analogous. $3.$ and $4.$ follow from $1.$ and $2.$ and \Cref{prop:implications in boolean algebra}.2.
    \input{tikz/linStrenghts/proof}
\end{proof}

\begin{proposition}\label{prop:and under or}
    For any $c,d \colon X \to Y$, $c \wedge d \leq c \vee d$.
\end{proposition}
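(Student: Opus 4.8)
The plan is to reduce the statement to an elementary fact about lattices, using only the hypotheses packaged into \cref{def:peircean-bicategory}. By \cref{def:peircean-bicategory}.1, each homset $\Cat{C}[X,Y]$ carries a Boolean algebra structure $(\Cat{C}[X,Y], \vee, \bot, \wedge, \top, \nega{})$; the meet $\wedge$ appearing there is exactly the one defined in \eqref{eq:def:cap} from the cartesian bicategory structure, so the order underlying this Boolean algebra coincides with the poset-enrichment order $\leq$, and $\vee$ is the corresponding binary join (as re-expressed graphically, via De Morgan from $\wedge$ and $\nega{}$, in \Cref{lemma:cup and bot}). In particular $(\Cat{C}[X,Y], \leq)$ is a lattice whose binary meets are computed by $\wedge$ and whose binary joins are computed by $\vee$.

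With this identification in hand, the proof is the two-step chain
\[
    c \wedge d \;\leq\; c \;\leq\; c \vee d \text{,}
\]
where the first inequality is the universal property of the meet (equivalently, \Cref{lemma:meet semilattice} provides the inf-semilattice structure on $\Cat{C}[X,Y]$, so $c \wedge d \leq c$), and the second is the universal property of the join. Transitivity of $\leq$ then yields $c \wedge d \leq c \vee d$, as claimed.

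There is no genuine obstacle here: the only point requiring attention is the bookkeeping that the Boolean operations $\wedge$ and $\vee$ are taken with respect to the same order $\leq$ used throughout the paper, which is precisely what \cref{def:peircean-bicategory}.1 stipulates — after which the statement is immediate. As a sanity check, in $\Rel$ the inequality just says that $(x,y) \in c \cap d$ implies $(x,y) \in c \cup d$, which is trivially true, and the abstract version holds verbatim in any Boolean algebra.
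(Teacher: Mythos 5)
Your proof is correct, but it takes a different and more elementary route than the paper's. The paper proves the statement via the chain $c \wedge d = c \wedge (d \vee d) \leq (c \wedge d) \vee d \leq (c \wedge \top) \vee d = c \vee d$, where the middle step invokes \Cref{prop:lin distr and over or} (itself derived from Boolean distributivity) and the remaining steps use idempotency of $\vee$ and the unit law for $\wedge$; this phrasing mirrors the linear-distributivity style of reasoning used elsewhere in the appendix. You instead observe that once the Boolean order is identified with the poset-enrichment order, the claim is just the lattice triviality $c \wedge d \leq c \leq c \vee d$. That identification — that the $\wedge$ of \cref{def:peircean-bicategory}.1 is the meet of \eqref{eq:def:cap}, so the Boolean order coincides with $\leq$ and $\vee$ is the join for $\leq$ — is exactly the same implicit assumption the paper's proof rests on (its step $c \wedge e \leq \top \wedge e$ cites \Cref{lemma:meet semilattice} and uses monotonicity of $\vee$ with respect to $\leq$), so you are not assuming anything more than the paper does; you have simply made the assumption explicit and then used it to shortcut the computation. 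Both arguments are valid; yours is shorter and arguably clearer, while the paper's version avoids ever asserting that $\vee$ is a categorical join and works purely equationally/inequationally with the Boolean operations.
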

\begin{proof}
    The following holds since $\Cat{C}[X,Y]$ is a Boolean algebra and a $\wedge$-semilattice with $\top$:
    \begin{align*}
        c \wedge d
        &=
        c \wedge (d \vee d) \tag{Idempotency of $\vee$}  \\ 
        &\leq
        (c \wedge d) \vee d \tag{\Cref{prop:lin distr and over or}}  \\ 
        &\leq
        (c \wedge \top) \vee d \tag{$\top$ is the top element} \\ 
        &=
        c \vee d \tag{$\top$ is the unit of $\wedge$}
    \end{align*}
\end{proof}

\begin{lemma}\label{lemma:tensor and under or}
    For any $c \colon X \to Y, d \colon Z \to W$, $c \tensor[+] d \leq c \tensor[-] d$.
\end{lemma}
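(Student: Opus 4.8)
The plan is to strip the claim down to a Boolean‑algebra inequality and then to one genuinely ``linear'' fact about $\tensor[+]$ and the black $\bot$. Since every homset $\Cat{C}[X\tensor[+]Z,Y\tensor[+]W]$ is a Boolean algebra (\Cref{def:peircean-bicategory}.1), \Cref{prop:implications in boolean algebra}.4 reduces the goal to showing $(c\tensor[+]d)\wedge\nega{(c\tensor[-]d)}\leq\bot$. By the definition of the black structure, $c\tensor[-]d\defeq\nega{(\nega{c}\tensor[+]\nega{d})}$, and $\nega{}$ is an involution on each homset (\Cref{def:peircean-bicategory}.1), so $\nega{(c\tensor[-]d)}=\nega{c}\tensor[+]\nega{d}$ and the goal becomes
\[(c\tensor[+]d)\wedge(\nega{c}\tensor[+]\nega{d})\leq\bot_{X\tensor[+]Z,\,Y\tensor[+]W}\text{.}\]

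Next I would establish the purely cartesian‑bicategory identity $(a\tensor[+]b)\wedge(a'\tensor[+]b')=(a\wedge a')\tensor[+](b\wedge b')$ for $a,a'\colon X\to Y$ and $b,b'\colon Z\to W$: unfold $\wedge$ via~\eqref{eq:def:cap}, split $\copier[+][X\tensor[+]Z]$ and $\cocopier[+][Y\tensor[+]W]$ into $\copier[+][X]\tensor[+]\copier[+][Z]$ and $\cocopier[+][Y]\tensor[+]\cocopier[+][W]$ (up to symmetries) using the coherence of the (co)monoid structure, and reassemble with the interchange law of $\tensor[+]$; alternatively this follows directly from the spider theorem (\Cref{th:spider}). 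Instantiating with $a=c,\ a'=\nega{c},\ b=d,\ b'=\nega{d}$ and using $c\wedge\nega{c}=\bot_{X,Y}$, $d\wedge\nega{d}=\bot_{Z,W}$ (\Cref{def:peircean-bicategory}.1), the goal reduces to $\bot_{X,Y}\tensor[+]\bot_{Z,W}\leq\bot_{X\tensor[+]Z,\,Y\tensor[+]W}$, that is, to $\bot_{X,Y}\tensor[+]\bot_{Z,W}=\bot$.

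To prove the latter I would rewrite $\bot$ using \Cref{lemma:cup and bot} as $\discard[-]\seq[-]\codiscard[-]$, so that the left‑hand side is $(\discard[-][X]\seq[-]\codiscard[-][Y])\tensor[+](\discard[-][Z]\seq[-]\codiscard[-][W])$; then apply a linear strength --- one of~\eqref{ax:linStrn1}--\eqref{ax:linStrn4}, equivalently an item of \Cref{lemma:lin str} --- to push the white $\tensor[+]$ inside the two $\seq[-]$‑composites and obtain an upper bound of the shape $(\discard[-][X]\tensor[+]\discard[-][Z])\seq[-](\codiscard[-][Y]\tensor[-]\codiscard[-][W])$; finally collapse this to $\discard[-][X\tensor[+]Z]\seq[-]\codiscard[-][Y\tensor[+]W]=\bot$ (\Cref{lemma:cup and bot} again), using coherence of the black (co)monoid together with the laxity axioms~\eqref{ax:tensorPlusIdMinus} and~\eqref{ax:tensorMinusIdPlus} governing how $\tensor[+]$ and $\tensor[-]$ interact with the black (co)units.

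The main obstacle is precisely this last step. The Boolean‑algebra bookkeeping is routine and everything needed about maps is contained in~\eqref{eq:prop mappe}, but $\bot_{X,Y}\tensor[+]\bot_{Z,W}=\bot$ genuinely interleaves the white monoidal product with the black $\seq[-]$‑structure, so it is not accessible by the boolean laws alone and must be threaded through the linear‑bicategory machinery (linear strengths and the laxity laws), keeping track of the coherence isomorphisms relating $X\tensor[+]Z$ to the decomposed constants. One could equivalently carry out the reduction with $\top$ in one slot --- using the coordinatewise distributivity $(a\tensor[+]e)\wedge(b\tensor[+]e)=(a\wedge b)\tensor[+]e$ and \Cref{prop:and under or} to land instead on $\bot_{X,Y}\tensor[+]\top_{Z,W}=\bot$ --- but the difficulty is unchanged.
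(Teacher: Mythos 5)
Your skeleton is genuinely different from the paper's and most of it is sound: the reduction via \Cref{prop:implications in boolean algebra}.4, the identity $\nega{(c\tensor[-]d)}=\nega{c}\tensor[+]\nega{d}$, and the coordinatewise law $(a\tensor[+]b)\wedge(a'\tensor[+]b')=(a\wedge a')\tensor[+](b\wedge b')$ (which does follow from (co)monoid coherence, naturality of $\symm[+]$ and interchange, though not from \Cref{th:spider}, whose statement only covers diagrams built from the Frobenius generators) correctly bring you to $\bot_{X,Y}\tensor[+]\bot_{Z,W}\leq\bot$. The gap is in how you propose to close this. After the linear strength you are left with $(\discard[-][X]\tensor[+]\discard[-][Z])\seq[-](\codiscard[-][Y]\tensor[-]\codiscard[-][W])$, and since $\bot$ is the least element the only way to bound this by $\bot$ is to show $\discard[-][X]\tensor[+]\discard[-][Z]\leq\discard[-][X]\tensor[-]\discard[-][Z]$ --- which is exactly an instance of the lemma being proved. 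The laxity axioms \eqref{ax:tensorPlusIdMinus} and \eqref{ax:tensorMinusIdPlus} cannot supply this: they concern the \emph{identities} $\id[-]$ and $\id[+]$, not the (co)units, and, more seriously, in this appendix they are not hypotheses but are themselves derived from the present lemma in \Cref{cor:lin str id}, so invoking them here is circular.

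The step is repairable, but only by using \eqref{eq:prop mappe} at precisely the point where you set it aside: since $\discard[+][X]\tensor[+]c=(\discard[+][X]\tensor[+]\id[+][Z])\seq[+]c$ for $c\colon Z\to\unittensor$ and $\discard[+][X]\tensor[+]\id[+][Z]$ is a map, one gets $\discard[+][X]\tensor[+]\discard[-][Z]=\nega{(\discard[+][X]\tensor[+]\discard[+][Z])}$, and combining this with $\discard[-][X]\leq\top_{X,\unittensor}=\discard[+][X]$ (or, alternatively, routing everything through \Cref{prop:comaps counit} so that only white composition appears) closes $\bot_{X,Y}\tensor[+]\bot_{Z,W}\leq\bot$. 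The paper sidesteps the issue entirely: it rewrites $c\tensor[+]d$ as $(c\tensor[+]\top_{Z,W})\wedge(\top_{X,Y}\tensor[+]d)$ by unitality and \eqref{eq:def:cap}, passes to the join via \Cref{prop:and under or}, unfolds the join with \Cref{lemma:cup and bot}, and then converts the white (co)units into black ones using \eqref{eq:prop mappe} and \Cref{lemma:comaps}, landing on $c\tensor[-]d$ by black unitality. As written, your final step does not go through.
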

\begin{proof}
    \input{tikz/tensorsLemma/proof}
\end{proof}

\begin{corollary}[Linear strenghts II]\label{cor:lin str id}
    For all objects $X,Y$, the following inequalities hold 
    \begin{center}
        $\id[-][X] \tensor[+] \id[-][Y] \leq \id[-][X] \tensor[-] \id[-][Y] \qquad\quad \id[+][X] \tensor[+] \id[+][Y] \leq \id[+][X] \tensor[-] \id[+][Y]$  
    \end{center}
\end{corollary}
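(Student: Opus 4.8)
The plan is to observe that the Corollary is nothing more than a direct instance of Lemma~\ref{lemma:tensor and under or}, requiring no further diagrammatic manipulation. Recall that that lemma states $c \tensor[+] d \leq c \tensor[-] d$ for arbitrary arrows $c \colon X \to Y$ and $d \colon Z \to W$ of a peircean bicategory. To obtain the first inequality of the Corollary I would instantiate it at $c := \id[-][X] \colon X \to X$ and $d := \id[-][Y] \colon Y \to Y$, which yields $\id[-][X] \tensor[+] \id[-][Y] \leq \id[-][X] \tensor[-] \id[-][Y]$. To obtain the second inequality I would instantiate it instead at $c := \id[+][X] \colon X \to X$ and $d := \id[+][Y] \colon Y \to Y$, which yields $\id[+][X] \tensor[+] \id[+][Y] \leq \id[+][X] \tensor[-] \id[+][Y]$.

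It is worth spelling out, in the proof, why these two inequalities are the ones we want: they are exactly the axioms~\eqref{ax:tensorPlusIdMinus} and~\eqref{ax:tensorMinusIdPlus} of Figure~\ref{fig:closed lin axioms}, once one recalls that the monoidal products act on identity arrows by $\id[+][X \tensor[+] Y] = \id[+][X] \tensor[+] \id[+][Y]$ and $\id[-][X \tensor[-] Y] = \id[-][X] \tensor[-] \id[-][Y]$, which holds because $(\Cat{C}, \tensor[+], \unittensor)$ and $(\Cat{C}, \tensor[-], \unittensor)$ are (strong monoidal) symmetric monoidal categories. Thus the Corollary is precisely the statement that in a peircean bicategory $\tensor[+]$ preserves $\id[-]$ laxly and $\tensor[-]$ preserves $\id[+]$ colaxly, contributing two of the defining inequalities of a symmetric monoidal linear bicategory.

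There is essentially no obstacle here: the only point that deserves a moment's care is the re-indexing of the type variables $X,Y,Z,W$ appearing in Lemma~\ref{lemma:tensor and under or}, because in the present application the source and target of each identity arrow coincide, so the roles of (lemma's) $X$ and $Y$ are both played by $X$, and those of $Z$ and $W$ both by $Y$. Once this matching is made explicit, the proof is a single line in each case.
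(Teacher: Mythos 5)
Your proof is correct and is exactly the paper's argument: the paper proves this corollary with the single line ``Immediate by Lemma~\ref{lemma:tensor and under or}'', i.e.\ by instantiating $c \tensor[+] d \leq c \tensor[-] d$ at the relevant identity arrows, just as you do. Your additional remarks about the re-indexing of types and the connection to axioms \eqref{ax:tensorPlusIdMinus} and \eqref{ax:tensorMinusIdPlus} are accurate but not needed for the proof itself.
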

\begin{proof}
    Immediate by \Cref{lemma:tensor and under or}.
\end{proof}

\begin{proposition}\label{prop:monoid rewiring}
    The following equality holds $
    \InputIfFileExists{rewiredMonoid.tikz}{}{\input{tikz/rewiredMonoid.tikz}}
 = \cocopierCirc[-][X]$.
\end{proposition}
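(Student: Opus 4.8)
The plan is to identify $\tikzfig{rewiredMonoid}$ with the dagger of the black comonoid $\copier[-][X]$ and then rewrite that through the isomorphism $\op{(\cdot)}$ and the complement $\nega{(\cdot)}$.

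First I would observe that the rewiring in $\tikzfig{rewiredMonoid}$ --- bending the two legs of $\copier[-][X] \colon X \to X \tensor[-] X$ around caps and cups so as to produce an arrow $X \tensor[+] X \to X$ --- is exactly a dagger applied to $\copier[-][X]$. When the bending is done with black caps and cups this is immediate from the definition \eqref{eqddagger}; with white ones it follows from \eqref{eqdagger}; and in a mixed situation one first homogenises the colours of the caps and cups using \Cref{cor:cc are the same} and \Cref{cor:daggers are the same}, sliding the (co)monoid legs through caps where necessary via \Cref{prop:cap property} and \eqref{eq:prop mappe} (recall that (co)monoid legs are (co)maps). In every case one obtains $\tikzfig{rewiredMonoid} = \op{(\copier[-][X])}$. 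This identification, together with the bookkeeping of colours, is the only genuinely diagrammatic step of the proof, and the main obstacle.

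It then remains to compute $\op{(\copier[-][X])}$ algebraically. By \eqref{eq:defnegative} we have $\copier[-][X] = \nega{\copier[+][X]}$, so \Cref{lemma:dagger preserves negation} gives $\op{(\copier[-][X])} = \op{(\nega{\copier[+][X]})} = \nega{(\op{(\copier[+][X])})}$; the dagger table (Table~\ref{table:daggerproperties}) gives $\op{(\copier[+][X])} = \cocopier[+][X]$; and \eqref{eq:defnegative} applied once more gives $\nega{(\cocopier[+][X])} = \cocopier[-][X]$. Hence
\[
\tikzfig{rewiredMonoid} \;=\; \op{(\copier[-][X])} \;=\; \op{(\nega{\copier[+][X]})} \;=\; \nega{(\op{(\copier[+][X])})} \;=\; \nega{(\cocopier[+][X])} \;=\; \cocopier[-][X] \text{.}
\]
Alternatively, one may bypass the white structure and finish via \Cref{cor:daggers are the same} and the black analogue of Table~\ref{table:daggerproperties}, namely $\op{(\copier[-][X])} = \opp{(\copier[-][X])} = \cocopier[-][X]$; either way, once the rewired monoid has been recognised as $\op{(\copier[-][X])}$ the remainder is a routine rewrite.
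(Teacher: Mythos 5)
Your algebraic tail is fine: granting $\tikzfig{rewiredMonoid} = \op{(\copier[-][X])}$, the chain $\op{(\copier[-][X])} = \op{(\nega{\copier[+][X]})} = \nega{(\op{(\copier[+][X])})} = \nega{(\cocopier[+][X])} = \cocopier[-][X]$ is a correct use of \eqref{eq:defnegative}, \Cref{lemma:dagger preserves negation} and Table~\ref{table:daggerproperties}. The problem is the first step, which you yourself flag as ``the main obstacle'' and then dispose of by assertion. The rewired monoid is not an instance of \eqref{eqdagger} or \eqref{eqddagger}: it is the black multiplication with \emph{one} leg and the output exchanged through \emph{white} cups and caps (a partial rotation), not the full transpose of $\copier[-][X]$ with the whole two-wire codomain bent around at once. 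Recognising a partial rotation of $\cocopier[-][X]$ as the full white dagger of $\copier[-][X]$ already presupposes that the remaining, un-rotated legs of the black dot pair up correctly against the white compact structure --- which is essentially the content of the proposition itself. In the worst reading your reduction is circular; in the most charitable reading it simply defers the entire proof to the phrase ``one first homogenises the colours \dots sliding the (co)monoid legs through caps where necessary.''

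That sentence names the right tools but does not constitute an argument, and the tools do not apply as directly as you suggest. \Cref{prop:cap property} and \Cref{cor:cc are the same} concern a \emph{full} cap on the (single-wire) codomain of an arrow, not the bending of individual legs of a multi-legged dot, so before either can be invoked one must rearrange the white Frobenius structure with \Cref{th:spider}; and the only mechanism for moving the black dot past white structure is \eqref{eq:prop mappe}/\Cref{lemma:comaps}, which requires the black dot to sit as a negation composed with a white (co)map in a specific configuration. This is exactly what the paper's proof does: it caps both sides via \Cref{prop:cap property}, applies \Cref{th:spider}, switches cap colours with \Cref{cor:cc are the same}, pushes the black dot through with \eqref{eq:prop mappe} so that the ambient region becomes homogeneously black, applies the spider theorem there, and switches back. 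That sequence is the proof; your proposal replaces it with the claim that its conclusion holds ``in every case,'' which is a genuine gap.
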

\begin{proof}
    We prove it by means of \Cref{prop:cap property} as follows.
\begin{align*}
    \begin{tikzpicture}
        \begin{pgfonlayer}{nodelayer}
            \node [{dotStyle/+}] (107) at (-1, 0.75) {};
            \node [style=none] (108) at (-0.5, 1.125) {};
            \node [style=none] (109) at (-0.5, 0.375) {};
            \node [style=none] (122) at (-1.925, -2) {};
            \node [style=none] (123) at (-1.925, 1.5) {};
            \node [style=none] (124) at (3, 1.5) {};
            \node [style=none] (125) at (3, -2) {};
            \node [style={dotStyle/+}] (127) at (-1.425, 0.75) {};
            \node [{dotStyle/-}] (128) at (0, 0) {};
            \node [style=none] (129) at (-0.5, 0.375) {};
            \node [style=none] (130) at (-0.5, -0.375) {};
            \node [style=none] (131) at (0.425, 0) {};
            \node [style=none] (132) at (0.5, 1.125) {};
            \node [style=none] (133) at (-1.175, -0.375) {};
            \node [style=none] (134) at (-0.5, -0.5) {};
            \node [style=none] (135) at (-0.5, 0.5) {};
            \node [style=none] (136) at (0.425, 0.5) {};
            \node [style=none] (137) at (0.425, -0.5) {};
            \node [style=none] (138) at (0.425, 0) {};
            \node [style=label] (139) at (-2.4, -0.925) {$X$};
            \node [style=label] (140) at (-2.4, -0.3) {$X$};
            \node [style=label] (142) at (-2.4, -1.525) {$X$};
            \node [{dotStyle/+}] (143) at (1.15, -0.4) {};
            \node [style=none] (144) at (0.4, -0.85) {};
            \node [style=none] (145) at (0.4, 0) {};
            \node [style={dotStyle/+}] (146) at (1.575, -0.4) {};
            \node [style=none] (147) at (-1.175, -0.85) {};
            \node [style=none] (148) at (-1.925, -0.375) {};
            \node [style=none] (149) at (-1.925, -0.85) {};
            \node [{dotStyle/+}] (153) at (2.225, -0.175) {};
            \node [style=none] (154) at (0.75, -1.475) {};
            \node [style=none] (155) at (0.75, 1.125) {};
            \node [style={dotStyle/+}] (156) at (2.65, -0.175) {};
            \node [style=none] (157) at (-1.925, -1.475) {};
        \end{pgfonlayer}
        \begin{pgfonlayer}{edgelayer}
            \draw [{bgStyle/+}] (124.center)
                 to (123.center)
                 to (122.center)
                 to (125.center)
                 to cycle;
            \draw [{bgStyle/-}] (136.center)
                 to (135.center)
                 to (134.center)
                 to (137.center)
                 to cycle;
            \draw [{wStyle/+}, bend left] (109.center) to (107);
            \draw [{wStyle/+}, bend left] (107) to (108.center);
            \draw [{wStyle/+}] (107) to (127);
            \draw [{wStyle/-}, bend right] (130.center) to (128);
            \draw [{wStyle/-}, bend right] (128) to (129.center);
            \draw [{wStyle/-}] (128) to (131.center);
            \draw [{wStyle/+}] (132.center) to (108.center);
            \draw [{wStyle/+}] (130.center) to (133.center);
            \draw [style={wStyle/+}] (138.center) to (131.center);
            \draw [{wStyle/+}, bend left] (145.center) to (143);
            \draw [{wStyle/+}, bend left] (143) to (144.center);
            \draw [{wStyle/+}] (143) to (146);
            \draw [{wStyle/+}] (147.center) to (144.center);
            \draw [style={wStyle/+}] (138.center) to (145.center);
            \draw [style={wStyle/+}, in=0, out=-180] (133.center) to (149.center);
            \draw [style={wStyle/+}, in=-180, out=0] (148.center) to (147.center);
            \draw [{wStyle/+}, bend left=45] (155.center) to (153);
            \draw [{wStyle/+}, bend left=45] (153) to (154.center);
            \draw [{wStyle/+}] (153) to (156);
            \draw (132.center) to (155.center);
            \draw (157.center) to (154.center);
        \end{pgfonlayer}
    \end{tikzpicture}
    &\stackrel{\text{\Cref{th:spider}}}{=}
    \begin{tikzpicture}
        \begin{pgfonlayer}{nodelayer}
            \node [style=none] (109) at (0.5, 0.625) {};
            \node [style=none] (122) at (-2, -1) {};
            \node [style=none] (123) at (-2, 1) {};
            \node [style=none] (124) at (3, 1) {};
            \node [style=none] (125) at (3, -1) {};
            \node [{dotStyle/-}] (128) at (1, 0.25) {};
            \node [style=none] (129) at (0.5, 0.625) {};
            \node [style=none] (130) at (0.5, -0.125) {};
            \node [style=none] (131) at (1.425, 0.25) {};
            \node [style=none] (133) at (0.325, -0.125) {};
            \node [style=none] (134) at (0.5, -0.25) {};
            \node [style=none] (135) at (0.5, 0.75) {};
            \node [style=none] (136) at (1.425, 0.75) {};
            \node [style=none] (137) at (1.425, -0.25) {};
            \node [style=none] (138) at (1.425, 0.25) {};
            \node [style=label] (139) at (-2.475, 0) {$X$};
            \node [style=label] (140) at (-2.475, 0.625) {$X$};
            \node [style=label] (142) at (-2.475, -0.6) {$X$};
            \node [{dotStyle/+}] (143) at (2.15, -0.15) {};
            \node [style=none] (144) at (1.4, -0.6) {};
            \node [style=none] (145) at (1.4, 0.25) {};
            \node [style={dotStyle/+}] (146) at (2.575, -0.15) {};
            \node [style=none] (147) at (0.325, -0.6) {};
            \node [style=none] (148) at (-0.425, -0.125) {};
            \node [style=none] (149) at (-0.425, -0.6) {};
            \node [style=none] (158) at (-0.45, 0.625) {};
            \node [style=none] (161) at (-1.25, 0.625) {};
            \node [style=none] (163) at (-1.25, -0.125) {};
            \node [style=none] (164) at (-1.25, -0.6) {};
            \node [style=none] (165) at (-2, -0.125) {};
            \node [style=none] (166) at (-2, -0.6) {};
            \node [style=none] (167) at (-2, 0.625) {};
        \end{pgfonlayer}
        \begin{pgfonlayer}{edgelayer}
            \draw [{bgStyle/+}] (124.center)
                 to (123.center)
                 to (122.center)
                 to (125.center)
                 to cycle;
            \draw [{bgStyle/-}] (136.center)
                 to (135.center)
                 to (134.center)
                 to (137.center)
                 to cycle;
            \draw [{wStyle/-}, bend right] (130.center) to (128);
            \draw [{wStyle/-}, bend right] (128) to (129.center);
            \draw [{wStyle/-}] (128) to (131.center);
            \draw [{wStyle/+}] (130.center) to (133.center);
            \draw [style={wStyle/+}] (138.center) to (131.center);
            \draw [{wStyle/+}, bend left] (145.center) to (143);
            \draw [{wStyle/+}, bend left] (143) to (144.center);
            \draw [{wStyle/+}] (143) to (146);
            \draw [{wStyle/+}] (147.center) to (144.center);
            \draw [style={wStyle/+}] (138.center) to (145.center);
            \draw [style={wStyle/+}, in=0, out=-180] (133.center) to (149.center);
            \draw [style={wStyle/+}, in=-180, out=0] (148.center) to (147.center);
            \draw (158.center) to (129.center);
            \draw [style={wStyle/+}, in=0, out=-180] (163.center) to (166.center);
            \draw [style={wStyle/+}, in=-180, out=0] (165.center) to (164.center);
            \draw (164.center) to (149.center);
            \draw (167.center) to (161.center);
            \draw [in=-180, out=0] (163.center) to (158.center);
            \draw [in=-180, out=0, looseness=1.25] (161.center) to (148.center);
        \end{pgfonlayer}
    \end{tikzpicture}
    \stackrel{\text{\Cref{cor:cc are the same}}}{=}
    \begin{tikzpicture}
        \begin{pgfonlayer}{nodelayer}
            \node [style=none] (109) at (0.3, 0.625) {};
            \node [style=none] (122) at (-2, -1) {};
            \node [style=none] (123) at (-2, 1) {};
            \node [style=none] (124) at (2.575, 1) {};
            \node [style=none] (125) at (2.575, -1) {};
            \node [{dotStyle/-}] (128) at (0.8, 0.25) {};
            \node [style=none] (129) at (0.3, 0.625) {};
            \node [style=none] (130) at (0.3, -0.125) {};
            \node [style=none] (131) at (0.825, 0.25) {};
            \node [style=none] (133) at (0.3, -0.125) {};
            \node [style=none] (134) at (0.3, -0.75) {};
            \node [style=none] (135) at (0.3, 0.75) {};
            \node [style=none] (136) at (2.325, 0.75) {};
            \node [style=none] (137) at (2.325, -0.75) {};
            \node [style=none] (138) at (0.825, 0.25) {};
            \node [style=label] (139) at (-2.475, 0) {$X$};
            \node [style=label] (140) at (-2.475, 0.625) {$X$};
            \node [style=label] (142) at (-2.475, -0.6) {$X$};
            \node [{dotStyle/-}] (143) at (1.55, -0.15) {};
            \node [style=none] (144) at (0.8, -0.6) {};
            \node [style=none] (145) at (0.8, 0.25) {};
            \node [style={dotStyle/-}] (146) at (1.975, -0.15) {};
            \node [style=none] (147) at (0.3, -0.6) {};
            \node [style=none] (148) at (-0.425, -0.125) {};
            \node [style=none] (149) at (-0.425, -0.6) {};
            \node [style=none] (158) at (-0.45, 0.625) {};
            \node [style=none] (161) at (-1.25, 0.625) {};
            \node [style=none] (163) at (-1.25, -0.125) {};
            \node [style=none] (164) at (-1.25, -0.6) {};
            \node [style=none] (165) at (-2, -0.125) {};
            \node [style=none] (166) at (-2, -0.6) {};
            \node [style=none] (167) at (-2, 0.625) {};
        \end{pgfonlayer}
        \begin{pgfonlayer}{edgelayer}
            \draw [{bgStyle/+}] (124.center)
                 to (123.center)
                 to (122.center)
                 to (125.center)
                 to cycle;
            \draw [{bgStyle/-}] (136.center)
                 to (135.center)
                 to (134.center)
                 to (137.center)
                 to cycle;
            \draw [{wStyle/-}, bend right] (130.center) to (128);
            \draw [{wStyle/-}, bend right] (128) to (129.center);
            \draw [{wStyle/-}] (128) to (131.center);
            \draw [{wStyle/+}] (130.center) to (133.center);
            \draw [style={wStyle/+}] (138.center) to (131.center);
            \draw [{wStyle/-}, bend left] (145.center) to (143);
            \draw [{wStyle/-}, bend left] (143) to (144.center);
            \draw [{wStyle/-}] (143) to (146);
            \draw [{wStyle/-}] (147.center) to (144.center);
            \draw [style={wStyle/+}, in=0, out=-180] (133.center) to (149.center);
            \draw [style={wStyle/+}, in=-180, out=0] (148.center) to (147.center);
            \draw (158.center) to (129.center);
            \draw [style={wStyle/+}, in=0, out=-180] (163.center) to (166.center);
            \draw [style={wStyle/+}, in=-180, out=0] (165.center) to (164.center);
            \draw (164.center) to (149.center);
            \draw (167.center) to (161.center);
            \draw [in=-180, out=0] (163.center) to (158.center);
            \draw [in=-180, out=0, looseness=1.25] (161.center) to (148.center);
        \end{pgfonlayer}
    \end{tikzpicture}    
    \stackrel{\eqref{eq:prop mappe}}{=}
    \begin{tikzpicture}
        \begin{pgfonlayer}{nodelayer}
            \node [style=none] (109) at (0.5, 0.625) {};
            \node [style=none] (122) at (-2, -1) {};
            \node [style=none] (123) at (-2, 1) {};
            \node [style=none] (124) at (2.525, 1) {};
            \node [style=none] (125) at (2.525, -1) {};
            \node [{dotStyle/-}] (128) at (1, 0.25) {};
            \node [style=none] (129) at (0.5, 0.625) {};
            \node [style=none] (130) at (0.5, -0.125) {};
            \node [style=none] (131) at (1.025, 0.25) {};
            \node [style=none] (133) at (0.325, -0.125) {};
            \node [style=none] (138) at (1.025, 0.25) {};
            \node [style=label] (139) at (-2.475, 0) {$X$};
            \node [style=label] (140) at (-2.475, 0.625) {$X$};
            \node [style=label] (142) at (-2.475, -0.6) {$X$};
            \node [{dotStyle/-}] (143) at (1.75, -0.15) {};
            \node [style=none] (144) at (1, -0.6) {};
            \node [style=none] (145) at (1, 0.25) {};
            \node [style={dotStyle/-}] (146) at (2.175, -0.15) {};
            \node [style=none] (147) at (0.325, -0.6) {};
            \node [style=none] (148) at (-0.425, -0.125) {};
            \node [style=none] (149) at (-0.425, -0.6) {};
            \node [style=none] (158) at (-0.45, 0.625) {};
            \node [style=none] (161) at (-1.25, 0.625) {};
            \node [style=none] (163) at (-1.25, -0.125) {};
            \node [style=none] (164) at (-1.25, -0.6) {};
            \node [style=none] (165) at (-2, -0.125) {};
            \node [style=none] (166) at (-2, -0.6) {};
            \node [style=none] (167) at (-2, 0.625) {};
        \end{pgfonlayer}
        \begin{pgfonlayer}{edgelayer}
            \draw [{bgStyle/-}] (124.center)
                 to (123.center)
                 to (122.center)
                 to (125.center)
                 to cycle;
            \draw [{wStyle/-}, bend right] (130.center) to (128);
            \draw [{wStyle/-}, bend right] (128) to (129.center);
            \draw [{wStyle/-}] (128) to (131.center);
            \draw [{wStyle/-}] (130.center) to (133.center);
            \draw [style={wStyle/-}] (138.center) to (131.center);
            \draw [{wStyle/-}, bend left] (145.center) to (143);
            \draw [{wStyle/-}, bend left] (143) to (144.center);
            \draw [{wStyle/-}] (143) to (146);
            \draw [{wStyle/-}] (147.center) to (144.center);
            \draw [style={wStyle/-}, in=0, out=-180] (133.center) to (149.center);
            \draw [style={wStyle/-}, in=-180, out=0] (148.center) to (147.center);
            \draw [style={wStyle/-}](158.center) to (129.center);
            \draw [style={wStyle/-}, in=0, out=-180] (163.center) to (166.center);
            \draw [style={wStyle/-}, in=-180, out=0] (165.center) to (164.center);
            \draw [style={wStyle/-}] (164.center) to (149.center);
            \draw [style={wStyle/-}] (167.center) to (161.center);
            \draw [style={wStyle/-}][in=-180, out=0] (163.center) to (158.center);
            \draw [style={wStyle/-}] [in=-180, out=0, looseness=1.25] (161.center) to (148.center);
        \end{pgfonlayer}
    \end{tikzpicture}        
    \\
    &\stackrel{\text{\Cref{th:spider}}}{=}
    \begin{tikzpicture}
        \begin{pgfonlayer}{nodelayer}
            \node [style=none] (109) at (-2, 0.625) {};
            \node [style=none] (122) at (-2, -1) {};
            \node [style=none] (123) at (-2, 1) {};
            \node [style=none] (124) at (0.025, 1) {};
            \node [style=none] (125) at (0.025, -1) {};
            \node [{dotStyle/-}] (128) at (-1.5, 0.25) {};
            \node [style=none] (129) at (-2, 0.625) {};
            \node [style=none] (130) at (-2, -0.125) {};
            \node [style=none] (131) at (-1.475, 0.25) {};
            \node [style=none] (138) at (-1.475, 0.25) {};
            \node [style=label] (139) at (-2.475, 0) {$X$};
            \node [style=label] (140) at (-2.475, 0.625) {$X$};
            \node [style=label] (142) at (-2.475, -0.6) {$X$};
            \node [{dotStyle/-}] (143) at (-0.75, -0.15) {};
            \node [style=none] (144) at (-1.5, -0.6) {};
            \node [style=none] (145) at (-1.5, 0.25) {};
            \node [style={dotStyle/-}] (146) at (-0.325, -0.15) {};
            \node [style=none] (147) at (-2, -0.6) {};
            \node [style=none] (165) at (-2, -0.125) {};
            \node [style=none] (166) at (-2, -0.6) {};
            \node [style=none] (167) at (-2, 0.625) {};
        \end{pgfonlayer}
        \begin{pgfonlayer}{edgelayer}
            \draw [{bgStyle/-}] (124.center)
                 to (123.center)
                 to (122.center)
                 to (125.center)
                 to cycle;
            \draw [{wStyle/-}, bend right] (130.center) to (128);
            \draw [{wStyle/-}, bend right] (128) to (129.center);
            \draw [{wStyle/-}] (128) to (131.center);
            \draw [style={wStyle/-}] (138.center) to (131.center);
            \draw [{wStyle/-}, bend left] (145.center) to (143);
            \draw [{wStyle/-}, bend left] (143) to (144.center);
            \draw [{wStyle/-}] (143) to (146);
            \draw [{wStyle/-}] (147.center) to (144.center);
        \end{pgfonlayer}
    \end{tikzpicture}            
    \stackrel{\text{\Cref{cor:cc are the same}}}{=}
    \begin{tikzpicture}
        \begin{pgfonlayer}{nodelayer}
            \node [style=none] (109) at (-1.75, 0.625) {};
            \node [style=none] (122) at (-2, -1) {};
            \node [style=none] (123) at (-2, 1) {};
            \node [style=none] (124) at (0.75, 1) {};
            \node [style=none] (125) at (0.75, -1) {};
            \node [{dotStyle/-}] (128) at (-1.25, 0.25) {};
            \node [style=none] (129) at (-1.75, 0.625) {};
            \node [style=none] (130) at (-1.75, -0.125) {};
            \node [style=none] (131) at (-0.825, 0.25) {};
            \node [style=none] (133) at (-1.925, -0.125) {};
            \node [style=none] (134) at (-1.75, -0.25) {};
            \node [style=none] (135) at (-1.75, 0.75) {};
            \node [style=none] (136) at (-0.825, 0.75) {};
            \node [style=none] (137) at (-0.825, -0.25) {};
            \node [style=none] (138) at (-0.825, 0.25) {};
            \node [style=label] (139) at (-2.475, 0) {$X$};
            \node [style=label] (140) at (-2.475, 0.625) {$X$};
            \node [style=label] (142) at (-2.475, -0.6) {$X$};
            \node [{dotStyle/+}] (143) at (-0.1, -0.15) {};
            \node [style=none] (144) at (-0.85, -0.6) {};
            \node [style=none] (145) at (-0.85, 0.25) {};
            \node [style={dotStyle/+}] (146) at (0.325, -0.15) {};
            \node [style=none] (147) at (-1.925, -0.6) {};
            \node [style=none] (165) at (-2, -0.125) {};
            \node [style=none] (166) at (-2, -0.6) {};
            \node [style=none] (167) at (-2, 0.625) {};
        \end{pgfonlayer}
        \begin{pgfonlayer}{edgelayer}
            \draw [{bgStyle/+}] (124.center)
                 to (123.center)
                 to (122.center)
                 to (125.center)
                 to cycle;
            \draw [{bgStyle/-}] (136.center)
                 to (135.center)
                 to (134.center)
                 to (137.center)
                 to cycle;
            \draw [{wStyle/-}, bend right] (130.center) to (128);
            \draw [{wStyle/-}, bend right] (128) to (129.center);
            \draw [{wStyle/-}] (128) to (131.center);
            \draw [{wStyle/+}] (130.center) to (133.center);
            \draw [style={wStyle/+}] (138.center) to (131.center);
            \draw [{wStyle/+}, bend left] (145.center) to (143);
            \draw [{wStyle/+}, bend left] (143) to (144.center);
            \draw [{wStyle/+}] (143) to (146);
            \draw [{wStyle/+}] (147.center) to (144.center);
            \draw [style={wStyle/+}] (138.center) to (145.center);
            \draw (167.center) to (129.center);
            \draw (133.center) to (165.center);
            \draw (166.center) to (147.center);
        \end{pgfonlayer}
    \end{tikzpicture}    
\end{align*}
\end{proof}

\begin{lemma}[Linear Frobenius]\label{lemma:lin frob}
    The following equalities hold
    \begin{center}
        \begin{enumerate*}
            \item $
    \InputIfFileExists{axiomsNEW/bwS2.tikz}{}{\input{tikz/axiomsNEW/bwS2.tikz}}
 = 
    \InputIfFileExists{axiomsNEW/bwZ2.tikz}{}{\input{tikz/axiomsNEW/bwZ2.tikz}}
\qquad$
            \item $
    \InputIfFileExists{axiomsNEW/bwS.tikz}{}{\input{tikz/axiomsNEW/bwS.tikz}}
 = 
    \InputIfFileExists{axiomsNEW/bwZ.tikz}{}{\input{tikz/axiomsNEW/bwZ.tikz}}
$
            \item $
    \InputIfFileExists{axiomsNEW/wbS2.tikz}{}{\input{tikz/axiomsNEW/wbS2.tikz}}
 = 
    \InputIfFileExists{axiomsNEW/wbZ2.tikz}{}{\input{tikz/axiomsNEW/wbZ2.tikz}}
\qquad$
            \item $
    \InputIfFileExists{axiomsNEW/wbS.tikz}{}{\input{tikz/axiomsNEW/wbS.tikz}}
 = 
    \InputIfFileExists{axiomsNEW/wbZ.tikz}{}{\input{tikz/axiomsNEW/wbZ.tikz}}
$
        \end{enumerate*}
    \end{center}
\end{lemma}
\begin{proof}
    We prove $1.$ below. The proof for $2.$ is analogous. $3.$ and $4.$ follow from $1.$ and $2.$ and \Cref{prop:implications in boolean algebra}.2.
\begin{align*}
    \begin{tikzpicture}
        \begin{pgfonlayer}{nodelayer}
            \node [{dotStyle/+}] (107) at (0.5, -0.375) {};
            \node [style=none] (108) at (0, -0.75) {};
            \node [style=none] (109) at (0, 0) {};
            \node [style=none] (122) at (0.925, 1.125) {};
            \node [style=none] (123) at (0.925, -1.125) {};
            \node [style=none] (124) at (-1.175, -1.125) {};
            \node [style=none] (125) at (-1.175, 1.125) {};
            \node [style=none] (127) at (0.925, -0.375) {};
            \node [{dotStyle/-}] (128) at (-0.5, 0.375) {};
            \node [style=none] (129) at (0, 0) {};
            \node [style=none] (130) at (0, 0.75) {};
            \node [style=none] (131) at (-0.925, 0.375) {};
            \node [style=none] (132) at (-1.175, -0.75) {};
            \node [style=none] (133) at (0.925, 0.75) {};
            \node [style=none] (134) at (0, 0.875) {};
            \node [style=none] (135) at (0, -0.125) {};
            \node [style=none] (136) at (-0.925, -0.125) {};
            \node [style=none] (137) at (-0.925, 0.875) {};
            \node [style=none] (138) at (-1.175, 0.375) {};
            \node [style=label] (139) at (1.4, -0.375) {$X$};
            \node [style=label] (140) at (1.4, 0.75) {$X$};
            \node [style=label] (141) at (-1.65, -0.75) {$X$};
            \node [style=label] (142) at (-1.65, 0.375) {$X$};
        \end{pgfonlayer}
        \begin{pgfonlayer}{edgelayer}
            \draw [{bgStyle/+}] (124.center)
                 to (123.center)
                 to (122.center)
                 to (125.center)
                 to cycle;
            \draw [{bgStyle/-}] (136.center)
                 to (135.center)
                 to (134.center)
                 to (137.center)
                 to cycle;
            \draw [{wStyle/+}, bend left] (109.center) to (107);
            \draw [{wStyle/+}, bend left] (107) to (108.center);
            \draw [{wStyle/+}] (107) to (127.center);
            \draw [{wStyle/-}, bend right] (130.center) to (128);
            \draw [{wStyle/-}, bend right] (128) to (129.center);
            \draw [{wStyle/-}] (128) to (131.center);
            \draw [{wStyle/+}] (132.center) to (108.center);
            \draw [{wStyle/+}] (130.center) to (133.center);
            \draw [style={wStyle/+}] (138.center) to (131.center);
        \end{pgfonlayer}
    \end{tikzpicture}
    &\stackrel{\text{\Cref{th:spider}}}{=}
    \begin{tikzpicture}
        \begin{pgfonlayer}{nodelayer}
            \node [style=none] (122) at (-2.275, -1.975) {};
            \node [style=none] (123) at (-2.275, 2) {};
            \node [style=none] (124) at (4.075, 2) {};
            \node [style=none] (125) at (4.075, -2) {};
            \node [{dotStyle/-}] (128) at (0.3, 0) {};
            \node [style=none] (129) at (-0.2, 0.375) {};
            \node [style=none] (130) at (-0.2, -0.375) {};
            \node [style=none] (134) at (-1.975, -1.175) {};
            \node [style=none] (135) at (-1.975, 1.75) {};
            \node [style=none] (136) at (1.825, 1.75) {};
            \node [style=none] (137) at (1.825, -1.175) {};
            \node [style=label] (140) at (4.525, 1.475) {$X$};
            \node [style=label] (142) at (-2.75, -0.85) {$X$};
            \node [{dotStyle/-}] (143) at (1.05, -0.4) {};
            \node [style=none] (144) at (0.3, -0.85) {};
            \node [style=none] (145) at (0.3, 0) {};
            \node [style={dotStyle/-}] (146) at (1.475, -0.4) {};
            \node [style=none] (147) at (-1.975, -0.85) {};
            \node [style=none] (166) at (-2.275, -0.85) {};
            \node [{dotStyle/-}] (170) at (-1.15, 0.975) {};
            \node [style=none] (171) at (-0.4, 0.375) {};
            \node [style=none] (172) at (-0.4, 1.475) {};
            \node [style={dotStyle/-}] (173) at (-1.575, 0.975) {};
            \node [{dotStyle/-}] (174) at (-1.15, 0.225) {};
            \node [style=none] (175) at (-0.4, -0.375) {};
            \node [style=none] (176) at (-0.4, 0.725) {};
            \node [style={dotStyle/-}] (177) at (-1.575, 0.225) {};
            \node [style=none] (178) at (1.825, 1.475) {};
            \node [style=none] (179) at (1.825, 0.725) {};
            \node [style=none] (180) at (4.075, 1.475) {};
            \node [style=none] (181) at (1.825, 0.725) {};
            \node [style=label] (182) at (-2.75, -1.575) {$X$};
            \node [style=none] (183) at (-2.275, -1.575) {};
            \node [style={dotStyle/+}] (184) at (3.25, -0.45) {};
            \node [style=none] (185) at (4.075, -0.45) {};
            \node [style=none] (186) at (1.825, -1.575) {};
            \node [style=label] (187) at (4.525, -0.45) {$X$};
        \end{pgfonlayer}
        \begin{pgfonlayer}{edgelayer}
            \draw [{bgStyle/+}] (124.center)
                 to (123.center)
                 to (122.center)
                 to (125.center)
                 to cycle;
            \draw [{bgStyle/-}] (136.center)
                 to (135.center)
                 to (134.center)
                 to (137.center)
                 to cycle;
            \draw [{wStyle/-}, bend right] (130.center) to (128);
            \draw [{wStyle/-}, bend right] (128) to (129.center);
            \draw [{wStyle/-}, bend left] (145.center) to (143);
            \draw [{wStyle/-}, bend left] (143) to (144.center);
            \draw [{wStyle/-}] (143) to (146);
            \draw [style={wStyle/-}] (144.center) to (147.center);
            \draw [style={wStyle/-}] (145.center) to (128);
            \draw [{wStyle/-}, bend right] (172.center) to (170);
            \draw [{wStyle/-}, bend right] (170) to (171.center);
            \draw [{wStyle/-}] (170) to (173);
            \draw [{wStyle/-}, bend right] (176.center) to (174);
            \draw [{wStyle/-}, bend right] (174) to (175.center);
            \draw [{wStyle/-}] (174) to (177);
            \draw [style={wStyle/-}] (130.center) to (175.center);
            \draw [style={wStyle/-}] (129.center) to (171.center);
            \draw [style={wStyle/-}] (172.center) to (178.center);
            \draw [style={wStyle/-}] (179.center) to (176.center);
            \draw [style={wStyle/+}] (147.center) to (166.center);
            \draw [style={wStyle/+}] (179.center) to (181.center);
            \draw [style={wStyle/+}] (178.center) to (180.center);
            \draw [style={wStyle/+}] (183.center) to (186.center);
            \draw [style={wStyle/+}, bend right] (186.center) to (184);
            \draw [style={wStyle/+}, bend left] (181.center) to (184);
            \draw [style={wStyle/+}] (184) to (185.center);
        \end{pgfonlayer}
    \end{tikzpicture}
    \stackrel{\text{\Cref{cor:daggers are the same}}}{=}
    \begin{tikzpicture}
        \begin{pgfonlayer}{nodelayer}
            \node [style=none] (122) at (-2.275, -1.975) {};
            \node [style=none] (123) at (-2.275, 2) {};
            \node [style=none] (124) at (3.375, 2) {};
            \node [style=none] (125) at (3.375, -2) {};
            \node [{dotStyle/-}] (128) at (-0.15, 0) {};
            \node [style=none] (129) at (-0.65, 0.375) {};
            \node [style=none] (130) at (-0.65, -0.375) {};
            \node [style=none] (134) at (-0.65, -0.5) {};
            \node [style=none] (135) at (-0.65, 0.5) {};
            \node [style=none] (136) at (0.375, 0.5) {};
            \node [style=none] (137) at (0.375, -0.5) {};
            \node [style=label] (140) at (3.825, 1.475) {$X$};
            \node [style=label] (142) at (-2.75, -0.85) {$X$};
            \node [{dotStyle/+}] (143) at (1.1, -0.4) {};
            \node [style=none] (144) at (0.35, -0.85) {};
            \node [style=none] (145) at (0.35, 0) {};
            \node [style={dotStyle/+}] (146) at (1.525, -0.4) {};
            \node [style=none] (166) at (-2.275, -0.85) {};
            \node [{dotStyle/+}] (170) at (-1.4, 0.975) {};
            \node [style=none] (171) at (-0.65, 0.375) {};
            \node [style=none] (172) at (-0.65, 1.475) {};
            \node [style={dotStyle/+}] (173) at (-1.825, 0.975) {};
            \node [{dotStyle/+}] (174) at (-1.4, 0.225) {};
            \node [style=none] (175) at (-0.65, -0.375) {};
            \node [style=none] (176) at (-0.65, 0.725) {};
            \node [style={dotStyle/+}] (177) at (-1.825, 0.225) {};
            \node [style=none] (179) at (1.125, 0.725) {};
            \node [style=none] (180) at (3.375, 1.475) {};
            \node [style=none] (181) at (1.125, 0.725) {};
            \node [style=label] (182) at (-2.75, -1.575) {$X$};
            \node [style=none] (183) at (-2.275, -1.575) {};
            \node [style={dotStyle/+}] (184) at (2.55, -0.45) {};
            \node [style=none] (185) at (3.375, -0.45) {};
            \node [style=none] (186) at (1.125, -1.575) {};
            \node [style=label] (187) at (3.825, -0.45) {$X$};
        \end{pgfonlayer}
        \begin{pgfonlayer}{edgelayer}
            \draw [{bgStyle/+}] (124.center)
                 to (123.center)
                 to (122.center)
                 to (125.center)
                 to cycle;
            \draw [{bgStyle/-}] (136.center)
                 to (135.center)
                 to (134.center)
                 to (137.center)
                 to cycle;
            \draw [{wStyle/-}, bend right] (130.center) to (128);
            \draw [{wStyle/-}, bend right] (128) to (129.center);
            \draw [{wStyle/+}, bend left] (145.center) to (143);
            \draw [{wStyle/+}, bend left] (143) to (144.center);
            \draw [style={wStyle/-}] (145.center) to (128);
            \draw [{wStyle/+}, bend right] (172.center) to (170);
            \draw [{wStyle/+}, bend right] (170) to (171.center);
            \draw [{wStyle/+}, bend right] (176.center) to (174);
            \draw [{wStyle/+}, bend right] (174) to (175.center);
            \draw [style={wStyle/-}] (130.center) to (175.center);
            \draw [style={wStyle/-}] (129.center) to (171.center);
            \draw [style={wStyle/+}] (179.center) to (181.center);
            \draw [style={wStyle/+}] (183.center) to (186.center);
            \draw [style={wStyle/+}, bend right] (186.center) to (184);
            \draw [style={wStyle/+}, bend left] (181.center) to (184);
            \draw [style={wStyle/+}] (184) to (185.center);
            \draw [style={wStyle/+}] (172.center) to (180.center);
            \draw [style={wStyle/+}] (181.center) to (176.center);
            \draw [style={wStyle/+}] (166.center) to (144.center);
            \draw [style={wStyle/+}] (146) to (143);
            \draw [style={wStyle/+}] (173) to (170);
            \draw [style={wStyle/+}] (177) to (174);
        \end{pgfonlayer}
    \end{tikzpicture}
    \\
    &\stackrel{\text{\Cref{th:spider}}}{=}
    \begin{tikzpicture}
        \begin{pgfonlayer}{nodelayer}
            \node [style=none] (122) at (-2.575, -1.95) {};
            \node [style=none] (123) at (-2.575, 2) {};
            \node [style=none] (124) at (2.025, 2) {};
            \node [style=none] (125) at (2.025, -1.975) {};
            \node [{dotStyle/-}] (128) at (-0.15, 0) {};
            \node [style=none] (129) at (-0.65, 0.375) {};
            \node [style=none] (130) at (-0.65, -0.375) {};
            \node [style=none] (134) at (-0.65, -0.5) {};
            \node [style=none] (135) at (-0.65, 0.5) {};
            \node [style=none] (136) at (0.375, 0.5) {};
            \node [style=none] (137) at (0.375, -0.5) {};
            \node [style=label] (140) at (2.475, 1.475) {$X$};
            \node [style=label] (142) at (-3.05, -0.375) {$X$};
            \node [{dotStyle/+}] (143) at (1.1, -0.4) {};
            \node [style=none] (144) at (0.35, -0.85) {};
            \node [style=none] (145) at (0.35, 0) {};
            \node [style={dotStyle/+}] (146) at (1.525, -0.4) {};
            \node [style=none] (166) at (-0.675, -0.85) {};
            \node [{dotStyle/+}] (170) at (-1.4, 0.975) {};
            \node [style=none] (171) at (-0.65, 0.375) {};
            \node [style=none] (172) at (-0.65, 1.475) {};
            \node [style={dotStyle/+}] (173) at (-1.825, 0.975) {};
            \node [{dotStyle/+}] (174) at (-1.9, -1.125) {};
            \node [style=none] (175) at (-1.4, -1.45) {};
            \node [style=none] (176) at (-1.4, -0.85) {};
            \node [style=none] (177) at (-2.575, -1.125) {};
            \node [style=none] (180) at (2.025, 1.475) {};
            \node [style=label] (182) at (-3.05, -1.1) {$X$};
            \node [style=none] (185) at (-2.575, -0.375) {};
            \node [style=none] (186) at (2.025, -1.475) {};
            \node [style=label] (187) at (2.475, -1.45) {$X$};
            \node [style=none] (188) at (-1.4, -0.375) {};
            \node [style=none] (189) at (-1.4, -0.85) {};
        \end{pgfonlayer}
        \begin{pgfonlayer}{edgelayer}
            \draw [{bgStyle/+}] (124.center)
                 to (123.center)
                 to (122.center)
                 to (125.center)
                 to cycle;
            \draw [{bgStyle/-}] (136.center)
                 to (135.center)
                 to (134.center)
                 to (137.center)
                 to cycle;
            \draw [{wStyle/-}, bend right] (130.center) to (128);
            \draw [{wStyle/-}, bend right] (128) to (129.center);
            \draw [{wStyle/+}, bend left] (145.center) to (143);
            \draw [{wStyle/+}, bend left] (143) to (144.center);
            \draw [style={wStyle/-}] (145.center) to (128);
            \draw [{wStyle/+}, bend right] (172.center) to (170);
            \draw [{wStyle/+}, bend right] (170) to (171.center);
            \draw [{wStyle/+}, bend right] (176.center) to (174);
            \draw [{wStyle/+}, bend right] (174) to (175.center);
            \draw [style={wStyle/-}] (129.center) to (171.center);
            \draw [style={wStyle/+}] (172.center) to (180.center);
            \draw [style={wStyle/+}] (166.center) to (144.center);
            \draw [style={wStyle/+}] (146) to (143);
            \draw [style={wStyle/+}] (173) to (170);
            \draw [style={wStyle/+}] (177.center) to (174);
            \draw [style={wStyle/+}, in=0, out=-180] (130.center) to (189.center);
            \draw [style={wStyle/+}, in=-180, out=0] (188.center) to (166.center);
            \draw [style={wStyle/+}] (176.center) to (189.center);
            \draw [style={wStyle/+}] (175.center) to (186.center);
            \draw [style={wStyle/+}] (185.center) to (188.center);
        \end{pgfonlayer}
    \end{tikzpicture}  
    \stackrel{\text{\Cref{prop:monoid rewiring}}}{=}
    \begin{tikzpicture}
        \begin{pgfonlayer}{nodelayer}
            \node [{dotStyle/+}] (107) at (-0.5, -0.375) {};
            \node [style=none] (108) at (0, -0.75) {};
            \node [style=none] (109) at (0, 0) {};
            \node [style=none] (122) at (-0.925, 1.125) {};
            \node [style=none] (123) at (-0.925, -1.125) {};
            \node [style=none] (124) at (1.175, -1.125) {};
            \node [style=none] (125) at (1.175, 1.125) {};
            \node [style=none] (127) at (-0.925, -0.375) {};
            \node [{dotStyle/-}] (128) at (0.5, 0.375) {};
            \node [style=none] (129) at (0, 0) {};
            \node [style=none] (130) at (0, 0.75) {};
            \node [style=none] (131) at (0.925, 0.375) {};
            \node [style=none] (132) at (1.175, -0.75) {};
            \node [style=none] (133) at (-0.925, 0.75) {};
            \node [style=none] (134) at (0, 0.875) {};
            \node [style=none] (135) at (0, -0.125) {};
            \node [style=none] (136) at (0.925, -0.125) {};
            \node [style=none] (137) at (0.925, 0.875) {};
            \node [style=none] (138) at (1.175, 0.375) {};
            \node [style=label] (139) at (-1.4, -0.375) {$X$};
            \node [style=label] (140) at (-1.4, 0.75) {$X$};
            \node [style=label] (141) at (1.65, -0.75) {$X$};
            \node [style=label] (142) at (1.65, 0.375) {$X$};
        \end{pgfonlayer}
        \begin{pgfonlayer}{edgelayer}
            \draw [{bgStyle/+}] (124.center)
                 to (123.center)
                 to (122.center)
                 to (125.center)
                 to cycle;
            \draw [{bgStyle/-}] (136.center)
                 to (135.center)
                 to (134.center)
                 to (137.center)
                 to cycle;
            \draw [{wStyle/+}, bend right] (109.center) to (107);
            \draw [{wStyle/+}, bend right] (107) to (108.center);
            \draw [{wStyle/+}] (107) to (127.center);
            \draw [{wStyle/-}, bend left] (130.center) to (128);
            \draw [{wStyle/-}, bend left] (128) to (129.center);
            \draw [{wStyle/-}] (128) to (131.center);
            \draw [{wStyle/+}] (132.center) to (108.center);
            \draw [{wStyle/+}] (130.center) to (133.center);
            \draw [style={wStyle/+}] (138.center) to (131.center);
        \end{pgfonlayer}
    \end{tikzpicture}                    
\end{align*}

\end{proof}

\begin{proof}[Proof of \Cref{thm_equiv_FOBic_PeirceBic}]
 By Propositions \ref{prop:enrichment} and \ref{prop:maps} every fo-bicategory is a peircean bicategory. By Proposition \ref{prop:map} every morphism of fo-bicategories is a morphism of peircean bicategories.

    Now observe that a peircean bicategory $\Cat{C}$ is a fo-bicategory, since:
    \begin{itemize}
        \item it is a cartesian bicategory by definition;
        \item it is a cocartesian bicategory via the isomorphism $\nega{}\colon (\co{\Cat{C}}, \copier[+], \cocopier[+]) \to (\Cat{C}, \copier[-], \cocopier[-])$;
        \item it is a closed linear bicategory, since:
        \begin{itemize}
            \item \eqref{ax:leftLinDistr} and \eqref{ax:rightLinDistr} hold by \Cref{lemma:lin distr};
            \item \eqref{ax:linStrn1}, \eqref{ax:linStrn2}, \eqref{ax:linStrn3} and \eqref{ax:linStrn4} hold by \Cref{lemma:lin str} and \eqref{ax:tensorPlusIdMinus} and \eqref{ax:tensorMinusIdPlus} hold by \Cref{cor:lin str id};
            \item every arrow $c \colon X \to Y$, and in particular also $\symm[+], \symm[-], \copier[+], \discard[+], \cocopier[+], \codiscard[+], \copier[-], \discard[-], \cocopier[-]$ and $\codiscard[-]$, has both a left and right linear adjoint by \Cref{lemma:lin adj};
        \end{itemize}
        \item \eqref{ax:bwFrob}, \eqref{ax:bwFrob2}, \eqref{ax:wbFrob} and \eqref{ax:wbFrob2} hold by \Cref{lemma:lin frob}.
    \end{itemize}

    A morphism of peircean bicategories $F \colon \Cat{C} \to \Cat{D}$ preserves the structure defined in~\eqref{eq:defnegative} since it preserves negation, e.g.
    \begin{align*}
        F(c \seq[-] d) &= F(\nega{(\nega{c} \seq[+] \nega{d})}) \tag{\ref{eq:defnegative}} \\
                       &= \nega{F(\nega{c} \seq[+] \nega{d})} \tag{$F$ preserves negation}  \\
                       &= \nega{(F(\nega{c}) \seq[+] F(\nega{d}))} \tag{$F$ is a strong symmetric monoidal functor} \\
                       &= \nega{(\nega{F(c)} \seq[+] \nega{F(d)})} \tag{$F$ preserves negation} \\
                       &= F(c) \seq[-] F(d) \tag{\ref{eq:defnegative}}
    \end{align*}
    In particular, $F$ is both a morphism of cartesian and cocartesian bicategories and thus a morphism of fo-bicategories.
\end{proof}

\section{Appendix to \cref{sec:restrictingadjunction}}\label{app:resadj}

\begin{lemma}\label{ex_func_ent_rel_in_CB}
                Let $\Cat{C}$ be a cartesian bicategory, and consider the doctrine $\doctrine{\mathsf{Map}(\Cat{C})}{\Cat{C}[-,I]}$. Then an element $c\in \Cat{C}[X\times Y,I]$ is:
                \begin{itemize}
                    \item functional if and only if the corresponding arrow in $\Cat{C}[X, Y]$, i.e. $\rewiredCirc[+]{c}[X][Y]$, satisfies the leftmost inequality in \eqref{eq:def:map};
                    \item entire if and only if the corresponding arrow in $\Cat{C}[X, Y]$, i.e. $\rewiredCirc[+]{c}[X][Y]$, satisfies the rightmost inequality in \eqref{eq:def:map};
                                    \end{itemize}
            Therefore,  $c$ is both functional and entire if and only if $\rewiredCirc[+]{c}[X][Y]$ is a map.
            
            \end{lemma}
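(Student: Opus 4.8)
The plan is to unwind both definitions—functionality and entirety of an element $c \in \Cat{C}[X\times Y, I]$ in the doctrine $\HM(\Cat{C}) = (\Cat{C}[-,I], \mathsf{Map}(\Cat{C}))$, and the two map inequalities of \eqref{eq:def:map} for the arrow $\rewiredCirc[+]{c}[X][Y]$ obtained by bending the $X$-wire of $c$ around—and show they match up on the nose via string-diagrammatic manipulation. The bridge between the two worlds is the explicit description of $\REL(\HM(\Cat{C}))$ recalled in \S\ref{sec:adjunction}: composition in $\REL(P)$ is $\exists_{\pr_{X\times Z}}(P_{\pr_{X\times Y}}(\phi)\wedge P_{\pr_{Y\times Z}}(\psi))$, the equality predicate $\delta_Y$ corresponds to $\id[+][Y]$ (equivalently $\capCirc[+][Y]$ under the cup/cap correspondence), and the left adjoints $\exists_g$ are given by $\exists_g(f) = f\seq[+] g^\dagger$. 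Since $\Cat{C}[-,I]$ is a sub-doctrine picked out by the hom-functor, each of the operations $P_{\pr}$, $\wedge$, $\exists_{\pr}$ appearing in \cref{def_func_and_entire_for_P} translates into a concrete string diagram built from $\copier[+], \discard[+], \cocopier[+], \codiscard[+]$ and the cap/cup, using the identities in Figures~\ref{fig:cb axioms} and the isomorphism $\op{(\cdot)}$.

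First I would treat functionality. The condition is $P_{\angbr{\pi_1}{\pi_2}}(c)\wedge P_{\angbr{\pi_1}{\pi_3}}(c)\leq P_{\angbr{\pi_2}{\pi_3}}(\delta_Y)$ in $\Cat{C}[X\times Y\times Y, I]$. I would draw the left-hand side as a diagram with one $X$-input copied by $\copier[+]$ into two occurrences of $c$, whose remaining legs are the two $Y$-outputs, and discard nothing; the right-hand side is $\delta_Y$ weakened along the projection forgetting $X$, i.e. $\discard[+][X]$ tensored with $\capCirc[+][Y]$ (the equality predicate on $Y\times Y$ rewired to have two inputs). Then, using the spider theorem (\cref{th:spider}) together with \cref{prop:cap property} to pass between ``box with two legs into $I$'' and ``box as an arrow $X\to Y$'', and the definition $\op{c} = \daggerCirc[+]{c}$, I expect this inequality to become exactly $\rewiredCirc[+]{c}[X][Y] \seq[+] \op{(\rewiredCirc[+]{c}[X][Y])} \geq$ ... — more precisely, after sliding the caps (\cref{prop:sliding through cap}) and cancelling, it reduces to the leftmost inequality of \eqref{eq:def:map} applied to $r := \rewiredCirc[+]{c}[X][Y]$, namely $r$ copies laxly in the single-valued direction. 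The key computational move is recognising that $P_{\angbr{\pi_2}{\pi_3}}(\delta_Y)$ diagrammatically just identifies the two $Y$-wires, so the inequality says: feeding one $X$ into two copies of $r$ and joining the outputs is below joining the outputs of a single $r$ — which is precisely $\copier[+]$-naturality stated as an inequality, i.e. the definition of a map in the copier component.

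Next I would do entirety, which is the easier half: $\top_X \leq \exists_{\pr_X}(c)$ in $\Cat{C}[X,I]$. Unfolding $\exists_{\pr_X}$ via $\exists_g(f)=f\seq[+] g^\dagger$ with $g = \pr_X\colon X\times Y\to X$ gives $\exists_{\pr_X}(c) = c \seq[+] \op{(\discard[+][Y])} = c \seq[+] \codiscard[+][Y]$ read in $\Cat{C}[X,I]$, which diagrammatically is $\rewiredCirc[+]{c}[X][Y]$ with its $Y$-output discarded. Since $\top_X = \discard[+][X]$ by \eqref{eq:def:cap}, the condition becomes $\discard[+][X] \leq \rewiredCirc[+]{c}[X][Y]\seq[+]\discard[+][Y]$, which is verbatim the rightmost inequality of \eqref{eq:def:map} for $r = \rewiredCirc[+]{c}[X][Y]$, i.e. the $\discard[+]$-naturality condition expressing totality. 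Each direction of each biconditional is immediate once the translation dictionary is in place. Finally, combining the two equivalences with \cref{def:maps}, $c$ is functional and entire iff $\rewiredCirc[+]{c}[X][Y]$ satisfies both inequalities in \eqref{eq:def:map}, i.e. is a map; and this in turn gives \cref{prop_maps_rel_P}, since by definition the homset $\REL(\HM(\Cat{C}))[X,Y] = \Cat{C}[X,I]$ evaluated at $X\times Y$ is exactly $\Cat{C}[X\times Y, I]$, with the bending $\rewiredCirc[+]{c}$ being the canonical identification with $\Cat{C}[X,Y]$.

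I expect the main obstacle to be bookkeeping rather than conceptual: making sure the three-fold weakenings and the placement of the $X$-wire in the functionality diagram are handled correctly, so that after the spider-theorem rewiring and the $\op{(\cdot)}$-properties of Table~\ref{table:daggerproperties} one lands precisely on the diagram in \eqref{eq:def:map} and not on a rotated or reflected variant. A secondary subtlety is that \cref{def_func_and_entire_for_P} uses the general left adjoints $\exists_f$ of an elementary and existential doctrine (via the formula in \cref{rem:generalised adjoint hyperdoctrine}), whereas the clean formula $\exists_g(f)=f\seq[+]g^\dagger$ is stated for $\HM(\Cat{C})$ in \cref{example_cartbicat_provide_eed}; I would invoke that computation of left adjoints directly, so that no re-derivation of Beck–Chevalley or Frobenius is needed. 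Everything else is a routine diagram chase using only Figures~\ref{fig:cb axioms}, \cref{th:spider}, \cref{prop:cap property}, and \cref{prop:sliding through cap}.
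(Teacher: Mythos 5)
Your proposal is correct and follows essentially the same route as the paper: translate the doctrine-level conditions for functionality and entirety into string diagrams, then use the spider theorem and the cup/cap rewiring correspondence (your ``box with two legs into $I$'' versus ``arrow $X\to Y$'' dictionary) to identify them with the two inequalities of \eqref{eq:def:map}, the entirety case collapsing to a single application of the comonoid unit law. The only detail the paper makes explicit that you gloss over is the intermediate reformulation of the functionality condition with a copy of $c$ reinserted on the right-hand side before rewiring, but that is exactly the kind of bookkeeping step you already flagged.
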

\begin{proof}
    By definition, an element $c\in \Cat{C}[X\times Y,I]$ is functional from $X$ to $Y$ if and only if 
    \begin{equation}\label{functional}\tag{functional}
        \functionalLHS{c}{X}{Y} \leq \functionalRHS{X}{Y} %
    \end{equation}
    and it is entire from $X$ to $Y$ if and only if 
    \begin{equation}\label{entire}\tag{entire}
        \entireLHS{X} \leq \entireRHS{c}{X} %
    \end{equation}
    Now observe that \eqref{functional} holds if and only if the following holds
    \begin{equation}\label{functionalalt}\tag{functional-alt}
        \functionalLHS{c}{X}{Y} \leq \functionalRHSalt{c}{X}{Y} %
    \end{equation}
    To conclude, we need to show that \eqref{functionalalt} and \eqref{entire} are equivalent, respectively, to
    \[
        \rewiredFunctionalLHS{c}{X}{Y} \leq \rewiredFunctionalRHS{c}{X}{Y} \quad\text{ and }\quad \entireLHS{X} \leq \rewiredEntireRHS{c}{X}.
    \]
    For \eqref{functionalalt} we use the fact that in any cartesian bicategory $\myCirc{a}{X}{Y} \leq \myCirc{b}{X}{Y}$ if and only if $\rewiredCirc[+]{a}[X][Y] \leq \rewiredCirc[+]{b}[X][Y]$, and thus
    \[
        \functionalLHS{c}{X}{Y} \leq \functionalRHSalt{c}{X}{Y} \;\text{ iff }\; \functionalLHScupped{c}{X}{Y} \leq \functionalRHScupped{c}{X}{Y}
    \]
    which amounts to $\rewiredFunctionalLHS{c}{X}{Y} \leq \rewiredFunctionalRHS{c}{X}{Y}$ using~\Cref{th:spider}.
    
    For \eqref{entire} it suffices to observe that $\rewiredEntireRHS{c}{X} \stackrel{\eqref{ax:comPlusUnit}}{=} \entireRHS{c}{X}$.
\end{proof}

\begin{proof}[Proof of \cref{prop_maps_rel_P}]
By \Cref{adj_CB_EED} we have that for every elementary and existential doctrine $P$, the unit of the adjunction $\REL(-)\dashv \HM(-)$ is morphism of elementary and existential doctrines $\freccia{P}{\eta_P\defeq (\Gamma_P,\rho)}{\HM(\REL(P))}$. Moreover, each component of $\rho$ is an iso by definition (see \Cref{app_section_adj} for a detailed description).  Therefore,  $\rho$ preserves and reflects functional and entire relations  (see \Cref{rem_EED_morph_sens_Fe_and_Te_into_Fe_and_Te}), i.e. $\rho_{X\times Y}(\phi)\in \REL(P)[X\times Y,I]$ is functional and entire in $\REL(P)$ if and only if $\phi$ is functional and entire in $P$. On the other hand, by \Cref{ex_func_ent_rel_in_CB} we have that an arrow $\psi\in \REL(P)[X,Y]$, i.e. an element $\psi\in P(X\times Y)$, is a map if and only if the corresponding element in $\REL(P)[A\times B,I]$, i.e. $\rho_{X\times Y} (\psi)$, is functional and entire with respect to the doctrine $\REL(P)[-,I]$. Therefore, we can conclude that $\phi$ is a map in $\mathsf{Rel}(P)$ if and only if $\phi$ is an entire and functional element of $ P$.
\end{proof}

\begin{proof}[Proof of \cref{thm:main}]
First, we want to prove that the inclusion $\HM \colon \CartBic \hookrightarrow \EED$ in \eqref{adj_CB_EED} restricts to an inclusion of categories 
$\PeirceBic\hookrightarrow\BHD$. By \cref{prop_inclusion_PD_BHD}, one only needs to check for morphisms in $\PeirceBic$. Given a morphism of peircean bicategories $F\colon \Cat{C} \to \Cat{D}$, $\HM(F)$ is the morphism of elementary and existential doctrines $(\tilde{F},\mathfrak{b}^F)$ defined in \cref{sec:adjunction}. In order to conclude that it is a morphism of boolean doctrines, it is enough to show that $\mathfrak{b}^F_X$ is a morphism of boolean algebras for all objects $X$. Since  $(\tilde{F},\mathfrak{b}^F)$ is a morphism of doctrines, $\mathfrak{b}^F_X$ is a morphism of inf-semilattices. Thus it is enough to show that $\mathfrak{b}^F_X$ preserve negation. But this is trivial since, for all  $c\in \Cat{C}[X,I]$, 
\begin{align}
\mathfrak{b}^F_X(\nega{c}) & =F(\nega{c}) \tag{Def. $\mathfrak{b}^F$} \\
&= \nega{F(c)}  \tag{morphism of Peircean, \cref{def:peircean-bicategory}} \\
&=\nega{\mathfrak{b}^F_X(c)}  \tag{Def. $\mathfrak{b}^F$}
\end{align}

Now, to prove that $\REL$ restrict to a functor $\REL \colon \BHD \to \PeirceBic$, by \cref{thm_adj_BHD_PB}, one only needs to check that for all morphisms of boolean hyperdoctrines $(F,\mathfrak{b})\colon P \to Q$, $\REL(F,\mathfrak{b})\colon \REL(P) \to \REL(Q)$ is a morphism of peicean bicategories. Since by \eqref{adj_CB_EED}, $\REL(F,\mathfrak{b})$ is a morphism of cartesian bicategories, one only needs to check that it preserves the negation. But this is obvious since for all arrows  $\phi \in \REL(P)[X,Y]$, $\REL(F,\mathfrak{b})(\phi)$ is --by definition-- $\mathfrak{b}_{X\times Y}(\phi)$ and $\mathfrak{b}_{X\times Y}$ is a morphism of boolean algebras.

To conclude, one only needs to check the unit and the counit of the adjunction in \eqref{adj_CB_EED}. The counit is an isomorphism of cartesian bicategories (see Equation (9) in \cite{bonchi2021doctrines}), and then it provides an isomorphism of peircean bicategories $\Cat{C}\cong \REL(\Cat{C}[-,I])$ whenever $\Cat{C}$ is a peircean bicategory.
The unit of the adjunction $\eta_P\colon P\to \REL(P)[-,I]$ is the morphism of elementary and existential doctrines $(\Gamma_P, \rho)$ illustrated in \eqref{eq:unit}. To conclude that $\eta_P$ is a morphism of boolean hyperdoctrine whenever $P$ is a boolean hyperdoctrine, one has only to prove that $\rho$ is a morphism of boolean algebras, but this is trivial since $\rho$ is always an isomorphism of inf-semilattices.

\end{proof}

\section{Appendix to \Cref{sec:the two equivalences}}\label{app_section_tablutation_and equivalences}

\begin{proof}[Proof of \cref{thm:theequivalence}]
    By \Cref{eq:theequivalence} we have that  the $\HM$ and $\REL$ functors provide an equivalence between the categories $\CartBic$ and  $\overline{\EED}$. Now, since every peircean category is in particular a cartesian bicategory, we have that every boolean hyperdoctrine arising from a peircean bicategory satisfies (RUC) and it has comprehensive diagonals. Then, we have that the functor $\HM\colon \PeirceBic \hookrightarrow \BHD$ factors through the canonical inclusion $\overline{\BHD}\hookrightarrow \BHD$:
    
    \[\begin{tikzcd}
        \PeirceBic && \BHD \\
        & \overline{\BHD}
        \arrow["\HM",hook, from=1-1, to=1-3]
        \arrow["\HM"',hook, from=1-1, to=2-2]
        \arrow[hook, from=2-2, to=1-3]
    \end{tikzcd}\]
    By \Cref{thm:main}, we have that $\HM\colon \PeirceBic \hookrightarrow \BHD$ is fully and faithful (since the counit of the adjunction is an iso), so it remains to prove that it is essentially surjective (with respect to the objects of $\overline{\BHD}$). By the equivalence presented in \Cref{eq:theequivalence}, we know that every boolean hyperdoctrine (that is in particular an elementary and existential doctrine) satisfying (RUC) and having comprehensive diagonals,  is isomorphic to an elementary and existential doctrine $\doctrine{\map(\Cat{C})}{\Cat{C}[-,I]}$ for some cartesian bicategory $\Cat{C}$. Thus,  we can conclude that $\doctrine{\map(\Cat{C})}{\Cat{C}[-,I]}$ is a boolean hyperdoctrine and, by \Cref{cor_cart_bic_is_peircea_iff_hyperdoc}, that $\Cat{C}$ is a peircean bicategory. This concludes  the proof that  $\PeirceBic \equiv \overline{\BHD}$.
    
    \end{proof}
\section{Appendix to \Cref{sec:tabulation}}
We summarize some useful properties of tabulations, which are crucial to establish the precise connection with the notion of full comprehension. We refer to  \cite[Lem. 3.3]{carboni1987cartesian} for the following result:
\begin{lemma}\label{lem_carboni_tabulations}
    Let $\freccia{X_r}{i}{X}$ be a tabulation of $\freccia{X}{r}{I}$. Then
    \begin{itemize}
        \item for every map $\freccia{Z}{f}{X}$ such that $f^\dag \seq[+]\discard[+]_{Z}\leq r$ there exists a unique map $\freccia{Z}{h}{X_r}$ such that $f=h\seq[+]i$;
        \item if $f^\dagger \seq[+]\discard[+]_{Z}=r$, then $h^\dagger \seq[+]\discard[+]_{Z}=\discard[+]_{X}$.
    \end{itemize}
       
\end{lemma}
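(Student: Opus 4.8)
The plan is to exhibit the required map explicitly as $h \defeq f \seq[+] \op{i}$ and to extract all three claims (existence, uniqueness, and the discard-equation) from the two defining equations of a tabulation, which I record as (T1) $i \seq[+] \op{i} = \id[+][X_r]$ and (T2) $\op{i} \seq[+] \discard[+][X_r] = r$, together with the fact that $i$ and $f$ are maps. From Proposition~\ref{prop:map adj}.3 a map is left adjoint to its dagger, so for $i$ we have the counit $\op{i} \seq[+] i \leq \id[+][X]$ (the unit being the \emph{equality} (T1)), and similarly for $f$ the unit $\id[+][Z] \leq f \seq[+] \op{f}$ and counit $\op{f} \seq[+] f \leq \id[+][X]$. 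I will also use that precomposition by a map preserves binary meets, i.e.\ $f \seq[+] (a \wedge d) = (f \seq[+] a) \wedge (f \seq[+] d)$; this follows immediately from \eqref{eq:def:cap} since a map is a comonoid homomorphism, $f \seq[+] \copier[+][X] = \copier[+][Z] \seq[+] (f \tensor[+] f)$.

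The first key computation is the ``support equation'' $f \seq[+] r = \discard[+][Z]$. The inclusion $\leq$ is routine: since $r \leq \top_{X,I} = \discard[+][X]$ and $f$ is total, $f \seq[+] r \leq f \seq[+] \discard[+][X] = \discard[+][Z]$. For $\geq$ I use the unit of $f$ together with the hypothesis $\op{f} \seq[+] \discard[+][Z] \leq r$, computing $\discard[+][Z] = \id[+][Z] \seq[+] \discard[+][Z] \leq f \seq[+] \op{f} \seq[+] \discard[+][Z] \leq f \seq[+] r$. Rewriting $r$ by (T2), this equation reads $h \seq[+] \discard[+][X_r] = \discard[+][Z]$, so $h$ is already total.

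Next I establish the factorization $h \seq[+] i = f$, i.e.\ $f \seq[+] \op{i} \seq[+] i = f$. The inclusion $\leq$ is immediate from the counit $\op{i} \seq[+] i \leq \id[+][X]$. For $\geq$ I pass through the coreflexive–predicate correspondence: the coreflexive $\op{i} \seq[+] i$ has predicate $\op{i} \seq[+] i \seq[+] \discard[+][X] = \op{i} \seq[+] \discard[+][X_r] = r$, and it can be recovered from $r$ as $\op{i} \seq[+] i = \id[+][X] \wedge (r \seq[+] \codiscard[+][X])$. Granting this, meet-preservation by $f$ and the support equation give $f \seq[+] \op{i} \seq[+] i = f \wedge (f \seq[+] r \seq[+] \codiscard[+][X]) = f \wedge (\discard[+][Z] \seq[+] \codiscard[+][X]) = f \wedge \top_{Z,X} = f$. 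Once $f = h \seq[+] i$ is known, $h$ is a map: the counit is $\op{h} \seq[+] h = i \seq[+] \op{f} \seq[+] f \seq[+] \op{i} \leq i \seq[+] \op{i} = \id[+][X_r]$ by (T1) and the counit of $f$, while the unit follows from (T1) again, since $h \seq[+] \op{h} = f \seq[+] \op{i} \seq[+] i \seq[+] \op{f} = f \seq[+] \op{f} \geq \id[+][Z]$. Uniqueness is then a one-line consequence of (T1): any map $h'$ with $h' \seq[+] i = f$ satisfies $h' = h' \seq[+] (i \seq[+] \op{i}) = f \seq[+] \op{i} = h$. \emph{The main obstacle is the coreflexive identity $\op{i} \seq[+] i = \id[+][X] \wedge (r \seq[+] \codiscard[+][X])$}: its nontrivial inclusion $\id[+][X] \wedge (r \seq[+] \codiscard[+][X]) \leq \op{i} \seq[+] i$ is an instance of the modular law for cartesian bicategories (in the spirit of Proposition~\ref{prop:wrong way}), and this Frobenius-flavoured step is where the real work lies.

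Finally, for the second bullet I assume the stronger equality $\op{f} \seq[+] \discard[+][Z] = r$ and compute directly, using the same two tabulation equations: $\op{h} \seq[+] \discard[+][Z] = i \seq[+] \op{f} \seq[+] \discard[+][Z] = i \seq[+] r = i \seq[+] \op{i} \seq[+] \discard[+][X_r] = \id[+][X_r] \seq[+] \discard[+][X_r] = \discard[+][X_r]$, where the third equality is (T2) and the fourth is (T1). (Note this forces the target to be $\discard[+][X_r]$, so the ``$\discard[+][X]$'' in the statement should read $\discard[+][X_r]$.) This last part, as well as uniqueness, require no new ingredients beyond (T1) and (T2).
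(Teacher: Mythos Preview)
The paper does not prove this lemma at all; it simply cites \cite[Lem.~3.3]{carboni1987cartesian}. Your argument is a faithful reconstruction of the Carboni--Walters proof: define $h \defeq f \seq[+] \op{i}$, establish the factorisation via the coreflexive identity $\op{i}\seq[+] i = \id[+][X] \wedge (r \seq[+] \codiscard[+][X])$, and read off uniqueness and the discard equation from (T1). The step you flag as \emph{the main obstacle} is exactly where the content lies, and it does go through: the modular law $(R\seq[+]S)\wedge T \leq R\seq[+](S \wedge (\op{R}\seq[+]T))$ holds in any cartesian bicategory as a consequence of the Frobenius axiom~\eqref{ax:plusFrob}, and instantiating it with $R=\op{i}$, $S=\top_{X_r,X}$, $T=\id[+][X]$ yields precisely the missing inclusion $\id[+][X]\wedge(\op{i}\seq[+]\top)\leq \op{i}\seq[+](\top\wedge i)=\op{i}\seq[+]i$. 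Your observation about the typo is also correct: since $\op{h}\colon X_r\to Z$, the arrow $\op{h}\seq[+]\discard[+][Z]$ lives in $\Cat{C}[X_r,I]$, so the right-hand side must be $\discard[+][X_r]$, not $\discard[+][X]$.
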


Now we recall another useful lemma, regarding doctrines with full comprehensions. We refer to \cite[Pro. 7.10]{MaiettiTrotta21} for the following result:
\begin{lemma}\label{lem_a_is_exists_comp_top}
    Let $\doctrine{\Cat{C}}{P}$ be a doctrine with full comprehensions. Then
    every comprehension is a mono for every element $\alpha$ of $P(X)$ we have that $\alpha=\exists_{\comp{\alpha}}(\top_{X_{\alpha}})$.
\end{lemma}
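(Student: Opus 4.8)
The plan is to establish the two assertions of the lemma separately: that every comprehension $\freccia{X_\alpha}{\comp{\alpha}}{X}$ is a monomorphism, and that $\alpha = \exists_{\comp{\alpha}}(\top_{X_\alpha})$ for every $\alpha$ in $P(X)$. Throughout I would use two standing facts. First, by \cref{rem:generalised adjoint hyperdoctrine}, in an elementary and existential doctrine every reindexing $P_f$ has a left adjoint $\exists_f$, so in particular $\exists_{\comp{\alpha}}$ is available. Second, each $P_f$, being a morphism of inf-semilattices, preserves the top element, so $P_f(\top) = \top$.

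For the first assertion, I would take two arrows $\freccia{Y}{g_1,g_2}{X_\alpha}$ with $g_1 \seq[+] \comp{\alpha} = g_2 \seq[+] \comp{\alpha} =: f$ and show $g_1 = g_2$. The key computation is that $f$ discharges the hypothesis of the universal property of $\comp{\alpha}$: indeed $P_f(\alpha) = P_{g_1}(P_{\comp{\alpha}}(\alpha)) = P_{g_1}(\top_{X_\alpha}) = \top_Y$, using contravariant functoriality of $P$, the defining equation $P_{\comp{\alpha}}(\alpha) = \top_{X_\alpha}$ of a comprehension (\cref{def_doct_with_comp}), and preservation of $\top$. Thus $f$ factors through $\comp{\alpha}$, and since that factorization is \emph{unique} while both $g_1$ and $g_2$ are such factorizations, uniqueness forces $g_1 = g_2$. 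This part uses only the existence of comprehensions, not fullness.

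For the second assertion, I would write $\beta := \exists_{\comp{\alpha}}(\top_{X_\alpha})$ and prove the two inequalities. The direction $\beta \leq \alpha$ is immediate from the adjunction $\exists_{\comp{\alpha}} \dashv P_{\comp{\alpha}}$: it is equivalent to $\top_{X_\alpha} \leq P_{\comp{\alpha}}(\alpha)$, which holds because $P_{\comp{\alpha}}(\alpha) = \top_{X_\alpha}$. The converse $\alpha \leq \beta$ is where fullness enters. The plan is to invoke full comprehension (\cref{def_doct_with_comp}): it suffices to show that $\comp{\alpha}$ factors through $\comp{\beta}$, the comprehension of $\beta$, which exists since $P$ has full comprehensions. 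By the universal property of $\comp{\beta}$, such a factorization exists precisely when $P_{\comp{\alpha}}(\beta) = \top_{X_\alpha}$. This equality follows from the unit of the adjunction, which gives $\top_{X_\alpha} \leq P_{\comp{\alpha}}(\exists_{\comp{\alpha}}(\top_{X_\alpha})) = P_{\comp{\alpha}}(\beta)$, together with the fact that $\top_{X_\alpha}$ is the top element, whence the reverse inequality is automatic. Hence $P_{\comp{\alpha}}(\beta) = \top_{X_\alpha}$, so $\comp{\alpha}$ factors through $\comp{\beta}$, and full comprehension yields $\alpha \leq \beta$. Combining the two inequalities gives $\alpha = \exists_{\comp{\alpha}}(\top_{X_\alpha})$.

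I expect the only genuinely delicate point to be the $\alpha \leq \beta$ direction: it is exactly there that merely having comprehensions is insufficient and the full comprehension axiom is needed, and the argument hinges on recognizing that the factorization condition coming from $\comp{\beta}$'s universal property is discharged by the unit of $\exists_{\comp{\alpha}} \dashv P_{\comp{\alpha}}$. The remaining steps are routine manipulations of the adjunction and of the universal property.
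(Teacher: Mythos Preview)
Your proof is correct. The paper does not supply its own argument for this lemma but merely cites \cite[Pro.~7.10]{MaiettiTrotta21}; your two-part argument (uniqueness in the universal property for monicity, and the adjunction unit combined with fullness for $\alpha=\exists_{\comp{\alpha}}(\top_{X_\alpha})$) is the standard one and goes through as written.
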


\begin{proof}[Proof of \cref{thm_func_comp_cb_iff_comprehensions}]
    Let $(\Cat{C}, \copier[+], \cocopier[+])$ be functionally complete, and let us consider the doctrine $\doctrine{\map(\Cat{C})}{\Cat{C}[-,I]}$ and an element $\freccia{X}{r}{I}$ of $\Cat{C}[X,I]$. We claim that the tabulation $\freccia{X_r}{i}{X}$ of $r$ is the comprehension of $r$. 
    
    First, notice that $\Cat{C}[i,I](r)=\top_{X_r}$ (i.e. $i\seq[+]r=\discard[+]_{X_r}$) because, by definition of functionally completeness, we have that $i\seq[+]i^\dagger=\id_{X_r}$ and $ i^\dagger \seq[+]\discard[+]_{X_r}=r$, and hence $i \seq[+]r=\discard[+]_{X_r}$. Now suppose that $\freccia{Z}{f}{X}$ is a map such that $\Cat{C}[f,I](r)=\top_{Z}$ (i.e. $f\seq[+]r =\discard[+]_{Z}$).  Then, in particular, we have that $\top_{Z}\leq \Cat{C}[f,I](r)$, and we can conclude that  $\exists_f(\top_{X_r})\leq r$ because the doctrine $\doctrine{\mathsf{Map}(\Cat{C})}{\Cat{C}[-,I]}$ is elementary and existential, so it has left adjoints along all the arrows (see\Cref{rem:generalised adjoint hyperdoctrine}). Now, by definition of left ajoints for this doctrine (see \Cref{example_cartbicat_provide_eed}) this means that $f^\dagger\seq[+]\discard[+]_{Z}\leq r$, so we can apply \Cref{lem_carboni_tabulations} and conclude that there exists a unique $h$ such that $f=h\seq[+]i$. This concludes the proof that if $(\Cat{C}, \copier[+], \cocopier[+])$ is functionally complete then $\doctrine{\mathsf{Map}(\Cat{C})}{\Cat{C}[-,I]}$ has comprehensions. 
    
    Now we need to prove that comprehensions are full. So let $\freccia{X}{r,r'}{I}$ be two arrows and let $i$ and $i'$ their tabulation. If $i$ factors through $i'$, namely $i=g \seq[+]i'$ for some map $\freccia{X_r}{g}{X_{r'}}$, then we have that $i^\dagger=(i')^\dagger \seq[+]g^\dagger$. Hence, we can conclude that $r\leq r'$ because,  by hypothesis,  $ i^\dagger\seq[+]\discard[+]_{X_r}=r$, and using the fact that $i^\dagger=(i')^\dagger \seq[+]g^\dagger$, we can conclude that $r=(i')^\dagger \seq[+]g^\dagger \seq[+] \discard[+]_{X_r}\leq r'$ (because $ (i')^\dagger \seq[+]\discard[+]_{X_{r'}}=r'$ and $g^\dagger\seq[+]\discard[+]_{X_{r}} \leq \discard[+]_{X_{r'}}$). 
    
    Now we show that full comprehensions implies functionally completeness. So, let us consider an arrow $\freccia{X}{r}{I}$. We claim that the comprehension $\freccia{X_r}{\comp{r}}{X}$ is a tabulation of $r$. First, by \Cref{lem_a_is_exists_comp_top}, we have that $\exists_{\comp{r}}(\top_{X_r})=r$, namely that  $\comp{r}^\dagger \seq[+]\discard[+]_{X_{r}}=r$. Finally, we have that  $\comp{r} \seq[+]\comp{r}^\dagger =\id_{X_r}$ because comprehensions are monomorphisms in $\mathsf{Map}(\Cat{C})$. This concludes the proof that $\comp{r}$ is the tabulation of $r$.
    \end{proof}

The last part of this section is devoted to prove the final corollary, namely that
\[\PeirceBic \equiv\overline{\mathbb{BHD}}_c\cong \mathbb{BC} \]

To properly reach this goal, we summarize here in the language of doctrines the main results presented in this work, and some useful characterizations presented in \cite{TECH}. Our result will follow by combining these results.

First, we have the following result presented in  \cite[Thm. 35]{bonchi2021doctrines}:
\begin{theorem}\label{thm_eqv_CB_EED}
    Let $\doctrine{\Cat{C}}{P}$ be an elementary and existential doctrine. Then the following two conditions are equivalent:
    \begin{itemize}
      \item $P$ has comprehensive diagonals and satisfies (RUC);
      \item $\Cat{C}\cong \map(\REL(P))$ and $P\cong \Cat{C}[-,I]$.
    \end{itemize}
In particular, $\CartBic\equiv \overline{\EED}$.
\end{theorem}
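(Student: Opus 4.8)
The plan is to read Theorem~\ref{thm_eqv_CB_EED} off the adjunction $\REL\dashv\HM$ of \eqref{adj_CB_EED} by pinning down exactly on which doctrines its unit is an isomorphism. The counit of \eqref{adj_CB_EED} is always an isomorphism of cartesian bicategories; in particular, for every cartesian bicategory $\Cat{C}$ one has $\Cat{C}\cong\map(\REL(\Cat{C}[-,I]))$, so condition (ii) of the theorem holds for $P=\Cat{C}[-,I]$ and, more generally, for any doctrine isomorphic to some $\Cat{C}[-,I]$. Conversely, the unit $\eta_P=(\Gamma_P,\rho)\colon P\to\REL(P)[-,I]$ described in \eqref{eq:unit} has every component $\rho_X$ an isomorphism of inf-semilattices by construction, so $\eta_P$ is an isomorphism of doctrines precisely when the graph functor $\Gamma_P\colon\Cat{C}\to\map(\REL(P))$ is an isomorphism of categories (it is already the identity on objects). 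Hence the whole theorem reduces to the single equivalence ``$\Gamma_P$ is an isomorphism $\iff$ $\doctrine{\Cat{C}}{P}$ has comprehensive diagonals and satisfies (RUC)'', from which the final clause $\CartBic\equiv\overline{\EED}$ follows formally.

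The easy direction is (ii)$\Rightarrow$(i): if $P\cong\Cat{C}[-,I]$ for a cartesian bicategory $\Cat{C}$, then the examples in \S\ref{sec:equivalence} record that $\doctrine{\map(\Cat{C})}{\Cat{C}[-,I]}$ has comprehensive diagonals and satisfies (RUC), and both properties transport along isomorphisms of doctrines.

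For the substantive direction, assume $P$ has comprehensive diagonals and satisfies (RUC), and set $\Cat{D}:=\REL(P)$, a cartesian bicategory by \eqref{adj_CB_EED}. I would prove that $\Gamma_P\colon\Cat{C}\to\map(\Cat{D})$ is bijective on hom-sets; being the identity on objects, it is then an isomorphism. \emph{Faithfulness} uses comprehensive diagonals: a short reindexing computation turns $\Gamma_P(f)=\Gamma_P(g)$ into $P_{\angbr{f}{g}}(\delta_Y)=P_g(P_{\Delta_Y}(\delta_Y))=P_g(\top_Y)=\top_X$, whence $\angbr{f}{g}$ factors through $\Delta_Y$ by \Cref{def_comp_diagonals} and $f=g$. \emph{Fullness} uses (RUC) through \Cref{prop_maps_rel_P}: a morphism $X\to Y$ of $\map(\Cat{D})$ is exactly an entire and functional element $\phi\in P(X\times Y)$; by (RUC) there is $f\colon X\to Y$ in $\Cat{C}$ with $\top_X\leq P_{\angbr{\id[+][X]}{f}}(\phi)$, hence, transposing along the left adjoint, $\Gamma_P(f)\leq\phi$; since $\Gamma_P(f)$ is entire and $\phi$ is functional, the inequality is an equality, so $\phi=\Gamma_P(f)$. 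Therefore $\Gamma_P$, and with it $\eta_P$, is an isomorphism, giving $P\cong\REL(P)[-,I]$ and $\Cat{C}\cong\map(\REL(P))$.

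Finally, $\CartBic\equiv\overline{\EED}$: the counit of \eqref{adj_CB_EED} being an isomorphism makes $\HM$ fully faithful; by the examples in \S\ref{sec:equivalence} it carries $\CartBic$ into $\overline{\EED}$; and by the substantive direction every object of $\overline{\EED}$ is isomorphic to $\HM(\REL(P))$ with $\REL(P)\in\CartBic$, so $\HM$ is essentially surjective onto $\overline{\EED}$. Hence $\REL$ and $\HM$ restrict to an adjoint equivalence between $\CartBic$ and $\overline{\EED}$. The main obstacle is the substantive direction: one must verify that the unique-factorisation clause of \Cref{def_comp_diagonals} is precisely what yields faithfulness of $\Gamma_P$, that (RUC) together with \Cref{prop_maps_rel_P} yields fullness \emph{onto} $\map(\REL(P))$ (and not merely the existence of some arrow), and that the fibrewise isomorphisms $\rho$ assemble with $\Gamma_P$ into a genuine isomorphism of elementary and existential doctrines.
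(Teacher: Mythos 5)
The paper does not actually prove this theorem: it is imported verbatim as Theorem 35 of \cite{bonchi2021doctrines}, so there is no in-paper argument to compare yours against. Judged on its own, your reconstruction is sound and follows the route one would expect (and, as far as I can tell, the route of the cited work): reduce everything to the question of when the unit $\eta_P=(\Gamma_P,\rho)$ of the adjunction $\REL\dashv\HM$ is an isomorphism, observe that this happens exactly when the graph functor $\Gamma_P$ is an isomorphism of categories since the $\rho_X$ are always isomorphisms, and then match faithfulness of $\Gamma_P$ with comprehensive diagonals and fullness onto $\map(\REL(P))$ with (RUC) via \Cref{prop_maps_rel_P}. Your faithfulness computation ($\Gamma_P(f)=\Gamma_P(g)$ implies $P_{\angbr{f}{g}}(\delta_Y)=\top_X$, hence $\angbr{f}{g}$ factors through $\Delta_Y$) is correct, as is the observation that an entire element below a functional element must equal it.

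One step deserves to be written out rather than asserted: the passage from $\top_X\leq P_{\angbr{\id}{f}}(\phi)$ to $\Gamma_P(f)\leq\phi$. Transposing along the left adjoint gives $\exists_{\angbr{\id}{f}}(\top_X)\leq\phi$, so you still need the identity $\exists_{\angbr{\id}{f}}(\top_X)=P_{f\times\id}(\delta_Y)=\Gamma_P(f)$; this follows from the explicit formula for $\exists$ along general arrows in \Cref{rem:generalised adjoint hyperdoctrine} together with Frobenius reciprocity, but it is not completely immediate and is the only place where the argument is genuinely elided. With that filled in, the final deduction of $\CartBic\equiv\overline{\EED}$ from full faithfulness of $\HM$ (counit an isomorphism) plus essential surjectivity onto $\overline{\EED}$ (the substantive direction) is standard and correct.
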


Combining the previous result with \Cref{thm_func_comp_cb_iff_comprehensions} we obtain the following corollary, where  we denote by $\overline{\EED}_c$ the full subcategory of $\overline{\EED}$ given by those doctrines of $\overline{\EED}$ having full comprehensions, and by $\CartBic_f$ the full subcategory of $\CartBic$ given by functionally complete cartesian bicategories.
\begin{corollary}\label{cor_func_comp_eqv_doct_ruc_cd_full_comp}
    Let $\doctrine{\Cat{C}}{P}$ be an elementary and existential doctrine. Then the following two conditions are equivalent:
    \begin{itemize}
      \item $P$ has comprehensive diagonals, full comprehension and satisfies (RUC);
      \item $\Cat{C}\cong \map(\REL(P))$, $\Cat{C}$ is functionally complete and $P\cong \Cat{C}[-,I]$.
    \end{itemize}
In particular, $\CartBic_f\equiv \overline{\EED}_c$.
\end{corollary}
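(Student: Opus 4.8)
The plan is to derive \cref{cor_func_comp_eqv_doct_ruc_cd_full_comp} by combining the representation theorem \cref{thm_eqv_CB_EED} with the characterisation of functional completeness in \cref{thm_func_comp_cb_iff_comprehensions}. Throughout, I read the second bullet of \cref{thm_eqv_CB_EED} as the assertion that $P$ lies in the essential image of $\HM$, i.e.\ that there is an isomorphism of doctrines $P \cong \HM(\REL(P))$ identifying the base $\Cat{C}$ with $\map(\REL(P))$ and reading $\Cat{C}[-,I]$ as the hom-doctrine $\REL(P)[-,I]$ of the cartesian bicategory $\REL(P)$; accordingly, ``$\Cat{C}$ is functionally complete'' means that the cartesian bicategory $\REL(P)$ is functionally complete. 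A preliminary (routine) observation I would record is that having full comprehensions is invariant under isomorphism of doctrines: comprehensions are defined by a universal property preserved by the base isomorphism, and each reindexing isomorphism $\mathfrak{b}_X$ preserves the order used in the fullness clause of \cref{def_doct_with_comp}.

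For the forward implication, assume $P$ has comprehensive diagonals, full comprehensions and satisfies (RUC). Forgetting the full-comprehension hypothesis, \cref{thm_eqv_CB_EED} already gives $\Cat{C} \cong \map(\REL(P))$ and $P \cong \REL(P)[-,I]$, so it only remains to see that $\REL(P)$ is functionally complete. Since $P$ has full comprehensions and this property is iso-invariant, the isomorphic doctrine $\HM(\REL(P))$, whose base is $\map(\REL(P))$ and whose fibres are $\REL(P)[-,I]$, also has full comprehensions. Applying \cref{thm_func_comp_cb_iff_comprehensions} to the cartesian bicategory $\REL(P)$ then yields that $\REL(P)$ is functionally complete, as required.

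For the converse, assume $\Cat{C} \cong \map(\REL(P))$, that $\REL(P)$ is functionally complete, and $P \cong \REL(P)[-,I]$. The first and third conditions are exactly the second bullet of \cref{thm_eqv_CB_EED}, so $P$ has comprehensive diagonals and satisfies (RUC). By \cref{thm_func_comp_cb_iff_comprehensions}, functional completeness of $\REL(P)$ makes $\HM(\REL(P))$ a doctrine with full comprehensions; transporting along the isomorphism $P \cong \HM(\REL(P))$, and again using iso-invariance of full comprehensions, shows that $P$ has full comprehensions. This establishes the equivalence of the two conditions.

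Finally, the displayed equivalence $\CartBic_f \equiv \overline{\EED}_c$ follows by restricting the equivalence $\CartBic \equiv \overline{\EED}$ of \cref{thm_eqv_CB_EED}, given by $\HM$ and $\REL$, to the relevant full subcategories. On objects, $\HM$ sends a functionally complete cartesian bicategory $\Cat{D}$ into $\overline{\EED}_c$: it lands in $\overline{\EED}$ because $\Cat{D}\in\CartBic$, and it has full comprehensions by \cref{thm_func_comp_cb_iff_comprehensions}. Dually, by the forward implication just proved, $\REL$ sends a doctrine of $\overline{\EED}_c$ to a functionally complete cartesian bicategory, hence into $\CartBic_f$. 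Since $\CartBic_f$ and $\overline{\EED}_c$ are full subcategories and the components of the unit and counit at these objects are the same isomorphisms as in the ambient equivalence, the restriction is again an equivalence. I expect the only genuinely delicate points to be the notational identification underlying \cref{thm_eqv_CB_EED} and the iso-invariance of full comprehensions; both become transparent once the universal property defining comprehensions is written out explicitly.
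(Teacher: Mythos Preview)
Your proposal is correct and follows exactly the approach the paper indicates: the paper presents this corollary without proof, stating only that it is obtained by ``combining the previous result with \cref{thm_func_comp_cb_iff_comprehensions}'', and your argument is a faithful and careful unpacking of that combination. Your explicit remarks about the intended reading of the second bullet of \cref{thm_eqv_CB_EED} and the iso-invariance of full comprehensions are the right sanity checks to make the one-line justification rigorous.
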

Notice that \Cref{thm_eqv_CB_EED} can be seen as a generalization of another result regarding doctrines and regular categories presented in \cite[Prop. 5.3]{TECH}, establishing the precise connection between regular categories and doctrines:
\begin{theorem}
    Let $\doctrine{\Cat{C}}{P}$ be an elementary and existential doctrine. Then the following two conditions are equivalent:
    \begin{itemize}
      \item $P$ has comprehensive diagonals, full comprehensions and satisfies (RUC);
      \item $\Cat{C}$ is regular and $P\cong \Sub_{\Cat{C}}$.
    \end{itemize}
In particular, $\mathbb{REG}\equiv \overline{\EED}_c$.
    
\end{theorem}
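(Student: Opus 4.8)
The plan is to obtain the statement from its cartesian-bicategory counterpart, \Cref{cor_func_comp_eqv_doct_ruc_cd_full_comp}, bridged to regular categories by the tabulation theorem of Carboni and Walters~\cite{carboni1987cartesian}. I treat the two implications separately and then assemble the categorical equivalence. The implication from the geometric side requires no new work: as recalled in \Cref{ex_sub_on_regular}, for a regular category $\Cat{C}$ the doctrine $\Sub_{\Cat{C}}$ is elementary and existential; its equality predicate is the diagonal subobject, so $\Delta_X$ is the comprehension of $\delta_X$ (comprehensive diagonals); the comprehension of a subobject is the subobject itself (full comprehensions); and every entire functional relation over a regular category is the graph of a unique arrow (RUC). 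Hence if $\Cat{C}$ is regular and $P\cong\Sub_{\Cat{C}}$, then $P$ satisfies all three conditions.

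For the converse, suppose $P$ has comprehensive diagonals, full comprehensions and satisfies (RUC). By \Cref{cor_func_comp_eqv_doct_ruc_cd_full_comp}, the cartesian bicategory $\REL(P)$ is functionally complete and $P\cong\HM(\REL(P))$, so in particular $\Cat{C}\cong\map(\REL(P))$ and the fibre of $P$ over $X$ is $\REL(P)[X,I]$. The remaining task is to recognise a functionally complete cartesian bicategory as the bicategory of relations of a regular category. This is exactly the tabulation theorem of \cite{carboni1987cartesian}: a cartesian bicategory $\Cat{B}$ is functionally complete if and only if $\map(\Cat{B})$ is regular and $\Cat{B}\cong\REL(\Sub_{\map(\Cat{B})})$, the tabulation of an arrow $r\colon X\to I$ supplying the monic part of the (regular epi, mono) factorisation. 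Applying this to $\Cat{B}=\REL(P)$ shows that $\Cat{C}\cong\map(\REL(P))$ is regular. Moreover, under this identification the hom-poset $\Cat{B}[X,I]$ is precisely $\Sub_{\map(\Cat{B})}(X)$, whence $P\cong\HM(\REL(P))\cong\Sub_{\Cat{C}}$, naturally in $X$.

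The categorical equivalence $\mathbb{REG}\equiv\overline{\EED}_c$ then follows by composing equivalences: \Cref{cor_func_comp_eqv_doct_ruc_cd_full_comp} provides $\overline{\EED}_c\equiv\CartBic_f$, while the tabulation theorem provides $\CartBic_f\equiv\mathbb{REG}$ through $\Cat{B}\mapsto\map(\Cat{B})$ with pseudo-inverse $\Cat{R}\mapsto\REL(\Sub_{\Cat{R}})$; the composite sends $P$ to the regular category $\map(\REL(P))$ and $\Cat{R}$ to $\Sub_{\Cat{R}}$. The main obstacle is precisely this Carboni--Walters step: one must verify that tabulations yield pullback-stable regular epi--mono factorisations and that $\map(\REL(\Sub_{\Cat{R}}))\cong\Cat{R}$ with subobjects matching hom-sets into $I$. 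Much of the bookkeeping is already available here --- \Cref{thm_func_comp_cb_iff_comprehensions} and \Cref{lem_carboni_tabulations} align tabulations with full comprehensions and record their behaviour under factorisation --- so the genuinely external ingredient is the classical identification of tabular relation bicategories with regular categories, which I would cite rather than reprove.
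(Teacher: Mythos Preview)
The paper does not prove this theorem: it is stated as a citation of \cite[Prop.~5.3]{TECH}, with no argument given. Your proposal therefore does something the paper does not attempt, namely deriving the result from the paper's own machinery together with the Carboni--Walters tabulation theorem. This is a legitimate route and your outline is correct: the forward direction is covered by the examples in the paper (\Cref{ex_sub_on_regular} and the remarks after \Cref{def_comp_diagonals} and \Cref{def:RUC}), while the backward direction chains \Cref{cor_func_comp_eqv_doct_ruc_cd_full_comp} with the classical identification of functionally complete cartesian bicategories with bicategories of relations over regular categories.

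The trade-off is that you have replaced one external citation (\cite{TECH}) with another (\cite{carboni1987cartesian}), so the proof is not more self-contained; what it buys is that the argument now passes through the cartesian-bicategory equivalence that is the paper's central theme, making the regular-category statement appear as a corollary of \Cref{cor_func_comp_eqv_doct_ruc_cd_full_comp} rather than an independent input. One small caveat: in invoking \Cref{cor_func_comp_eqv_doct_ruc_cd_full_comp} you read ``$\Cat{C}$ is functionally complete'' as ``$\REL(P)$ is functionally complete'', which is surely the intended meaning but is worth flagging since the corollary's statement is slightly garbled on this point.
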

Finally, we can combine the previous result with \Cref{cor_func_comp_eqv_doct_ruc_cd_full_comp} obtaining as corollary the well-known equivalence (see \cite{carboni1987cartesian} and \cite{Fong2019RegularAR})  between functionally complete bicategories and regular categories:
\begin{corollary}
    We have the equivalences of categories  $\mathbb{REG}\equiv\overline{\EED}_c \equiv \CartBic_f$.
\end{corollary}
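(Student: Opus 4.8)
The plan is to obtain the corollary purely by transitivity of equivalence of categories, using $\overline{\EED}_c$ as the common pivot. Both ingredients are already available in the excerpt: the theorem from~\cite[Prop. 5.3]{TECH} stated immediately above provides an equivalence $\mathbb{REG}\equiv \overline{\EED}_c$, while \Cref{cor_func_comp_eqv_doct_ruc_cd_full_comp} provides an equivalence $\CartBic_f\equiv \overline{\EED}_c$. Since the composite of two equivalences of categories is again an equivalence, and since every equivalence admits a pseudo-inverse, chaining these two through the shared middle term yields $\mathbb{REG}\equiv \CartBic_f$, and recording the pivot gives the displayed chain $\mathbb{REG}\equiv\overline{\EED}_c \equiv \CartBic_f$.

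First I would make explicit that the middle category $\overline{\EED}_c$ appearing in both equivalences is literally the same full subcategory of $\EED$, namely the elementary and existential doctrines with comprehensive diagonals, full comprehensions, and satisfying (RUC). This legitimises composing the two equivalences, and it is immediate from the definitions, since both statements quantify over exactly this class of doctrines.

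Then I would name the composite functors. In one direction, $\overline{\EED}_c \to \mathbb{REG}$ sends a doctrine $P$ to its (regular) base category, while $\CartBic_f \to \overline{\EED}_c$ is the restriction of $\HM$; the composite sends a functionally complete cartesian bicategory $\Cat{C}$ to $\map(\Cat{C})$, which is therefore regular. In the other direction, $\mathbb{REG} \to \overline{\EED}_c$ sends $\Cat{B}$ to $\Sub_{\Cat{B}}$ and $\overline{\EED}_c \to \CartBic_f$ is the restriction of $\REL$; the composite sends a regular category $\Cat{B}$ to the functionally complete cartesian bicategory $\REL(\Sub_{\Cat{B}})$. The natural isomorphisms witnessing that these composites are mutually pseudo-inverse are obtained by pasting the unit and counit isomorphisms of the two given equivalences.

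There is essentially no obstacle here: the entire content has been front-loaded into \Cref{cor_func_comp_eqv_doct_ruc_cd_full_comp} (which itself rests on \Cref{thm_func_comp_cb_iff_comprehensions} and \Cref{thm_eqv_CB_EED}) and into the cited result of~\cite{TECH}. The only point requiring a moment's care is the compatibility of the two pivots, namely confirming that the doctrine $P\cong \Cat{C}[-,I]$ arising from a functionally complete $\Cat{C}$ is indeed a subobject doctrine $\Sub_{\Cat{B}}$ for the regular category $\Cat{B}=\map(\Cat{C})$; but this too is immediate once both equivalences have identified the base category of $P$ with $\map(\Cat{C})$, so the corollary follows by transitivity.
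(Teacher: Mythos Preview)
Your proposal is correct and matches the paper's approach exactly: the paper simply states that the corollary is obtained by combining the preceding theorem (which gives $\mathbb{REG}\equiv\overline{\EED}_c$) with \Cref{cor_func_comp_eqv_doct_ruc_cd_full_comp} (which gives $\CartBic_f\equiv\overline{\EED}_c$). Your additional remarks about the explicit composite functors and the identification of pivots are more detailed than what the paper records, but they are accurate and do not deviate from the intended argument.
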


Since, by definition (see \cite[Sec. A1.4, p. 38]{SAE}), a category $\Cat{C}$ is boolean if and only if the subobjects functor on $\Cat{C}$ is a boolean hyperdoctrine, we have the following corollary, that is a particular instance of the previous one. Here we denote the category of boolean categories by $\mathbb{BC}$ and that of  boolean hyperodctrines satisfying (RUC), with full comprehensions and comprehensive diagonals by $\overline{\mathbb{BHD}}_c$:
\begin{corollary}\label{cor_bool_cat_bool_hyper_ruc_diag_full_com}
    We have an equivalence of categories $\mathbb{BC}\equiv\overline{\mathbb{BHD}}_c $.
\end{corollary}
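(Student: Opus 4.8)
The plan is to obtain this corollary by restricting the equivalence $\mathbb{REG}\equiv\overline{\EED}_c$ — the characterization from \cite{TECH} (stated as the theorem just above) identifying the objects of $\overline{\EED}_c$ with the subobject doctrines $\Sub_{\Cat{C}}$ of regular categories $\Cat{C}$ — along the definitional correspondence between boolean categories and boolean subobject doctrines. That equivalence sends a regular category $\Cat{C}$ to $\Sub_{\Cat{C}}$ and, conversely, sends a doctrine $P\in\overline{\EED}_c$ to its regular base category, with $P\cong\Sub_{\Cat{C}}$. I want to show this equivalence cuts down to one between the subcategory $\mathbb{BC}$ of boolean categories (with coherent functors) inside $\mathbb{REG}$ and the subcategory $\overline{\mathbb{BHD}}_c$ of boolean hyperdoctrines (with the three conditions) inside $\overline{\EED}_c$.

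First I would settle the object level. Since every object of $\overline{\EED}_c$ already has comprehensive diagonals, full comprehensions and satisfies (RUC), the only extra requirement for membership in $\overline{\mathbb{BHD}}_c$ is that it be a boolean hyperdoctrine, i.e.\ that its fibres be boolean algebras. Using the isomorphism $P\cong\Sub_{\Cat{C}}$ supplied by the theorem, together with the definitional fact (\cite[Sec. A1.4]{SAE}) that $\Cat{C}$ is boolean iff $\Sub_{\Cat{C}}$ is a boolean hyperdoctrine, I obtain that $P$ lies in $\overline{\mathbb{BHD}}_c$ precisely when its base category $\Cat{C}$ is boolean. This makes the object-assignment of the equivalence restrict to a bijection between boolean categories and objects of $\overline{\mathbb{BHD}}_c$.

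The main work — and the main obstacle — is the morphism level, because morphisms of boolean hyperdoctrines are EED-morphisms that \emph{additionally} preserve negation, so $\overline{\mathbb{BHD}}_c$ is a non-full subcategory of $\overline{\EED}_c$, and correspondingly $\mathbb{BC}$ must be taken with coherent functors rather than with all regular functors. The key lemma I would prove is that, for a regular functor $F\colon\Cat{C}\to\Cat{D}$ between boolean categories, the induced EED-morphism $\Sub_{\Cat{C}}\to\Sub_{\Cat{D}}$ preserves complements of subobjects iff $F$ is coherent (preserves finite unions). For the $\Leftarrow$ direction, a coherent functor preserves $\wedge$, $\vee$, $\top$ and $\bot$, so for a subobject $S$ with complement $\nega{S}$ one has $F(S)\wedge F(\nega{S})=\bot$ and $F(S)\vee F(\nega{S})=\top$, whence $F(\nega{S})=\nega{F(S)}$ by uniqueness of complements. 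For the $\Rightarrow$ direction, De Morgan gives $S\vee T=\nega{(\nega{S}\wedge\nega{T})}$, so preservation of $\wedge$ and of $\nega{(-)}$ forces preservation of $\vee$. This identifies the coherent functors between boolean categories with exactly the negation-preserving morphisms between the corresponding subobject doctrines, i.e.\ the morphisms of boolean hyperdoctrines.

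Finally I would assemble these pieces: the object bijection and the morphism correspondence show that the fully faithful, essentially surjective equivalence $\mathbb{REG}\equiv\overline{\EED}_c$ restricts to a fully faithful, essentially surjective functor $\mathbb{BC}\to\overline{\mathbb{BHD}}_c$, yielding $\mathbb{BC}\equiv\overline{\mathbb{BHD}}_c$. The only remaining care is to confirm that $\mathbb{BC}$ is indeed taken with coherent functors (so that the restriction is well defined on arrows as well as objects), which is the standard convention of \cite{SAE}; granting this, the corollary follows as a direct specialization of the preceding equivalence, exactly as the surrounding text advertises.
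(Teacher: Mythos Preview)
Your proposal is correct and follows the same strategy as the paper: the corollary is obtained by restricting the equivalence $\mathbb{REG}\equiv\overline{\EED}_c$ along the definitional fact that a category $\Cat{C}$ is boolean iff $\Sub_{\Cat{C}}$ is a boolean hyperdoctrine. The paper treats this entirely as a one-line specialization (the sentence immediately preceding the corollary is the whole argument), whereas you additionally work out the morphism level---observing that $\overline{\mathbb{BHD}}_c$ is non-full in $\overline{\EED}_c$ and matching the negation-preservation condition with coherence of functors---which is a point the paper leaves implicit.
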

Combining these results we obtain the proof of our finial corollary:
    \begin{proof}[Proof of \cref{cor:final}]
        By \Cref{thm:theequivalence} we have the equivalences $\FOBic\equiv\overline{\mathbb{BHD}} $, so we can combine this result with \Cref{cor_func_comp_eqv_doct_ruc_cd_full_comp} obtaining the equivalence $\FOBic_f\equiv\overline{\mathbb{BHD}}_c $. Therefore, combining this equivalence with \Cref{cor_bool_cat_bool_hyper_ruc_diag_full_com} we obtain 
        \[\FOBic_f\equiv\overline{\mathbb{BHD}}_c \equiv \mathbb{BC}.\]
    \end{proof}

\end{document}